\numberwithin{equation}{section}
\tikzset{>=latex}
\newtheorem{theorem}{Theorem}[section]
\newtheorem{proposition}[theorem]{Proposition}
\newtheorem{corollary}[theorem]{Corollary}
\newtheorem{definition}[theorem]{Definition}
\newtheorem{lemma}[theorem]{Lemma}
\newtheorem{remark}[theorem]{Remark}
\newtheorem{assumption}[theorem]{Assumption}
\newcommand{\N}{\mathbb{N}}
\newcommand{\E}{\mathbb{E}}
\newcommand{\R}{\mathbb{R}}
\newcommand{\C}{\mathbb{C}}
\newcommand{\Z}{\mathbb{Z}}
\renewcommand{\P}{\mathbb{P}}
\newcommand{\Prob}{\mathbb{P}}
\newcommand{\T}{\mathbb{T}}
\newcommand{\cB}{\mathcal{B}}
\newcommand{\cD}{\mathcal{D}}
\newcommand{\cF}{\mathcal{F}}
\newcommand{\cS}{\mathcal{S}}
\newcommand{\cY}{\mathcal{Y}}
\newcommand{\cI}{\mathcal{I}}
\newcommand{\cJ}{\mathcal{J}}
\newcommand{\cR}{\mathcal{R}}
\newcommand{\cC}{\mathcal{C}}
\newcommand{\cE}{\mathcal{E}}
\newcommand{\bbL}{\mathbb{L}}
\newcommand{\bbW}{\mathbb{W}}
\renewcommand{\epsilon}{\varepsilon}
\DeclareMathOperator{\He}{Hess}
\newcommand{\nnb}{\nonumber\\}
\DeclareMathOperator{\ent}{Ent}
\DeclareMathOperator{\diag}{diag}
\DeclareMathOperator{\id}{id}
\DeclareMathOperator{\tr}{Tr}
\newcommand\xqed[1]{%
  \leavevmode\unskip\penalty9999 \hbox{}\nobreak\hfill
  \quad\hbox{#1}}
\newcommand\demo{\xqed{$\blacksquare$}}
\newcommand{\mc}[1]{{\mathcal #1}}
\newcommand{\mb}[1]{{\mathbf #1}}
\newcommand{\mf}[1]{{\mathfrak #1}}
\newcommand{\bb}[1]{{\mathbb #1}}
\newcommand{\ms}[1]{{\mathscr #1}}
\newcommand{\mg}[1]{{\textsf #1}}
\newcommand{\mtt}[1]{{\mathtt #1}}
\newcommand{\<}{\langle}
\renewcommand{\>}{\rangle}
\title{Critical dynamical fluctuations in reaction-diffusion processes}
\author{Benoit Dagallier\footnote{CEREMADE, Université Paris-Dauphine, Université PSL. E-mail: {\tt dagallier@ceremade.dauphine.fr}.}\hspace{0.2cm} and Claudio Landim\footnote{IMPA, Estrada Dona Castorina 110, CEP 22460 Rio de Janeiro, Brasil and CNRS
UMR 6085, Universit\'e de Rouen, France. E-mail: {\tt landim@impa.br}.}}
\date{May 23, 2025}
\begin{document}
\maketitle

\begin{abstract}
We consider a one-dimensional microscopic reaction-diffusion process
obtained as a superposition of a Glauber and a Kawasaki dynamics.  The
reaction term is tuned so that a dynamical phase transition occurs in
the model as a suitable parameter is varied.  We study dynamical
fluctuations of the density field at the critical point.

We characterise the slowdown of the dynamics at criticality, and prove 
that this slowdown is induced by a single observable, the global density (or magnetisation). 
We show that magnetisation fluctuations are non-Gaussian and characterise
their limit as the solution of a non-linear SDE. We  prove,
furthermore, 
that other observables remain fast: 
the density field acting on the fast modes (i.e. on mean-0 test functions) and with Gaussian scaling converges, in the sense of finite dimensional distributions, to a Gaussian field with space-time covariance that we compute explicitly.

The proof relies on a decoupling of slow and fast modes  relying in
particular on a relative entropy argument.  Major technical
difficulties include the fact that local equilibrium does not hold due
to the non-linearity, and proving replacement estimates on diverging
time intervals due to critical slowdown.
\end{abstract}

\setcounter{tocdepth}{1}
\tableofcontents

\section{Introduction}
The goal of this article is the study of an interacting particle system in the vicinity of a phase transition. 
General heuristics, based on dynamical renormalisation group arguments, identify possible universal behaviours at a critical point depending on conserved quantities and order parameters~\cite{HH}. 
Near a phase transition the dynamics is expected to slow down and, typically, evolve in a time-scale polynomial in the distance to the critical point or in the size of the model if exactly at the transition. 
Detailed predictions on how various observables slow down have been made, with formulas for the corresponding critical exponents. 
These predictions typically assume a decoupling of fast and slow observables to reduce the full microscopic dynamics to just a few effective equations for the slow objects.\\

Making these predictions rigorous starting from a microscopic dynamics has proven to be challenging.  
A central difficulty lies in the justification of the decoupling assumption. 
Indeed, in the microscopic dynamics observables of interest typically do not satisfy closed equations.  
Understanding how to close these equations is a well-known problem even away from critical points. 
At criticality a second difficulty comes from the presence of non-linear effects.

One class of models for which critical dynamics are well understood involves particles with long range, mean-field interactions. 
In the case of real particles with individual Brownian noise, 
Dawson~\cite{Dawson} identified, in the limit of a large number of particles, 
the time-scale on which the dynamics evolves close to the critical point, 
as well as described the dynamics of the mean particle position in this time-scale. 
Similar results could be obtained for discrete (Glauber) dynamics in a mean-field Ising model. 
In both cases, the magnetisation satisfies a closed equation at the microscopic level, 
which means that both the form of the equation and the correct time-scale close to the critical point are immediately apparent. 
The magnetisation (or mean particle position) is also the only slow observable, the limiting dynamics of its fluctuations being given by a finite-dimensional, non-linear SDE. 

Going beyond mean-field interactions introduces significant difficulties. The much more involved case of Glauber dynamics for Ising models (and variants) with intermediate-range interactions of Kac type was studied first in dimension one~\cite{Kac1d}; 
then in the harder two-dimensional~\cite{Mourrat_Weber_Kac,Shen_Weber_Kac,Iberti_Kac} and three dimensional cases~\cite{Kac3d}, using methods from Hairer's theory of regularity structures~\cite{Hairer}. 
In these models there is a diverging number of  observables that become slow at the transition.  
Contrary to the mean-field case, 
these observables do not individually satisfy closed equations. 
The limiting equations are correspondingly formulated for the magnetisation fluctuation field, 
which is in general a random distribution satisfying an infinite-dimensional (singular) SPDE related to models from field theory. 
Although the complexity of the limiting SPDE makes the study of the microscopic dynamics very involved, 
there is again no difficulty in identifying the correct time-scale and form of the equation. 
Indeed, as in the mean-field case, the range of the Kac interaction is sufficiently long that the fluctuation field satisfy (almost) closed equations at the microscopic level. 
\medskip

In contrast, results for microscopic models with short-range interactions remain
far and few.  Two types of dynamics have been studied extensively:
conservative Kawasaki dynamics (with a single conservation law), 
and non-conservative Glauber dynamics.
In Ising models with Glauber dynamics (meaning no conserved quantities as in previous examples), 
polynomial slowdown around the critical point has been established in many cases in the last ten years~\cite{LuSly_Ising,BauDag_Ising}.  A description of the dynamics close to
the critical point however remains out of reach.

For microscopic models with Kawasaki dynamics, say for definiteness models where particles can jump at finite range subject to an exclusion rule so that the number of particle is conserved, understanding of the dynamics at the critical point is
also limited but general results relating certain critical
exponents exist~\cite{SY_diffusivity}.  The presence of the conserved
quantity however makes conservative dynamics more tractable, at least
at high temperature.  Indeed, due to the global conservation law, the
density of particles evolves on much longer time-scales than other
observables, 
making it possible to obtain closed macroscopic equations on its evolution.  This observation was made rigorous by Guo, Papanicolaou
and Varadhan~\cite{gvp}, Rost~\cite{rost} and Brox and
Rost~\cite{BR84}, who set forward general arguments, the so-called
entropy method in \cite{gvp} and the Boltzmann-Gibbs principle in
\cite{rost, BR84}, to obtain closed equations for the density and its equilibrium fluctuations.  
These general arguments rely on the explicit knowledge of the invariant measure of the dynamics, 
although duality arguments can sometimes be used in special cases where the invariant measures are not known~\cite{dfl}.  
The general arguments were then extensively used and
improved (see the book~\cite{kl} for a review), 
with variations such
as the relative entropy method of Yau~\cite{yau_relative_ent},
recently improved to gain in-depth understanding of fluctuations of the
density at (very) high temperature in general settings, with no requirement on knowledge of invariant measures~\cite{Jara2018}. 
\\

In this article, we consider an interacting particle with a conserved
quantity at a critical point.  Our main result,
Theorem~\ref{theo_convergence_magnetisation}, establishes critical
slowdown and shows that the density field on this time-scale acts by projection on the global
density. 
Similarly to mean-field models,  
this global density of particles is thus shown to be the sole observable responsible for critical slowdown, 
and Theorem~\ref{theo_convergence_magnetisation} also characterises its non-linear
dynamics. 
Our second main result, Theorem~\ref{theo_fastmodes}, 
gives a precise description of fast modes, 
a term we loosely employ to refer to all other observables. 
We show these fast modes remain Gaussian with an explicit covariance.

Precisely, we
work with a so-called Glauber + Kawasaki dynamics in one space
dimension, where the Kawasaki dynamics conserves the number of
particles while the Glauber dynamics is associated with a short-range
Ising model.  The typical evolution of the macroscopic density $\rho$ of particles
in such a model is well known to be described by a reaction-diffusion
equation in the scaling limit~\cite{dfl}:
\begin{equation}
\partial_t\rho 
=
\partial^2_x \rho - V'(\rho)
,\qquad 
V:[0,1]\to\R
.
\label{eq_reacdiff_intro}
\end{equation}
Taken separately, 
neither the Kawasaki dynamics nor the Glauber part are associated with models having static phase transitions. 
Indeed, 
the Kawasaki dynamics giving rise to the Laplacian term in~\eqref{eq_reacdiff_intro} microscopically corresponds to a model at infinite temperature, see~\eqref{10} below, 
while the Glauber part is reversible with respect to a one dimensional short-range Ising model (see below~\eqref{eq_Glauberjump_rates_as_Ising}).   
The superposition of the two dynamics, 
however, 
features a \emph{dynamical} phase transition with associated critical slowdown. 
Occurrence of a phase transition can be read off properties of the reaction term $V$.  
To wit, by analogy with a static model, situations where the reaction term $V$ is strictly convex can be viewed as the underlying particle system being at high temperature. 
The presence of two or more distinct minimisers for $V$ corresponds on the other hand to a low temperature situation, 
and a dynamical phase transition occurs when the unique global minimiser for $V$ becomes unstable. 
At low temperature, in the simplest case of a reaction term with two minima $\rho_-\neq\rho_+$ of equal depth and in the limit where the strength of the reaction term goes to infinity, 
one expects interfaces between regions at densities $\rho_\pm$ to form rapidly, then evolve in a diffusive time-scale according to motion by curvature; an instance of the so-called Lifshitz law~\cite{Lifshitz1962}.

Making the above heuristics rigorous has proved to be challenging. 
Indeed, both the entropy method and the duality techniques of~\cite{dfl} only easily give access to \emph{qualitative} information on the dynamics.  
This makes it difficult to control the dynamics on different time-scales, 
relevant at low temperature to describe interface formation in the strong reaction limit, 
as well as at the critical point due to critical slowdown. 
A few years ago Jara and Menezes gave a  roadmap to providing \emph{quantitative} control on the dynamics~\cite{Jara2018}, 
improving on the relative entropy method of Yau and building on the earlier work~\cite{gjs}. 
These quantitative results were initially used to study density fluctuations at high temperature in contexts where the invariant measure is not known~\cite{jm_example,jl}. 
Improvements on these quantitative estimates however also opened the way to studying interface formation and evolution at low temperature, 
i.e. to obtaining law of large number-type results on the low temperature dynamics in the strong reaction limit. 
This however turned out to require substantially different ideas and was recently achieved in a series of work~\cite{ft_early,Funaki_nongradient,ft}. 
The corresponding controls were shown to be sufficient to also describe fluctuations around a low temperature interface~\cite{fls}. 
This left open the delicate study of fluctuations at the critical point, addressed in the present paper through yet improved relative entropy estimate. \\

Obtaining relative entropy bounds precise enough to study density fluctuations of Glauber + Kawasaki dynamics close to a critical point involves substantial additional difficulties. 
On the one hand, the density fluctuation field, 
seen as a random distribution (see~\eqref{eq_def_ttY}), 
does not evolve on the same time-scale depending on the test function it acts on. 
As the correct time-scale for the slow modes corresponds to very long times for the fast modes, 
precise quantitative controls on the homogenisation of the latter are required to obtain the correct limiting equations.

On the other hand, 
traditional local equilibrium arguments, 
at the heart of both entropy and relative entropy methods, 
break down at the critical point of Glauber + Kawasaki dynamics. 
The local equilibrium heuristics asserts that the law of the dynamics at each time is close to a so-called local equilibrium state, 
in the present case given by a product measure which prescribes the correct macroscopic behaviour for the density, 
but discards all other microscopic information. 
Local equilibrium has been shown to hold, with quantitative bounds, in very general high temperature settings - this is a major achievement of~\cite{Jara2018} - as well as at low temperature~\cite{fls}; density fluctuations being linear (i.e. with Gaussian scaling) in both cases. 
Local equilibrium also turns out to provide a good enough approximation to give access to non-linear fluctuations in KPZ-type models \cite{GJ_fluct_WASEP,gjs,Yang_KPZ_longrange, Yang_BG}, 
where the non-linearity is \emph{local} (it involves the square of the gradient of the field). 
In contrast, 
the non-linearity in our case is \emph{global}, 
as it involves powers of the density field itself rather than its gradient. 
This non-linearity comes from long-distance correlations arising at the critical point. Those are felt even at microscopic level, 
making the uncorrelated local equilibrium state a poor approximation of the dynamics.  
Let us however mention the very recent~\cite{HMW25}, 
where convergence of weakly asymmetric simple exclusion to the KPZ equation is established. 
The argument does not rely on local equilibrium, 
instead using methods of regularity structures adapted to the discrete setting. It would be very interesting to see what such techniques yield for our type of non-linearity.

\medskip
To set up the relative entropy estimate that will enable us to obtain closed equations on the density fluctuations, we start by identifying the slow observables and 
decompose observables into slow and fast contributions at the microscopic level.  
In the present case the Kawasaki part of the dynamics is infinite-temperature with the consequence that there is a single slow mode, the
global number of particles. 
This greatly simplifies the decomposition, 
which could however be carried out in more general settings.  
The key observation is then that, 
conditioned on the slow mode, the dynamics is
effectively high temperature. 
This is established rigorously using suitable log-Sobolev inequalities. 
With this observation in hand we can show that a form of local equilibrium is indeed valid, 
uniformly in time and with quantitative bounds, 
for the dynamics projected on configurations with fixed value of the slow mode.  
The uniformity in time is necessary because the long time-scale of the
critical slowdown corresponds to longer-than-hydrodynamic times for
the fast observables. 
At this point the contribution of the fast modes to the relative entropy has been taken into account, 
and we are only left with bounding non-linear functions of the slow mode. 
This is done by energy-type estimates.\\

The article is organised as follows.  In
Section~\ref{sec_model_result} we present the model, 
the main results and introduce the relative entropy estimate.  
The first main result,
Theorem~\ref{theo_convergence_magnetisation}, asserts that the density
field acts by projecting on the magnetisation, 
which evolves according to a
non-linear stochastic ODE. It is proven in
Section~\ref{sec3}. 
The main ingredient in the
proof of Theorem~\ref{theo_convergence_magnetisation} is the key
technical input of this work, the relative entropy estimate that spans
Sections~\ref{n-sec1} to~\ref{sec_free_energy_bounds}.
Section~\ref{n-sec1} has pedagogical aims: general background on the
relative entropy method is presented and the failure of local
equilibrium is illustrated.  The proof of the relative entropy estimate used in
Theorem~\ref{theo_convergence_magnetisation} is then split into two
parts.
 
In Section~\ref{n-sec2}, 
we explain how to include non-linear information into the local equilibrium measure in order to control the slow mode. 
A bound on relative entropy of the law of the dynamics with respect to this modified local equilibrium is established, 
assuming suitable bounds on the fast modes. 
The technical Sections~\ref{n-sec3} and~\ref{sec_free_energy_bounds} then provide the control on fast modes under various conditions.  

In Section~\ref{n-sec3}, we work in a perturbative regime, where a suitable parameter in the dynamics is small.  
Still with a pedagogical aim, the section is as elementary as possible, 
proving local equilibrium for fast modes through classical equivalence of ensembles arguments  and an integration by parts formula stated in Appendix~\ref{sec_IBP}. 

The restriction on the size of the parameter is then removed in Section~\ref{sec_free_energy_bounds}, 
under the assumption that a suitable log-Sobolev inequality hold. 
While this log-Sobolev inequality is conjectured to be true, 
it is not known at the moment. 
Using the log-Sobolev inequality, 
the restriction to small parameters is lifted by yet again improving on our modified local equilibrium, 
this time adding more precise information on fast modes. 
To make the resulting very technical estimates as accessible as possible, 
we work with a modified local equilibrium providing worse control on the non-linearity compared to Section~\ref{n-sec3}. 
It is then not possible to control directly the relative entropy, 
and instead we bound a kind of free energy.  
The estimate of this free energy makes use of a combination of concentration estimates and large deviation bounds replacing equivalence of ensembles arguments, 
established in Sections~\ref{app_concentration} and \ref{app_LD} respectively. 
 
The second main result,
Theorem~\ref{theo_fastmodes} proven in Section~\ref{sec8}, 
states that the fast modes remain Gaussian, with a covariance that is white in time and coloured in space. 
The lack of correlation in time reflects the fact that the fast modes equilibrate instantaneously on the time-scale of the critical slowdown. 
Theorem~\ref{theo_fastmodes} is proved using a combination of the same tools (relative entropy/free energy estimate and concentration bounds) together with Duhamel's formula.\\

\section{Model and results}\label{sec_model_result}
\subsection{A reaction-diffusion model}
Denote by $\color{blue} \bb T_n = \bb Z / n \bb Z$,
$n \in \bb N =\{1, 2, \dots \}$, the one-dimensional discrete torus
with $n$ points. Let $\color{blue} \Omega_n = \{0,1\}^{\bb T_n}$
be the state space. Elements of $\Omega_n$ are represented by the
Greek letters $\eta = (\eta_i : i\in \bb T_n)$ and $\xi$, with $\eta_i =1$ if the configuration $\eta$ is occupied at site $i$ and
$\eta_i =0$ otherwise.

Denote by $L_n^{\rm ex}$ the generator of the symmetric simple exclusion
process on $\bb T_n$ given by
\begin{equation}
\label{10}
(L_n^{\rm ex}f)\, (\eta)\;=\; \sum_{i\in\bb T_n} 
\, \big\{\, f(\eta^{i,i+1})-f(\eta)\, \big\}
\end{equation}
for all $f\colon \Omega_n \to \bb R$.  In this formula, $\eta^{ i,j}$
represents the configuration of particles obtained from $\eta$ by
exchanging the occupation variables $\eta_i$ and $\eta_j$:
\begin{equation}
{\color{blue} (\eta^{i,j})_{k} } \;:=\;
\begin{cases}
\eta_{k}  \,, & k\not =i \,,\, j\;,\\
\eta_{i} \,, & k=j\;, \\
\eta_{j} \,, & k=i\;.
\end{cases}
\label{eq_def_eta_i_iplus1}
\end{equation}
Here and below, summations are to be understood modulo $n$.

Denote by $\color{blue} \nu^n_\rho$, $0\le \rho\le 1$, the Bernoulli
product measure on $\Omega_n$ with density $\rho$. This is the product
measure with marginals given by Bernoulli distributions with parameter
$\rho$. A straightforward computation shows that these measures
satisfy the detailed balance conditions for the symmetric exclusion
process. In particular, they are stationary for this dynamics.

Fix a positive cylinder function
$h \colon \{0,1\}^{\bb Z} \to \bb R$, and let $L^G_n$ be the
generator of the Glauber dynamics on $\Omega_n$:
\begin{equation}
(L^G_n f)(\eta)  \;=\; \sum_{i\in \bb T_n}  h (\tau_i \eta)
\,\big[\,  f(\eta^i) \,-\, f(\eta)\, \big]
\end{equation}
for all $f\colon \Omega_n \to \bb R$.  In this formula, $\eta^i$
represents the configuration obtained from $\eta$ by flipping the
value of $\eta_i$:
\begin{equation}
{\color{blue}(\eta^i)_{k}} \;:=\;
\begin{cases}
\eta_{k}\;, & k\not =i\;,\\
1\,-\, \eta_{i}\;, & k=i\;,
\end{cases}
\label{eq_def_etai}
\end{equation}
and $\{\tau_i : i\in \bb T_n\}$ are the translations acting on
$\Omega_n$: 
\begin{equation}
\label{12}
(\tau_i \eta)_k \;=\;\eta_{i+k}\;, \quad i\,,\, k\,\in\, \bb T_n\;,
\;\; \eta\,\in\, \Omega_n  \;.
\end{equation}

In this article we consider a particular family of rates, but the
arguments do not rely on the specific form of the rates and can be
applied to other non-conservative dynamics which exhibit a phase
transition.  For $-1\le \gamma \le 1$, denote by
$c \colon \{0,1\}^{\bb Z} \to \bb R$ the positive cylinder function
given by
\begin{equation}
\label{01}
{\color{blue} c (\eta)} \,=\, c_\gamma(\eta)
\,:=\, 
1\,-\, \gamma \, \sigma_0\, [\, \sigma_{-1} + \sigma_{1}
\,] \,+\, \gamma^2 \, \sigma_{-1}\, \sigma_{1}  \;,
\end{equation}
where $\color{blue} \sigma_i = 2\eta_i -1 \in \{-1,1\}$. Mind that
$c_\gamma (\cdot)$ takes only three values:
\begin{equation}
c_\gamma (\sigma) \,=\,
\left\{
\begin{aligned}
& (1-\gamma)^2 \quad  \text{if $\sigma_{0} = \sigma_{-1} = \sigma_{1}$ } \,,
\\
&1- \gamma^2 \quad\;\;  \text{if $\sigma_{-1} \neq \sigma_{1}$ } \,,
\\
&(1+\gamma)^2 \quad  \text{if $\sigma_{0} \neq \sigma_{-1} = \sigma_{1}$  } \,.
\end{aligned}\label{eq_value_jump_rates}
\right.
\end{equation}
Mind that the rates are increasing for $0<\gamma<1$, as the dynamics
tries to align spins.

For $0\le \gamma\le 1$, the Glauber dynamics induced by the generator
$L^G_n$ is reversible for the Gibbs measure of an Ising model with
nearest neighbour interaction at inverse temperature $\beta$
satisfying $\gamma = \tanh \beta$.

Indeed, consider the Hamiltonian
$H_n\colon \{-1,1\}^{\bb T_n} \to \bb R$ given by
\begin{equation}
H_n(\sigma) \,:=\, -\, \sum_{j\in\bb T_n} \sigma_j\,
\sigma_{j+1}\,. 
\end{equation}
Denote by $\mu^n_{\beta}$ the Gibbs measure induced by 
$H_n$ at inverse temperature $\beta$:
\begin{equation}
{\color{blue} \mu^n_\beta (\sigma) } \,:=\,
\frac{1}{Z^n_\beta}\, e^{-\beta H_n(\sigma)}\;,
\label{eq_Ising}
\end{equation}
where $Z^n_\beta$ represents the normalization which turns
$\mu^n_\beta$ into a probability measure.  The Glauber dynamics is the name
given to the $\Omega_n$-valued, continuous-time Markov chain in which
a spin $\sigma_i$ is flipped at the rate
\begin{equation*}
 e^{-(\beta/2) \, [H_n(\sigma^i) - H_n(\sigma)]}  \;.
 \end{equation*}
 
An elementary computation yields that the Gibbs measure $\mu^n_\beta$
satisfies the detailed balance conditions for the Glauber dynamics, and that
\begin{equation}
A\, e^{-(\beta/2) \, [H_n(\sigma^i) - H_n(\sigma)]}  \,=\,
c(\tau_i\eta)
\label{eq_Glauberjump_rates_as_Ising}
\end{equation}
provided $A = 1-\gamma^2$, $\gamma = \tanh \beta$. In particular, the
Gibbs measure $\mu^n_\beta$ is stationary for the Markov chain induced
by the generator $L^G_n$ with the rates $c(\cdot)$ given by \eqref{01}.

Fix $a>0$ and let $\color{blue} (\xi^n(t); t\geq 0)$ be the
$\Omega_n$-valued, continuous-time Markov chain whose generator,
denoted by $L_n$, is given by
\begin{equation}
{\color{blue} L_n}  \;: =\; n^2\, L_n^{\rm ex} \;+\; a \, L^G_n\;.
\label{eq_def_generator}
\end{equation}

Denote by $\color{blue} \ms M (\bb T)$ the space of positive measures
on $\bb T$ with total mass bounded by $1$, endowed with the weak
topology.  Introduce the empirical measure
$\pi^n(\eta) \in \ms M(\bb T)$ associated with an element
$\eta \in \Omega_n$ as the measure on $\bb T$ defined by
\begin{equation}
{\color{blue} \pi^n} \,=\, \pi^n(\eta) \,:=\, 
\frac{1}{n}\sum_{j\in \bb T_n}\eta_j \, \delta_{j/N}\;,
\label{eq_def_pin_intro}
\end{equation}
where $\color{blue}\delta_{x}$ stands for the Dirac mass at point $x$.  

Fix a density profile $\rho_0 \colon \bb T \to [0,1]$. Consider a
sequence of initial configurations $\eta^n\in \Omega_n$, and suppose
that the associated empirical measure $\pi^n(\eta^n)$ converges to
$\rho_0(x)\, dx$.  De Masi, Ferrari and Lebowitz \cite{dfl} proved that
for all $t\ge 0$, the random measure $\pi^n(\xi^n_t)$ converges to the
absolutely continuous measure $u(t,x)\, dx$ whose density $u$ solves the
reaction-diffusion equation
\begin{equation}
\label{eq_LLN}
\left\{
\begin{aligned}
& \partial_t u = \Delta u \,-\,  \, a\, V'(u) \;,
\\
& u(0, x) = \rho_0 (x) \;,
\end{aligned}
\right.
\end{equation}
where $V\colon [0,1] \to \bb R$ is the potential given by
\begin{align}
{\color{blue} V'(\rho)}
\;&=\; -\, E_{\nu^n_\rho} \big[\, c(\eta) \, [1- 2\eta_0]\,\big]  
\nnb
\;&=\; -\, (2\gamma-1)(2\rho-1) + \gamma^2 (2\rho-1)^3\;
,
\qquad 
\rho\in[0,1]
.
\end{align}
Here and below, given a function $g\colon \Omega_n \to \bb R$ and a
probability measure $\mu$ on $\Omega_n$, we denote by
$\color{blue} E_\mu[g]$ or $\color{blue}\mu(g)$ the expectation of $g$
with respect to $\mu$.  
%In the case where $h=c$ is given by
%\eqref{01},
%\begin{equation*}
%{\color{blue} V'(\rho)}
%\;=\; -\, (2\gamma-1)(2\rho-1) + \gamma^2 (2\rho-1)^3\;.
%\end{equation*}
If $\gamma\leq 1/2$, $V(\cdot)$ is strictly convex, while for
$1/2<\gamma\le 1$ $V(\cdot)$ is a double-well potential with a local
maximum at $\rho=1/2$ and local minima at $0<\rho_- < 1/2< \rho_+<1$.
By symmetry, $\rho_- = 1-\rho_+$.  For $\gamma=1/2$ the global minimum at $1/2$ becomes degenerate as $V''(1/2)=0$.

% check hypotheses of dfl

We investigate in this article the fluctuations of the density field~\eqref{eq_def_pin_intro}, 
in particular of the fluctuations of the total number of particles (which we will also call fluctuations of the magnetisation in view of the connection~\eqref{01} to~\eqref{eq_Glauberjump_rates_as_Ising} to an Ising model). 
We are interested in a
neighborhood of the local minimum $\rho=1/2$ when this local minimum
becomes degenerate (that is for $\gamma \sim 1/2$).  Fix
\begin{equation}
{\color{blue} \gamma \,=\, \gamma_n } \, :=\,
\frac{1}{2}\, \Big(\, 1 \,-\, \frac{\theta}{ \sqrt{n}}\,\Big)\,,
\qquad
\theta\in\R \;.
\label{eq_def_gamma}
\end{equation}

Let us present heuristic computations to identify the size of magnetisation fluctuations and the time-scale on which they evolve. 
Assume below that $\theta=0$ so that
$\gamma=1/2$. Denote by $\mc Y^n_t$ the normalized magnetisation:
\begin{equation}
{\color{blue} \mc Y^n_t} \,:=\, \frac{1}{b_n} \, \sum_{i\in\bb T_n}
\{\, \xi^n_i(\upsilon_n t) - (1/2)\,\} \;, 
\end{equation}
where $b_n$, $\upsilon_n$ are sequences of positive real numbers which
diverge as $n\to\infty$.  The semi-martingale decomposition of
$\mc Y^n_t$ reads as
\begin{equation}
\label{n-16}
\mathcal Y^n_t \,=\, \mathcal Y^n_0
+ \int_0^t  \upsilon_n \, L_n\mathcal Y^n_s\, ds \,+\, M^n_t 
\;,
\end{equation}
where $M^n_t$ is a martingale. The generator has been multiplied by
$\upsilon_n$ because the dynamics is accelerated by this amount. Since
the exclusion part of the dynamics does not modify the magnetisation,
\begin{equation}
\label{n-24}
\upsilon_n \, L_n\mathcal Y^n \,=\,
a\, \upsilon_n \, L^G_n\mathcal Y^n
\,=\,
-\frac{a \upsilon_n }{b_n}\sum_{i\in\T_n}c(\tau_i \eta)
\, \sigma_i \;.
\end{equation}
As the exclusion part of the dynamics is speeded-up by
$\upsilon_n n^2$, it is plausible that it provides enough ergodicity
to replace the local function $c(\tau_i \eta) \, \sigma_i$ by its
expectation with respect to the Bernoulli product measure with density
given by the empirical density $ m/n = n^{-1} \sum_i \eta_i$:
\begin{equation}
c(\tau_i \eta) \, \sigma_i \,\sim\,
\nu^n_{m/n} \big(\, c(\eta) \, \sigma_0 \,\big) \,=\,
V'(m/n)\,.
\label{eq_heuristic_replacement_intro}
\end{equation}
Since the first three derivatives of $V$ vanish at $1/2$, a third
order Taylor expansion of $V'$ yields that
$V'(m/n) \sim (1/6) V^{(4)}(1/2) [(m/n) - 1/2]^3$, where we omitted the
higher order terms.  Here, $V^{(4)}(\cdot)$ stands for the fourth
derivative of $V(\cdot)$.  Since $(m/n) - 1/2 = (b_n/n) \mc Y_n$, we
conclude that
\begin{equation}
\upsilon_n \, L_n\mathcal Y^n \,\sim\, -\frac{a \upsilon_n }{b_n}\,
n\, \frac{1}{6}\, V^{(4)}(1/2) \, \big(\,\frac{b_n}{n}\,\big)^3 \, \mc Y_n^3\;.
\end{equation}
The factor $n$ represents the sum over $i$.  For this expression to be
of order $1$ gives:
\begin{equation}
\upsilon_n\, (b_n/n)^2 =1
.
\label{eq_scaling1}
\end{equation}
Denote by $\<M^n\>_t$ the quadratic variation of the martingale
$M^n_t$ introduced in \eqref{n-16}. A straightforward computation
yields that
\begin{equation}
\label{n-17}
\<M^n\>_t \,=\, \int_0^t a\, \frac{\upsilon_n }{b^2_n}
\sum_{i\in\T_n} c(\tau_i \xi^n(\upsilon_n\, s)) \, ds \, .
\end{equation}
This expression is of order one only if:
\begin{equation}
 \upsilon_n = b^2_n/n
 .
\end{equation}
 Together with~\eqref{eq_scaling1} this imposes $b_n = n^{3/4}$ and $\upsilon_n =
\sqrt{n}$.

\begin{remark}
\label{rm1}
The previous argument can be carried out in any dimension. In dimension $d$ the appropriate parameters are $b_n = n^{3d/4}$,
$\upsilon_n = n^{d/2}$.
\end{remark}

\begin{remark}
\label{rm2}
The exclusion dynamics, which does not modify the magnetisation, only
entered in the argument to replace local quantities by macroscopic averages (a so-called two
blocks estimate for blocks of size $n$).  We are initially speeding-up
the exclusion part diffusively because it is in this scale that one
observes a smooth evolution of the density profile. Of course,
speeding it up more only helps.
\end{remark}

\subsection{Results}
In view of the previous discussion, denote by
$\color{blue} (\eta^n(t); t\ge 0)$ the $\Omega_n$--valued
continuous-time Markov chain induced by the generator
$\color{blue} \ms L_n := \sqrt{n}\, L_n$.  Let 
$\mc Y^n_t$, $\mtt Y^n_t$ respectively denote the centred, re-scaled
magnetisation and density fluctuation field, given by
\begin{equation}
\begin{gathered}
{\color{blue} \mc Y^n_t} \,:=\, \frac{1}{n^{3/4}} \, \sum_{i\in\bb T_n}
\{\, \xi^n_i(\sqrt{n}\, t) - (1/2)\,\}
\,:=\, \frac{1}{n^{3/4}} \, \sum_{i\in\bb T_n}
\{\, \eta^n_i(t) - (1/2)\,\}\;,
\\
{\color{blue} \mtt Y^n_t(H)} \,:=\, \frac{1}{n^{3/4}} \,
\sum_{i\in\bb T_n} H(i/n)\, 
\{\, \eta^n_i(t) - (1/2)\,\}\;, \quad H\,\in\, C^\infty(\bb T)
\;. 
\end{gathered}
\label{eq_def_ttY}
\end{equation}
The heuristic argument presented in the last subsection with the appropriate scalings
$b_n = n^{3/4}$, $\upsilon_n = \sqrt{n}$ and with $\theta\in \bb R$
yields that the re-scaled magnetisation should evolve according to the
non-linear stochastic ODE
\begin{equation}
\label{eq_SDE_magnetisation_in_thm}
\mathrm{d}\mathcal Y_t = 
-2\, a\,  \theta\, \mathcal Y_t \, \mathrm{d}t - 2\, a \, \mathcal Y_t^3 \,
\mathrm{d}t + \sqrt{a}\, \mathrm{d}W_t \,,
\end{equation}
where $(W_s)_{s\geq 0}$ is a standard Brownian motion.
This is the first main result of this article. A precise statement requires
some notation.

Denote by $\color{blue} \cD([0,T], \bb R)$, $T>0$, the space of
trajectories from $[0,T]$ to $\R$ which are right-continuous with
left-limits, endowed with the Skorohod topology and its associated
Borel $\sigma$-algebra. For a probability measure $\mu$ on $\Omega_n$,
denote by $\color{blue} \bb P^n_{\mu}$ the probability measure on
$\cD([0,T], \bb R)$ induced by the process $\mc Y^n_t$ when $\eta^n_0$
is distributed according to $\mu$.  Let $\color{blue} \E^n_\mu$ denote
the associated expectation.  When the dependence on $n$ is clear
(e.g. if the initial condition is denoted by $\mu_n$), we will drop
the superscript $n$ from the notation.

For $p\geq 1$, introduce also the set $\color{blue} \bb L^p([0,T],\R)$
of functions $f\colon [0,T] \to \bb R$ with finite $p^{\text{th}}$
moment: $%\Vert f\Vert^p_p :=
\int_{[0,T]} |f(t)|^p \, dt<\infty$.

To describe distributions on $\Omega_n$ the dynamics will start from, introduce the relative entropy $H_n(\mu|\nu)$ between
two probability measures $\mu,\nu$ on $\Omega_n$:
\begin{equation}
\label{2-34c}
{\color{blue} H_n(\mu |\nu)}\,:=\, \int
\frac{d\mu}{d\nu}\, \log \frac{d\mu}{d\nu} \;  d\nu\quad
\text{if}\;\; \mu\ll\nu \;,
\end{equation}
and $H_n(\mu |\nu) =+\infty$ otherwise.  Recall that we denote by
$\nu^n_{1/2}$ the Bernoulli product measure with density $1/2$ (which
corresponds to the uniform measure over all configurations).  For a
probability measure $\alpha$ on $\R$, let $\color{blue} \bb Q_\alpha$
denote the law of the solution to the
SDE~\eqref{eq_SDE_magnetisation_in_thm} with initial condition
$\alpha$ (uniquely defined, see e.g. \cite[Theorem 5.5.15]{ks}).  The
main result of the article reads as follows.  Below and throughout the
article, we use the notation $a_n=O(b_n)$ for two sequences
$a_n\in\R$, $b_n\geq 0$ ($n\in\N$) to say that there is $M>0$ with
$|a_n|\leq Mb_n$ for each $n$.

\begin{theorem}
\label{theo_convergence_magnetisation}
Let $\mu_n$ be a probability measure on $\Omega_n$, $n \ge 1$, and
let $\alpha$ be a probability measure on $\R$. Suppose that
\begin{equation}
H_n(\mu_n\, |\, \nu^n_{1/2}) 
=
O(\sqrt{n}\, ),\qquad 
\lim_{n\to\infty}\mu_n(\cY^n \in \cdot) = \alpha(\cdot)
\quad \text{ weakly}
.
\label{eq_assumption_CI_theo_magnetisation}
\end{equation}
Then the following convergence holds. 
\begin{itemize}
\item[(i)] (Small $a$).  There exists $\mf a_0>0$ independent of
$(\mu_n)_n$, $\alpha$ such that, for all $0<a<\mf a_0$
\begin{equation*}
\lim_{n\to\infty} 
\bb E_{\mu_n} \Big[\, \Big| \int_0^t \big\{ \mtt Y^N_s(H)
\, - \,\<H\> \,\mc Y^n_s \, \big\}\; ds \,\Big|\, \Big]\,=\, 0
\end{equation*}
for every $t>0$ and smooth function $H\colon \bb T\to \bb R$.  In this
formula, $\color{blue} \<H\> = \int_{\bb T} H(x)\, dx$.  Moreover, for
all $p\in(1,4/3)$, the law of $(\cY^n_t)_{t\in[0,T]}$ under
$\bb P^n_{\mu_n}$, seen as a probability measure on
$\bb L^p([0,T],\R)$, converges weakly to the measure $\bb Q_\alpha$ in
the topology of measures on $\bb L^p([0,T],\R)$.

\item[(ii)] (Conditional result for all $a$).  The same convergence
result holds true for any $a>0$ under the log-Sobolev Assumption~\ref{ass_LSI}.
\end{itemize}
\end{theorem}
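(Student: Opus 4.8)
The plan is to establish the two claims of Theorem~\ref{theo_convergence_magnetisation} in stages: first the replacement (``projection'') estimate for the density field onto the magnetisation, then tightness and identification of the limit SDE, with the key relative entropy estimate supplying the uniform-in-time control that makes both steps work.

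\medskip

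\textbf{Step 1: The relative entropy bound.} The backbone is the quantitative relative entropy estimate announced in the introduction (proved in Sections~\ref{n-sec2}--\ref{sec_free_energy_bounds}). Write $\mu^n_t$ for the law of $\eta^n_t$ under $\bb P^n_{\mu_n}$, and build a ``modified local equilibrium'' reference measure $\nu^n_t$: a product-type measure carrying the correct macroscopic density near $1/2$, corrected with non-linear information on the slow mode (the global magnetisation) as in Section~\ref{n-sec2}, and — in case (ii) — additional information on the fast modes obtained via the log-Sobolev Assumption~\ref{ass_LSI}. The goal is a Yau-type inequality $\partial_t H_n(\mu^n_t|\nu^n_t) \leq -c\, n^2 D_n(\mu^n_t|\nu^n_t) + (\text{error})$, where $D_n$ is the exclusion Dirichlet form and the error, after a Gronwall argument started from the assumed bound $H_n(\mu_n|\nu^n_{1/2}) = O(\sqrt n)$, yields $\sup_{t\le T} H_n(\mu^n_t | \nu^n_t) = o(\sqrt n)$, or at least $O(\sqrt n)$ with the remaining slack absorbable. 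The crucial point, and \emph{the main obstacle}, is that at $\gamma = \gamma_n \sim 1/2$ local equilibrium genuinely fails (Section~\ref{n-sec1}): the non-linearity is global, long-range correlations persist at the microscopic scale, so the naive product measure is a poor reference and one must both (a) inject the right non-Gaussian slow-mode information into $\nu^n_t$ and (b) prove replacement/homogenisation estimates for the fast modes on the \emph{diverging} time interval $[0, \sqrt n\, T]$ in microscopic units. The small-$a$ restriction in (i) is precisely what lets one bypass Assumption~\ref{ass_LSI} by a perturbative equivalence-of-ensembles argument (Section~\ref{n-sec3}); removing it (part (ii)) requires the conjectural log-Sobolev inequality to run the free-energy variant of Section~\ref{sec_free_energy_bounds}.

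\medskip

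\textbf{Step 2: The projection estimate.} Given the entropy bound, the first display in the theorem follows from the entropy inequality: for any local function $g$ and $\delta > 0$,
\begin{equation*}
\bb E_{\mu_n}\Big[\,\Big|\int_0^t \tau_g\, ds\,\Big|\,\Big] \;\le\; \frac{1}{\delta n^{3/4}}\Big( H_n(\mu^n_t|\nu^n_t) + \log \bb E_{\nu^n_t}\Big[\exp\big(\delta n^{3/4}\big|\textstyle\int_0^t \tau_g\, ds\big|\big)\Big]\Big),
\end{equation*}
combined with a one-block/two-blocks replacement showing $\mtt Y^n_s(H) - \<H\>\,\mc Y^n_s$ is, in integrated form, a sum of local-function fluctuations with small variance under the reference measure (here the diffusive speed-up $n^2$ of the exclusion part provides the ergodicity; cf.\ Remark~\ref{rm2}). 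Concretely one replaces $H(i/n)(\eta^n_i - 1/2)$ by its average over a mesoscopic box, then over the full torus, the cost of each replacement controlled by $D_n$ via a Feynman--Kac/entropy-production bound, and the errors vanish after dividing by $n^{3/4}$ and integrating in time.

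\medskip

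\textbf{Step 3: Tightness and the limit SDE.} From the semimartingale decomposition \eqref{n-16}--\eqref{n-17} of $\mc Y^n_t$ with $\upsilon_n = \sqrt n$, $b_n = n^{3/4}$: the drift is $-\frac{a\sqrt n}{n^{3/4}}\sum_i c(\tau_i\eta^n_t)\sigma_i$, which by the replacement \eqref{eq_heuristic_replacement_intro} (again justified by the entropy bound applied to $c(\tau_i\eta)\sigma_i - V'(m/n)$) and the Taylor expansion of $V'$ at $1/2$ — using $V'(1/2)=V''(1/2)=V'''(1/2)=0$, $\tfrac16 V^{(4)}(1/2) = -2a^{-1}$ effectively after tracking constants, and the linear term generated by $\gamma_n = \tfrac12(1 - \theta n^{-1/2})$ giving the $-2a\theta\mc Y_t$ contribution — converges to $-2a\theta\mc Y_t - 2a\mc Y_t^3$; the quadratic variation \eqref{n-17} equals $\int_0^t a\, n^{-1}\sum_i c(\tau_i\xi^n)\, ds \to \int_0^t a\, \bb E_{\nu^n_s}[c]\, ds = a t$ since $\bb E_{\nu^n_{1/2}}[c_{1/2}(\eta)] = 1 - \tfrac14(\tfrac12 + \tfrac12) + \tfrac14\cdot\tfrac14 \cdot(\cdots) = 1$ to leading order (the rate averages to $1$ at density $1/2$, $\gamma = 1/2$). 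Tightness of $\mc Y^n_\cdot$ in $\bb L^p([0,T],\R)$ for $p \in (1,4/3)$ follows from an $\bb L^p$-type Aldous/Kolmogorov criterion: the drift is bounded in $\bb L^1$ after the replacement (the cubic term needs a uniform moment bound $\sup_n \bb E_{\mu_n}[|\mc Y^n_t|^k] < \infty$, obtained from the entropy bound plus concentration estimates of Section~\ref{app_concentration}), and the martingale part is tight by its quadratic variation; the restriction $p < 4/3$ reflects the integrability of $\mc Y^3$. Finally, any limit point solves the martingale problem for \eqref{eq_SDE_magnetisation_in_thm}, which has a unique solution $\bb Q_\alpha$ (by the cited \cite[Theorem 5.5.15]{ks}, using the one-sided Lipschitz / dissipativity of $y \mapsto -2a\theta y - 2ay^3$), giving the claimed convergence. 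Part (ii) is identical once the entropy bound is available under Assumption~\ref{ass_LSI}.
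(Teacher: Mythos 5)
Your overall roadmap matches the paper's: a quantitative relative-entropy bound with a carefully engineered reference measure (Theorems~\ref{n-s05} and~\ref{theo_free_energy}), followed by replacement of microscopic averages by functions of $\mc Y^n$, tightness in $\bb L^p$, identification of the martingale limit, and uniqueness via \cite[Theorem 5.5.15]{ks}. Steps 1 and 2 are a faithful summary. However, your Step~3 elides the most delicate part of the identification argument, and as written it is incomplete.

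The gap is in the sentence ``any limit point solves the martingale problem for \eqref{eq_SDE_magnetisation_in_thm}, which has a unique solution.'' You have tightness of $(\cY^n_t)_t$ only in $\bb L^p([0,T],\R)$ for $p\in(1,4/3)$, and you have shown (via the replacement and the martingale CLT) that $W_\cdot(\cY^n_0,\cY^n) := \cY^n_\cdot - \cY^n_0 + 2a\int_0^\cdot(\theta\cY^n_s + (\cY^n_s)^3)\,ds$ converges in law to $\sqrt{a}\,B_\cdot$ in $\bb L^p$. But the map $y\mapsto \int_0^\cdot y_s^3\,ds$ is \emph{not} continuous on $\bb L^p([0,T],\R)$ for $p<3$, so convergence of the image $W_\cdot(\cY^n_0,\cY^n)$ does not by the continuous-mapping theorem tell you that under a limit point $\bb Q$ the random variable $W_\cdot(\cY_0,\cY)$ has the law of $\sqrt a\,B_\cdot$. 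The paper's Section~\ref{sec_continuity} closes this by truncating the cubic, i.e.\ defining $W^A_\cdot$ with a cutoff $\chi_A$ (which \emph{is} $\bb L^p$-continuous, Lemma~\ref{lemm_continuity_WA}), and then letting $A\to\infty$ using the uniform fourth-moment bound $\sup_n\bb E_{\mu_n}[\int_0^T(\cY^n_t)^4\,dt]<\infty$ together with the analogous moment for the limit $\bb Q$ (Lemma~\ref{lemm_higher_moments}). A second point you omit: even once $W_\cdot(\cY_0,\cY)\overset{d}{=}\sqrt a\,B_\cdot$ is secured, you must still argue that $\bb Q$ concentrates on trajectories with a continuous representative starting at $\cY_0$, and that the martingale-problem uniqueness (stated on $C([0,T],\R)$) pulls back to probability measures on $\R\times\bb L^p([0,T],\R)$; this is Corollary~\ref{coro_martingale_problem_well_posed} and its proof via the Carath\'eodory extension.

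Two minor quibbles in passing: your constant in the Taylor step should read $\tfrac16 V^{(4)}(1/2)=2$ (not $-2a^{-1}$), so the cubic drift is $-2a(\mc Y^n)^3$; and for the quadratic variation, $E_{\nu^n_{1/2}}[c]=1$ is immediate since the $\sigma_i$ are independent and centred, no cancellation between terms is needed.
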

\begin{remark}
Convergence in Theorem \ref{theo_convergence_magnetisation} 
holds in the stronger topology of probability measures on the
fractional Sobolev space $\bb W^{(p-1)/p,p}([0,T],\R)$ defined
Section~\ref{sec_tightness}.  We do not state this improvement as one
would expect convergence to directly hold for measures on
$\cD([0,T],\R)$.  We could however only control an integrated modulus
of continuity due to the non-linearity in the dynamics.
\end{remark}

\begin{remark}[Initial condition]\label{rm_IC}
The measure
$(Z^n_h)^{-1} \exp\{\frac{4h}{n^{1/4}}\sum_{i\in\bb T_n}
(\eta_i-1/2)\}\, \nu^n_{1/2}(\eta)$ on $\Omega_n$ ($h\in\R$) can be
checked to satisfy the assumptions of
Theorem~\ref{theo_convergence_magnetisation} with $\alpha = \delta_h$.
More interestingly,
Assumption~\eqref{eq_assumption_CI_theo_magnetisation} on initial
conditions is the natural relative entropy bound for critical
mean-field Ising measures.  Indeed, fix $b$, $c\in \bb R$, and denote
by $\mu^n_{b,c}$ the measure on $\Omega_n$ given by
\begin{equation}
\label{n-30}
{\color{blue}  \mu^n_{b,c}  (\eta) } \, := \,  \frac{1}{Z^n_{b,c}} \,
e^{ c n (\mf m/n)^2 + b \sqrt{n}\, (\mf m/n)^2}
\nu^n_{1/2} (\eta)\;, \quad \eta\in\Omega_n\;,
\end{equation}
where $Z^n_{b,c}$ is a normalizing constant, and
$\color{blue} \mf m \,:=\, \sum_{i\in \bb T_n} [\eta_i - (1/2)]$.
Denote by $\mu_{b}$  the probability measures on $\bb R$ given by
\begin{equation}
\label{n-32}
{\color{blue} \mu_{b} (dx) } \,:=\,  \frac{1}{Z_b}\, e^{-(4/3) x^4 + b
x^2} \, dx \,, 
\end{equation}
where $Z_b$ is a normalizing constant.  In
Appendix~\ref{app_refmeas_nuU} we show that the measures
$\mu^n_{b,c} $, $c\le 2$, satisfy the hypotheses of Theorem
\ref{theo_convergence_magnetisation} with $\alpha = \delta_0$ if $c<2$
and $\alpha = \mu_b$ if $c=2$.  The link between
Assumption~\ref{eq_assumption_CI_theo_magnetisation} and bounds for
critical mean-field Ising measures is not a coincidence: as we state
in Section~\ref{sec_sketch_proof_sec_results}, the law of the
magnetisation at each time is well approximated by a closely related
measure.
\end{remark}

\begin{remark}
By modifying the rates $c(\cdot)$ introduced in \eqref{01}, the
potential $V(\cdot)$ appearing in \eqref{eq_LLN} could be turned into
a sixth or higher order polynomial. Modifying appropriately the
time-scale and the magnetisation scaling, $\upsilon_n$ and $b_n$ in
equation \eqref{n-24}, one could derive a non-linear stochastic ODE of
fifth or higher order. Moreover, the fact that the Gibbs measure is
stationary for the Glauber dynamics plays no role. It was just
mentioned to stress that each piece of the dynamics has a distinct
stationary state.
\end{remark}

Theorem~\ref{theo_convergence_magnetisation} shows that fluctuations
of the magnetisation are non-linear and non-Gaussian, and that the
fluctuation field $\mtt Y^n_t$ projects over the magnetisation. 
It is however also possible to capture the much smaller fluctuations of other modes, which remain Gaussian and evolve on a faster time-scale. This is our second maint result, stated next.

Denote by $y^n_t$ the density field with
Gaussian scaling (rather than $n^{-3/4}$ prefactor as for $\mtt Y^n$: for any smooth $H\colon \T \to\R$,
\begin{equation}
{\color{blue} y^n_t(H)} \,:=\,
\frac{1}{\sqrt{n}\,}\sum_{i\in\T_n}\bar\eta_i (t)\, H(i/n)\;.
\end{equation}
\begin{theorem}
\label{theo_fastmodes}
Let $(\mu_n)_{n\ge 1}$ be a sequence of initial conditions satisfying
the hypotheses of Theorem~\ref{theo_convergence_magnetisation}. 
Assume either $a\leq \mf a_0$, 
or that Assumption~\ref{ass_LSI} holds as in Theorem~\ref{theo_convergence_magnetisation}.  

The
field $(y^n_t)_{t\in(0,T]}$ then converges in the sense of finite
dimensional distributions to a Gaussian field $y$ on
$(0,T]\times\{H\in \bb L^2(\T) : \<H\>=0\}$, with covariance:
\begin{equation}
\E \big[\,(y_t,H)\, (y_s,G) \, \big]
=
{\bf 1}_{t=s} \,
\Big\{\frac{1}{4}(H,G) + \frac{a}{2}\big(H,(-\Delta)^{-1} G\big)\Big\}\,
.
\end{equation}
In particular the limiting field is the same regardless of the initial condition.
\end{theorem}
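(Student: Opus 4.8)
The plan is to obtain the evolution of $y^n_t(H)$ for a fixed mean-zero test function $H$ via the Dynkin martingale decomposition, and then to extract the limit through Duhamel's formula combined with the relative-entropy (or free-energy) estimate that is the technical heart of the paper. Writing $\bar\eta_i = \eta_i - 1/2$, the generator $\ms L_n = \sqrt n(n^2 L^{\rm ex}_n + a L^G_n)$ acts on $y^n(H)$ producing two contributions: the exclusion part gives $n^{5/2} L^{\rm ex}_n y^n(H) = \tfrac{1}{\sqrt n}\sum_i (\Delta_n H)(i/n)\,\bar\eta_i(t) =: y^n_t(\Delta_n H)$, with $\Delta_n$ the discrete Laplacian converging to $\Delta$; the Glauber part gives $a\sqrt n L^G_n y^n(H) = -\tfrac{a\sqrt n}{\sqrt n}\sum_i H(i/n)\,c(\tau_i\eta)\,\sigma_i = -a\sum_i H(i/n)\,c(\tau_i\eta)\,\sigma_i$. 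So
\begin{equation}
y^n_t(H) = y^n_0(H) + \int_0^t y^n_s(\Delta_n H)\,ds - a\int_0^t \sum_{i\in\T_n} H(i/n)\,c(\tau_i\eta(s))\,\sigma_i(s)\,ds + M^n_t(H),
\end{equation}
with $M^n_t(H)$ a martingale whose quadratic variation is computed explicitly (exclusion part of order $n^2\cdot n^{-1}=n$ times a gradient-squared, Glauber part of order $a\sum_i H(i/n)^2 c(\tau_i\eta)$). I expect the exclusion contribution to the bracket to vanish in the limit after a summation-by-parts exhibiting the $n^{-1}$ gain, while the Glauber part converges to $\tfrac a4\int_0^t(H,H)\,ds$ using the replacement $c(\tau_i\eta)\to E_{\nu^n_{1/2}}[c] = 1-\gamma_n^2 \to 3/4$ near criticality — wait, more carefully one uses $E_{\nu^n_\rho}[c]$ at $\rho=1/2+o(1)$, giving the constant $1-\gamma^2\to 3/4$; combined with the $-a$-drift analysis below the effective noise strength is $a/4$. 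This identifies the white-in-time part of the covariance.

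The main work is the nonlinear drift term $-a\int_0^t\sum_i H(i/n)\,c(\tau_i\eta)\sigma_i\,ds$. The key replacement, analogous to \eqref{eq_heuristic_replacement_intro} but now \emph{keeping the first correction}, is
\begin{equation}
\sum_{i\in\T_n} H(i/n)\,c(\tau_i\eta)\,\sigma_i \;\approx\; \sum_{i\in\T_n} H(i/n)\,V'\!\big(\tfrac{m}{n}\big) + \text{(field correction)},
\end{equation}
where $m/n = 1/2 + n^{-1/4}\mathcal Y^n$. Because $\sum_i H(i/n) = n\langle H\rangle + o(n) = o(n)$ when $\langle H\rangle = 0$, and $V'(m/n)$ is of order $(m/n-1/2)^3 = O(n^{-3/4})$, the naive "constant density" term contributes $o(n)\cdot O(n^{-3/4}) = o(n^{1/4})$, which is negligible at the scale considered (this is exactly why the fast modes do \emph{not} see the nonlinearity — it is carried entirely by the zero mode, which is filtered out by $\langle H\rangle = 0$). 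What survives is the \emph{linearisation} of $c(\tau_i\eta)\sigma_i$ around the constant profile: a Boltzmann–Gibbs-type replacement of $c(\tau_i\eta)\sigma_i - E_{\nu^n_{m/n}}[c\,\sigma_0]$ by a linear combination $\sum_j \kappa_j \bar\eta_{i+j}$ of nearby occupation variables (with $\kappa_j$ determined by the covariance structure of $c\sigma_0$ under $\nu^n_{1/2}$), up to a gradient term and an error controlled by the relative-entropy/free-energy estimate. This yields, after summation, a term of the form $-a\,\lambda\,\int_0^t y^n_s(H)\,ds + $ (gradient-type error), contributing to the drift of $y^n_t(H)$ and ultimately, via Duhamel, to the \emph{spatial} covariance: the $\tfrac a2(H,(-\Delta)^{-1}G)$ piece arises because the effective linear drift has a zero eigenvalue on the zero mode but acts like $-a$ times the identity (or like $a\Delta^{-1}$ after inverting) on mean-zero functions. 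I would compute the constant $\lambda$ explicitly from $\partial_\rho E_{\nu^n_\rho}[c\,\sigma_0]\big|_{\rho=1/2} = -V''(1/2) = 0$ — so in fact the \emph{leading} linear coefficient also vanishes at criticality, and one must go one order further, to the term coupling $\bar\eta_i$ with the global magnetisation; this is the delicate point and is precisely where the long-range correlations enter.

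Granting the replacement, the endgame is: (1) establish tightness of $(y^n_\cdot(H))_n$ — finite-dimensional in time suffices for the statement, so I only need tightness at each fixed time, which follows from the relative-entropy bound $H_n(\mu_n|\nu^n_{1/2}) = O(\sqrt n)$ and the entropy inequality together with a Gaussian-concentration estimate for $y^n(H)$ under $\nu^n_{1/2}$ (the scaling $n^{-1/2}$ is the Gaussian one, so $E_{\nu^n_{1/2}}[e^{\theta y^n(H)}]$ is bounded, and the $O(\sqrt n)$ entropy is absorbed by choosing the tilt of order $1/\sqrt n$ — this is where the hypothesis on $\mu_n$ is used at exactly the right strength); (2) pass to the limit in the martingale problem: any limit point $y_t(H)$ satisfies $y_t(H) = y_0(H) + \int_0^t y_s(\Delta H - a\cdot\text{something})\,ds + M_t(H)$ with $M_t(H)$ Gaussian of the identified variance; (3) solve this linear SPDE by Duhamel and read off the covariance $\mathbf 1_{t=s}\{\tfrac14(H,G) + \tfrac a2(H,(-\Delta)^{-1}G)\}$; the "white in time" (i.e. the Dirac $\mathbf 1_{t=s}$) comes from the fact that on the $\sqrt n$ time-scale the linear semigroup $e^{t(\Delta + \ldots)}$ has already relaxed — formally, the limit is a generalised Ornstein–Uhlenbeck stationary field sampled at its invariant measure, whose time-correlations on any macroscopic time-scale collapse to $\delta$.

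The main obstacle I anticipate is the nonlinear Boltzmann–Gibbs replacement on the diverging time-scale: the usual Boltzmann–Gibbs principle is proven under local equilibrium, which the introduction stresses \emph{fails} here; one must instead run it through the conditional-on-slow-mode local equilibrium and the log-Sobolev machinery (or the perturbative argument for $a < \mf a_0$), and crucially show that the error in the replacement, integrated over the long time $\int_0^t$, is still $o(1)$ despite the time interval being longer-than-hydrodynamic for these modes. Controlling this requires precisely the improved quantitative relative-entropy / free-energy estimate of Sections~\ref{n-sec2}–\ref{sec_free_energy_bounds}, applied uniformly in $s\in[0,t]$, and that is the place where all the technical weight of the paper is spent. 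A secondary subtlety is keeping the zero-mode contribution genuinely separated: since $y^n$ still overlaps slightly with $\mathcal Y^n$ through the boundary term $\sum_i H(i/n) = O(1)$ when $H$ is not \emph{exactly} mean-zero discretely, one must either subtract the discrete mean or absorb this $O(1)\cdot\mathcal Y^n\cdot n^{-1/4}$ error, which is $o(1)$ and harmless but needs a line.
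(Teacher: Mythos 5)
Your high-level intuition is right --- the fast modes form a generalised OU field, stationary already on the $\sqrt n$ time-scale, hence white in time, and the colour in space is the invariant covariance $C$ solving the Lyapunov equation $\Delta C + C\Delta + (\tfrac12(-\Delta) + a) = 0$, i.e.\ $C=\tfrac14 + \tfrac a2(-\Delta)^{-1}$. But the technical route you sketch has a scaling error that would derail the whole argument. The exclusion drift is $\sqrt n\,(n^2 L_n^{\rm ex})\,y^n(H) = \sqrt n\,y^n(\Delta_n H)$, not $y^n(\Delta_n H)$: you have dropped the $\sqrt n$ from the time acceleration. Consequently the Laplacian drift term in your Dynkin decomposition is of order $\sqrt n$, not $O(1)$, and the martingale bracket (both the exclusion piece $\sqrt n\sum_i |\eta_{i+1}-\eta_i|^2 (\partial^n H)_i^2/n$ and the Glauber piece $a\sqrt n\sum_i c\,H_i^2/n$) also grows like $\sqrt n$. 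With a fixed test function $H$ nothing is tight, and there is no ``limit point satisfying a martingale problem'' from which to read off the covariance.

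The paper resolves this by using a Duhamel semimartingale decomposition with the \emph{time-dependent} test function $H_{\sqrt n(t-s)} := e^{\sqrt n (t-s)\Delta}H$, see~\eqref{eq_Ito_fast_modes}: the $\partial_s$ of the test function exactly cancels the $\sqrt n\Delta$ drift, the residual nonlinear drift is shown to vanish (Propositions~\ref{prop_yfast_closeto_mart}--\ref{prop_nonlinear_drift_fastmodes}), and $y^n_t(H) = M^{n,t}_t(H)+o_{\bb P}(1)$ with $M^{n,t}$ a martingale whose bracket is now \emph{bounded} because $\|H_{\sqrt n(t-s)}\|$ decays exponentially in $\sqrt n(t-s)$. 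This means you have the mechanism for the limit covariance the wrong way around: you claim the exclusion bracket vanishes and the Glauber bracket gives $\tfrac a4(H,H)$, with $(-\Delta)^{-1}$ coming from a linearised drift. In fact \emph{both} parts of the covariance come from the bracket under Duhamel: after the time change $u=\sqrt n(t-s)$, the Kawasaki contribution is $\tfrac12\int_0^{\sqrt n t}\|\nabla H_u\|^2_2\,du \to \tfrac14(H,H)$ (so it does not vanish --- it is the $\tfrac14(H,H)$ term), and the Glauber contribution is $a\int_0^{\sqrt n t}\|H_u\|^2_2\,du \to \tfrac a2(H,(-\Delta)^{-1}H)$ (this is where $(-\Delta)^{-1}$ enters, as the time integral of $e^{2u\Delta}$, not from a drift). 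The ``Boltzmann--Gibbs linearisation'' you discuss at length --- replacing $c(\tau_i\eta)\sigma_i$ by a local linear combination of $\bar\eta_j$'s with a coefficient $\lambda$ read off from $\partial_\rho E_{\nu_\rho}[c\sigma_0]$ --- is not needed and does not appear; what is needed is showing that the full nonlinear drift integral (eq.~\eqref{eq_Ito_fast_modes}, second and third lines) vanishes in probability, which is done in Proposition~\ref{prop_nonlinear_drift_fastmodes} by the relative-entropy/free-energy machinery with a double renormalisation and large-deviation truncation, not by a Boltzmann--Gibbs expansion around $\nu_\rho$.
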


\subsection{Closing equations: relative entropy bounds}\label{sec_sketch_proof_sec_results}
The main work in the proof of Theorems~\ref{theo_convergence_magnetisation}--\ref{theo_fastmodes} 
 consists in constructing a suitably precise approximation of the law of the microscopic dynamics at each time to allow for replacing microscopic observables by functions of the slow mode, the global magnetisation. 
The quality of this approximation is measured through relative entropy bounds in the spirit of the argument of Jara and
Menezes~\cite{Jara2018}. 
In this section we explain how to construct this approximation.

For two probability measures $\mu$, $\nu$ on
$\Omega_n$, 
recall that their relative entropy $H_n(\mu |\nu)$ is defined in~\eqref{2-34c}. 
Frequently we write
$\color{blue} H_n(f |\nu)$ instead of $H_n(\mu |\nu)$ if
$f = d\mu/d\nu$.

Fix a reference measure $\color{blue} \nu^n_{\rm ref}$ on $\Omega_n$ with $\inf_{\eta\in\Omega_n}\nu^{n}_{\rm ref}(\eta)>0$,
and denote by $\mu_n$ the initial distribution of particles in
$\Omega_n$.  Let $\color{blue} (S^n(t) : t\ge 0)$ be the semigroup
associated with the generator $\ms L_n = \sqrt{n}\, L_n$, and let
$f_t$ be the Radon-Nikodym derivative of the distribution of process
at time $t$ with respect to the reference measure $\nu^n_{\rm ref}$:
\begin{equation}
\label{2-34}
{\color{blue} f_t} \,=\, f_t^n
\;:=\; \frac{d\, \mu_n \,  S^n(t)}{d\, \nu^n_{\rm ref}}\; \cdot
\end{equation}
Thus, $H_n(f_t|\nu^n_{\rm ref})$ represents the relative entropy of
$\mu_n\, S^n(t)$ with respect to $\nu^n_{\rm ref}$:
\begin{equation}
\label{2-34b}
{\color{blue} H_n(f_t|\nu^n_{\rm ref})}\,:=\, 
\int f_t\, \log f_t \, d\nu^n_{\rm ref}\;. 
\end{equation}

The entropy inequality reads
\begin{equation}
\label{n-35}
\int F \; d\mu^n \,\le\, \frac{1}{\beta} \, H_n(\mu^n | \nu^n_{\rm ref})
\,+\, \frac{1}{\beta} \, \log \int e^{\beta F} \; d \nu^n_{\rm ref}
\end{equation}
for all $\beta>0$, probability measure $\mu^n$ and bounded continuous
function $F$. We refer to Section A.1.8 of \cite{kl} for a proof of this classical result.

The proof of Theorem \ref{theo_convergence_magnetisation} relies on a
a good estimate of $H_n(f_t|\nu^n_{\rm ref})$ (or more precisely of
the time-integrated carr\'e du champ $\ms G_n$ defined
in~\eqref{13b}). 
Once such an estimate is available, one can use the entropy inequality~\eqref{n-35} to bound expectations of observables under the law of the dynamics in terms of expectations under the reference measure.

If the reference measure is the invariant measure of the dynamics, 
then the relative entropy decreases in time by
\cite[Propostion A1.9.1]{kl}, 
so that $H_n(f^n_t|\nu^n_{\rm ref}) \le H_n(\mu^n|\nu^n_{\rm
ref})$. 
Unfortunately, the stationary state of the Markov process induced by
the generator $\ms L_n = \sqrt{n}\, L_n$ is not known. 
The dynamics is a superposition of
the exclusion dynamics, for which all Bernoulli product measures are
invariant, and the Glauber dynamics, with invariant measure given by
the nearest-neighbour Ising model described in~\eqref{eq_Ising}. 
The whole game is therefore to choose a reference measure as close as possible to the stationary state, and simple
enough to allow explicit computations. The better the approximation,
the more difficult the proof of a relative entropy estimate becomes. 

Evolution of the relative entropy with respect to a non-stationary reference measure is given by the
Jara--Yau lemma (see e.g.~\cite[Lemma A.1]{Jara2018})
\begin{equation}
\label{2-22b}
\frac{d}{dt}\, H_n(f_t | \nu^n_{\rm ref}) \;\leq\; -\,2\,
\ms G_n(f_t; \nu^n_{\rm ref})
\;+\; \int  f_t \, \ms L^*_{n} \, \mb 1   \; d \nu^n_{\rm ref} 
\end{equation}
for all $t\ge 0$, where $\color{blue} \mb 1 \colon \Omega_n \to \bb R$
is the constant function equal to $1$, $\color{blue} \ms L^*_{n}$
represents the adjoint of $\ms L_n$ in $\bbL^2(\nu^n_{\rm ref})$, and
$\ms G_n(h;\nu^n_{\rm ref})$ is the integral of the carr\'e du champ
associated with the generator $\ms L_n$, evaluated at a density $h$
with respect to $\nu^n_{\rm ref}$:
\begin{equation}
\label{13b}
{\color{blue}  {\ms G}_n(h; \nu^n_{\rm ref})}
\;:=\;  \int \frac{1}{2}\,  \big\{ \ms L_n h
\,-\, 2\, (\, \ms L_n \sqrt{h} \,) \,
\sqrt{h} \,\big\} \; d \nu^n_{\rm ref} \,\ge\, 0 \;.
\end{equation}

At this point, 
one would like to estimate the second term on the right-hand
side of \eqref{2-22b} in terms of the carr\'e du champ and the
entropy in order to set up a Gronwall estimate for $H_n(f^n_t|\nu^n_{\rm ref})$. More precisely the goal is to show that there exist finite constants $c_0\le
2$, $c_1$, and a sequence $(\alpha_n:n\ge 1)$ such that
\begin{equation}
\label{n-18}
\int  h \, \ms L^*_{n} \, \mb 1   \; d \nu^n_{\rm ref}
\,\le\, c_0\,
\ms G_n(h; \nu^n_{\rm ref}) \,+\, c_1 \, \big\{\,  H_n(f_t | \nu^n_{\rm
ref}) \,+\, \alpha_n \, \big\}
\end{equation}
for all density $h$ with respect to $\nu^n_{\rm ref}$. We call this
bound the {\it energy-entropy estimate}. Recollect the previous
estimates and apply the Gronwall inequality to obtain that
\begin{equation}
H_n(f_t | \nu^n_{\rm ref}) \,+\, (2-c_0)\,
\int_0^t \ms G_n(f_s; \nu^n_{\rm ref})\, ds \,\le\,
\big\{\,  H_n(f_0 | \nu^n_{\rm ref}) \,+\, \alpha_n \, \big\}
\, e^{c_1t} \quad\text{for all}\;\; t \,\ge\, 0\;.
\end{equation}
We next explain how to choose the reference measure.

\medskip\noindent{\bf Product measure.}  Recall the
definition~\eqref{eq_def_generator} of the generator.  The exclusion
dynamics in $L_n$ is speeded-up by $n^2$, in contrast with the Glauber
dynamics which is not speeded-up.  The exclusion part of the dynamics
restricted to a small enough neighbourhood of each macroscopic point
should therefore equilibrate much faster than the Glauber part.  A
first natural choice of reference measure is thus the Bernoulli
product measure - this is the local equilibrium measure discussed in the introduction.  Since we are interested in longer time-scales
corresponding to $\sqrt{n}\, L_n$, one should take density $1/2$ since the
density profile $\rho(x) = 1/2$ is the unique stationary solution of
the hydrodynamic equation \eqref{eq_LLN} when $\gamma=1/2$.  This is
the choice of reference measure made in \cite{jlv, bl10, LandimTsunoda,
flt} when deriving the hydrodynamic limit and the large deviations
principle for the empirical measure, and in \cite{Jara2018,jm_example, fls} to examine
nonequilibrium fluctuations of the model away from the critical point.

In the present case we claim that the Bernoulli
product measure with density $1/2$, $\nu^n_{1/2}$, 
is not a good reference measure to prove the entropy-energy estimate~\eqref{n-18}. 
This is perhaps not surprising as the density under this measure has Gaussian fluctuations (i.e. $\sum_i [\eta_i-1/2]\approx \sqrt{n}\,$), 
while the heuristic argument~\eqref{n-16}--\eqref{n-17} predicts $\sum_i [\eta_i-1/2]\approx n^{3/4}$. 

In Section \ref{n-sec1}, for pedagogical reasons, we compute the adjoint~\eqref{2-22b} with respect to the product measure. 
The following term appears in the computation,
see equation \eqref{n-19}:
\begin{equation}
\sqrt{n} \, \int  f_t \, 
\sum_{i\in \bb T_n} \bar\eta_i \bar\eta_{i+1}\; d \nu^n_{1/2}
,\qquad
\bar\eta_\cdot := \eta_\cdot - \frac{1}{2}
.
\label{eq_term_to_estimate_sec3_in_secresults}
\end{equation}
To control this term we separate its contribution to slow and fast modes, 
writing:
\begin{equation}
\bar\eta_i 
:=
\bar\eta_i^m + \ms M^n,\qquad 
\ms M^n := 
\frac{1}{n}\sum_{i\in\T_n}\bar\eta_i,\qquad 
i\in\T_n
.
\label{eq_mode_decomp_secresults}
\end{equation}
The magnetisation $\ms M^n$ is the slow mode, 
while $\bar\eta^m_\cdot$ converges much faster to its steady-state value, in a way that we make precise using the energy term $\ms G_n$ in~\eqref{2-22b} and a log-Sobolev inequality for the projection of the dynamics on the fast modes, 
see~\eqref{eq_estimate_mean0_modes_sec_product_measure}. 
Using the above decomposition, the term~\eqref{eq_term_to_estimate_sec3_in_secresults} contributes as follows to the magnetisation: 
\begin{equation}
\label{n-20}
n^{3/2} \, \int  f_t \, (\ms M^n)^2 
\; d \nu^n_{1/2}
=
\sqrt{n} \, \int  f_t \, (\cY^n)^2 
\; d \nu^n_{1/2}
. 
\end{equation}
This term depends only on the total magnetisation, 
which the energy $\ms G_n(f_s^n; \nu^n_{\rm ref})$ is
not helpful to estimate. This prevents us from obtaining suitable bounds on relative entropy with respect to $\nu^n_{1/2}$.  
To better control the magnetisation we therefore choose a different reference measure relying on the Glauber part of the dynamics. 

\medskip\noindent{\bf Product measure tilted by the magnetisation.}
In order to handle the term \eqref{n-20}, 
we present two corrections
to the Bernoulli product measure. In Section \ref{n-sec2}, we choose 
as reference measure 
\begin{equation}
\label{n-21}
{\color{blue} \nu^n_U (\eta)} \,:=\, \frac{1}{Z_n}\, e^{n U(\mf m/n)}
\,=\, \frac{1}{Z^n_U}\, e^{n U(\mf m/n)} \nu^n_{1/2} (\eta)\;, 
\end{equation}
where, recall, $\mf m = \sum_{i\in \bb T_n} [\eta_i - (1/2)]$, $Z_n$,
$Z^n_U$ are normalisation constants, and $U\colon [-1/2,1/2]\to \bb R$
is an appropriate potential. The second equality follows from the fact
that $\nu^n_{1/2}$ is the uniform measure.

The potential $U$ is chosen by examining a birth and death process
corresponding to the evolution of the magnetisation, assuming that the
exclusion part of the dynamics is strong enough to average out local
quantities and transform them into functions of the magnetisation. For
instance, the birth rate at $k = \sum_{i\in\bb T_n} \eta_i$ is given
by the expectation under the Bernoulli product measure with density
$k/n$ of $c(\eta) [1-\eta_0]$, 
see Section~\ref{n-sec2} for details. 
This approximation is very good to
control the contribution of the magnetisation to the relative entropy,
and cancels the term \eqref{n-20}.

The main result of Sections \ref{n-sec2}--\ref{n-sec3}, stated next, 
says that the entropy-energy estimate~\eqref{n-18} can be established using $\nu^n_U$ as reference measure, 
provided the reaction part is weak enough. 
\begin{theorem}
\label{n-s05}
There exists $\mf a_0>0$ and a finite constant $\mf c_0$ such that
\begin{equation*}
H_n(f_t | \nu^n_U) \,+\,  \int_0^t
\ms G_n(f_s; \nu^n_U) \, ds
\,\le\,  H_n(f_0 | \nu^n_U) \,+\,  a\, \mf c_0 \, \sqrt{n} \, t
\end{equation*}
for all $a\le \mf a_0$,  $t\ge 0$.
\end{theorem}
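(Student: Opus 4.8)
The plan is to feed a single energy--entropy bound into the Jara--Yau differential inequality~\eqref{2-22b}. Concretely, the target is to prove that, for all $a\le\mf a_0$ and every density $f$ with respect to $\nu^n_U$,
\begin{equation*}
\int f\, \ms L^*_n\, \mb 1 \; d\nu^n_U \;\le\; \ms G_n(f;\nu^n_U)\;+\; a\,\mf c_0\, \sqrt n\;,
\end{equation*}
with \emph{no} multiplicative entropy term on the right-hand side: inserting this into~\eqref{2-22b} (the $-2\ms G_n$ there absorbing the $+\ms G_n$ here) and integrating in time yields the theorem, with no Gronwall factor. The key preliminary observation is that $\nu^n_U$ depends on $\eta$ only through $\mf m$, which the exclusion moves preserve; hence $\nu^n_U$ is reversible for $L^{\rm ex}_n$, so $(L^{\rm ex}_n)^*\mb 1 = 0$ and the speeded-up exclusion part drops from $\ms L^*_n\mb 1 = a\sqrt n\,(L^G_n)^*\mb 1$, while still contributing a term of order $n^{5/2}$ times the exclusion Dirichlet form to $\ms G_n(f;\nu^n_U)$. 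It is this surplus of carré du champ that pays for the replacement estimates below. Next I would compute $(L^G_n)^*\mb1(\eta)=\sum_{i\in\T_n}\big\{c(\tau_i\eta^i)\,\tfrac{\nu^n_U(\eta^i)}{\nu^n_U(\eta)}-c(\tau_i\eta)\big\}$, with the density ratio equal to $\exp\{n[U((\mf m-\sigma_i)/n)-U(\mf m/n)]\}$.

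Since $U$ is a fixed smooth potential on $[-1/2,1/2]$, the exponent equals $-\sigma_i U'(\mf m/n)+O(1/n)$ uniformly in the configuration, so $c(\tau_i\eta^i)\,\tfrac{\nu^n_U(\eta^i)}{\nu^n_U(\eta)}=c(\tau_i\eta^i)\,e^{-\sigma_i U'(\mf m/n)}+O(1/n)$; summed over the $n$ sites and multiplied by the prefactor $a\sqrt n$, the error terms contribute at most $O(a\sqrt n)$, using only $f\ge 0$ and $\int f\,d\nu^n_U=1$. The remaining sum $a\sqrt n\sum_i\big[c(\tau_i\eta^i)\,e^{-\sigma_i U'(\mf m/n)}-c(\tau_i\eta)\big]$ is treated by a quantitative two-blocks step: each local function is replaced by its conditional expectation given $\mf m$, i.e.\ by the corresponding average under a Bernoulli measure of density $\approx 1/2+\mf m/n$ (an equivalence-of-ensembles comparison between $\nu^n_U(\,\cdot\mid\mf m)$ and the canonical measure), and the fluctuation errors are returned to $\ms G_n(f;\nu^n_U)$ through the integration-by-parts formula of Appendix~\ref{sec_IBP} and the large exclusion Dirichlet form. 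This is where the restriction $a\le\mf a_0$ enters: each replacement costs a multiplicative constant in front of $\ms G_n$, and the factor $a$ keeps the total below $1$.

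The potential $U$ is then fixed so as to annihilate what is left after the replacements. Collecting the $\mf m$-measurable part of $a\sqrt n\sum_i\big[c(\tau_i\eta^i)e^{-\sigma_i U'(\mf m/n)}-c(\tau_i\eta)\big]$ yields a main term $a\sqrt n\sum_i\Phi(\mf m/n)$ with an explicit $\Phi$ built from $E_{\nu^n_\rho}[c(\eta)(1-\eta_0)]$, $E_{\nu^n_\rho}[c(\eta)\eta_0]$ and $U'$; imposing $\Phi\equiv 0$ fixes $U'$ as the logarithm of the birth-to-death rate ratio of the effective birth--death chain for $\mf m$ (see the heuristic around~\eqref{n-21}), a globally bounded function with the $\approx-\tfrac43 x^4$ behaviour near $x=0$ anticipated in Remark~\ref{rm_IC}. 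With this choice, $\int f\ms L^*_n\mb1\,d\nu^n_U$ is dominated by the replacement errors, $\le\ms G_n(f;\nu^n_U)$ since $a\le\mf a_0$, plus the $O(a\sqrt n)$ residual above; this is the displayed bound, with a finite $\mf c_0$ and some $\mf a_0>0$, and hence the theorem.

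I expect the main obstacle to be the quantitative two-blocks estimate. Classical one- and two-block lemmas yield $o(1)$ errors valid in expectation along the dynamics, whereas here the error must be dominated by $\ms G_n(f;\nu^n_U)$ with a multiplicative constant strictly below $1$, uniformly over all densities $f$. In the perturbative regime the $a$-prefactor provides exactly this margin and the argument closes via equivalence of ensembles together with the Appendix~\ref{sec_IBP} formula; for general $a$ one genuinely needs a log-Sobolev inequality for the dynamics conditioned on $\mf m$ --- the still-open Assumption~\ref{ass_LSI} --- which is precisely why part~(ii) of Theorem~\ref{theo_convergence_magnetisation} is only conditional. A further technical point, to be handled by the exponential suppression of near-extreme densities under $\nu^n_U$, is controlling uniformly in $f$ the contribution of atypical values of $\mf m$, where the equivalence of ensembles degenerates and the above expansions are crude.
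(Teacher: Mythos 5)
Your proposal tracks the paper's proof of Theorem~\ref{n-s05} quite closely: Jara--Yau inequality~\eqref{2-22b}, vanishing of $(L^{\rm ex}_n)^*\mb 1$ because $\nu^n_U$ is $\mf m$-measurable, expansion of $\exp\{n[U(\cdot)-U(\cdot)]\}$ to $e^{\mp U'(\mf m/n)}$ plus $O(1/n)$, exact cancellation of the $n^{3/2}$ magnetisation term by the detailed-balance choice of $U$, and absorption of the remaining fast-mode correlations into the exclusion carr\'e du champ with the $a$-prefactor supplying the margin. This is the same architecture as Sections~\ref{n-sec2}--\ref{n-sec3}.

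There is, however, one step you gloss over that is not optional, even for small $a$. You say the argument ``closes via equivalence of ensembles together with the Appendix~\ref{sec_IBP} formula,'' reserving log-Sobolev inequalities for the conjectural Assumption~\ref{ass_LSI} in the general-$a$ case. But the integration-by-parts formula only converts $\sum_i\bar\eta^m_i\bar\eta^m_{i+1}$ into $\delta n^2\Gamma^{\rm ex}_n(f)$ plus a term of the form $\frac{1}{n}\sum_j\nu^n_U[f\,X_j^2]$ with $X_j$ a normalised linear statistic (Lemma~\ref{n-l01}). To turn $\nu^n_U[f\,X_j^2]$ back into carr\'e du champ plus a bounded quantity you must go through the entropy inequality \emph{and} a log-Sobolev inequality for the measure conditioned on $\mf m$; in the small-$a$ regime that measure is the uniform canonical measure $\nu^{n,m}_{1/2}$, and the paper invokes Yau's \emph{known} LSI for it (Lemma~\ref{n-s01}, citing~\cite{yau}). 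Equivalence of ensembles (Lemma~\ref{n-l02}) then controls the exponential moment (Proposition~\ref{n-l03}), but on its own it is not enough; without the LSI the conditional relative entropy $H_n(f^m|\nu^{n,m}_{1/2})$ appearing after the entropy inequality remains unbounded. So the distinction is not ``no LSI vs.\ LSI'' across small and general $a$, but ``known LSI for $\nu^{n,m}_{1/2}$ vs.\ conjectural LSI for $\nu^{n,m}_g$.''

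A minor slip worth noting: you describe $U'$ as having ``$\approx-\tfrac43 x^4$'' behaviour near $0$. In fact $U'(\varrho)=2\log\frac{1+2\gamma\varrho}{1-2\gamma\varrho}\approx 8\gamma\varrho$ is linear and globally bounded on $[-1/2,1/2]$; it is the effective potential $W=E-U_0$ (equivalently, the negative log-density of the magnetisation under $\nu^n_U$) that is quartic at $\gamma=1/2$, and the $\tfrac43 x^4$ you quote belongs to the measure $\mu_b$ of Remark~\ref{rm_IC}. This does not affect the structure of the argument, but it is a different object from the one entering the adjoint computation.
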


The constant $\mf a_0$ appearing in
Theorems~\ref{theo_convergence_magnetisation}--\ref{theo_fastmodes} is the one given in
Theorem~\ref{n-s05}.  This result is the main ingredient in the proofs
of Theorems~\ref{theo_convergence_magnetisation}--\ref{theo_fastmodes} in the $a\leq \mf a_0$
case.

Theorem~\ref{n-s05} is proven in Section~\ref{n-sec2}. To focus on the
key parts of the proof we have chosen to state some general estimates,
such as the concentration inequalities used to estimate~\eqref{n-20}, 
in the specific case of the
measure $\nu^n_U$.  Some of these results are needed in more general
form to treat the case where $a$ is large. 
They are therefore reproven in a more abstract and technical set-up in
Appendix~\ref{app_concentration}.

\medskip\noindent{\bf Product measure tilted by two point
correlation functions.}  
The argument presented in Section~\ref{n-sec2} compares the law of the dynamics to $\nu^n_U$ where all dependence between the magnetisation and other modes is neglected.  
This turns out to only be a good approximation when $a$ is small, 
a restriction needed to estimate the $a\, \sqrt{n}\, \sum_i\bar\eta^m_i\bar\eta^m_{i+1}$ term coming from~\eqref{eq_term_to_estimate_sec3_in_secresults} when using the mode decomposition~\eqref{eq_mode_decomp_secresults}. 
To overcome the restriction on $a$, 
we introduce
a second perturbation of the reference measure $\nu^n_{1/2}$, which
takes into account dynamical correlations between magnetisation and
other modes.  
Previous works carried out at high temperature~\cite{Correlations2022,Courant} suggest: 
\begin{enumerate}
\item[(a)] that it is enough to tune two-point correlations, all
higher-order correlations being negligible;

\item[(b)] that an effective way to do so is to tilt $\nu^n_{1/2}$ by a
discrete Gaussian term, considering a tilted measure of the form:
\begin{equation}
{\color{blue} \nu^n_g(\eta)} \,:=\, \frac{1}{Z^n_g}\, 
\exp\Big[\frac{1}{2n}\sum_{i\neq j}
\bar\eta_i\bar\eta_j g_{i,j}\Big] \, \nu^n_{1/2}(\eta) \, ,
\label{n-22}
\end{equation}
where $Z^n_g$ is the normalisation constant, $g\colon \bb T^2 \to \bb R$
is a smooth function ($g_{i,j}:= g(i/n,j/n)$) and $\color{blue} \bar\eta_i = \eta_i - (1/2)$.
\end{enumerate}
When $a=0$ there is no
reaction part and the dynamics is invariant with respect to the
product measure $\nu^n_{1/2}$.  We thus should have $g=0$ in this
case.  If on the other hand $\gamma=0$, then the reaction part of the
generator defines a dynamics that is reversible with respect to
$\nu^n_{1/2}$.  Thus we again expect $g=0$ in that case.  Enforcing
these two conditions and continuity in the parameters, we can uniquely
define $g$ as stated next. 
\begin{proposition}[Definition of $g$]\label{prop_g}
There is a unique family of functions
${\color{blue} g_{\delta,b}} \in C^\infty([0,1])\cap C(\T)$,
$\delta\in(-1,1)$, $b>0$, with the following properties.  The function
$g_{\delta,b}$ solves:
\begin{equation}
\label{eq_ODE_g}
\left\{
\begin{aligned}
&\displaystyle{
g''(x)  - c_{\delta, b} \, g(x) 
-\frac{1}{4} \int_{\T} g'(x-z)g'(z)\, dz}
+
\displaystyle{\frac{b}{2}\int_{\T} g(x-z)g(z)\, dz\,
=0, \quad x \in(0,1),}
\\ \\
&\displaystyle{g'(0_+)-g'(1_-)
= -16\delta b},
\end{aligned}
\right.
\end{equation}
where $c_{\delta, b} = 2b(1+2\delta)$, and
$b\mapsto \|g^0_{\delta, b}\|_2$ and
$\delta\mapsto \|g_{\delta, b}\|_2$ are continuous functions vanishing
at $0$.  Here, we defined
${\color{blue} g^0_{\delta,b}} := g_{\delta,b} -\int_{\T}
g_{\delta,b}$.

The function $g_{\delta,b}$ has the explicit Fourier decomposition:
\begin{equation}
g_{\delta,b}(x)
=
\min\{4,8\gamma\}{\mb 1} + g^0_{\delta,b},\quad
g^0_{\delta,b}(x)
=
\sum_{\ell\geq 1}\lambda^{-}_\ell 2\cos(2\pi\ell x)
,
\label{eq_Fourier_g}
\end{equation}
with $\lambda^-_\ell$ for $\ell\neq 0$ given by:
\begin{equation}
\lambda^-_\ell 
=
\frac{4\pi^2\ell^2+2b(1+2\delta)-\big|4\pi^2\ell^2 - 2b(1-2\delta)\big|}{b+2\pi^2\ell^2}
\geq 
0\;.
\label{eq_fourier_g_sec2}
\end{equation}
\end{proposition}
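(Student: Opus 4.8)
The plan is to carry out the entire computation on the Fourier side of $\T=\R/\Z$, where the convolution nonlinearities of~\eqref{eq_ODE_g} diagonalise. Expand $g=\sum_{\ell\in\Z}\hat g_\ell\,e^{2\pi i\ell x}$ and write $\mu_\ell:=4\pi^2\ell^2$. The decisive structural point is that the two nonlinear terms in~\eqref{eq_ODE_g} are \emph{convolutions}, not pointwise products: $\int_\T g(x-z)g(z)\,dz=(g*g)(x)$ and $\int_\T g'(x-z)g'(z)\,dz=(g'*g')(x)$, so on Fourier coefficients they act as $\widehat{g*g}_\ell=\hat g_\ell^{\,2}$ and $\widehat{g'*g'}_\ell=(2\pi i\ell)^2\hat g_\ell^{\,2}=-\mu_\ell\hat g_\ell^{\,2}$, and hence they do not couple distinct modes. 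First I would recast the boundary value problem as a single distributional equation on $\T$: since $g\in C(\T)$ is smooth on $(0,1)$, an integration by parts against periodic test functions gives $g''=\{g''\}+\big(g'(0_+)-g'(1_-)\big)\,\delta_0$, where $\{g''\}$ is the pointwise second derivative and $\delta_0$ is the Dirac mass at the origin (not the parameter $\delta$). Using the second line of~\eqref{eq_ODE_g}, the interior equation together with the jump condition is equivalent to
\begin{equation*}
g''-c_{\delta,b}\,g-\tfrac14\,g'*g'+\tfrac b2\,g*g\;=\;-\,16\,\delta\,b\;\delta_0\qquad\text{in }\cD'(\T).
\end{equation*}
Taking the $\ell$-th Fourier coefficient (with $\widehat{\delta_0}_\ell=1$) turns this into the decoupled family of scalar quadratics
\begin{equation*}
\Big(\tfrac{\mu_\ell}4+\tfrac b2\Big)\hat g_\ell^{\,2}-(\mu_\ell+c_{\delta,b})\,\hat g_\ell+16\,\delta\,b\;=\;0,\qquad\ell\in\Z.
\end{equation*}

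Second, I would solve each quadratic and select the right branch. A direct computation shows the discriminant is a perfect square: with $c_{\delta,b}=2b(1+2\delta)$ one gets $(\mu_\ell+c_{\delta,b})^2-16\delta b(\mu_\ell+2b)=\big(\mu_\ell+2b(1-2\delta)\big)^2$, so the two roots are the explicit rational functions $\lambda_\ell^{\pm}=\dfrac{2\big[(\mu_\ell+c_{\delta,b})\pm|\mu_\ell+2b(1-2\delta)|\big]}{\mu_\ell+2b}$, $\lambda_\ell^-\le\lambda_\ell^+$; this yields~\eqref{eq_fourier_g_sec2} and the bound $\lambda_\ell^-\ge0$ for $\ell\neq0$, as well as the value $\hat g_0=\lambda_0^-$ of the constant mode in~\eqref{eq_Fourier_g}. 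I would then \emph{define} $g_{\delta,b}$ by $\hat g_\ell:=\lambda_\ell^-$ for every $\ell$. That this choice and no other yields a family with the required properties gives existence and uniqueness at once: for $\ell\neq0$ one has $\lambda_\ell^+\to4$ as $|\ell|\to\infty$, so any solution with $g^0\in L^2$ must agree with $\lambda_\ell^-$ for all but finitely many $\ell$; and since $\lambda_\ell^-=\min(\lambda_\ell^-,\lambda_\ell^+)$ depends continuously on $(\delta,b)$ and vanishes identically on $\{b=0\}$ and on $\{\delta=0\}$, whereas $\lambda_\ell^+$ does not, the continuity and vanishing at $0$ of $b\mapsto\|g^0_{\delta,b}\|_2$ and $\delta\mapsto\|g_{\delta,b}\|_2$ forbid ever picking $\lambda_\ell^+$ — including for the finitely many exceptional modes and for $\ell=0$. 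Reality of $g$ forces $\hat g_{-\ell}=\hat g_\ell$ (the quadratics have real coefficients depending on $\ell$ only through $\mu_\ell$), so $g_{\delta,b}$ is automatically even and~\eqref{eq_Fourier_g} follows.

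Third, I would verify that the $g:=g_{\delta,b}$ so constructed lies in $C^\infty([0,1])\cap C(\T)$ and genuinely solves~\eqref{eq_ODE_g}. The clean route is to recognise the tail: up to a finite trigonometric polynomial (from the finitely many modes, present only when $\delta>1/2$, for which the absolute value flips and $\lambda_\ell^-=4$), $g^0_{\delta,b}$ equals a constant multiple of the $1$-periodic Green's function of $-\partial_x^2+2b$, whose closed form $x\mapsto\mathrm{const}\cdot\cosh\!\big(\sqrt{2b}\,(\tfrac12-x)\big)$ on $[0,1]$ is real-analytic there, continuous on $\T$, and has a derivative jump at $0$ equal to precisely the constant prescribed by the second line of~\eqref{eq_ODE_g}. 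This delivers the claimed regularity and the jump condition, and — $g$ being then regular enough that the Fourier manipulations of the first step are reversible — the interior equation in~\eqref{eq_ODE_g}. Finally, Parseval's identity and the explicit formula for $\lambda_\ell^-$ give the continuity and vanishing at $0$ of $b\mapsto\|g^0_{\delta,b}\|_2$ and $\delta\mapsto\|g_{\delta,b}\|_2$.

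The main obstacle I anticipate is not a single hard estimate but the bookkeeping in the first two steps: getting the sign and size of the Dirac contribution to $g''$ exactly right so that the boundary condition in~\eqref{eq_ODE_g} is faithfully encoded in the scalar equations, and making the branch-selection/uniqueness argument watertight over the \emph{whole} parameter range $\delta\in(-1,1)$, $b>0$. In particular, in the regime $\delta>1/2$ the discriminant of finitely many low modes (and of the $\ell=0$ mode at $\delta=1/2$) vanishes for certain $b$, so the two roots $\lambda_\ell^\pm$ cross there; identifying the correct branch then requires a continuation argument from the trivial solution at $b=0$, using that the continuous selection is $\lambda_\ell^-=\min(\lambda_\ell^-,\lambda_\ell^+)$, and some care is needed to pin down the constant mode in that regime.
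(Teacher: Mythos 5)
Your proposal is correct and follows essentially the same Fourier route as the paper: diagonalise the convolution nonlinearities, encode the derivative jump at the origin as a Dirac mass (your distributional identity on $\T$ and the paper's weak formulation against $H^2(\T)$ test functions are equivalent), solve the decoupled scalar quadratics, and select the branch $\lambda_\ell^-$ forced by the continuity-and-vanishing requirements.

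Two remarks are worth recording. First, your expansion of the discriminant gives $\sqrt{D}=|4\pi^2\ell^2 + 2b(1-2\delta)|$, whereas \eqref{eq_fourier_g_sec2} (and the appendix) display $|4\pi^2\ell^2 - 2b(1-2\delta)|$; yours is the correct sign. One can cross-check via Vieta: the product of the two roots of the scalar quadratic equals $32\delta b/(2\pi^2\ell^2+b)$, which the $+$ form reproduces but the $-$ form does not (they agree only when $\delta=\tfrac{1}{2}$ or $\ell=0$); alternatively, the $-$ form fails to vanish at $\delta=0$, contradicting the requirement $g_{0,b}\equiv 0$. This is a harmless typo in the paper's application (where $\gamma=\delta\approx\tfrac{1}{2}$), but your corrected formula is the one that makes the proposition true for general $\delta\in(-1,1)$. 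Second, for the $C^\infty([0,1])$ regularity the paper runs an elliptic bootstrap (test against $C^\infty_c((0,1))$ to get $g\in H^k((0,1))$ for all $k$); your identification of the tail as a constant multiple of the periodic Green's function of $-\partial_x^2+2b$, with explicit profile $\mathrm{const}\cdot\cosh\big(\sqrt{2b}\,(\tfrac{1}{2}-x)\big)$ on $[0,1]$, is more concrete and additionally makes the derivative jump and the reversibility of the Fourier manipulations transparent, at the modest cost of tracking the finitely many exceptional low modes when $\delta>\tfrac{1}{2}$.
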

We just write {\color{blue} $g$ for $g_{\gamma,a}$} where $\gamma,a$
are the parameters appearing in the
definition~\eqref{eq_def_generator} of the dynamics.
Proposition~\ref{prop_g} is proven in Appendix~\ref{app_prop_g}.

In \eqref{n-22}, let $\color{blue} g_{i,j} = g((i-j)/n)$, where
$g(\cdot)$ is the function introduced in Proposition \ref{prop_g}.
Denote by $\cF(h | \nu^n_g , \kappa)$, $\kappa\ge 0$, the free energy
of a density $h$ with respect to the measure $ \nu^n_g$:
\begin{equation} 
{\color{blue} \cF( h \,|\, \nu^n_g , \kappa)}
\, := \, H_n(h|\nu^n_g)
\,+\, 
\kappa \, \sqrt{n}\, \int (\cY^n)^2 \, h\; d \nu^n_g
\,.
\label{eq_def_free_energy}
\end{equation}
The non-perturbative counterpart of Theorem~\ref{n-s05} for the free
energy~\eqref{eq_def_free_energy} is stated next and proven in Section~\ref{sec_free_energy_bounds}.
\begin{theorem}[Free energy bound]
\label{theo_free_energy}
Let $a>0$ and $\theta\in\R$.  Suppose the log-Sobolev inequality of
Assumption~\ref{ass_LSI} holds.  There is then $\kappa>0$ such that,
for each $T>0$, there is $C(T)>0$ with:
\begin{equation}
\cF( f_t \,|\, \nu^n_g , \kappa)
\, +\, \int_0^t
\ms G_n(f_s; \nu^n_g) \, ds
\,\le\,  
C(T)\, \cF( f_0 \,|\, \nu^n_g , \kappa)
\,+\,
C(T)\, \sqrt{n}
\end{equation}
for all $n\ge 1$ and $t\in[0,T]$.
\end{theorem}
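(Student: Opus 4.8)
The plan is to differentiate the free energy $\cF(f_t \mid \nu^n_g, \kappa)$ in time and absorb all terms into a Gronwall-type inequality. Writing $\cF(f_t) = H_n(f_t \mid \nu^n_g) + \kappa\sqrt{n}\int(\cY^n)^2 f_t\, d\nu^n_g$, the entropy part evolves by the Jara--Yau lemma~\eqref{2-22b}, producing $-2\ms G_n(f_t;\nu^n_g) + \int f_t\,\ms L_n^*\mathbf{1}\,d\nu^n_g$, while the second part evolves as $\kappa\sqrt{n}\int \ms L_n[(\cY^n)^2] f_t\, d\nu^n_g$ (no adjoint issues here since we differentiate a genuine expectation). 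The term $\int f_t\,\ms L_n^*\mathbf 1\, d\nu^n_g$ is, as in Section~\ref{n-sec2} but now with respect to the two-point-correlation-tilted measure $\nu^n_g$, a sum of local-function integrals; the choice of $g$ via Proposition~\ref{prop_g} is precisely engineered so that, after the mode decomposition $\bar\eta_i = \bar\eta_i^m + \ms M^n$, the fast-mode contributions ($a\sqrt n \sum_i \bar\eta_i^m\bar\eta_{i+1}^m$ and similar) are cancelled or rendered harmless, leaving only (i) terms controllable by the carré du champ $\ms G_n(f_t;\nu^n_g)$ with a constant strictly below $2$, and (ii) non-linear functions of the slow mode $\cY^n$. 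The latter are handled by the added free-energy term: choosing $\kappa>0$ appropriately, $\kappa\sqrt n\int \ms L_n[(\cY^n)^2]f_t\,d\nu^n_g$ contributes a negative drift $-c\kappa\sqrt n\int(\cY^n)^4 f_t\,d\nu^n_g$ (from the cubic non-linearity in $V'$, cf.~\eqref{eq_heuristic_replacement_intro}) that dominates the positive slow-mode error terms up to a bounded remainder.

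The key input making this work beyond the perturbative regime is the log-Sobolev inequality of Assumption~\ref{ass_LSI}: it provides, uniformly in $n$, a spectral-gap-type bound for the dynamics restricted to the hyperplanes of fixed magnetisation, which is what controls the fast modes by $\ms G_n$ without any smallness of $a$. Concretely, I would use the log-Sobolev inequality to show that, conditioned on the slow mode, $f_t$ is close to the conditioned reference measure in relative entropy with a rate governed by $\ms G_n(f_t;\nu^n_g)$; this is the replacement for the equivalence-of-ensembles estimates of Section~\ref{n-sec3}. The concentration estimates of Section~\ref{app_concentration} and the large-deviation bounds of Section~\ref{app_LD} then let me replace the remaining local functions by their conditional expectations (functions of $\cY^n$) at cost $O(\sqrt n)$, and control moments $\int(\cY^n)^{2k}f_t\,d\nu^n_g$ in terms of $\cF(f_t)$ itself (this is why the free energy, not the bare relative entropy, is the right quantity — $\cF$ bounds the quadratic moment by construction, and concentration bootstraps to higher moments).

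Assembling: I expect an inequality of the form
\begin{equation*}
\frac{d}{dt}\cF(f_t) \;\le\; -(2-c_0)\,\ms G_n(f_t;\nu^n_g) \;+\; C\,\cF(f_t) \;+\; C\sqrt n
\end{equation*}
with $c_0<2$ and $C=C(T,a,\theta)$, after which Gronwall on $[0,T]$ gives the stated bound $\cF(f_t)+\int_0^t\ms G_n(f_s;\nu^n_g)\,ds \le C(T)\cF(f_0)+C(T)\sqrt n$. The main obstacle is step (i)--(ii) of the first paragraph: extracting from $\int f_t\,\ms L_n^*\mathbf 1\,d\nu^n_g$ the exact algebraic cancellations that the defining ODE~\eqref{eq_ODE_g} for $g$ guarantees, and simultaneously keeping the non-cancelled slow-mode polynomial in a form where the negative quartic drift from $\ms L_n[(\cY^n)^2]$ genuinely dominates. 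The boundary condition $g'(0_+)-g'(1_-)=-16\delta b$ in~\eqref{eq_ODE_g} is presumably what handles the discrete/translation-invariance defect at the "seam" of the torus, and getting that bookkeeping right — together with controlling the error in the replacement~\eqref{eq_heuristic_replacement_intro} over the diverging time scale $\sqrt n$, which is where the uniform log-Sobolev bound is indispensable — is the technical heart of the argument.
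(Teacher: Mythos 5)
Your proposal follows the paper's own argument quite closely: differentiate $\cF$ via the Jara--Yau lemma, compute $\ms L^*_n{\mb 1}$ in $\bbL^2(\nu^n_g)$ so that the ODE~\eqref{eq_ODE_g} for $g$ cancels the leading two-point correlation $a\sqrt{n}\sum_i\bar\eta^m_i\bar\eta^m_{i+1}$, harvest the negative quartic drift from $\kappa\sqrt{n}\,\partial_t\nu^n_g(f_t(\cY^n)^2)$ by choosing $\kappa$ to match the coefficient $\alpha_0$ multiplying $(\cY^n)^4$ in the adjoint, control the remaining fast-mode terms by log-Sobolev plus concentration and large-deviation bounds, and conclude with Gronwall — this is precisely the decomposition into Propositions~\ref{prop_bound_adjoint} and~\ref{prop_bound_moment}. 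One detail you skate over is that the large-deviation truncations are only available for $t\ge\tau\,n^{-1/2}$ uniformly in the initial condition, so on the short window $[0,\tau n^{-1/2}]$ only the crude bound $C\sqrt n\,H_n(f_t|\nu^n_g)$ is at hand and the Gronwall inequality has to be run in two regimes; this is harmless because the window has length $O(n^{-1/2})$. Also, the quartic moment $\nu^n_g(f_t(\cY^n)^4)$ is handled by the exact cancellation against the drift of $(\cY^n)^2$, not bootstrapped from $\cF$ by concentration under $\nu^n_g$ as your closing parenthetical suggests — indeed $\nu^n_g$ concentrates magnetisation much worse than $\nu^n_U$, which is exactly why the free energy (rather than the entropy alone) is the object one must track.
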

\begin{remark}
We could also consider the probability measure
$\nu^n_{U,g^0}\propto \exp[\frac{1}{2n}\sum_{i\neq
j}g^0_{i,j}\bar\eta_i\bar\eta_j]\, \nu^n_{U}$ rather than $\nu^n_g$. 
This is discussed at the beginning of Section~\ref{sec_free_energy_bounds} where Theorem~\ref{theo_free_energy} is proven. 
\end{remark}
\begin{remark}
The control on the free energy stated in Theorem~\ref{theo_free_energy} appears to be stronger than the bound on relative entropy of Theorem~\ref{n-s05} as it also provides a bound on $\E_{\mu_n}[(\mc Y^n_t)^2]$. 
 
This is in fact not the case: the measure $\nu^n_U$ provides much better control on large values of the magnetisation than $\nu^n_g$, 
and an estimate on $\E_{\mu_n}[(\mc Y^n_t)^2]$ is a consequence of Theorem~\ref{n-s05} rather than requiring a separate argument. 
We shall indeed prove that there exists a finite constant $\mf c_0$ and
an integer $\mf n_0$ such that
\begin{equation}
\label{n-40}
\begin{aligned}
& \bb E^n \big[\,
\big(\mc Y^n_t\big)^2\, \big]
\, \le\; e^{4 a |\theta|  t}\, \Big\{\, 
E^n \big[\,
\big(\mc Y^n_0 \big)^2\, \big]
\,+\, \mf c_0 \, a\,
\Big( \frac{1}{\sqrt{n}}\, H_n (f_0 \,|\, \nu^{n}_{U})  \,+\, t\,\Big)
\, \Big\}
\end{aligned}
\end{equation}
for all $n\ge \mf n_0$, $t\ge 0$.
\end{remark}
\begin{remark}
The function $g$ and the correlation structure of the fast modes are related and can be deduced from one another. 
Let us give a heuristic, which has been made rigorous in high temperature settings, see e.g.~\cite[Proposition 2.2]{Courant}.  
It is proven in~\cite[Proposition 1.2]{Courant_suppl} that a measure of the form $\nu^n_{\tilde g}$ has covariance $(\sigma^{-1}\id-\tilde g)^{-1}$ ($\sigma=1/4$) with $\tilde g\in C(\T)$ having operator norm strictly below $1$ seen as an operator on $\bb L^2(\T)$.  
In the case $\tilde g=g$ the above operator is degenerate and only makes sense restricted to mean-0 test functions. 
The function $g$ is designed so that the measure $\nu^n_{g}$ approximates the covariance of fast modes under the dynamics, in the sense:
\begin{equation}
\E[y_t(G)y_t(H)]
=
\int_{\T} G(x)(4-g_0)^{-1}H(x)\, dx
,\qquad 
t>0, G,H\in\C_0
.
\end{equation}
Thus one expects the identity $\frac{\id}{4}+\frac{a}{2}(-\Delta^{-1})= (4\id-g_0)^{-1}$ relating covariance and $g$, 
an identity readily checked thanks to the formula~\eqref{eq_fourier_g_sec2} for the Fourier coefficients of $g_0$. 
\end{remark}
We conclude this section by stating that all initial distributions mentioned in Theorem~\ref{theo_convergence_magnetisation} have relative entropy and free energy bounded by $O(\sqrt{n}\, )$, 
therefore Theorems~\ref{n-s05}--\ref{theo_free_energy} provide $O(\sqrt{n}\, )$ bounds on relative entropy and free energy uniformly on $[0,T]$. 
\begin{proposition}\label{prop_free_energy_time0}
Let $\mu_n$ denote a probability measure on $\Omega_n$ ($n\in\N$) and assume:
\begin{equation}
\sup_{n\geq 1}\frac{H_n(\mu_n\, |\, \nu^n_{1/2})}{\sqrt{n}}
<
\infty
.
\end{equation}
Then:
\begin{equation}
\sup_{n\geq 1}\Big\{\frac{H_n(\mu_n\, |\,\nu^{n}_{\rm ref})}{\sqrt{n}}
\,+\, E_{\mu_n}[(\cY^n)^2]\Big\}
<
\infty
.
\end{equation}
\end{proposition}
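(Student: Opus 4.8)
The plan is to establish the two bounds of the conclusion separately. The bound on $E_{\mu_n}[(\cY^n)^2]$ uses only the hypothesis and soft concentration under $\nu^n_{1/2}$; the bound on $H_n(\mu_n\,|\,\nu^n_{\rm ref})$ is then reduced to it together with the structural properties of the tilts defining $\nu^n_U$ and $\nu^n_g$ (the case $\nu^n_{\rm ref}=\nu^n_{1/2}$ being nothing to prove, the case $\nu^n_{U,g^0}$ of the remark a combination of the other two). For the second moment, note that under $\nu^n_{1/2}$ the magnetisation $\mf m=\sum_{i\in\bb T_n}\bar\eta_i$ is a sum of i.i.d.\ centred variables valued in $\{-\tfrac12,\tfrac12\}$, so $E_{\nu^n_{1/2}}[e^{t\mf m}]=\cosh(t/2)^n\le e^{nt^2/8}$; since a sub-Gaussian variable $X$ with proxy $\sigma^2$ satisfies $E[e^{\lambda X^2}]\le(1-2\lambda\sigma^2)^{-1/2}$ for $\lambda<1/(2\sigma^2)$, taking $\sigma^2=n/4$ and $\lambda=1/n$ gives $E_{\nu^n_{1/2}}[e^{\sqrt n\,(\cY^n)^2}]=E_{\nu^n_{1/2}}[e^{\mf m^2/n}]\le\sqrt2$, uniformly in $n$. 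Plugging this into the entropy inequality~\eqref{n-35} with second measure $\nu^n_{1/2}$, $F=(\cY^n)^2$ and $\beta=\sqrt n$ yields $E_{\mu_n}[(\cY^n)^2]\le H_n(\mu_n\,|\,\nu^n_{1/2})/\sqrt n+(\log\sqrt2)/\sqrt n=O(1)$.

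For the relative entropy, write $\nu^n_{\rm ref}=(Z^n_{\rm ref})^{-1}e^{\Psi}\nu^n_{1/2}$, giving the exact identity $H_n(\mu_n\,|\,\nu^n_{\rm ref})=H_n(\mu_n\,|\,\nu^n_{1/2})+\log Z^n_{\rm ref}-E_{\mu_n}[\Psi]$; it then suffices to bound $\log Z^n_{\rm ref}$ and $-E_{\mu_n}[\Psi]$ by $O(\sqrt n)$. For $\nu^n_U$ we have $\Psi=nU(\mf m/n)$, and by the properties of $U$ established in Appendix~\ref{app_refmeas_nuU} — $U$ bounded on $[-\tfrac12,\tfrac12]$ and non-negative on a fixed neighbourhood of $0$ for $n$ large — its negative part satisfies $nU^-(\mf m/n)\le Cn(\mf m/n)^2=C\sqrt n\,(\cY^n)^2$, whence $-E_{\mu_n}[\Psi]\le C\sqrt n\,E_{\mu_n}[(\cY^n)^2]=O(\sqrt n)$, while $\log Z^n_U=O(\log n)$ is part of the same appendix. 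For $\nu^n_g$ we have $\Psi=\tfrac1{2n}\sum_{i\neq j}\bar\eta_i\bar\eta_j\,g_{i,j}$; splitting $g=\<g\>\,\mb1+g^0$, using $\bar\eta_i^2=\tfrac14$, and writing $g^0$ in Fourier modes with coefficients $\lambda^-_{|\ell|}\ge0$ (Proposition~\ref{prop_g}, \eqref{eq_fourier_g_sec2}), Plancherel gives
\[
\Psi=\frac{\<g\>}{2}\sqrt n\,(\cY^n)^2-\frac{\<g\>}{8}+\frac1{2n}\sum_{\ell\neq0}\lambda^-_{|\ell|}\,\big|\widehat{\bar\eta}_\ell\big|^2-\frac{g^0(0)}{8}\ \ge\ -\,\frac{\<g\>+g^0(0)}{8},
\]
with $\widehat{\bar\eta}_\ell=\sum_{j\in\bb T_n}\bar\eta_j\,e^{2\pi\mathrm i\ell j/n}$, $\<g\>=\min\{4,8\gamma_n\}\ge0$ and $g^0(0)=2\sum_{\ell\ge1}\lambda^-_\ell<\infty$; hence $-\Psi$ is bounded above deterministically and $-E_{\mu_n}[\Psi]=O(1)$, while $\log Z^n_g=O(\log n)$ follows from the explicit Fourier formula for $g$ and a local limit estimate for $\mf m$ under $\nu^n_{1/2}$, again from the appendix. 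In all cases $H_n(\mu_n\,|\,\nu^n_{\rm ref})=O(\sqrt n)$, which with the second-moment bound proves the Proposition.

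The only genuinely model-specific input above is the estimate $\log Z^n_{\rm ref}=O(\sqrt n)$ (in fact $O(\log n)$), and it is precisely there that the fine structure of $U$ and $g$ matters: they are designed so that the $2\epsilon^2$-part of $U$ — respectively the constant Fourier mode $\<g\>=\min\{4,8\gamma_n\}\to4$ of $g$ — exactly cancels the leading quadratic coefficient of the large-deviation rate function $I(\epsilon)=2\epsilon^2+\tfrac43\epsilon^4+O(\epsilon^6)$ of $\mf m/n$ under $\nu^n_{1/2}$, so that $\nu^n_{\rm ref}(\mf m=\cdot)$ is governed by a quartic profile on the scale $\mf m\sim n^{3/4}$ and $Z^n_{\rm ref}$ is only polynomial in $n$. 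Making this cancellation rigorous, and checking that the $\theta/\sqrt n$-correction in $\gamma_n$ does not destroy the non-negativity of the quadratic part of $U$, is the main technical obstacle, and the one place the appendix computations are indispensable; everything else — the second-moment bound and the reduction of $H_n(\cdot\,|\,\nu^n_{\rm ref})$ to it — is soft.
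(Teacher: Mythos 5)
Your proposal is correct and takes essentially the same route as the paper's proof in Appendix~\ref{app_relative_entropy}: the second moment is bounded via the entropy inequality and an exponential moment estimate for $(\cY^n)^2$ under $\nu^n_{1/2}$, and the relative entropy is rewritten as $H_n(\mu_n\,|\,\nu^n_{1/2})-E_{\mu_n}[\Psi]+\log Z^n_{\rm ref}$, after which positivity of the tilting potential and the partition-function bounds (Lemma~\ref{n-29}, Lemma~\ref{lemm_partition_fct_nu_g}) close the argument. Two minor observations that do not affect the conclusion: for $\nu^n_U$ you bound the negative part of $U$ by $C\varrho^2$ and invoke the already-established second-moment bound, whereas the paper simply uses that $U$ is convex with $U(0)=U'(0)=0$ and hence globally non-negative, making the term $-E_{\mu_n}[nU(\mf m/n)]\le 0$ directly; and the inequality $E[e^{\lambda X^2}]\le(1-2\lambda\sigma^2)^{-1/2}$ is the exact Gaussian $\chi^2$-MGF, not a theorem for general sub-Gaussian variables with proxy $\sigma^2$ (the correct general bound is of the form $\frac{1+2\lambda\sigma^2}{1-2\lambda\sigma^2}$ or similar), but this only changes the value of the $O(1)$ constant.
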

Proposition~\ref{prop_free_energy_time0} is proven in Appendix~\ref{app_relative_entropy}.

\subsection{Discussion and perspectives}\label{sec_discussion}
\paragraph{Steady state fluctuations.}
The present analysis can be used to characterise magnetisation fluctuations under the invariant measure of the dynamics~\cite{Tsunoda_invmeas}. 

\paragraph{Higher dimensions.} 
By accelerating further the exclusion dynamics Theorem
\ref{theo_convergence_magnetisation} can be extended to higher
dimensions, at least when $a$ is small enough. Set
$\gamma = (1/2) [ 1 - (\theta/n^{d/2})]$, and recall the parameters
introduced in Remark \ref{rm1}.  Speed-up further the exclusion
dynamics by $\mg s_n := \sqrt{\log n}$ in dimension $2$ and by
${\sf s}_n := n^{(d-2)/2}$ in dimension $d\ge 3$.  The generator $L_n$
is thus
\begin{equation*}
L_n \,=\, n^{d/2} \, \big\{ {\sf s}_n \, n^2 \, L^{\rm ex}_n
\,+\, a \, L^{G}_n \,\big\} \;.
\end{equation*}
Applying the Jara-Menezes flow lemma \cite[Lemma 3.2]{Jara2018}, we
can adapt the proof of the energy estimate presented in Theorems
\ref{n-s02} and Lemma \ref{lemm_estimate_WphiJp} to higher dimensions,
as well as the proofs of the other results needed to establish Theorem
\ref{theo_convergence_magnetisation}.  Compare with the scaling in
Remark~\ref{rm1}: the extra acceleration of the diffusive part is an
artefact of the proof that we currently do not know how to remove.

For general $a$, our results rely on comparing the dynamics to the measure built from the kernel $g$ of  Proposition~\ref{prop_g}. 
In particular the relative entropy method makes use of regularity properties of $g$, 
since we replace discrete sums by integrals and discrete derivatives by their continuous counterparts. 
In higher dimension we do not expect $g$ to be more regular than the inverse of the Laplacian, as observed e.g. in~\cite{Correlations2022}. 
The fact that this inverse is not a function in dimension $d\geq 2$ would then introduce further difficulties to a generalisation of the proof to higher dimensions.

\paragraph{Beyond mean-field.}
In the dynamics~\eqref{eq_def_generator} the Kawasaki part of the dynamics is at infinite temperature. This implies that all modes except the magnetisation equilibrate fast at the critical point. 
A natural generalisation would be to consider a Kawasaki part with itself a (static or dynamical) phase transition. 
If at the transition only finitely many modes slow down in addition to the magnetisation, 
then one may hope to generalise the present results by similarly projecting microscopic observables on slow and fast modes and controlling the relaxation of the fast modes through appropriate log-Sobolev inequalities for projections of the dynamics on the fast modes. 
Although computations would be significantly more involved and, for instance, 
log-Sobolev inequalities for measures conditioned on more than the magnetisation (which are not known), 
would have to be established, 
there should be no qualitative difference to what is done in this article.

Settings where there is a diverging number of slow modes at the transition, 
such as one would expect if the Kawasaki part is e.g. reversible with respect to a model with short-range interactions, 
are beyond the scope of this article. 
In such situations there are many different time-scales for the dynamics, 
with faster modes having non-trivial influence on the slower ones. 
It would be very interesting to see how to adapt the methods of the present papers to settings where a good control of relaxation of the dynamics at each scale is available, 
such as hierarchical $\varphi^4$-type models~\cite{BauBo_hierarchical}.

\section{Entropy production for product reference measure}
\label{n-sec1}
\subsection{Generalities on entropy production}
Fix a reference probability measure $\nu^n_{\rm ref}$ on $\Omega_n$
with $\inf_{\eta\in \Omega_n}\nu^n_{\rm ref} (\eta)>0$.  Recall from
\eqref{13b} the definition of the integrated carr\'e du champ
$\ms G_n(h;\nu^n_{\rm ref})$ of $h$ with respect to
$\nu^n_{\rm ref}$ for the dynamics with generator $\ms L_n$.  
It is convenient in the following to explicitly keep track of the time acceleration factor $\sqrt{n}$, 
so we write the generator as $\sqrt{n} \, L_n$ and define the associated carr\'e du champ as:
\begin{equation}
{\color{blue} \Gamma_n(h; \nu^n_{\rm ref})}
\;: =\; n^{2} \, \Gamma^{\rm ex}_n(h; \nu^n_{\rm ref})
\;+\; a \, \Gamma^G_n(h; \nu^n_{\rm ref})\;
=\; 
n^{-1/2}\, \ms G_n(h;\nu^n_{\rm ref})\;,
\end{equation}
where
\begin{equation}
\begin{gathered}
{\color{blue} \Gamma^{\rm ex}_n(h; \nu^n_{\rm ref})}
\;:=\; \frac{1}{2}\, \sum_{i\in\bb T_n} 
\int  \big[\,  \sqrt{h (\eta^{i,i+1})} \,-\,
\sqrt{h(\eta)}\, \big]^2 \; d \nu^n_{\rm ref}\;, \\
{\color{blue} \Gamma^G_n(h; \nu^n_{\rm ref})}
\;:=\;  \frac{1}{2}\, \sum_{i\in\bb T_n} 
\int c(\tau_i \eta)\, \big[\,  \sqrt{h (\eta^{i})} \,-\,
\sqrt{h(\eta)}\, \big]^2 \; d \nu^n_{\rm ref} \;.
\end{gathered}
\label{eq_def_carreduchamp}
\end{equation}
The proof of the next result is similar to the one of
\cite[Proposition 5.1]{jl}. 
Note that in this computation the carr\'e
du champ corresponding to the Glauber part of the dynamics
appears,  but will not be used in the rest of the argument.

\begin{proposition}
\label{l06-1}
For all $t\ge 0$,
\begin{equation}
\label{2-22}
H'_n(f_t | \nu^n_{\rm ref}) \;\leq\; -\,2\,  \sqrt{n}\,
\Gamma_n(f_t ; \nu^n_{\rm ref})
\;+\;    \sqrt{n}\,  \int  f_t \, L^*_{n} \, \mb 1   \; d \nu^n_{\rm ref} \;,
\end{equation}
where, recall, $\mb 1 \colon \Omega_n \to \bb R$ is the constant
function equal to $1$, and $\color{blue} L^*_{n}$ represents the
adjoint of $L_n$ in $\bbL^2(\nu^n_{\rm ref})$.
\end{proposition}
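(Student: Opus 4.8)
The plan is to differentiate the relative entropy $H_n(f_t\,|\,\nu^n_{\rm ref})$ in time directly from its definition and bound the result. Since $f_t$ solves the forward (Fokker--Planck) equation $\partial_t f_t = \sqrt{n}\, L^*_n f_t$ in $\bbL^2(\nu^n_{\rm ref})$, where $L^*_n$ is the adjoint of $L_n$ with respect to $\nu^n_{\rm ref}$, we get
\[
H'_n(f_t\,|\,\nu^n_{\rm ref})
\;=\; \int (\partial_t f_t)\, (\log f_t + 1)\; d\nu^n_{\rm ref}
\;=\; \sqrt{n}\int (L^*_n f_t)\, \log f_t \; d\nu^n_{\rm ref}\;,
\]
the ``$+1$'' term dropping out because $\int \partial_t f_t\, d\nu^n_{\rm ref} = \partial_t \int f_t\, d\nu^n_{\rm ref} = 0$. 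First I would move the adjoint back onto $\log f_t$, writing $\int (L^*_n f_t)\log f_t\, d\nu^n_{\rm ref} = \int f_t\, L_n(\log f_t)\, d\nu^n_{\rm ref}$, and then use the elementary numerical inequality $b\log(a/b) \le 2\sqrt{b}(\sqrt a - \sqrt b)$ for $a,b>0$ (equivalently $a\log(a/b) - a + b \ge (\sqrt a - \sqrt b)^2$, applied pointwise in the jump).

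The key algebraic step is to expand $L_n(\log f_t)(\eta) = \sum_{\rm jumps} r(\eta,\eta')\,[\log f_t(\eta') - \log f_t(\eta)]$ over the exclusion and Glauber jumps, multiply by $f_t(\eta)$, integrate, and symmetrise each jump term. For a single jump $\eta\mapsto\eta'$ with rate $r(\eta,\eta')$ and reversed rate $r(\eta',\eta)$ adapted to $\nu^n_{\rm ref}$, the standard manipulation gives
\[
\int f_t(\eta)\, r(\eta,\eta')\,[\log f_t(\eta')-\log f_t(\eta)]\; d\nu^n_{\rm ref}
\;\le\; -2\,\Gamma\text{-type term} \;+\; \text{correction involving } L^*_n\mb 1\;,
\]
where the correction collects the failure of $\nu^n_{\rm ref}$ to be reversible (or even stationary). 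Concretely, using $b\log(a/b)\le 2\sqrt b(\sqrt a-\sqrt b)$ with $a = f_t(\eta')$, $b = f_t(\eta)$, summing over all jumps and reorganising, the leading term reproduces $-2\sqrt n\,\Gamma_n(f_t;\nu^n_{\rm ref})$ with the exclusion and Glauber carré du champ from \eqref{eq_def_carreduchamp}, while the remainder, after one more symmetrisation, equals exactly $\sqrt n\int f_t\, L^*_n\mb 1\, d\nu^n_{\rm ref}$. This is the same bookkeeping as in \cite[Proposition 5.1]{jl}; the factors $n^2$ (exclusion) and $a$ (Glauber) and the overall $\sqrt n$ are carried along unchanged, and the exchange of jump rates with the Radon--Nikodym weights of $\nu^n_{\rm ref}$ produces the $L^*_n\mb 1$ term because $L^*_n\mb 1 \ne 0$ in general.

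The main obstacle, and the only genuinely delicate point, is justifying the time-differentiation under the integral and the manipulation $\int(L^*_nf_t)\log f_t\,d\nu^n_{\rm ref} = \int f_t\,L_n\log f_t\,d\nu^n_{\rm ref}$ when $f_t$ may not be bounded below. On a finite state space $\Omega_n$ this is in fact harmless: the semigroup $S^n(t)$ is a finite matrix exponential, so $t\mapsto f_t(\eta)$ is smooth for each $\eta$, and the lower bound $\inf_\eta \nu^n_{\rm ref}(\eta)>0$ together with positivity of $f_t$ for $t>0$ (by irreducibility of the combined dynamics, or else a standard approximation $f_t \rightsquigarrow (1-\epsilon)f_t + \epsilon$ and a limit) makes every expression finite and every sum finite. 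So I would first record these finiteness facts, then carry out the symmetrised expansion jump-by-jump, apply the numerical inequality, and collect terms to land on \eqref{2-22}.
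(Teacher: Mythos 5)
Your proposal is correct and follows the same route the paper intends (the paper defers to the argument of \cite[Proposition 5.1]{jl}, which is exactly the Jara--Yau computation you describe). One small remark: no symmetrisation is actually needed --- once you write $H_n'(f_t\,|\,\nu^n_{\rm ref})=\sqrt n\int f_t\,L_n\log f_t\,d\nu^n_{\rm ref}$ and apply $b\log(a/b)\le 2\sqrt b(\sqrt a-\sqrt b)$ pointwise at each jump, the right-hand side is bounded by $2\sqrt n\int\sqrt{f_t}\,L_n\sqrt{f_t}\,d\nu^n_{\rm ref}$, which equals $\sqrt n\int L_nf_t\,d\nu^n_{\rm ref}-2\sqrt n\,\Gamma_n(f_t;\nu^n_{\rm ref})$ directly from the definition \eqref{13b} of the carr\'e du champ, and the first integral is $\sqrt n\int f_t\,L_n^*\mb 1\,d\nu^n_{\rm ref}$ by definition of the adjoint; also, your parenthetical ``equivalently $a\log(a/b)-a+b\ge(\sqrt a-\sqrt b)^2$'' is obtained from the first inequality by swapping $a\leftrightarrow b$, so it is a companion estimate rather than an equivalent one, but only the first form is used.
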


Denote by $\color{blue} L^{*,\rm ex}_n$, $\color{blue} L^{*,G}_n$ the
adjoint in $\bbL^2(\nu^n_{\rm ref})$ of the generators $L^{\rm ex}_n$,
$L^{G}_n$, respectively, so that for any $F:\Omega_n\to\R$:
\begin{equation}
\label{eq_adjoint_general_measure}
L_n^*F \,=\, n^2 L^{*,\rm ex}_nF \,+\, a \, L^{*,G}_nF \;.
\end{equation}
An elementary computation yields (recall
definitions~\eqref{eq_def_eta_i_iplus1}--\eqref{eq_def_etai} of
$\eta^{i,i+1},\eta^i$ for $i\in\T_n$):
\begin{equation}
\begin{gathered}
L_n^{*,\rm ex} F(\eta)
\,=\, n^2\sum_{i\in\T_n}
\Big[F(\eta^{i,i+1})\frac{\nu^n_{\rm ref}(\eta^{i,i+1})}
{\nu^n_{\rm ref}(\eta)} - F(\eta)\Big] \;,
\\
L^{*,G}_nF (\eta)
\,=\, \sum_{i\in\T_n}
\Big[c(\tau_i(\eta^i))F(\eta^i)\frac{\nu^n_{\rm ref}(\eta^{i})}
{\nu^n_{\rm ref}(\eta)}-
c(\tau_i\eta)F(\eta)\Big] \;.
\label{eq_general_formula_adjoint}
\end{gathered}
\end{equation}
\subsection{The product case}
To present some of the key ideas and difficulties in the proof of Theorems~\ref{n-s05}--\ref{theo_free_energy}, 
we first consider the case where the reference measure is the Bernoulli product measure with density
$1/2$: $\nu^n_{\rm ref} = \nu^n_{1/2}$.  
This corresponds to comparing the dynamics to the local equilibrium, which away from a critical point is known to be a good approximation~\cite{Jara2018}. 
As discussed in the introduction and shown below, 
this will not be the case in our present near-critical setting.

As product measures are
reversible for the exclusion dynamics, if $L^{{\rm ex},*}_n$
represents the adjoint of $L^{{\rm ex}}_n$ in $\bbL^2(\nu^n_{1/2})$,
$L^{{\rm ex},*}_n = L^{{\rm ex}}_n$ so that
$L^{{\rm ex},*}_n \mb 1=0$. On the other hand, an elementary
computation, based on the explicit formula for the Glauber jump rates
\eqref{01}, yields that
\begin{equation}
\label{04}
a\, \sqrt{n}\, (L^{G,*}_n \mb 1)(\eta)
\; =\; a\, \sqrt{n}\, \sum_{i\in \bb T_n}
\big\{\,c( (\tau_i \eta)^i) \,-\, c(\tau_i\eta) \, \big\}
\;=\;
16\,a\,\gamma \, \sqrt{n}\,
\sum_{i\in \bb T_n} \bar\eta_i\bar\eta_{i+1}
\;,
\end{equation}
where $\bar\eta_i := \eta_i- (1/2)$.  Therefore, by
Proposition \ref{l06-1},
\begin{equation}
\label{n-19}
\begin{aligned}
H'_n(f_t | \nu^n_{1/2}) \; \leq\; & -\,2\,  \sqrt{n}\, \Gamma_n(f_t; \nu^n_{1/2})
\;+\;    16\, a\,  \gamma\, \int  f_t \, \sqrt{n} \,
\sum_{i\in \bb T_n} \bar\eta_i \, \bar\eta_{i+1}  \; d \nu^n_{1/2}
.
\end{aligned}
\end{equation}
To prove an $O(\sqrt{n})$ bound on
$H_n(f_t \,|\,\nu^n_{1/2}) + \int_0^t \sqrt{n}\, \Gamma_n(f_s;
\nu^n_{1/2})\, ds$ at each time $t\geq 0$, we would like to bound the
second term above as follows: for some $C(a)>0$,
\begin{equation}
16\, a\,  \gamma\,  \int  f_t \, \sqrt{n} \,
\sum_{i\in \bb T_n} \bar\eta_i \, \bar\eta_{i+1}  \; d \nu^n_{1/2}
\leq 
\,  \sqrt{n} \ \Gamma_n(f_t; \nu^n_{1/2})
+ 
C(a)\, \sqrt{n}
.
\label{eq_bound_eta_i_eta_i+1_by_carre_du_champ}
\end{equation}
At time $t\geq 0$, 
the Kawasaki part of the dynamics has run a time of order $n^{5/2}$, 
longer than the $n^2$ timescale at which relaxation to equilibrium would take place at fixed magnetisation. 
On the other hand, 
the $\sqrt{n}$ acceleration of the Glauber part is expected to be the time-scale at which fluctuations of the magnetisation $\sum_{i\in\T_n}\bar\eta_i$ evolve. 
We therefore consider the relaxation of $\sum_{i\in \bb T_n} \bar\eta_i \, \bar\eta_{i+1}$ due to the Glauber and Kawasaki parts separately as mentioned in Section~\ref{sec_sketch_proof_sec_results}, 
through the following projection. 
Define:
\begin{equation}
{\color{blue} \bar\eta^m}
\,:=\,
\bar\eta_i - \ms M^n,\qquad 
{\color{blue} \ms M^n }
:=
\frac{1}{n}\sum_{i\in\T_n}\bar\eta_i = \frac{\cY^n}{n^{1/4}}
\in \Big[-\frac{1}{2},\frac{1}{2}\Big]
.
\label{eq_recentering_sec_product}
\end{equation}
Then:
\begin{align}
16\, a\,  \gamma\,  \int  f_t \, \sqrt{n} \,
\sum_{i\in \bb T_n} \bar\eta_i \, \bar\eta_{i+1}  \; d \nu^n_{1/2} 
&=
16\, a\,  \gamma\,  \int  f_t \, \sqrt{n} \,
\sum_{i\in \bb T_n} \bar\eta^m_i \, \bar\eta^m_{i+1}  \; d \nu^n_{1/2}
\nnb
&\quad + 
16\, a\,  \gamma\, n \int f_t\,  (\cY^n)^2\, d\nu^n_{1/2}
.
\label{eq_mode_decomposition_sec_product}
\end{align}
The relaxation of the first term is governed by the Kawasaki part of the dynamics, 
which means in particular that $\bar\eta^m_\cdot$ has Gaussian fluctuations. 
We will indeed prove in the next section that, 
if $a$ is smaller than a universal constant:
\begin{equation}
\bigg|16\, a\,  \gamma\,  \int  f_t \, \sqrt{n} \,
\sum_{i\in \bb T_n} \bar\eta^m_i \, \bar\eta^m_{i+1}  \; d \nu^n_{1/2}\, \bigg|
\leq 
\frac{\sqrt{n}}{2}\, \Gamma_n^{\rm ex}(f_t;\nu^n_{1/2}) 
+ 
C(a)\sqrt{n}
.
\label{eq_estimate_mean0_modes_sec_product_measure}
\end{equation}
On the other hand, the magnetisation term
in~\eqref{eq_mode_decomposition_sec_product} is problematic.  To see
it, apply the entropy inequality to bound it for each $\lambda>0$ by:
\begin{align}
16\, a\,  \gamma\, n\,  \int f_t\,  (\cY^n)^2\, d\nu^n_{1/2}
\leq 
\frac{16\, a\,  \gamma\, \sqrt{n}\, H_n(f_t|\nu^n_{1/2})}{\lambda}
+ \frac{16\, a\,  \gamma\,\sqrt{n} }{\lambda}\log \nu^n_{1/2}
\Big( e^{\lambda \sqrt{n} (\cY^n)^2}\Big)
.
\end{align}
We need $\lambda$ small enough to make the exponential moment bounded
with $n$ (in fact $\lambda<2$ as follows from a similar proof as
Lemma~\ref{n-29}).  However, this restriction $\lambda<2$ is
incompatible with a bound on the entropy term.  Indeed, note first
that the factor $\sqrt{n}$ in front of the entropy prevents any
$O(\sqrt{n})$ bound on $H_n(f_t|\nu^n_{1/2})$ through Gronwall
inequality.  We therefore need to bound $H_n(f_t|\nu^n_{1/2})$ in
terms of the carr\'e du champ appearing
in~\eqref{eq_bound_eta_i_eta_i+1_by_carre_du_champ}.  This can be done
through the following log-Sobolev inequality for the product measure
$\nu^n_{1/2}$ with optimal constant:
\begin{equation}
H_n(f|\nu^n_{1/2})
\leq 
C_0\, \Gamma^G_n (f;\nu^n_{1/2})
,\qquad 
\text{for all }f:\Omega_n\to\R_+\text{ with }\nu^n_{1/2}(f)=1
.
\end{equation}
The constant $C_0$ can be computed explicitly from the jump
rates~\eqref{eq_value_jump_rates} and is at least $4+o_n(1)$ in the
regime $\gamma=1/2 +o_n(1)$ we are interested in.  One then has, for
any $\lambda\in(0,2)$ and some $C(\lambda)>0$:
\begin{align}
16 \, a\,  \gamma\, \sqrt{n} \int f_t\,  (\cY^n)^2\, d\nu^n_{1/2}
&\leq 
\frac{16\, \gamma\, C_0\, }{\lambda} \,
\sqrt{n}\, a\, \Gamma^G_n(f_t;\nu^n_{1/2})
+ a\, \sqrt{n}\, C(\lambda)
\nnb
&\leq 
\frac{16\, \gamma\, C_0\, }{\lambda} \,\sqrt{n}\,
\Gamma_n (f_t;\nu^n_{1/2})
+ a\, \sqrt{n}\, C(\lambda)
.
\end{align}
The choice $\lambda<2$ is thus incompatible with the fact that we would need $16\, \gamma \, C_0 /\lambda \leq 1$ to
prove~\eqref{eq_bound_eta_i_eta_i+1_by_carre_du_champ}.  Although we can
afford a slightly better factor in front of the carr\'e du champ
in~\eqref{eq_bound_eta_i_eta_i+1_by_carre_du_champ}, the gap between
the two restrictions on $\lambda$ cannot be bridged and
we cannot obtain the desired $O(\sqrt{n})$ bound on the relative entropy with respect to the product measure at each time.\\

Section~\ref{n-sec2} solves the above problem by considering a reference measure tailored to make the problematic magnetisation term disappear. 
This is enough to prove the energy--entropy estimate of Theorem~\ref{n-s05} for small $a$, 
where the restriction on $a$ this time comes from the estimate of the mean-0 modes~\eqref{eq_estimate_mean0_modes_sec_product_measure}. 
The measure is yet again refined in Section~\ref{sec_free_energy_bounds} to better estimate the mean-0 modes and obtain Theorem~\ref{theo_free_energy}, valid for all $a$.

\section{Entropy production for a density tilted product measure}
\label{n-sec2}

In this section, we show that correcting the local equilibrium
(Bernoulli product) measure with an appropriate function of the
magnetisation can improve the entropy estimate.  We start with a
heuristic argument to determine the correction.

Denote by $\color{blue} \nu^{n,k}_{1/2}$, $0\le k\le n$, the canonical
stationary state of the exclusion dynamics with $k$ particles. The
measure $\nu^{n,k}_{1/2}$ corresponds to conditioning the product
measure $\nu^n_{1/2}$ to the hyperplane of configurations with $k$
particles. In the context of the simple exclusion process, the measure
$\nu^{n,k}_{1/2}$ is the uniform measure of the configurations of
$\Omega_n$ with $k$ particles.

Recall the expression~\eqref{01} of the jump rates. 
In terms of $\bar\eta_i:=\eta_i-1/2$  ($i\in\T_n$), 
they read:
\begin{equation}
c(\tau_i\eta) 
=
1-4\, \gamma\, \bar\eta_i\, [\, \bar\eta_{i-1}+\bar\eta_{i+1}\, ] +4\, \gamma^2\, \bar\eta_{i-1}\,\bar \eta_{i+1}
.
\label{eq_jp_rates_as_bareta}
\end{equation}
Denote by $B_n(\cdot)$, $D_n(\cdot)$ the average birth and death jump
rates, respectively:
\begin{equation}
B_n(k) \;:=\, \nu^{n,k}_{1/2} \Big(\,
\sum_{i\in \bb T_n} (1-\eta_i) \, c(\tau_i\eta) \Big) \;,
\quad
D_n(k) \;:=\, \nu^{n,k}_{1/2} \Big(\,
\sum_{i\in \bb T_n} \eta_i \, c(\tau_i\eta) \Big)
\;.
\end{equation}
If the exclusion dynamics is sufficiently speeded-up, one expects the
creation and destruction rates in the full dynamics to approximately
coincide with their averages under the canonical measure with the
corresponding magnetisation.  The magnetisation would then evolve as a
birth and death process on $\{0, \dots, n\}$ with rates given by
$B_n$, $D_n$.

The stationary measure of this birth and death dynamics, denoted by
$\pi_n$, satisfies the detailed balance conditions
\begin{equation}
\label{n-05}
\pi_n (k)\, B_n(k) \,=\, \pi_n(k+1)\, D_n(k+1)\,, \quad 0\le k<n\;.
\end{equation}
A direct computation 
%The equivalence of ensembles \cite[Section A.2.1]{kl} 
provides an 
estimate for these rates:
\begin{equation}
\begin{gathered}
B_n(k) \,=\,
n\,\big\{  \, B(\rho)  \,+\,
O\ \big (\, \frac{1}{n}\,\big) \, \big\}\;,
\quad B(\rho) \,=\, (1-\rho)\,  [\, 1 + \gamma\,  (2\rho -1 )\, ]^2
\,, 
\\
D_n(k) \;=\, 
n\,\big\{  \,  D(\rho) \,+\,
O\big (\, \frac{1}{n}\,\big) \, \big\} \;, \quad
D(\rho) \,=\,  \rho\,  [\, 1 - \gamma\,  (2\rho -1 )\, ]^2\;, 
\end{gathered}
\end{equation}
where $\rho=k/n$. 

It is then easily checked that the detailed balance condition
\eqref{n-05} holds with $\pi_n$ satisfying:
\begin{equation}
\pi_n (k) \,=\,  e^{- n \, \{ E (k/n) - U_0(k/n) \} + O(\log n) }
\, \sim \,
e^{n U_0 (k/n) } \, {n \choose k }\, \Big( \frac{1}{2}\Big)^n 
,
\end{equation}
where the $O(\log n)$ is uniform in $k$ and $E(\cdot)$ stands for the
entropy weight
$\color{blue} E(\rho) := \rho\, \log \rho + (1-\rho) \log (1-\rho) +
\log 2$.  On the other hand, the energy $U_0$ is defined for
$\rho\in[0,1]$ as:
\begin{equation}
\label{n-06} 
{\color{blue} U_0(\rho) \,:=\, U (\varrho)} \,:=\, \frac{1}{\gamma}\,
\Big\{ \, \big [ 1 + 2 \gamma  \varrho\, \big] \log \big[ \, 1 +
2 \gamma \varrho\, \big]
\,+\, \big [ 1 - 2 \gamma \varrho\, \big] \log \big[ \, 1 -
2 \gamma \varrho\, \big] \, \Big\} \;,
\end{equation}
where ${\color{blue} \varrho := \rho - (1/2)} \in [-1/2, 1/2]$. Note
that $U(0) \,=\, U'(0) \,=\, U^{(3)} (0) \,=\, 0$,
\begin{equation}
\label{n-34}
\begin{gathered}
U''(0) \,=\, 8\gamma\,=\, 4 \Big(\, 1 \,-\, \frac{\theta}{
\sqrt{n}}\,\Big)
\,, \quad 
U^{(4)} (0) \,=\, 64 \gamma^3 \,=\,
8 \Big(\, 1 \,-\, \frac{\theta}{
\sqrt{n}}\,\Big)^3\;,
\end{gathered}
\end{equation}
where $U^{(j)}$ stands for the $j$-th derivative of $U$. Moreover, $U$
is convex and symmetric (so that $U_0$ is symmetric around $1/2$).
Similarly, the entropy weight $E(\cdot)$ satisfies
$E(1/2) = E'(1/2) = E^{(3)}(1/2) =0$ and
\begin{equation}
\label{n-38}
E''(1/2) \,=\, 4 \,, \quad E^{(4)} (1/2) \,=\, 32 \;.
\end{equation}
Thus, by \eqref{n-34}, for $\gamma =1/2$, $\color{blue} W : = E - U_0$
is a non-negative convex function which vanishes at $\rho=1/2$ where
it is quartic.  
For future reference, note also 
\begin{equation}
\label{n-07}
U_0'(\rho) \,=\, U'(\varrho) \,=\, 2  \, \log \frac{ 1 + 2 \gamma \varrho}
{1 - 2 \gamma \varrho}\;
, \qquad
U_0''(\rho)
=
\frac{4\gamma}{1-4\gamma^2\varrho^2}   
\;\cdot
\end{equation}

In view of the previous discussion, let $U\colon [-1/2,1/2]\to \bb R$ be
given by \eqref{n-06}, and set
\begin{equation}
\label{eq_def_nu_V}
{\color{blue} \nu^n_U (\eta)}  
\,=\, \frac{1}{Z^n_U}\, e^{n U(\mf m/n)}\, \nu^n_{1/2} (\eta)
\,=\, \frac{1}{Z^n_U}\, e^{n U(\mf m/n)}\,
\, \Big( \frac{1}{2}\Big)^n  
\end{equation}
to be the reference measure.  Here 
$Z^n_U$ is a normalising constant, and, recall,
$\mf m = \sum_i \bar \eta_i$ so that $\mf m/n = \varrho$ in the
previous notation. The second identity follows from the fact that
$\nu^n_{1/2}$ is the uniform measure.

\subsection*{Computation of $L^*_n  \mb 1$}

As the exclusion dynamics does not change the magnetisation, if
$L^{{\rm ex},*}_n$ represents the adjoint of $L^{{\rm ex}}_n$ in
$\bbL^2(\nu^n_U)$, $L^{{\rm ex},*}_n \mb 1=0$. On the other hand, an
elementary computation yields that
\begin{equation}
\label{04b}
a\, \sqrt{n}\, (L^{G, *}_n \mb 1)(\eta)
\; =\; a\, \sqrt{n} \, \sum_{i\in \bb T_n}
\big\{\,c(\tau_i \eta^i) \, e^{ n[ U(\mf m(\eta^i)/n) - U(\mf m/n)]} 
\,-\, c(\tau_i\eta) \, \big\}\;.
\end{equation}
Using the expression~\eqref{eq_jp_rates_as_bareta} of the jump rates, rewrite~\eqref{04b} as
\begin{equation}
\label{n-01}
\begin{aligned}
& a \, \sqrt{n} \, 
e^{ (\nabla^+_n U)(\mf m/n) }
\sum_{i\in \bb T_n} (1-\eta_i) \,
\Big\{\, 1\, +\, 4\, \gamma \, \bar\eta_i\, [\, \bar\eta_{i-1} + \bar\eta_{i+1}
\,] \,+\,4\, \gamma^2 \, \bar\eta_{i-1}\, \bar\eta_{i+1} \,\Big\} 
\\
&  \;+\; a \, \sqrt{n} \, 
e^{ (\nabla^-_n U)(\mf m/n)} 
\sum_{i\in \bb T_n} \eta_i\,
\Big\{\, 1\, + 4\, \, \gamma \, \bar\eta_i\, [\, \bar\eta_{i-1} + \bar\eta_{i+1}
\,] \,+\,4\, \gamma^2 \, \bar\eta_{i-1}\, \bar\eta_{i+1} \,\Big\} 
\\
& -\, a\, \sqrt{n} \, \sum_{i\in \bb T_n}
\Big\{\, 1\,-\, 4\, \gamma \, \bar\eta_i\, [\, \bar\eta_{i-1} + \bar\eta_{i+1}
\,] \,+\,4\, \gamma^2 \, \bar\eta_{i-1}\, \bar\eta_{i+1} \,\Big\} \;,
\end{aligned}
\end{equation}
where $(\nabla^\pm _n U)(\mf m/n) = n\, [ U((\mf m \pm 1)/n) - U(\mf m/n)]$.

Recall that $\bar\eta^m_i = \eta_i - (m/n) = \bar \eta_i - (\mf m/n)$,
so that
\begin{equation*}
\begin{gathered}
\sum_{i\in \bb T_n}  \bar\eta_i\, \bar\eta_{i+k} \;=\;
 n\, (\mf m/n)^2 \,+\, G^{2,k}_{n,m} (\eta) \;,\;\;
k\,=\, 1\,,\, 2\,, 
\\
\sum_{i\in \bb T_n} \bar\eta_{i-1}\, \bar\eta_i\, \bar\eta_{i+1} \;=\;
n\,  (\mf m/n)^3 \,+\, F^{(3)}_{n,m} (\eta)
\end{gathered}
\end{equation*}
where
\begin{equation}
\label{n-15}
\begin{gathered}
G^{2,k}_{n,m}  (\eta) \,=\, \sum_{i\in \bb T_n}
\bar \eta^m_i\, \bar \eta^m_{i+k}\;,
\quad
G^{(3)}_{n,m}  (\eta) \,=\,
\sum_{i\in \bb T_n}
\bar \eta^m_{i-1}\, \bar \eta^m_i\, \bar \eta^m_{i+1}\;,
\\
F^{(3)}_{n,m}  (\eta) \,=\, G^{(3)}_{n,m}  (\eta)
\,+\, 2\, (\mf m/n)\, G^{2,1}_{n,m}
\,+\,  \, (\mf m/n)\, G^{2,2}_{n,m}\;.
\end{gathered}
\end{equation}
With this notation and noting that $\eta_i\bar\eta_i=1/2$,
$(1-\eta_i)\bar\eta_i=-1/2$,~\eqref{n-01} becomes:
\begin{equation}
\label{n-02}
\begin{aligned}
a\, \sqrt{n}\, (L^{G, *}_n \mb 1)(\eta)
\; & =\; a\, n^{3/2} \,  
e^{ (\nabla^+_n U)(\mf m/n) }\,  [(1/2) - (\mf m/n)] \, [\, 1 -
2\gamma (\mf m /n) ]^2
\\
& \,+\,  a\, n^{3/2} \,   e^{ (\nabla^-_n U)(\mf m/n) }
\, [(1/2) + (\mf m/n)] [\, 1 + 2\gamma (\mf m/n) ]^2
\\
& -\, a\, n^{3/2} \, \big\{ \,1 \,+\, 4\, (\gamma^2 - 2 \gamma) 
\, (\mf m/n)^2 \, \big\}
\,+\, a \, \sqrt{n}\,\sum_{k=1}^3 R^{(k)}_{n,m}\;,
\end{aligned}
\end{equation}
where
\begin{equation}
\label{n-14}
\begin{gathered}
R^{(1)}_{n,m} \,=\,   e^{ (\nabla^+_n U)(\mf m/n) }\,
\big\{\, 4\,\gamma\, G^{2,1}_{n,m} \,+\, 2\,\gamma^2 \, G^{2,2}_{n,m}
\,-\, 4\, \gamma^2 \, F^{(3)}_{n,m} \,\big\} \;, 
\\
R^{(2)}_{n,m} \,=\,   e^{ (\nabla^-_n U)(\mf m/n) }\,
\big\{\, 4\,\gamma\, G^{2,1}_{n,m} \,+\, 2\,\gamma^2 \, G^{2,2}_{n,m}
\,+\, 4\, \gamma^2 \, F^{(3)}_{n,m} \,\big\} \;,
\\
R^{(3)}_{n,m} \,=\,  
\big\{\, 8\,\gamma\, G^{2,1}_{n,m} \,-\, 4\,\gamma^2 \, G^{2,2}_{n,m}
\,\big\} \;.
\end{gathered}
\end{equation}

By \eqref{n-07}, the absolute value of $U''$ is bounded in
$[-1/2, 1/2]$. Therefore,
$(\nabla^\pm_n U)(\mf m/n) = \pm U' (\mf m/n) + r_n/n$ for some $r_n$
uniformly bounded in $\mf m$ and $n$. Thus, by \eqref{n-07}, expanding
$(\nabla^\pm_n U)(\mf m/n)$ yields to an exact cancellation of the terms with prefactor $n^{3/2}$ so that 
\begin{equation}
\label{n-09}
a\, \sqrt{n}\, (L^{G, *}_n \mb 1)(\eta)
\; =\; a\, \sqrt{n} \, h_n (\eta) \, \,+\,a \, \sqrt{n}\, \sum_{k=1}^3 R^{(k)}_{n,m}(\eta)
\end{equation}
where $\sup_{n\ge 1} \max_{\eta\in\Omega_n} |h_n(\eta)|\le \mf c_0$
for some finite constant $\mf c_0$. 
Assuming the bounds on the $R^{(k)}_{m,n}$ proven below in Theorem~\ref{n-s02}, 
we may now prove the entropy
estimate. 

\begin{proof}[Proof of Theorem \ref{n-s05}]
By Proposition \ref{l06-1} and Equation \eqref{n-09},
\begin{equation}
H'_n(f_t | \nu^n_U) \;\leq\; -\,2\,  \sqrt{n}\, \Gamma_n(f_t; \nu^n_U)
\;+\;    a\, \mf c_0 \, \sqrt{n}
\,+\, a\, \sqrt{n}\, \sum_{k=1}^3  \int  f_t \, R^{(k)}_{n,m}(\eta)  \; d \nu^n_U \;,
\label{eq_der_entropy_with_error_terms_sec4}
\end{equation}
for some finite constant $\mf c_0$. By \eqref{n-14}, \eqref{n-15}, and
since the absolute value of $U''$ is bounded in $[-1/2, 1/2]$, by
Theorem \ref{n-s02} and Remark \ref{n-s04}, the previous expression is
bounded by
\begin{equation*}
H'_n(f_t | \nu^n_U) \;\leq\; (a\, \mf c_0 \,
-\,2) \,  \sqrt{n}\, \Gamma_n(f_t \,|\, \nu^n_U)
\;+\;    a\, \mf c_0 \, \sqrt{n} \;,
\end{equation*}
where the value of the constant $\mf c_0$ has changed. Set $\mf a_0 =
1/\mf c_0$ to complete the proof of the theorem.
\end{proof}

	\section{An estimate on the average of local functions}
\label{n-sec3}

We prove in this section the bounds on the terms $R^{(k)}_{m,n}$ ($1\leq k\leq 3$) appearing in~\eqref{eq_der_entropy_with_error_terms_sec4} used in the last section. 

\begin{theorem}
\label{n-s02}
Let $\mc Z_n \colon \Omega_n \to \bb R$ be a sequence of uniformly
bounded random variables which are invariant by the exclusion
dynamics:
$\mf c_Z := \sup_{n\ge 1} \Vert \mc Z_n\Vert_\infty <\infty $;
$\mc Z_n(\eta^{i,i+1}) = \mc Z_n(\eta)$ for all $i\in \bb T_n$,
$n\ge 1$.  There exist finite constants $\mf c_1$, $\mf c_2$ 
depending only on $\mf c_Z$ and on the variable $\eta_0\eta_1$, and an
integer $\mf n_0$ such that:
\begin{equation*}
\Big| \, E_{\nu^{n}_{U}} \Big[\, f\, \mc Z_n\, 
\sum_{i\in \bb T_n} \bar\eta^m_i \, \bar\eta^m_{i+1}
\, \Big] \, \Big|
\, \le\; \mf c_1 \,+\, \mf c_2\,  n^{2}\,
\Gamma_n^{\rm ex}(f; \nu^{n}_{U})
\end{equation*}
for all $n\ge \mf n_0$, and density $f$ with respect to $\nu^{n}_{U}$.
\end{theorem}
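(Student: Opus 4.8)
The plan is to estimate the expectation $E_{\nu^n_U}[f\,\mc Z_n\,\sum_i \bar\eta^m_i\bar\eta^m_{i+1}]$ by exploiting that $\sum_i \bar\eta^m_i\bar\eta^m_{i+1}$ is a mean-zero-ish two-point observable on the \emph{fast} (fixed-magnetisation) sector, where the exclusion dynamics is ergodic and quantitatively controlled. Since $\mc Z_n$ and the magnetisation $\mf m$ are both invariant under the exclusion dynamics, I would first condition on the hyperplane $\{\mf m = k\}$: writing $\nu^n_U = \sum_k \nu^n_U(\mf m = k)\,\nu^{n,k}_{1/2}$ and $f^k$ for the conditional density of $f$ on that hyperplane, the left-hand side becomes $\sum_k \nu^n_U(\mf m=k)\,\mc Z_n|_k\; E_{\nu^{n,k}_{1/2}}\big[f^k \sum_i \bar\eta^m_i\bar\eta^m_{i+1}\big]$, where now $\bar\eta^m_i = \eta_i - k/n$ is genuinely recentred for the canonical measure. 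The point is that on each hyperplane the problem is a pure equilibrium-fluctuation estimate for the symmetric exclusion process at density $k/n$, to which the classical machinery applies.

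The core step is then, for each fixed $k$, to bound $\big|E_{\nu^{n,k}_{1/2}}[f^k W_{n,k}]\big|$ with $W_{n,k} := \sum_i \bar\eta^m_i\bar\eta^m_{i+1}$ in terms of the canonical Dirichlet form $D^{n,k}(f^k) := \frac12\sum_i\int (\sqrt{f^k(\eta^{i,i+1})}-\sqrt{f^k(\eta)})^2\,d\nu^{n,k}_{1/2}$. Here I would invoke two ingredients: (a) an \emph{equivalence of ensembles / Boltzmann–Gibbs-type} estimate showing that $W_{n,k}$, being a sum of $n$ local, mean-(almost-)zero functions under $\nu^{n,k}_{1/2}$, can be written modulo a term of order $O(1)$ as $n^2\, L^{\rm ex}_n h_k$ for some function $h_k$ with $E_{\nu^{n,k}_{1/2}}[(\text{gradient of }h_k)^2] = O(1/n^3)$ — this is exactly the kind of statement the paper attributes to the integration-by-parts formula of Appendix~\ref{sec_IBP} and equivalence-of-ensembles arguments; and (b) the elementary bound $\big|\int f^k\, n^2 L^{\rm ex}_n h_k\, d\nu^{n,k}_{1/2}\big| \le \beta^{-1} n^2 D^{n,k}(f^k) + \beta\, n^2 D^{n,k}(h_k)$ valid for all $\beta>0$ (the standard ``move the generator onto $h_k$'' trick together with Cauchy–Schwarz and $2ab \le \beta^{-1}a^2+\beta b^2$). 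Choosing $\beta$ of order $n^2$ turns the second term into $n^4 D^{n,k}(h_k) = O(n)$, wait — one must be careful: $D^{n,k}(h_k) = O(1/n^3)$ gives $\beta\,n^2 D^{n,k}(h_k) = O(n^2\beta/n^3)$, so taking $\beta$ bounded yields an $O(1/n)$ error, which is more than enough, while the main term contributes $\beta^{-1}n^2 D^{n,k}(f^k)$; optimising $\beta$ and then summing over $k$ against $\nu^n_U(\mf m=k)$ reconstitutes $n^2\Gamma^{\rm ex}_n(f;\nu^n_U) = \sum_k \nu^n_U(\mf m=k)\,n^2 D^{n,k}(f^k)$ by convexity of the Dirichlet form, giving the claimed $\mf c_1 + \mf c_2\, n^2\Gamma^{\rm ex}_n(f;\nu^n_U)$. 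The factor $\mc Z_n|_k$ is harmless since it is bounded by $\mf c_Z$ uniformly, and the dependence on $\eta_0\eta_1$ enters only through the constants in the equivalence-of-ensembles step.

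The main obstacle is step (a): producing the function $h_k$ with $h$-Dirichlet-form of order $n^{-3}$ uniformly in $k\in\{0,\dots,n\}$, \emph{including} $k/n$ near $0$ or $1$ where the canonical measure degenerates and the spectral gap of the exclusion process is no longer uniformly $\Theta(n^{-2})$ on the nose — one needs the standard $O(n^{-2})$ gap bound for SSEP on the torus, which is in fact uniform in the number of particles, so this is manageable but requires care. Concretely, one solves the Poisson-type equation $n^2 L^{\rm ex}_n h_k = W_{n,k} - E_{\nu^{n,k}_{1/2}}[W_{n,k}]$ and controls $\langle h_k, W_{n,k}\rangle_{\nu^{n,k}_{1/2}}$ via the spectral gap, noting $\mathrm{Var}_{\nu^{n,k}_{1/2}}(W_{n,k}) = O(n)$ (it is a sum of $n$ weakly correlated bounded terms) so that $D^{n,k}(h_k) = n^{-2}\langle h_k, -L^{\rm ex}_n h_k\rangle \le n^{-2}\cdot\text{gap}^{-1}\cdot\text{Var}(W_{n,k})/n^{?}$ — the bookkeeping of powers of $n$ here is the delicate part and must be done against the precise normalisations in~\eqref{eq_def_carreduchamp}. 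The remaining $E_{\nu^{n,k}_{1/2}}[W_{n,k}] = O(1)$ contributes, after summation, the constant $\mf c_1$; this last point also uses that covariances of the canonical measure differ from the grand-canonical ones by $O(1/n)$, i.e. another equivalence-of-ensembles input. I would organise the write-up so that these canonical-measure estimates are quoted from the appendices, keeping the body of the proof to the conditioning decomposition and the generator-shift inequality.
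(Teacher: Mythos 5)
Your conditioning on the magnetisation hyperplane and reduction to canonical-measure estimates mirror what the paper does (this is exactly the decomposition in Lemma~\ref{n-s01}). However, the core of your argument — solve the Poisson equation $n^2 L^{\rm ex}_n h_k = W_{n,k} - E_{\nu^{n,k}_{1/2}}[W_{n,k}]$ and bound $\big|\int f^k\, n^2 L^{\rm ex}_n h_k\, d\nu^{n,k}_{1/2}\big|$ by $\beta^{-1}n^2\Gamma^{\rm ex}_n(f^k) + \beta\, n^2 D^{n,k}(h_k)$ — does not hold in the form you state. Writing out the generator and applying Cauchy--Schwarz gives
\begin{equation*}
\Big|\int f^k\, L^{\rm ex}_n h_k\, d\nu\Big|
\le \Big[\sum_i\int\big(\sqrt{f^k(\eta^{i,i+1})}-\sqrt{f^k(\eta)}\big)^2 d\nu\Big]^{1/2}
\Big[\sum_i\int\big(\sqrt{f^k(\eta^{i,i+1})}+\sqrt{f^k(\eta)}\big)^2\big(\nabla_{i,i+1} h_k\big)^2 d\nu\Big]^{1/2},
\end{equation*}
so the carré du champ of $f^k$ (which is what the statement needs) pairs with a \emph{weighted} quadratic form of $h_k$ carrying the weight $(\sqrt{f^k}'+\sqrt{f^k})^2$. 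To absorb that weight you need a \emph{pointwise} bound on $\nabla_{i,i+1} h_k$, which a Poisson solution does not provide. The $H_{-1}$/spectral-gap formalism pairs naturally with the symmetric Dirichlet form $\mathcal E(f^k,f^k)=\sum_i\int(f^k(\eta^{i,i+1})-f^k(\eta))^2 d\nu$, which can be arbitrarily larger than $\Gamma^{\rm ex}_n(f^k)$. This is precisely why the paper does not solve a Poisson equation: Lemma~\ref{n-l01} instead telescopes $\sum_i\bar\eta^m_i\bar\eta^m_{i+1}$ into single-bond gradients $\bar\eta^m_j-\bar\eta^m_{j+1}$ multiplied by spatially averaged prefactors $h$ that are \emph{bond-invariant}, so that the integration-by-parts Lemma~\ref{n-s03} applies and cleanly yields $\Gamma^{\rm ex}_n(f) + \int h^2 f\,d\nu$. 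You also have no analogue of the remaining two steps of the paper's proof: after IBP the term $\int h^2 f\,d\nu$ (with $h$ a spatial average of $\bar\eta^m$, a priori $O(\sqrt n)$ pointwise) is controlled via Yau's log-Sobolev inequality for the canonical measure (Lemma~\ref{n-s01}) combined with subgaussian concentration (Proposition~\ref{n-l03}, proved through a local CLT and Hoeffding), not a variance/spectral-gap bound. Finally, you credit the Appendix IBP formula with producing a function $h_k$ such that $W_{n,k}-E[W_{n,k}] = n^2 L^{\rm ex}_n h_k + O(1)$; that formula proves no such thing --- it is a single-bond integration by parts with a compensating $\int h^2 f\,d\nu$ term, not a Poisson-equation solvability statement.
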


The requirement that $n\ge \mf n_0$ comes from the local central limit
theorem for i.i.d.\! random variables, see Lemma \ref{n-l02}.

\begin{remark}
\label{n-s04}
The proof presented in this section applies to the weighted measure
$\nu^n_U$, to the product measure $\nu^n_{1/2}$ and to the canonical
measure $\nu^{n,k}_{1/2}$, $0\le k\le n$. We present the proof for the
measure $\nu^n_{1/2}$ to focus on the main steps.  The argument also
applies to the cylinder functions $\bar\eta^m_{-1} \, \bar\eta^m_{1}$
and $\bar\eta^m_{-1}\, \bar\eta^m_{0}\, \bar\eta^m_{1}$ or to any
other finite product of this type. We indicated below the
modifications needed in the proof to cover these other cases.
\end{remark}

The next result is a consequence of Theorem \ref{n-s02}.

\begin{corollary}
\label{n-s09}
There exist finite constants $\mf c_1$, $\mf c_2$, and an integer
$\mf n_0$ such that
\begin{equation*}
\begin{aligned}
& \bb E_{\mu_n} \big[\,
\big(\mc Y^n_t\big)^2\, \big]
\,+\, 16 \, a\, \gamma^2 \,   \int_0^t  \bb E_{\mu_n} \big[\,
(\mc Y^n_s )^4 \, \big]\; ds  
\\
&\;\;
\, \le\;  E_{\mu_n} \big[\,
\big(\mc Y^n \big)^2\, \big]
\,-\, 4 \, a\, \theta \,   \int_0^t  \bb E_{\mu_n} \big[\,
(\mc Y^n_s )^2 \, \big]\; ds  
\,+\, \mf c_1 \, t \, +\,
\mf c_2\,  n^{2}\, \int_0^t \Gamma_n^{\rm ex}(f_s ; \nu^{n}_{U})
\; ds 
\end{aligned}
\end{equation*}
for all $n\ge \mf n_0$, probability measures $\mu_n$ and $t\ge 0$. In this formula,
$f_s = d\mu_n S^n(s)/ d\nu^n_{U}$ as usual.
\end{corollary}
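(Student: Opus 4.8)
This is a consequence of Theorem~\ref{n-s02} once one computes $\ms L_n(\mc Y^n)^2$. Since $\Omega_n$ is finite, $t\mapsto\E_{\mu_n}[(\mc Y^n_t)^2]$ is differentiable, and as $\mc Y^n=n^{-3/4}\mf m$ is invariant under the exclusion dynamics (so $L^{\rm ex}_n(\mc Y^n)^2=0$), Dynkin's formula gives
\begin{equation*}
\E_{\mu_n}\big[(\mc Y^n_t)^2\big]=E_{\mu_n}\big[(\mc Y^n)^2\big]+a\sqrt n\int_0^t E_{\nu^n_U}\big[f_s\,L^G_n(\mc Y^n)^2\big]\,ds .
\end{equation*}
Flipping $\eta_i$ turns $\mf m$ into $\mf m-\sigma_i$ with $\sigma_i=2\eta_i-1$, so $(\mc Y^n(\eta^i))^2-(\mc Y^n)^2=n^{-3/2}(1-2\sigma_i\mf m)$ and
\begin{equation*}
a\sqrt n\,L^G_n(\mc Y^n)^2=a\,n^{-1}\Big[\sum_{i\in\T_n}c(\tau_i\eta)-2\mf m\sum_{i\in\T_n}c(\tau_i\eta)\,\sigma_i\Big].
\end{equation*}

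The next step is to expand the two sums using the representation~\eqref{eq_jp_rates_as_bareta} of the rates, the identity $\bar\eta_i^2=1/4$ and the notation~\eqref{n-15}; a direct computation yields
\begin{equation*}
\sum_{i\in\T_n}c(\tau_i\eta)\,\sigma_i=(2-4\gamma)\mf m+8\gamma^2\Big(\tfrac{\mf m^3}{n^2}+F^{(3)}_{n,m}\Big),\quad
\sum_{i\in\T_n}c(\tau_i\eta)=n+(4\gamma^2-8\gamma)\tfrac{\mf m^2}{n}-8\gamma\,G^{2,1}_{n,m}+4\gamma^2\,G^{2,2}_{n,m}.
\end{equation*}
Inserting $\mf m=n^{3/4}\mc Y^n$ and $\gamma=\gamma_n$, for which $2-4\gamma_n=2\theta/\sqrt n$, the pieces $-2an^{-1}(2-4\gamma)\mf m^2$ and $-16a\gamma^2 n^{-3}\mf m^4$ become exactly $-4a\theta(\mc Y^n)^2$ and $-16a\gamma^2(\mc Y^n)^4$, leaving
\begin{equation*}
a\sqrt n\,L^G_n(\mc Y^n)^2=-4a\theta(\mc Y^n)^2-16a\gamma^2(\mc Y^n)^4+B^n+R^n,
\end{equation*}
where $B^n$ collects the terms bounded in absolute value by a constant $\mf c_1$ (the $a$ from $\sum_i 1$, the term $a(4\gamma^2-8\gamma)n^{-1/2}(\mc Y^n)^2$, and $-8a\gamma n^{-1}G^{2,1}_{n,m}$, $4a\gamma^2 n^{-1}G^{2,2}_{n,m}$, using $|\mc Y^n|\le n^{1/4}/2$ and $|G^{2,k}_{n,m}|\le n$), while, after expanding $F^{(3)}_{n,m}$ through~\eqref{n-15},
\begin{equation*}
R^n=-16a\gamma^2\tfrac{\mf m}{n}G^{(3)}_{n,m}-32a\gamma^2\big(\tfrac{\mf m}{n}\big)^2 G^{2,1}_{n,m}-16a\gamma^2\big(\tfrac{\mf m}{n}\big)^2 G^{2,2}_{n,m}.
\end{equation*}

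Integrating over $[0,t]$ then gives
\begin{equation*}
\begin{aligned}
\E_{\mu_n}[(\mc Y^n_t)^2]+16a\gamma^2\int_0^t\E_{\mu_n}[(\mc Y^n_s)^4]\,ds
&=E_{\mu_n}[(\mc Y^n)^2]-4a\theta\int_0^t\E_{\mu_n}[(\mc Y^n_s)^2]\,ds\\
&\quad+\int_0^t E_{\nu^n_U}[f_s(B^n+R^n)]\,ds .
\end{aligned}
\end{equation*}
Since $\|B^n\|_\infty\le\mf c_1$ the $B^n$ part contributes at most $\mf c_1 t$. For $R^n$, each coefficient $\mf m/n$ and $(\mf m/n)^2$ is a bounded function of the magnetisation, hence invariant by the exclusion dynamics, and $G^{2,1}_{n,m}=\sum_i\bar\eta^m_i\bar\eta^m_{i+1}$, $G^{2,2}_{n,m}=\sum_i\bar\eta^m_{i-1}\bar\eta^m_{i+1}$, $G^{(3)}_{n,m}=\sum_i\bar\eta^m_{i-1}\bar\eta^m_i\bar\eta^m_{i+1}$; so Theorem~\ref{n-s02} together with Remark~\ref{n-s04} yields $|E_{\nu^n_U}[f_s R^n]|\le\mf c_1+\mf c_2\,n^2\,\Gamma_n^{\rm ex}(f_s;\nu^n_U)$ for $n\ge\mf n_0$. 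Integrating this over $[0,t]$ and renaming the constants gives the claimed inequality.

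The only genuinely delicate point — and the reason Theorem~\ref{n-s02} is invoked rather than a naive bound — is the control of $R^n$: its pointwise size is of order $a\,n$, so the crude estimate integrated in time would be far too large, and it is the replacement estimate, exploiting the cancellations in the centred sums $\sum_i\bar\eta^m_i\cdots$, that renders this term $O(n^2\Gamma_n^{\rm ex})+O(t)$. All remaining manipulations are elementary algebra.
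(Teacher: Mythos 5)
Your proof is correct and follows essentially the same route as the paper: Dynkin's formula for $(\mc Y^n_t)^2$, exploitation of the exclusion invariance of the magnetisation, extraction of the $-4a\theta(\mc Y^n)^2 - 16a\gamma^2(\mc Y^n)^4$ terms via $1-2\gamma=\theta/\sqrt n$, and an application of Theorem~\ref{n-s02} (with Remark~\ref{n-s04}) to the remaining centred three- and two-point sums multiplied by the bounded, exclusion-invariant prefactors $\mf m/n$ and $(\mf m/n)^2$. The only cosmetic difference is that you explicitly expand $\frac{a}{n}\sum_i c(\tau_i\eta)$ whereas the paper simply bounds it by $(1+\gamma)^2 a$; your $R^n$ is exactly the paper's $-16a\gamma^2\,\ms A_n$.
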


\begin{proof}
By the semi-martingale decomposition of the square of the
magnetisation, 
\begin{equation}
\bb E_{\mu_n} \Big[\,
\big(\mc Y^n_t\big)^2\, \Big] \;=\;
\bb E_{\mu_n} \Big[\,
\big(\mc Y^n_0\big)^2\, \Big]
\,+\, \bb E_{\mu_n} \Big[\, \int_0^t 
\sqrt{n}\, L_n \big(\mc Y^n_s\big)^2\; ds \, \Big] \;.
\end{equation}
We estimate the second term. Since the exclusion dynamics does not
modify the magnetisation and $1-2 \gamma = \theta/ \sqrt{n}$, 
one has recalling the expression~\eqref{eq_jp_rates_as_bareta} of the jump rates:
\begin{equation}
\begin{aligned}
\sqrt{n}\, L_n \big(\mc Y^n\big)^2 \, & =\,
-\, \frac{4a}{n^{1/4}} \, \mc Y^n \sum_{i\in\bb T_n} c(\tau_i \eta) \,
\bar\eta_i\;+\; \frac{a}{n} \sum_{i\in\bb T_n} c(\tau_i \eta)
\\
\, & =\,
-\, 4\,a\, \theta \, (\mc Y^n)^2
\;-\; 16 \, a\, \gamma^2 \, \ms M^n\, \sum_{i\in\bb T_n}
\bar\eta_{i-1}\bar\eta_i\bar\eta_{i+1}
\;+\;
\frac{a}{n} \sum_{i\in\bb T_n} c(\tau_i \eta) \;, 
\end{aligned}
\label{eq_th51_0}
\end{equation}
where $\ms M^n = \mc Y^n/n^{1/4}=\sum_{i\in\T_n}\bar\eta_i = \mf m/n$,
introduced in \eqref{eq_recentering_sec_product}, is bounded in
absolute value by $1/2$.

The last term on the right-hand side of~\eqref{eq_th51_0} is
bounded by $(1+\gamma)^2a$. The second one can be written as
\begin{equation}
\begin{gathered}
\;-\; 16 \, a\, \gamma^2 \, \ms M^n\, \sum_{i\in\bb T_n}
\bar\eta_{i-1} \bar\eta_i   \bar\eta_{i+1} \, =\,
\; -\; 16 \, a\, \gamma^2 \,  (\mc Y^n )^4
\,-\,16 \, a\, \gamma^2 \,  \mc A_n
\\
\text{where}\quad
\ms A_n \,=\, 
\ms M^n\, \sum_{i\in\bb T_n}
\bar\eta^m_{i-1} \bar\eta^m_i   \bar\eta^m_{i+1}
\, +\,  (\ms M^n)^2 \,
\Big\{ 2\, \sum_{i\in\bb T_n} \bar\eta^m_{i} \bar\eta^m_{i+1}
\,+\, \sum_{i\in\bb T_n} \bar\eta^m_{i-1} \bar\eta^m_{i+1} \,\Big\}\;.
\end{gathered}
\end{equation}
Since $\ms M^n$ is bounded and invariant by the exclusion dynamics, by
Theorem \ref{n-s02}, 
\begin{equation}
\begin{aligned}
\bb E_{\mu_n} \Big[\, \int_0^t 
\mc A_n (s)  \; ds \, \Big]
\;=\; \int_0^t  E_{\nu^n_{U}} \big[\, f_s \;
\mc A_n \,\big] \; ds
\,\le\,  \mf c_1 \, t \, +\,
\,  n^{2}\, \int_0^t \Gamma_n^{\rm ex}(f_s ; \nu^{n}_{U})
\; ds \;, 
\end{aligned}
\end{equation}
for all sufficiently large $n$.  In this formula,
$f_s = d\, \mu_n\, S^n(s)/ d\, \nu^n_{U}$ and the values of the constant
$\mf c_1$ may have changed. To complete the proof it
remains to recollect all previous estimates.
\end{proof}

The next corollary is a consequence of the previous result and Theorem
\ref{n-s05}. 

\begin{corollary}
\label{n-s11}
There exists a finite constant $\mf c_3$ and an integer $\mf n_0$ such
that
\begin{equation*}
\begin{aligned}
& \bb E_{\mu_n} \big[\,
\big(\mc Y^n_t\big)^2\, \big]
\,+\, 16 \, a\, \gamma^2 \,   \int_0^t  \bb E_{\mu_n} \big[\,
(\mc Y^n_s  )^4 \, \big]\; ds  
\\
&\quad 
\, \le\; e^{4 a |\theta|  t}\, \Big\{\,  E_{\mu_n} \big[\,
\big(\mc Y^n \big)^2\, \big]
\,+\, \mf c_3 \, a\,
\big\{ \, n^{-1/2} \, H_n (f_0 \,|\, \nu^{n}_{U})  \,+\, t\,\big\}\,
\Big\}
\end{aligned}
\end{equation*}
for all $n\ge \mf n_0$, $t\ge 0$. In this formula, $f_0 = d\mu_n / d\nu^n_U$.
\end{corollary}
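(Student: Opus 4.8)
The plan is to combine Corollary~\ref{n-s09}, which controls the second moment of $\mc Y^n$ up to a carr\'e du champ error, with the entropy--energy estimate of Theorem~\ref{n-s05}, and to close the resulting inequality by Gronwall's lemma; no genuinely new input is needed.

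First I would rewrite the carr\'e du champ error term in Corollary~\ref{n-s09} using Theorem~\ref{n-s05}. Since $\Gamma_n(\cdot;\nu^n_U) = n^2\,\Gamma^{\rm ex}_n(\cdot;\nu^n_U) + a\,\Gamma^G_n(\cdot;\nu^n_U) \ge n^2\,\Gamma^{\rm ex}_n(\cdot;\nu^n_U)$ and $\ms G_n(\cdot;\nu^n_U) = \sqrt n\,\Gamma_n(\cdot;\nu^n_U)$, one has $n^2\,\Gamma^{\rm ex}_n(f_s;\nu^n_U) \le n^{-1/2}\,\ms G_n(f_s;\nu^n_U)$, so Theorem~\ref{n-s05} gives
\begin{equation*}
n^2\int_0^t \Gamma^{\rm ex}_n(f_s;\nu^n_U)\, ds
\;\le\; n^{-1/2}\int_0^t \ms G_n(f_s;\nu^n_U)\, ds
\;\le\; n^{-1/2}\,H_n(f_0\,|\,\nu^n_U) \,+\, a\,\mf c_0\, t \;.
\end{equation*}
Inserting this into Corollary~\ref{n-s09}, bounding $-4a\theta\int_0^t \bb E_{\mu_n}[(\mc Y^n_s)^2]\, ds \le 4a|\theta|\int_0^t \bb E_{\mu_n}[(\mc Y^n_s)^2]\, ds$, and keeping in mind that the error constants $\mf c_1,\mf c_2$ in Corollary~\ref{n-s09} are $O(a)$ (they arise from the terms $16a\gamma^2\mc A_n$ and $a\, n^{-1}\sum_i c(\tau_i\eta)$ of the semimartingale decomposition, with $16\gamma^2$ bounded uniformly in $n\ge\mf n_0$), I would obtain, using also $a^2\le \mf a_0\, a$, an estimate of the form
\begin{equation*}
\psi(t) \;\le\; E_{\mu_n}[(\mc Y^n)^2] \,+\, \mf c_4\, a\,\big(n^{-1/2}\,H_n(f_0\,|\,\nu^n_U) + t\big) \,+\, 4a|\theta|\int_0^t \bb E_{\mu_n}[(\mc Y^n_s)^2]\, ds \;,
\end{equation*}
where $\psi(t) := \bb E_{\mu_n}[(\mc Y^n_t)^2] + 16a\gamma^2\int_0^t \bb E_{\mu_n}[(\mc Y^n_s)^4]\, ds$ and $\mf c_4$ is a finite constant.

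Finally, since the fourth-moment integral in $\psi$ is non-negative, $\bb E_{\mu_n}[(\mc Y^n_s)^2] \le \psi(s)$, so the above becomes $\psi(t) \le \alpha(t) + 4a|\theta|\int_0^t \psi(s)\, ds$ with $\alpha(t) := E_{\mu_n}[(\mc Y^n)^2] + \mf c_4\, a\,(n^{-1/2}H_n(f_0\,|\,\nu^n_U)+t)$ non-decreasing in $t$. The integral form of Gronwall's inequality then yields $\psi(t) \le \alpha(t)\, e^{4a|\theta| t}$, which is exactly the claimed bound with $\mf c_3 = \mf c_4$. I do not expect any real obstacle here: the entire difficulty is already contained in Corollary~\ref{n-s09} (hence in Theorem~\ref{n-s02}) and in Theorem~\ref{n-s05}; the only mild points of care are the bookkeeping of the $a$-prefactors in the error terms and the observation that the non-negative fourth-moment term on the left-hand side can be carried unchanged through the Gronwall step.
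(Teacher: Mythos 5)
Your proposal is correct and takes essentially the same approach as the paper: use Theorem~\ref{n-s05} to bound the integrated carr\'e du champ in Corollary~\ref{n-s09} by $n^{-1/2}H_n(f_0\,|\,\nu^n_U)+a\,\mf c_0\,t$, then close with Gronwall's inequality, carrying the non-negative fourth-moment term through unchanged. Your explicit bookkeeping of the $a$-prefactors in $\mf c_1,\mf c_2$ (which the paper leaves implicit in the statement of Corollary~\ref{n-s09}) is a welcome clarification, not a deviation.
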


\begin{proof}
By Theorem \ref{n-s05}, the sum of the last two terms appearing on the
right-hand side of the statement of Corollary \ref{n-s09} is less than or
equal to
\begin{equation}
\mf c_3 \, \big\{\, t \, +\, n^{-1/2}\, H_n (f_0 \,|\, \nu^{n}_{U})
\,\big\} 
\end{equation}
for some finite constant $\mf c_3$. Therefore, by Corollary
\ref{n-s09}, there exist a finite constant $\mf c_3$ and an integer
$\mf n_0$ such that
\begin{align}
\mb e_n(t)  \,&+\,
16 \, a\, \gamma^2 \,  \bb E_{\mu_n} \Big[\, \int_0^t 
(\mc Y^n (s) )^4   \; ds \, \Big]
\nnb
&\qquad 
\le\; \mb e_n(0)
\;-\; 4\,a\, \theta\, \int_0^t \mb e_n(s) \; ds 
\,+\, \mf c_3 \, a\, \big\{ \, n^{-1/2} \, H_n (f_0 \,|\, \nu^{n}_{U})
\,+\, t\,\big\}
\end{align}
for all $n\ge \mf n_0$, $t\ge 0$.  Here $\mb e_n(t) = \bb
E_{\mu_n} [\, (\mc Y^n_t)^2\, ] $.
To complete the proof of the corollary, it remains to apply Gronwall's
estimate. 
\end{proof}

We turn to proof of Theorem \ref{n-s02} which is divided in several
steps.  Recall that we present the proof taking as reference measure
the product measure $\nu^n_{U}$, but all arguments apply to
$\nu^n_{1/2}$ or $\nu^{n,k}_{1/2}$. 
The starting point is an energy estimate.

\subsection*{Step 1: Energy estimate}

We start with an integration by part formula. This result is well
known, 
and proven in Appendix~\ref{sec_IBP} in a more general framework useful in Section~\ref{sec_free_energy_bounds}. 
For a probability measure $\nu$ on $\Omega_n$, $i\in \bb T_n$, and a
density $f$ with respect to $\nu$, let 
\begin{equation*}
\Gamma^{{\rm ex},i,i+1}_{n}(f;\nu)
\;:=\; \frac{1}{2}\, 
\int  \big[\,  \sqrt{f (\eta^{i,i+1})} \,-\,
\sqrt{f(\eta)}\, \big]^2 \; d \nu\;.
\end{equation*}

\begin{lemma}
\label{n-s03}
For all $n\ge 2$,
\begin{equation*}
\int h \, [\bar\eta_i^m - \bar\eta^m_{i+1}]\, f\; d\nu^n_{U}
\;\leq\; \beta  \, \Gamma^{{\rm ex},i,i+1}_{n}(f; \nu^n_{U})
\;+\;  \frac{1}{2\beta}  \, \int h^2 \, f  \; d\nu^n_{U} 
\end{equation*}
for all $\beta >0$, $i \in \bb T_n$, $h\colon \Omega_n \to \bb R$ such that
$h(\eta^{i,i+1}) = h(\eta)$ for all $\eta \in \Omega_n$, and
density $f\colon \Omega_n \to [0,\infty)$ with respect to $\nu^n_{U}$.
\end{lemma}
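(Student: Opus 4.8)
The plan is to exploit two facts: the transposition $\Theta_i\colon\eta\mapsto\eta^{i,i+1}$ preserves the reference measure $\nu^n_U$, and the function $\bar\eta^m_i-\bar\eta^m_{i+1}$ is odd under $\Theta_i$ while $h$ is even. First I would note that the empirical-density term cancels, so that $\bar\eta^m_i-\bar\eta^m_{i+1}=\eta_i-\eta_{i+1}$; since $\eta_i,\eta_{i+1}\in\{0,1\}$ we have $(\eta_i-\eta_{i+1})^2\le 1$, and $(\eta^{i,i+1})_i-(\eta^{i,i+1})_{i+1}=-(\eta_i-\eta_{i+1})$. Next, $\nu^n_U$ depends on $\eta$ only through $\mf m=\sum_j\bar\eta_j$, which $\Theta_i$ leaves invariant, and $\nu^n_{1/2}$ is the uniform measure; hence $\nu^n_U(\eta^{i,i+1})=\nu^n_U(\eta)$, i.e.\ $\Theta_i$ is $\nu^n_U$-measure-preserving. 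Using $h(\eta^{i,i+1})=h(\eta)$, changing variables $\eta\mapsto\eta^{i,i+1}$ in the integral and averaging with the original expression yields the symmetrisation identity
\[
\int h\,[\eta_i-\eta_{i+1}]\,f\;d\nu^n_U
\;=\;\frac12\int h\,[\eta_i-\eta_{i+1}]\,\big(f(\eta)-f(\eta^{i,i+1})\big)\;d\nu^n_U .
\]

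From here it is a one-step Young inequality. I would factor $f(\eta)-f(\eta^{i,i+1})=\big(\sqrt{f(\eta)}-\sqrt{f(\eta^{i,i+1})}\big)\big(\sqrt{f(\eta)}+\sqrt{f(\eta^{i,i+1})}\big)$, bound the integrand by its absolute value, and apply $|u|\,w\le\beta u^2+\tfrac1{4\beta}w^2$ with $u=\sqrt{f(\eta)}-\sqrt{f(\eta^{i,i+1})}$ and $w=|h|\,|\eta_i-\eta_{i+1}|\,\big(\sqrt{f(\eta)}+\sqrt{f(\eta^{i,i+1})}\big)$. The contribution of $u^2$ is $\tfrac\beta2\int u^2\,d\nu^n_U=\beta\,\Gamma^{{\rm ex},i,i+1}_n(f;\nu^n_U)$ by the very definition of $\Gamma^{{\rm ex},i,i+1}_n$. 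For the contribution of $w^2$ I would use $(\eta_i-\eta_{i+1})^2\le1$ and $(\sqrt a+\sqrt b)^2\le2(a+b)$ to get $\tfrac1{8\beta}\int w^2\,d\nu^n_U\le\tfrac1{4\beta}\int h^2\,[f(\eta)+f(\eta^{i,i+1})]\,d\nu^n_U$, and then symmetrise once more: since $h$ and $\nu^n_U$ are $\Theta_i$-invariant, $\int h^2 f(\eta^{i,i+1})\,d\nu^n_U=\int h^2 f\,d\nu^n_U$, so this equals $\tfrac1{2\beta}\int h^2 f\,d\nu^n_U$. Adding the two contributions gives exactly the claimed bound.

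There is honestly no real obstacle here: all of the content sits in the symmetrisation identity, and the constants come out on the nose once the Young parameter is taken to be $2\beta$, so there is nothing to discard. I would also remark that the argument uses nothing about $\nu^n_U$ beyond its invariance under the transpositions $\Theta_i$, so it applies verbatim to $\nu^n_{1/2}$ and to the canonical measures $\nu^{n,k}_{1/2}$; the variant with an additional weight factor needed in Section~\ref{sec_free_energy_bounds} is established in exactly the same way in Appendix~\ref{sec_IBP}, which one may simply invoke.
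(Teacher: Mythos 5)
Your proof is correct and is essentially the paper's argument specialized to $\nu^n_U$: the paper points to Lemma~\ref{lemm_IBP} in Appendix~\ref{sec_IBP}, whose proof uses the identical symmetrisation over $\eta\mapsto\eta^{i,i+1}$ followed by Young's inequality, except that there the reference measure $\nu^{n,m}_g$ is not $\Theta_i$-invariant and the Radon–Nikodym ratio $\nu^{n,m}_g(\eta^{i,i+1})/\nu^{n,m}_g(\eta)$ produces extra correction terms, which vanish in your $\nu^n_U$ case exactly as you observe. Your closing remark about the applicability to $\nu^n_{1/2}$ and $\nu^{n,k}_{1/2}$ matches Remark~\ref{n-s04}.
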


\begin{lemma}
\label{n-l01}
Let $\mc Z_n \colon \Omega_n \to \bb R$ be a sequence of random
variables satisfying the hypotheses of Theorem \ref{n-s02}.  There
exist finite constants $C_0$, $C_1$ which depend only on the local
function $\eta_0\eta_1$ and on
$\sup_{n\ge 1} \Vert \mc Z_n\Vert_\infty$ such that
\begin{equation}
\label{n-13}
\begin{aligned}
& \Big| \, \nu^{n}_{U} \Big( f\, \mc Z_n\, 
\sum_{i\in \bb T_n} \bar\eta^m_i \, \bar\eta^m_{i+1}
\Big) \, \Big|
\\
&\quad
\le\; C_0  \,+\,
\delta \,  n^{2}\, \Gamma_n^{\rm ex}(f; \nu^{n}_{U})
\,+\,
\frac{C_1} {\delta\, n} \,
\sum_{j\in\bb T_n} \nu^{n}_{U}
\Big[f \,\Big( \mc Z_n\, \frac{1}{\sqrt{n} }
\sum_{i\neq j+1-n} I_n\big(j-(i+1)\big)\, \bar\eta^m_{i}  \Big)^2\Big]\,,
\end{aligned}
\end{equation}
for all $\delta>0$ and density $f$ with respect to $\nu^{n}_{U}$.
\end{lemma}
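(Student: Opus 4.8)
The idea is to use a discrete "integration against the Green's function of the exclusion Dirichlet form" trick. I want to write the two-point function $\sum_i \bar\eta^m_i \bar\eta^m_{i+1}$ so that, after summing against $\mc Z_n$ and $f$, the products can be reorganised into telescoping terms of the form $\bar\eta^m_i - \bar\eta^m_{i+1}$, each of which is controlled by the single-bond exclusion carré du champ via Lemma \ref{n-s03}. Concretely, fix an arbitrary reference site (say indexed by the sum over $j$ in the statement) and write, for each $i$,
\[
\bar\eta^m_{i} \;=\; \bar\eta^m_{j+1} \;+\; \sum_{\ell} \big(\bar\eta^m_{\ell}-\bar\eta^m_{\ell+1}\big),
\]
where the sum runs over the bonds on the (shorter) arc of $\T_n$ joining $i$ to $j+1$; the coefficient of a given gradient $\bar\eta^m_\ell - \bar\eta^m_{\ell+1}$ is then a bounded kernel $I_n(\cdot)$ counting how many of the paths pass through that bond, which is exactly the object appearing on the right-hand side of \eqref{n-13}. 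I would average this identity over the choice of reference site $j\in\T_n$ (dividing by $n$) to symmetrise, which is the source of the $\frac1n\sum_j$ in the statement.

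First I would substitute this representation into $\nu^n_U\big(f\,\mc Z_n\sum_i \bar\eta^m_i\bar\eta^m_{i+1}\big)$. The term where $\bar\eta^m_i$ is replaced by the reference value $\bar\eta^m_{j+1}$ produces, after also expanding $\bar\eta^m_{i+1}$ the same way, a bounded contribution: here one uses that $\mc Z_n$ and the magnetisation-shifted variables interact only through bounded local functions, that $\sum_i \bar\eta^m_i = 0$ by definition of $\bar\eta^m$, and the a priori bound $\|\mc Z_n\|_\infty \le \mf c_Z$ together with $|\bar\eta^m_i|\le 1$; this gives the constant $C_0$. The genuinely dynamical term is the one multiplying a single gradient $\bar\eta^m_\ell-\bar\eta^m_{\ell+1}$: for it I apply Lemma \ref{n-s03} with $h$ equal to $\mc Z_n$ times the appropriate (bounded, exclusion-invariant on the bond $\{\ell,\ell+1\}$ — one must check the gradient is taken on a bond not touching the remaining factors, or peel off those factors first) linear combination of $\bar\eta^m$'s. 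Summing the resulting bound over $\ell$ and over $j$, the $\Gamma^{{\rm ex},\ell,\ell+1}_n$ terms assemble into $\delta\, n^2\,\Gamma^{\rm ex}_n(f;\nu^n_U)$ (the $n^2$ coming from the diffusive speed-up built into $\Gamma_n$, and $\delta$ being the free parameter $\beta$ in Lemma \ref{n-s03}, suitably rescaled by $n$), while the $\frac1{2\beta}\int h^2 f\,d\nu^n_U$ terms become exactly the $\frac{C_1}{\delta n}\sum_j \nu^n_U[f(\cdots)^2]$ term written in \eqref{n-13}.

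The delicate point — and the main obstacle — is the bookkeeping around exclusion-invariance: Lemma \ref{n-s03} requires the test function $h$ multiplying the gradient on bond $\{\ell,\ell+1\}$ to be invariant under the swap $\eta\mapsto\eta^{\ell,\ell+1}$, but $\mc Z_n$ is only assumed invariant under \emph{all} swaps (so that is fine), whereas the other $\bar\eta^m$ factors picked up in the path expansion are \emph{not} bond-invariant when their indices are $\ell$ or $\ell+1$. One handles this by choosing the path decomposition so that the "other" factor $\bar\eta^m_{i}$ or $\bar\eta^m_{i+1}$ sits away from the active bond, or by absorbing the $O(1)$ discrepancy created when the active bond coincides with a factor's support into the constant $C_0$ (using that such coincidences contribute only $O(1)$ bonds per $i$). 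A second, more quantitative, subtlety is that one must not lose the gain from the $n^2$ speed-up: the number of bonds on an arc is $O(n)$, so naively $\sum_\ell$ over a single path costs a factor $n$; this is precisely compensated because applying Cauchy–Schwarz/Lemma \ref{n-s03} bond-by-bond and then summing gives $\delta \sum_\ell \Gamma^{{\rm ex},\ell,\ell+1}_n \le \delta\, \Gamma^{\rm ex}_n$ without the extra $n$, and the $n^{-1/2}$ normalisation inside the squared term in \eqref{n-13} is exactly what keeps that last term $O(1)$ after the later equivalence-of-ensembles / local CLT estimate of Lemma \ref{n-l02}. Once \eqref{n-13} is in place, Theorem \ref{n-s02} follows by bounding the last term in \eqref{n-13} via that local CLT argument and optimising over $\delta$.
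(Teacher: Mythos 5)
Your strategy---telescope the two-point function into bond gradients, isolate the non-invariant contributions, then apply the single-bond integration-by-parts of Lemma~\ref{n-s03} and recollect---is essentially the paper's. The execution there is slightly different: instead of telescoping $\bar\eta^m_i$ against a reference site and averaging over that site, the paper uses the zero-mean property $\sum_j\bar\eta^m_j=0$ to write $\bar\eta^m_{i+1}=\frac1n\sum_{j=1}^n[\bar\eta^m_{i+1}-\bar\eta^m_{i+j}]$ and then one-directional telescoping around the torus, which after a change of variables produces the explicit kernel $I_n(p)=\frac{n-1-p}{n}\,{\bf 1}_{[0,n-1]}(p)$. In the statement, the index $j$ labels the gradient bond rather than a reference site, and the factor $\frac1{\delta n}$ comes from taking $\beta=\delta n^2$ in Lemma~\ref{n-s03} together with the $n^{-1/2}$ normalisation inside the square, not from averaging over reference sites.

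The genuine gap is the accounting for the coincidence (non-invariant) terms, which you flag as the delicate point but do not close. You suggest absorbing them into $C_0$ ``using that such coincidences contribute only $O(1)$ bonds per $i$.'' That count is correct, but it amounts to $O(n)$ coincident pairs in total, each naively of size $O(1)$, giving $O(n)$, not $O(1)$. What makes the paper's $C_0$ finite is that its kernel equals $0$ at the coincident site $i\equiv j$ and $I_n(n-2)=1/n$ at the remaining coincident site $i\equiv j+1$; the removed contribution is therefore $\frac1n\sum_j(\bar\eta^m_j-\bar\eta^m_{j+1})\bar\eta^m_{j+1}$, deterministically $O(1)$. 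Under your ``shorter arc averaged over reference sites'' decomposition, the weight at a coincident site is essentially the fraction of reference sites whose shorter arc passes through the adjacent bond, roughly $1/2$, not $O(1/n)$, and the removed part would be $O(n)$: the estimate does not close. Your alternative suggestion, ``choosing the path decomposition so that the other factor sits away from the active bond,'' points in the right direction, but the key structural ingredient you are missing is precisely that the one-directional telescope (not the shorter-arc one) produces a kernel that decays linearly to order $1/n$ at the coincident site.
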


\begin{proof}
As $\sum_{1\le j\le n} \bar\eta^m_{j} =0$, 
\begin{equation*}
\sum_{i\in\T_n}\bar\eta^m_i \bar\eta^m_{i+1} \,=\, 
\frac{1}{n}\sum_{i\in\T_n}\bar\eta^m_i
\sum_{j=1}^{n} [\, \bar\eta^m_{i+1} - \bar\eta^m_{i+j} \,]
=
\sum_{i\in\T_n}\bar\eta^m_i
\sum_{k=1}^{n-1} \, [1-(k/n)]\, (\bar\eta^m_{i+k}-\bar\eta^m_{i+k+1})
,
\end{equation*}
Performing the change of variables $j=k+i$ and exchanging sums permits
to rewrite this expression as
\begin{align}
\label{03}
\sum_{j\in\T_n}& (\bar\eta^m_{j}-\bar\eta^m_{j+1}) 
\sum_{i\in\T_n} I_n(j-(i+1))\, \bar\eta^m_i
\nnb
&\hspace{2cm}=
\sum_{j\in\T_n} (\bar\eta^m_{j}-\bar\eta^m_{j+1}) 
\sum_{i\neq j+1} I_n\big(j-(i+1)\big)\, \bar\eta^m_i
+\frac{1}{n}\, \sum_{j\in\T_n}(\bar\eta^m_{j}-\bar\eta^m_{j+1})\bar\eta^m_{j+1}
\; ,
\end{align}
where $I_n$ is given by:
\begin{equation}
I_n : p\in\Z
\longmapsto 
\frac{n-1-p}{n}\, {\bf 1}_{[0,n-1]}(p)
.
\end{equation}
In~\eqref{03} we disassociate the term $i=j+1-n$ from the sum because it is the only
term in the sum over $i$ which depends on the variables $\eta_j$,
$\eta_{j+1}$.  The absolute value of the second sum is bounded by
$1$. 
It corresponds to the first contribution on the right-hand side of
the inequality~\eqref{n-13} stated in the lemma:
\begin{equation}
\Big|\, \nu^n_U\Big( f\, \mc Z_n\, \frac{1}{n}\,
\sum_{j\in\T_n}(\bar\eta^m_{j}-\bar\eta^m_{j+1})\bar\eta^m_j\Big)\, \Big|
\leq 
C_0\, ,
\qquad 
C_0 := \sup_{n\geq 1}\Vert \mc Z_n\Vert_\infty
. 
\end{equation}
 We turn to the first term in~\eqref{03}.

Since $\mc Z_n(\eta^{i,i+1}) = \mc Z_n(\eta)$ for all $i\in\bb T_n$,
$n\ge 1$, by the integration by parts formula stated in
Lemma~\ref{n-s03}, 
with $h = \mc Z_n\, \sum_{i\neq j+1}I_n\big(j-(i+1)\big)\,\bar\eta^m_j$, 
\begin{equation}
\label{n-39}
\begin{aligned}
& \Big| \nu^{n}_{U} \Big( f\, \mc Z_n\, 
\sum_{j\in\T_n} (\bar\eta^m_{j}-\bar\eta^m_{j+1}) 
\sum_{i\neq j+1-n} I_n\big(j-(i+1)\big)\, \bar\eta^m_{i}  \Big)\Big|
\nnb
&\quad\leq 
\delta\, n^{2} \,  \Gamma^{\rm ex}_n(f ; \nu^{n}_{U})
\,+\, \frac{1} {2\, \delta\, n} \, \sum_{j\in\bb T_n} \nu^{n}_{U}
\Big[f \,\Big( \mc Z_n\, \frac{1}{\sqrt{n} }
\sum_{i\neq j+1-n} I_n\big(j-(i+1)\big)\, \bar\eta^m_{i}  \Big)^2\Big]
\end{aligned}
\end{equation}
for all $\delta>0$. On the second term of the right-hand side, we may
bound $\mc Z_n^2$ by $C_1 = \sup_n \Vert \mc Z_n\Vert^2$. This
completes the proof of the lemma.

\end{proof}

\subsection*{Step 2: Logarithmic Sobolev inequality}

The renormalised terms in the right-hand side of~\eqref{n-13} are now
sufficiently averaged in space for the entropy and log-Sobolev
inequality to provide a bound as we now explain.  The next lemma gives
a first bound using the log-Sobolev inequality for the fast part of
the dynamics (i.e. for Kawasaki dynamics at each fixed value of the
magnetisation).

\begin{lemma}
\label{n-s01}
There exists a finite constant $\mf c_{\rm LS}$ such that
\begin{align*}
& \nu^{n}_{U}
\Big[f \,\Big(\, \frac{1}{\sqrt{n} }
\sum_{i\neq j+1-n} I_n\big(j-(i+1)\big) \, \bar\eta^m_{i} \Big)^2\Big]
\le\,
\frac{1}{\lambda} \, \mf c_{\rm LS} \, n^2\,
\Gamma^{\rm ex}_n (f; \nu^{n}_{U})
\\
& \qquad +\, \frac{1}{\lambda}
\sum_{m=0}^n \nu^{n}_{U}(f\, {\bf 1}_m)\, \log \,
\nu^{n,m}_{1/2} \Big( \exp  \Big\{ \lambda\, \Big(\frac{1}{\sqrt{n} }
\sum_{i\neq j+1-n} I_n\big(j-(i+1)\big) \, \bar\eta^m_{i}\Big)^2
\Big\} \, \Big) 
\end{align*}
for all $n\ge 1$, $\lambda>0$ and densities $f$ with respect to $\nu^{n}_{U}$.
\end{lemma}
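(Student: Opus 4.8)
\emph{Proof sketch.} The reference measure $\nu^n_U$ differs from the uniform measure $\nu^n_{1/2}$ only by the factor $e^{nU(\mf m/n)}$, which depends on the configuration solely through the particle number $\mf m=\sum_i\eta_i$; hence conditioning $\nu^n_U$ on $\{\mf m=m\}$ yields the canonical exclusion measure $\nu^{n,m}_{1/2}$, exactly as for $\nu^n_{1/2}$. The plan is to exploit this by slicing along the hyperplanes of fixed particle number. Write $\mathbf 1_m$ for the indicator of $\{\eta:\mf m(\eta)=m\}$ and, for fixed $j$, set $\phi:=n^{-1/2}\sum_{i\neq j+1-n}I_n(j-(i+1))\,\bar\eta^m_i$; on $\{\mf m=m\}$ this is a genuine (bounded, since $|I_n|\le 1$ and $|\bar\eta^m_i|\le 1$) function of $\eta$. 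Decomposing and using $\nu^n_U(\,\cdot\mid \mf m=m)=\nu^{n,m}_{1/2}$,
\begin{equation*}
\nu^n_U\big[f\,\phi^2\big]=\sum_{m=0}^n\nu^n_U(f\mathbf 1_m)\,E_{\nu^{n,m}_{1/2}}\big[f_m\,\phi^2\big],\qquad f_m:=\frac{f\mathbf 1_m}{E_{\nu^{n,m}_{1/2}}[f]},
\end{equation*}
where each $f_m$ is a density with respect to $\nu^{n,m}_{1/2}$ and $\sum_m\nu^n_U(f\mathbf 1_m)=1$.

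Next, on each slice I would apply the entropy inequality \eqref{n-35} with the bounded function $\Psi=\phi^2$ and parameter $\lambda>0$, giving $E_{\nu^{n,m}_{1/2}}[f_m\phi^2]\le \lambda^{-1}H_n(f_m\,|\,\nu^{n,m}_{1/2})+\lambda^{-1}\log E_{\nu^{n,m}_{1/2}}[e^{\lambda\phi^2}]$, and then control the entropy term by the exclusion Dirichlet form through the logarithmic Sobolev inequality for the symmetric simple exclusion process on $\T_n$, whose constant is of order $n^2$ uniformly in $m$:
\begin{equation*}
H_n\big(f_m\,\big|\,\nu^{n,m}_{1/2}\big)\le \mf c_{\rm LS}\,n^2\,\Gamma^{{\rm ex},m}_n\big(f_m;\nu^{n,m}_{1/2}\big),\quad \Gamma^{{\rm ex},m}_n(g;\nu):=\frac12\sum_{i\in\T_n}\int\big(\sqrt{g(\eta^{i,i+1})}-\sqrt{g(\eta)}\,\big)^2\,d\nu.
\end{equation*}
It remains to reassemble these per-slice Dirichlet forms. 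Since every exclusion exchange $\eta\mapsto\eta^{i,i+1}$ preserves $\mf m$, and since on $\{\mf m=m\}$ one has $\sqrt f=\sqrt{E_{\nu^{n,m}_{1/2}}[f]}\,\sqrt{f_m}$ while $\nu^n_U$ restricted to that hyperplane equals $\nu^n_U(\mathbf 1_m)\,\nu^{n,m}_{1/2}$, a short computation gives the exact identity $\sum_{m=0}^n\nu^n_U(f\mathbf 1_m)\,\Gamma^{{\rm ex},m}_n(f_m;\nu^{n,m}_{1/2})=\Gamma^{\rm ex}_n(f;\nu^n_U)$. Substituting the entropy/log-Sobolev bound into the slice decomposition of $\nu^n_U[f\phi^2]$ and using this identity produces exactly the asserted inequality.

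I do not expect a genuine obstacle here. The only non-elementary ingredient is the $O(n^2)$ logarithmic Sobolev constant for exclusion, which is classical and is invoked as is; the single point needing care is the bookkeeping of the square-root and normalisation factors in the Dirichlet-form identity, together with the (trivial, for fixed $n$) observation that $\phi$ is bounded so that the entropy inequality applies. The uniform-in-$n$ control of the exponential moment $E_{\nu^{n,m}_{1/2}}[e^{\lambda\phi^2}]$ is \emph{not} needed at this stage: it is the content of the subsequent estimate, handled via the local central limit theorem of Lemma~\ref{n-l02}.
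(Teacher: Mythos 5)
Your proposal is correct and follows exactly the paper's own argument: slice the expectation along the hyperplanes of fixed particle number (using that the tilt $e^{nU(\mf m/n)}$ depends only on $\mf m$, so the conditional measure is $\nu^{n,m}_{1/2}$), apply the entropy inequality~\eqref{n-35} on each slice, invoke Yau's log-Sobolev inequality with constant $O(n^2)$ uniform in $m$, and reassemble the per-slice Dirichlet forms via the conservation of $\mf m$ under exclusion exchanges. The identity $\sum_m \nu^n_U(f\mathbf 1_m)\,\Gamma^{\rm ex}_n(f^m;\nu^{n,m}_{1/2})=\Gamma^{\rm ex}_n(f;\nu^n_U)$ that you single out as the point needing care is precisely the closing observation of the paper's proof, and your bookkeeping of the normalisation factors is correct.
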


\begin{proof}
The proof does dot depend on the particular choice of the function
$\frac{1}{\sqrt{n} } \sum_{i\neq j+1-n} I_n\big(j-(i+1)\big) \,
\bar\eta^m_{i}$, so we give a proof for a general
$X_n\colon \Omega_n\to\R$.  Decomposing the expectation according to
the total number of particles yields that
\begin{align*}
& \nu^{n}_{U}
\big[f\,  X_n\big]
\; =\; 
\sum_{m=0}^n \nu^{n}_{U}(f\, {\bf 1}_m)\,
\nu^{n,m}_{1/2} \big(f^m \, X_n\, \big)
\;,
\end{align*}
where
${\color{blue} f^m: \Omega_{n,m}} := \{\eta\in\Omega_n : \sum_i \eta_i
= m\} \to \bb R_+$ represents the density $f$ projected on the
hyperplane of configurations with $m$ particles:
$f^m (\eta) = f(\eta) / \nu^{n,m}_{1/2} (f)$, $\eta\in
\Omega_{n,m}$. Mind that $f^m$ is a density with respect to the
measure $\nu^{n,m}_{1/2}$.  By the entropy inequality \eqref{n-35},
the expectation is bounded for all $\lambda>0$ by
\begin{equation*}
\frac{1}{\lambda} \, H_n (f^m |\nu^{n,m}_{1/2})
\,+\, \frac{1}{\lambda} \log \,
\nu^{n,m}_{1/2} \Big( e^{\lambda\, X_n}\, \Big)
\end{equation*}
By the logarithmic Sobolev inequality
\cite[Theorem 4]{yau}, the first term is bounded by
\begin{equation}
\frac{1}{\lambda} \, \mf c_{\rm LS} \, n^2\,
\Gamma^{\rm ex}_n (f^m; \nu^{n,m}_{1/2}) 
\label{eq_LSI_Yau_sec_5}
\end{equation}
for some finite constant $\mf c_{\rm LS}$ independent of $m,n$. 
To complete the proof of
the lemma, observe that
\begin{equation*}
\sum_{m=0}^n \nu^{n}_{U}(f {\bf 1}_m)\,
\Gamma^{\rm ex}_n (f^m; \nu^{n,m}_{1/2})
\;=\;
\Gamma^{\rm ex}_n (f; \nu^n_{U})\;.
\end{equation*}
\end{proof}

\subsection*{Step 3: Concentration inequality}

In this subsection, we estimate the exponential moment appearing in the statement of Lemma~\ref{n-s01}, 
presenting a general concentration inequality for 
canonical measures. 
Given a sequence of functions
$a_n: \bb T_n \to \bb R$, denote by $\Vert a\Vert_\infty$ the supremum
of the sup norms:
$\color{blue} \Vert a\Vert_\infty = \sup_{n\ge 1} \max_{i\in\T_n}
|a_{n,i}|$, where $a_{n,i} := a_n(i)$.

\begin{proposition}
\label{n-l03}
There exists an integer $\mf n_1$ such that
\begin{equation*}
\nu^{n,m}_{1/2} \Big( \exp  \Big\{ \alpha \, \Big(\frac{1}{\sqrt{n}}
\sum_{i\in\T_n} \, a_{n,i} \, \bar\eta^m_{i} \Big)^2
\Big\} \, \Big) \,\le \, 16 \, e^{8 \alpha \Vert a\Vert^2_\infty}
\end{equation*}
for all $n\ge \mf n_1$, $0\le m\le n$, sequence of functions
$a_n: \bb T_n \to \bb R$, and
$\alpha \le (4 \Vert a\Vert^2_\infty)^{-1}$.
\end{proposition}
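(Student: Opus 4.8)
The plan is to strip the quadratic exponent by a Gaussian (Hubbard--Stratonovich) linearisation, reducing the estimate to a bound on the Laplace transform of a \emph{linear} statistic under the canonical measure $\nu^{n,m}_{1/2}$, uniform in the number of particles $m$. Write $X_n := \frac{1}{\sqrt n}\sum_{i\in\T_n} a_{n,i}\,\bar\eta^m_i$ and use the identity $e^{\alpha x^2} = \E_Z\big[e^{\sqrt{2\alpha}\,Zx}\big]$, valid for any $x\in\R$, $\alpha\ge 0$ and $Z$ a standard Gaussian. By Tonelli,
\begin{equation*}
\nu^{n,m}_{1/2}\big(e^{\alpha X_n^2}\big)\;=\;\E_Z\Big[\,\nu^{n,m}_{1/2}\big(e^{\sqrt{2\alpha}\,Z X_n}\big)\,\Big],
\end{equation*}
so the whole matter reduces to a sub-Gaussian bound for $tX_n$ under $\nu^{n,m}_{1/2}$, for all real $t$.

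The key step is to show that $\nu^{n,m}_{1/2}\big(e^{tX_n}\big)\le e^{\,t^2\Vert a\Vert_\infty^2/2}$ for every $t\in\R$, every $n\ge 1$ and every $0\le m\le n$. On the slice $\{\sum_i\eta_i=m\}$ one has $\eta_i=\mathbf 1\{i\in S\}$ for $S$ a uniformly random $m$-subset of $\T_n$, so $e^{tX_n}=e^{-tm\bar a_n/\sqrt n}\prod_{i\in S}e^{t a_{n,i}/\sqrt n}$ with $\bar a_n:=n^{-1}\sum_i a_{n,i}$. Putting $x_i:=e^{t a_{n,i}/\sqrt n}>0$, the average of $\prod_{i\in S}x_i$ over $S$ equals the normalised elementary symmetric mean $e_m(x)/\binom nm$; Maclaurin's inequality (equivalently, Hoeffding's comparison of sampling without and with replacement applied to the convex map $y\mapsto e^{ty}$) bounds it by $\big(n^{-1}\sum_i x_i\big)^m$. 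Reabsorbing the prefactor $e^{-tm\bar a_n/\sqrt n}$ turns this into $\big(\E[e^{(t/\sqrt n)Y}]\big)^m$, where $Y$ takes the value $a_{n,i}-\bar a_n$ with probability $1/n$ each; since $\E[Y]=0$ and $Y$ lies in an interval of length $\max_i a_{n,i}-\min_i a_{n,i}\le 2\Vert a\Vert_\infty$, Hoeffding's lemma gives $\E[e^{sY}]\le e^{s^2\Vert a\Vert_\infty^2/2}$ for all $s$, and the choice $s=t/\sqrt n$ together with $m\le n$ gives the claim.

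Finally, substituting $t=\sqrt{2\alpha}\,Z$ in this bound and integrating the Gaussian,
\begin{equation*}
\nu^{n,m}_{1/2}\big(e^{\alpha X_n^2}\big)\;\le\;\E_Z\big[e^{\alpha\Vert a\Vert_\infty^2 Z^2}\big]\;=\;\big(1-2\alpha\Vert a\Vert_\infty^2\big)^{-1/2}\;\le\;\sqrt2\;\le\;16\,e^{8\alpha\Vert a\Vert_\infty^2},
\end{equation*}
the Gaussian integral converging and being at most $\sqrt2$ because the hypothesis $\alpha\le(4\Vert a\Vert_\infty^2)^{-1}$ forces $2\alpha\Vert a\Vert_\infty^2\le\tfrac12$. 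This yields the stated bound with room to spare. The only genuinely delicate point is getting the Laplace bound \emph{uniformly in} $m$ without losing a factor $\sqrt n$: the naive route — conditioning the product measure $\nu^n_{1/2}$ on $\{\sum_i\eta_i=m\}$ and comparing numerator and denominator — produces such a factor (this is presumably why the statement carries the hypothesis $n\ge\mf n_1$ and invokes a local central limit theorem), whereas the Maclaurin / Hoeffding comparison above sidesteps it and in fact holds for all $n\ge 1$.
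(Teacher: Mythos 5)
Your proof is correct, and it takes a genuinely different route from the paper's. The paper's argument first applies Schwarz to split the exponent into two halves (so that each factor depends only on $[n/2]$ coordinates), then invokes the equivalence-of-ensembles comparison Lemma~\ref{n-l02} (which rests on the local CLT bound \eqref{n-11} and requires $\sqrt{n\rho(1-\rho)}$ bounded below) to pass to the grand canonical product measure, applies Hoeffding's sub-Gaussian bound under the product measure, and finally treats the small-$m$ regime by a direct argument. Your proof instead linearises the square by Hubbard--Stratonovich, reducing the claim to a uniform Laplace-transform bound $\nu^{n,m}_{1/2}(e^{tX_n})\le e^{t^2\Vert a\Vert_\infty^2/2}$; you obtain this by representing $e^{tX_n}$ as a weighted product over a uniform $m$-subset, bounding the normalised elementary symmetric mean $e_m(x)/\binom nm$ by the power $(\bar x)^m$ via Maclaurin's inequality (equivalently, Hoeffding's convex-order comparison of sampling without versus with replacement), and then invoking Hoeffding's lemma with $m\le n$. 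The steps are all valid: the Gaussian linearisation is elementary, the Maclaurin/Hoeffding bound handles all $0\le m\le n$ (including the degenerate cases $m\in\{0,n\}$ where it reduces to AM--GM or is vacuous), and the final Gaussian integral converges under $\alpha\le(4\Vert a\Vert_\infty^2)^{-1}$ and gives a factor $\le\sqrt2$. The payoff is real: your argument requires no local CLT, no case split on $m$, and holds for every $n\ge1$, whereas the paper's proof only establishes the bound for $n\ge\mathfrak n_1$; you correctly identify the source of the $\mathfrak n_1$ hypothesis and show it to be an artefact of the equivalence-of-ensembles route.
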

\begin{remark} 
The result is also valid in the case where the local function
$\bar\eta^m_{0}$ in the sum
$\sum_{1\le k\le n} \, a_{n,k} \, \bar\eta^m_{-k}$ is replaced by
$h(\eta) = \prod_{j\in B} \bar\eta^m_{j}$, $B$ a finite subset of
$\bb Z$.  There is however an additional decoupling step because the
$(h(\tau_i\eta))_{i\in\T_n}$ are no longer independent. This step is
explained in the proof of \cite[Corollary 4.5]{jl}, see
also~\cite[Appendix F]{Jara2018} for a general treatment.  The
constants $16$ and $8$ appearing on the right-hand side of the
proposition then have to be replaced by larger ones which depend on
the set $B$.
\end{remark}

The proof of this result is based on the local central limit
theorem. Recall from Theorems VII.4--6 in \cite{petrov} the statement
of this result. The proof of Theorem VII.4 yields that there exists a
finite constant $\mf c_{CL}$ and a finite integer $\mf n_0$, such that
\begin{equation}
\label{n-11}
\max_{0\le j\le n}\, 
\Big|\, \sqrt{ n \rho (1-\rho)} \,
\nu^n_\rho (\sum_{i=1}^n \eta_i = j) \,-\,
\frac{1}{\sqrt{2\pi}}\, e^{-x^2/2}\, \Big|
\,\le\, \frac{\mf c_{CL} }{\sqrt{ n \rho (1-\rho)}}
\end{equation}
for all $0<\rho<1$, $n\ge \mf n_0$. In this formula,
$x= (j - n\rho)/\sqrt{ n \rho (1-\rho)}$.

The next result compares the expectation with respect to the canonical
measure of a nonnegative function to its expectation with respect to
the grand canonical one.  This result is similar to \cite[Corollary
5.5]{lpy}, but there the ratio between the truncated moments and the
corresponding powers of the variance are uniformly bounded (see
equation (5.1)) which is not the case in the present context.  For a
positive real number $r$ denote by $[r]$ its integer part:
$\color{blue} [r] = \max\{k\in \bb N : k\le r\}$.

\begin{lemma}
\label{n-l02}
Fix $0<\vartheta<1$. Then,
\begin{equation*}
\nu^{n,m}_{1/2} (F) \,\le\, \frac{4}{1-\vartheta} \, \, \nu^n_{\rho}  (F)
\;, \;\;\text{where}\;\; \rho = m/n\;, 
\end{equation*}
for all $n\ge \mf n_0$, $m$ such that $\sqrt{ n \rho
(1-\rho)} \ge 2 \sqrt{ 2 \pi} \mf c_{CL}$, and
nonnegative functions $F\colon \Omega_n\to \bb R_+$ which
depend only on the variables, $\eta_1, \dots, \eta_{[n\vartheta]}$. 
\end{lemma}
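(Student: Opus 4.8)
The plan is to realise the canonical measure $\nu^{n,m}_{1/2}$ as the grand–canonical measure $\nu^n_\rho$, with $\rho=m/n$, conditioned on the total number of particles: since $\nu^n_\rho$ assigns the same mass $\rho^m(1-\rho)^{n-m}$ to every configuration with $m$ particles, conditioning on $\{\sum_i\eta_i=m\}$ yields exactly the uniform measure $\nu^{n,m}_{1/2}$, so, writing $S_n:=\sum_{i=1}^n\eta_i$,
\begin{equation*}
\nu^{n,m}_{1/2}(F)\;=\;\frac{\nu^n_\rho\big(F\,{\bf 1}\{S_n=m\}\big)}{\nu^n_\rho(S_n=m)}\,.
\end{equation*}
Set $k:=[n\vartheta]$ and split $S_n=S'+S''$ with $S':=\sum_{i\le k}\eta_i$, $S'':=\sum_{k<i\le n}\eta_i$, which are independent under $\nu^n_\rho$, $S''$ being the sum of $N:=n-k\ge n(1-\vartheta)$ i.i.d.\ Bernoulli$(\rho)$ variables. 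As $F$ depends only on $\eta_1,\dots,\eta_k$, it is independent of $S''$, hence
\begin{equation*}
\nu^{n,m}_{1/2}(F)\;=\;\sum_{j}\nu^n_\rho\big(F\,{\bf 1}\{S'=j\}\big)\,
\frac{\nu^n_\rho(S''=m-j)}{\nu^n_\rho(S_n=m)}\,.
\end{equation*}
Since $\sum_j\nu^n_\rho(F\,{\bf 1}\{S'=j\})=\nu^n_\rho(F)$ and $F\ge0$, it suffices to bound the ratio $\nu^n_\rho(S''=m-j)/\nu^n_\rho(S_n=m)$ by $4/(1-\vartheta)$ uniformly over the (finitely many) $j$ for which the numerator is non-zero.

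For the denominator I would use the local limit estimate \eqref{n-11} at $j=m$: there $x=(m-n\rho)/\sqrt{n\rho(1-\rho)}=0$ because $\rho=m/n$, so, by the hypothesis $\sqrt{n\rho(1-\rho)}\ge 2\sqrt{2\pi}\,\mf c_{CL}$,
\begin{equation*}
\nu^n_\rho(S_n=m)\;\ge\;\frac{1}{\sqrt{n\rho(1-\rho)}}\Big(\tfrac{1}{\sqrt{2\pi}}-\tfrac{\mf c_{CL}}{\sqrt{n\rho(1-\rho)}}\Big)\;\ge\;\frac{1}{2\sqrt{2\pi}\,\sqrt{n\rho(1-\rho)}}\,.
\end{equation*}
For the numerator I would apply \eqref{n-11} to the sum $S''$ of $N$ i.i.d.\ Bernoulli$(\rho)$ variables (licit once $N\ge\mf n_0$), bounding the Gaussian factor by $1$:
\begin{equation*}
\nu^n_\rho(S''=m-j)\;\le\;\frac{1}{\sqrt{N\rho(1-\rho)}}\Big(\tfrac{1}{\sqrt{2\pi}}+\tfrac{\mf c_{CL}}{\sqrt{N\rho(1-\rho)}}\Big)\,.
\end{equation*}
From $N\ge n(1-\vartheta)$ and $n\rho(1-\rho)\ge 8\pi\,\mf c_{CL}^2$ one gets $\sqrt{n/N}\le(1-\vartheta)^{-1/2}$ and $\mf c_{CL}/\sqrt{N\rho(1-\rho)}\le(8\pi(1-\vartheta))^{-1/2}$, so dividing the two displays (the factor $\rho(1-\rho)$ cancels under the root),
\begin{equation*}
\frac{\nu^n_\rho(S''=m-j)}{\nu^n_\rho(S_n=m)}\;\le\;2\sqrt{2\pi}\,\sqrt{\tfrac{n}{N}}\Big(\tfrac{1}{\sqrt{2\pi}}+\tfrac{1}{\sqrt{8\pi(1-\vartheta)}}\Big)\;\le\;\frac{2}{\sqrt{1-\vartheta}}+\frac{1}{1-\vartheta}\;\le\;\frac{4}{1-\vartheta}\,,
\end{equation*}
using $\sqrt{1-\vartheta}\ge1-\vartheta$ and $1/(1-\vartheta)\ge1$. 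This gives the lemma once $\mf n_0$ is taken large enough (depending on $\vartheta$) that $n-[n\vartheta]$ exceeds the threshold of \eqref{n-11} for every $n\ge\mf n_0$.

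The only real obstacle is the threshold matching: the local limit theorem must be applied both to the full sum $S_n$ (which is exactly what the hypothesis $\sqrt{n\rho(1-\rho)}\ge2\sqrt{2\pi}\,\mf c_{CL}$ controls) and to the shorter sum $S''$ of length $N=n-[n\vartheta]$, which forces $\mf n_0$ to depend on $\vartheta$ and requires the lower bound $N\rho(1-\rho)\ge(1-\vartheta)\,n\rho(1-\rho)$ to keep the error term of \eqref{n-11} for $S''$ under control; everything else is bookkeeping of constants. Alternatively one could replace the sharp bound on the numerator by any estimate of the form $\max_\ell\nu^n_\rho(S''=\ell)\le C(\vartheta)/\sqrt{n\rho(1-\rho)}$ together with the above lower bound on $\nu^n_\rho(S_n=m)$, at the price of a larger explicit constant.
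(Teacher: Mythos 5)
Your proof is correct and follows essentially the same route as the paper: both realise $\nu^{n,m}_{1/2}$ as the grand–canonical measure $\nu^n_\rho$ conditioned on $\{S_n=m\}$, split $S_n$ into the $F$-dependent block $S'$ and the complementary block $S''$, and then bound the ratio $\nu^n_\rho(S''=m-j)/\nu^n_\rho(S_n=m)$ uniformly in $j$ by applying the local CLT estimate \eqref{n-11} to both the full sum (for the denominator, with $x=0$) and the truncated sum of length $n-k$ (for the numerator, with the Gaussian factor bounded by $1$). The only cosmetic difference is that you group terms by the value of $S'$ where the paper sums over configurations $\xi\in\Omega_k$; your explicit remark that $\mf n_0$ must be large enough that $n-[n\vartheta]$ exceeds the threshold of \eqref{n-11} is a $\vartheta$-dependence the paper leaves implicit.
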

\begin{proof}
Let $k= [n\vartheta]$. Since $F$ depends only on the variables
$\eta_1, \dots , \eta_k$,
\begin{equation*}
\nu^{n,m}_{1/2} (F)  \,=\, \sum_{\xi\in \Omega_k} F(\xi)\,
\nu^{n,m}_{1/2}\big(\, \{\eta \in \Omega_n : \eta_i = \xi_i\,,\, 1\le i\le
k\}\, \big)\;. 
\end{equation*}
By definition of the canonical measure, the previous sum is equal to
\begin{equation}
\label{n-10}
\sum_{\xi\in \Omega_k} F(\xi)\, \nu^n_\alpha(\xi)\,
\frac{\nu^n_\beta\big(\sum_{k+1\le i\le n} \eta_i = m - |\xi|\, \big)}
{\nu^n_\beta \big(\sum_{1\le i\le n} \eta_i = m\, \big)} 
\end{equation}
for every $0<\beta<1$, where $|\xi| = \sum_{1\le i\le k} \xi$. Choose
$\beta = \rho= m/n$. It remains to show that the last ratio is
bounded uniformly in $\xi$, $\rho$ and $n$.

In the ratio appearing in the formula \eqref{n-10}, multiply the
numerator and the denominator by $\sqrt{ n \rho (1-\rho)}$. By
\eqref{n-11}, since $x=0$, the denominator is larger than or equal to
$(1/2) (1/\sqrt{ 2 \pi})$ provided $n\ge \mf n_0$ and $m$ is such that
$\sqrt{ n \rho (1-\rho)} \ge 2 \sqrt{ 2 \pi} \mf c_{CL}$ [to guarantee
that
$(1/\sqrt{ 2 \pi}) - (\mf c_{CL} / \sqrt{ n \rho (1-\rho)} )\ge
(1/2\sqrt{ 2 \pi})$].

On the other hand, by \eqref{n-11} and since $k\le \vartheta n$, the
numerator multiplied by $\sqrt{ n \rho (1-\rho)}$ is bounded by
\begin{equation*}
\frac{\sqrt{ n }}{\sqrt{ n - k }} \,\Big\{ \frac{1}{\sqrt{ 2 \pi}}
\,+\, \frac{\mf c_{CL}}{\sqrt{ (n - k)  \rho(1-\rho)} }\,\Big\}
\,\le\, \frac{1}{1-\vartheta}\,
\frac{2}{\sqrt{ 2\pi }}\;\cdot
\end{equation*}
Adding the previous estimates completes the proof of the lemma.
\end{proof}

The previous result plays a central role in the proof of the
concentration inequality.

\begin{proof}[Proof of Proposition \ref{n-l03}]
Fix $0<m<n$. 
By Schwarz inequality, the exponential appearing in the
statement of Proposition~\ref{n-l03} is bounded by
\begin{equation}
\label{n-12}
\nu^{n,m}_{1/2} \Big( \exp  \Big\{ 2 \alpha \, \Big(\frac{1}{\sqrt{n}}
\sum_{i=0}^{[n/2]-1} \, a_{n,i} \, \bar\eta^m_{i} \Big)^2
\Big\} \, \Big)^{1/2}  \,
\nu^{n,m}_{1/2} \Big( \exp  \Big\{ 2\alpha \, \Big(\frac{1}{\sqrt{n}}
\sum_{i=[n/2]}^n \, a_{n,i}\, \bar\eta^m_{i} \Big)^2
\Big\} \, \Big)^{1/2} 
.
\end{equation}

We focus on the first term, the second one is handled similarly.  For
$n\ge \mf n_0$, and $m=\rho n$ such that
$\sqrt{ n \rho (1-\rho)} \ge 2 \sqrt{ 2 \pi} \mf c_{CL}$, by Lemma
\ref{n-l02}, the first term is bounded by
\begin{equation*}
4\,  \nu^{n}_\rho  \Big( \exp  \Big\{ 2 \alpha \, \Big(\frac{1}{\sqrt{n}}
\sum_{i=0}^{[n/2]-1} \, a_{n,i} \, \bar\eta^m_{i} \Big)^2
\Big\} \, \Big)^{1/2} 
\end{equation*}
By \cite[Lemma 4.4]{jl}, which relies on Hoeffding's inequality
\cite[Lemma 2.2]{BLM13},
$\sum_{0\le i< [n/2]} \, a_{n,i} \, \bar\eta^m_{i}$ is
$A$-subgaussian, where $A = (n/2) \Vert a\Vert^2_\infty$. Thus,
by equation (4.8) in \cite[Lemma 4.4]{jl}, the previous expression is
bounded by $4 e^{4 \alpha \Vert a\Vert^2_\infty}$ provided $\alpha \le
(4 \Vert a\Vert^2_\infty)^{-1}$. 

Therefore, \eqref{n-12} is bounded by
$16 e^{8 \alpha \Vert a\Vert^2_\infty}$ if
$\alpha \le (4 \Vert a\Vert^2_\infty)^{-1}$, $n\ge \mf n_0$, and $m$ is
such that $\sqrt{ n \rho (1-\rho)} \ge 2 \sqrt{ 2 \pi} \mf c_{CL}$.

Suppose now that $\sqrt{ n \rho (1-\rho)} \le 2 \sqrt{ 2 \pi} \mf c_{CL}$ and start again from the exponential appearing in the
statement of Proposition~\ref{n-l03}. 
Take $m\le n/2$ (the other case is handled similarly). 
Then
$m\le 16 \pi \mf c_{CL}$, and:
\begin{equation*}
\alpha \, \Big(\frac{1}{\sqrt{n}}
\sum_{i\in\T_n} \, a_{n,i} \, \bar\eta^m_{i} \Big)^2
\,\le\, 4\alpha \Vert a\Vert^2_\infty (m^2/n) 
\,\le\, 2^{10} \alpha \Vert a\Vert^2_\infty \pi^2 \mf c_{CL}^2 /n\;.
\end{equation*}
This expression is bounded by $8 \alpha \Vert a\Vert^2_\infty$
provided $n$ is large enough depending only on $\mf c_{CL}$. This completes the proof of the proposition. 
\end{proof}

\subsection*{Conclusion}

\begin{proof}[Proof of Theorem \ref{n-s02}]
Recall the estimate presented in Lemma \ref{n-l01}.  Lemma \ref{n-s01}
asserts that the last term on the right-hand side of \eqref{n-13} is
bounded by the sum of two expressions. The first one is the carr\'e du
champ
$( \mf c_{\rm LS}\, C_1/ 2\delta \lambda) \, n^2\, \Gamma^{\rm
ex}_n(f;\nu^n_U)$ ($\delta,\lambda>0$).  Choose $\lambda=1/4$. The
second one is at most
\begin{equation}
\frac{C_1\, }{2\, \delta \lambda}\, \max_{0\leq m\leq n} \log \nu^{n,m}_{1/2} \Big( \exp  \Big\{ \lambda\, \Big(\frac{1}{\sqrt{n}}
\sum_{i\neq j+1-n}I_n\big(j-(i+1)\big) \, \bar\eta^m_{i} \Big)^2
\Big\} \, \Big) 
\, ,
\end{equation}
and by Proposition \ref{n-l03} the exponential moment is bounded by
$e^{C_2} := 16 e^2$.

In conclusion, since $\lambda=1/4$,
\begin{equation}
\begin{aligned}
& \Big| \, \nu^{n}_{U} \Big( f \, \mc Z_n\, 
\sum_{i\in \bb T_n} \bar\eta^m_i \, \bar\eta^m_{i+1}
\Big) \, \Big|
\\
&\quad
\le\; C_0\, +\,
\big\{\, 2\, \frac{\mf c_{\rm LS}}{\delta}\, C_1\, 
\,+\, \delta\, \big\} \,
n^{2}\, \Gamma_n^{\rm ex}(f ; \nu^{n}_U)\,+\,
\frac{2\, C_1\, C_2}{\delta} \, 
\end{aligned}
\end{equation}
for all $\delta>0$. Choose $\delta=1$ to complete the proof of the theorem.
\end{proof}

\section{Free energy estimate}\label{sec_free_energy_bounds}
In this section, we prove Theorem~\ref{theo_free_energy}, 
generalising the entropy estimate to all $a>0$.  
As in Section~\ref{n-sec2}, 
the proof involves a decoupling of slow and fast modes. 
This decoupling is established under the log-Sobolev inequality of Assumption~\ref{ass_LSI}, 
in force throughout the section.  \\

Recall from Sections~\ref{n-sec2}--\ref{n-sec3} that the restriction on the size of $a$ comes from the estimate of the following term in the adjoint:
\begin{equation}
a\sqrt{n}\, \E\Big[  \sum_{i\in \bb T_n} \bar\eta^m_i(t) \, \bar\eta^m_{i+1}(t)\Big]
.
\label{eq_term_to_cancel_nonpert}
\end{equation}
This term involves the fast, mean-0 modes, i.e. the $\bar\eta^m$. 
The aim of this section is to cancel~\eqref{eq_term_to_cancel_nonpert} by adding information on the fast modes into the reference measure. 
Recall that the measure $\nu^n_U$ of~\eqref{eq_def_nu_V} offered a correction to the behaviour of the magnetisation, 
but still neglected the influence of the magnetisation on mean-0 modes (that is, conditioned on the magnetisation $\nu^n_U$ is the uniform measure, just like the product measure $\nu^n_{1/2}$).  
In order to cancel~\eqref{eq_term_to_cancel_nonpert} we include this influence to leading order in $n$, considering a measure of the form:
\begin{equation}
\nu^n_g(\eta)
\propto \exp\Big[\frac{1}{2n}\sum_{i\neq j}\bar\eta^m_i\bar\eta^m_j g_{i,j}\Big] \nu^n_{1/2}(\eta)
,
\label{eq_def_nung_sec_free_energy}
\end{equation}
where $g$ is the function appearing in Proposition~\ref{prop_g},
identified with the symmetric function
$g\colon (x,y)\in\T^2\mapsto g(x-y)$.  Note that we could also consider the
probability measure
$\nu^n_{U,g^0}\propto \exp[\frac{1}{2n}\sum_{i\neq
j}g^0_{i,j}\bar\eta_i\bar\eta_j]\, \nu^n_{U}$.
However~\eqref{eq_def_nung_sec_free_energy}, which corresponds to
approximating $U$ by its Hessian at $0$, turns out to be technically
simpler to work with.  The drawback of using $\nu^n_g$ instead of
$\nu^n_{U,g^0}$ is that we lose the birth and death structure of Section~\ref{n-sec2}.  As a result,
when computing the adjoint magnetisation terms will not automatically
cancel as in Section~\ref{n-sec2}, 
and an additional estimate is
needed to control possible large values of the magnetisation.  
This is
done in Theorem~\ref{theo_free_energy} by estimating, instead of the
relative entropy, the free energy
$\cF( f_t \,|\, \nu^n_g , \kappa)$ introduced in \eqref{eq_def_free_energy}.

Throughout the section $a>0$, $\theta\in\R$ and
$\gamma=\frac{1}{2}(1-\theta n^{-1/2})$ are fixed and such that $\gamma\in(0,1)$. Constants implicitly depend on $a,\theta$.  
Note that $g$
implicitly depends on $n$ through $\gamma$, see
Proposition~\ref{prop_g}.  Nonetheless the bounds on $g$ proven in
Appendix~\ref{app_prop_g} imply that uniform norms of $g$ and its derivatives that appear below are uniform in $n$. \\

Let $f_t\, \nu^n_g$ stand for the law of the dynamics at time $t$.
Recall from Proposition~\ref{l06-1} that the derivative of the
relative entropy satisfies:
\begin{align}
H'_n(f_t|\nu^n_g)
\leq 
-2\, n^{5/2}\, \Gamma^{\rm ex}_n(f_t;\nu^n_g)
\,-\, 2\,a\, \sqrt{n}\, \Gamma^G_n(f_t;\nu^n_g)
+\nu^n_g\big(f_t\, \sqrt{n}\, L^*_n{\bf 1}\big)
,
\label{eq_der_entropy}
\end{align}
with the carré du champ defined in~\eqref{eq_def_carreduchamp} and $L^*_n$ the adjoint in $\bb L^2(\nu^n_g)$ of the generator $L_n$ (recall~\eqref{eq_def_generator}).

In view of the formula~\eqref{eq_der_entropy} for the entropy
dissipation, Theorem~\ref{theo_free_energy} is a consequence of Propositions~\ref{prop_bound_adjoint}--\ref{prop_bound_moment} below bounding the adjoint and the expectations of $(\mathcal Y^n_t)^2$.
\begin{proposition}[Expression of the adjoint and bounds]\label{prop_bound_adjoint}
Let $n\geq 1$.   
The adjoint in $\mathbb L^2(\nu^n_g)$ has the following properties. 
\begin{itemize}
	\item[(i)] (Renormalisation step).  
	There is $\alpha_0>0$ and, 
	for each $\delta>0$, 
	there is a non-negative function $Q^{4+,\delta}_n\colon \Omega_n\to\R_+$ such that:
\begin{equation}
\nu^n_g\big(f \sqrt{n}\, L^*_n{\bf 1}\,\big)
\leq 
\alpha_0\,  \nu^n_g\big(f \sqrt{n}\, (\mathcal Y^n)^4\big) 
+ \frac{\delta\, n^{5/2}}{2}\, \Gamma^{\rm ex}_n(f;\nu^n_g) 
+ \nu^n_g\big( f \sqrt{n}\, Q^{4+,\delta}_n\big)
,
\end{equation}
for any $f:\Omega_n\to\R_+$ with $f\nu^n_g$ a probability measure. 
In addition, 
$Q^{4+,\delta}_n$ satisfies, for some $C>0$:
\begin{equation}
\nu^{n}_g\big(f\, \sqrt{n}\,  Q^{4+,\delta}_n\, \big)
\leq 
C\, \sqrt{n}\, \big( H_n(f \, |\, \nu^n_g) + 1\big)
.
\label{eq_small_time_bound_Prop61}
\end{equation}
	\item[(ii)] (Exponential moment bound). 
	For all $T>0$ and $\delta>0$, 
	there is $C_1=C_1(\delta,T)>0$ and a time $\tau_1>0$ such that:
\begin{equation}
\forall t\in[\tau_1\, n^{-1/2},T],\qquad 
\nu^n_g\big(f_t \, \sqrt{n}\, Q^{4+,\delta}_n \, \big)
\leq 
\frac{\delta\, n^{5/2}}{2}\, \Gamma^{\rm ex}_n(f;\nu^n_g) +
C_1 \sqrt{n}\, 
. 
\label{eq_bounds_error_prop_free_energy}
\end{equation}
\end{itemize} 
\end{proposition}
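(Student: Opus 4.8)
\emph{Overall approach and the magnetisation term.} The plan is to compute $\sqrt n\,L^*_n\mathbf 1$ in $\mathbb L^2(\nu^n_g)$ explicitly, split it into a function of the magnetisation alone plus a fluctuation remainder, and use the defining ODE~\eqref{eq_ODE_g} of $g$ to cancel the dangerous part of the latter. Starting from the general formula~\eqref{eq_general_formula_adjoint}, note first that, contrary to $\nu^n_U$, the measure $\nu^n_g$ is \emph{not} constant on the fixed-magnetisation hyperplanes, so the exclusion adjoint $n^2 L^{*,\mathrm{ex}}_n\mathbf 1$ no longer vanishes and must be computed alongside the Glauber one. The Radon--Nikodym ratio of $\nu^n_g$ under a bond exchange (resp.\ a spin flip at $i$) is the exponential of a discrete first difference of the quadratic form $\frac1{2n}\sum_{k\ne\ell}\bar\eta^m_k\bar\eta^m_\ell g_{k,\ell}$, and expanding these exponentials to second order produces discrete first and second differences of $g$ together with the convolutions $\int g'g'$, $\int g\,g$ of~\eqref{eq_ODE_g}. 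After multiplying by $\sqrt n\cdot n^2$ (exclusion) and $a\sqrt n$ (Glauber), summing over sites, and collecting terms — a long but routine bookkeeping — one isolates the part of $\sqrt n\,L^*_n\mathbf 1$ depending on $\mathcal Y^n$ only. Taylor-expanding its smooth coefficients around $\mathfrak m/n=0$, using $1-2\gamma=\theta/\sqrt n$, that $U^{(3)}(0)=0$ (so $U$ agrees with its Hessian approximation to order $(\mathfrak m/n)^3$) and that the first three derivatives of $V$ vanish at $1/2$, the pieces of prefactor $n^{3/2}$ cancel exactly as in~\eqref{n-02} and the remaining odd-order corrections cancel by parity, leaving a leading term of quartic order $\beta\sqrt n(\mathcal Y^n)^4$ — the same nonlinearity as in the computation~\eqref{eq_th51_0} of $\sqrt n\,L_n(\mathcal Y^n)^2$ — plus lower-order terms $O(\sqrt n(\mathcal Y^n)^2)+O(\sqrt n)$. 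Bounding the quartic term by $\alpha_0\sqrt n(\mathcal Y^n)^4$ for suitable $\alpha_0>0$ and placing the rest into $Q^{4+,\delta}_n$ yields the first displayed inequality of part~(i).

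\emph{The fluctuation remainder and the role of $g$; the small-time bound.} The part of $\sqrt n\,L^*_n\mathbf 1$ not already accounted for consists, to leading order, of terms of the shape $\sqrt n\sum_i\psi_n(i/n)\,\bar\eta^m_i\bar\eta^m_{i+1}$ and $\sqrt n\sum_i\tilde\psi_n(i/n)\,\bar\eta^m_i\bar\eta^m_{i+2}$ coming from the exclusion second-order expansion (coefficients built from $g''$ and the above convolutions) and from the Glauber rates (with a boundary jump encoded by $g'(0_+)-g'(1_-)$). Proposition~\ref{prop_g} is precisely the statement that $g$ is chosen so that these contributions cancel, which is why the ODE~\eqref{eq_ODE_g} carries exactly those convolution terms and that boundary condition. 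What remains is a combination of discrete-gradient terms, higher-order products of the $\bar\eta^m$, and discretisation errors (e.g.\ the gap between $g''$ and the discrete Laplacian of $g_{i,j}$), all controlled with the regularity estimates on $g$ of Appendix~\ref{app_prop_g}. The discrete-gradient terms are rewritten via the telescoping identity~\eqref{03} and then, using the $\nu^n_g$-version of the integration-by-parts formula of Appendix~\ref{sec_IBP} (cf.\ Lemma~\ref{n-s03}), a fraction of them is absorbed into $\frac{\delta}{2}\,n^{5/2}\Gamma^{\rm ex}_n(f;\nu^n_g)$, leaving a spatially-averaged-square remainder; everything left over is gathered into the non-negative $Q^{4+,\delta}_n$. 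For~\eqref{eq_small_time_bound_Prop61}, each summand of $Q^{4+,\delta}_n$ is by construction either a bounded cylinder function of the $\bar\eta^m$ averaged over $\mathbb T_n$ or a polynomial in $\mathcal Y^n$, so the entropy inequality~\eqref{n-35} with $\beta=1$ gives $\nu^n_g(f\sqrt n\,Q^{4+,\delta}_n)\le\sqrt n\,H_n(f\,|\,\nu^n_g)+\sqrt n\log\nu^n_g(e^{Q^{4+,\delta}_n})$, and the exponential moment is $O(1)$ by the concentration estimates of Appendix~\ref{app_concentration} for the fast part (using that $g$ has small operator norm on mean-zero test functions, so $\nu^n_g$ has sub-Gaussian marginals there) and by Lemma~\ref{n-29} for the polynomial-in-$\mathcal Y^n$ part; the constant $C$ is not optimised since this bound is only used over the short interval $[0,\tau_1 n^{-1/2}]$.

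\emph{Part~(ii).} For $t\ge\tau_1 n^{-1/2}$ the exclusion dynamics, accelerated by $n^{5/2}$, has effectively run for $\tau_1 n^2$ units, exceeding its mixing time on a fixed-magnetisation hyperplane (of order $n^2$), so the conditional law on each hyperplane has relaxed. Decompose over the total number of particles, $\nu^n_g(f_t\sqrt n\,Q^{4+,\delta}_n)=\sqrt n\sum_m\nu^n_g(f_t\mathbf 1_m)\,\nu^{n,m}_{1/2}(f^m_t\,Q^{4+,\delta}_n)$, apply the entropy inequality and the Yau logarithmic Sobolev inequality on each slice as in Lemma~\ref{n-s01} — this is where Assumption~\ref{ass_LSI} enters — and sum the slice Dirichlet forms to $n^2\Gamma^{\rm ex}_n(f_t;\nu^n_g)$; taking the entropy-inequality parameter of order $\delta^{-1}$ converts this into $\frac{\delta}{2}\,n^{5/2}\Gamma^{\rm ex}_n(f_t;\nu^n_g)$, while the residual exponential moments are $O(1)$ uniformly in $m$ by the concentration inequality of Appendix~\ref{app_concentration}. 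The summands of $Q^{4+,\delta}_n$ carrying positive powers of the magnetisation are constant but possibly large on a given slice, so the slice-wise log-Sobolev argument does not see them; these are absorbed into $C_1\sqrt n$ using the a~priori moment bound $\mathbb E_{\mu_n}[(\mathcal Y^n_t)^{2k}]=O(1)$ furnished by Proposition~\ref{prop_bound_moment} together with the free-energy bound already available at earlier times.

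\emph{Main obstacle.} I expect this last coupling to be the crux: the homogenisation of the fast modes via log-Sobolev cannot be decoupled from the moment control of $\mathcal Y^n$, and conversely the moment bound of Proposition~\ref{prop_bound_moment} needs the carré-du-champ control of the fast modes — this is the structural reason Theorem~\ref{theo_free_energy} is formulated for the combined free energy $\cF(f_t\,|\,\nu^n_g,\kappa)$ rather than for the relative entropy alone. Its proof is then a single Gronwall argument propagating both quantities simultaneously — crude bound~\eqref{eq_small_time_bound_Prop61} on $[0,\tau_1 n^{-1/2}]$, refined bound~\eqref{eq_bounds_error_prop_free_energy} on $[\tau_1 n^{-1/2},T]$ — with $\kappa$ chosen so that $\alpha_0\le16a\gamma^2\kappa$ and $\delta<2$, so that the quartic magnetisation term and the carré du champ are both controlled.
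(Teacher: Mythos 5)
Your overall architecture — compute the adjoint, cancel two-point correlations via the ODE~\eqref{eq_ODE_g} for $g$, decompose into slow and fast modes, identify the quartic magnetisation contribution, and renormalise the rest — does match the paper's Sections~\ref{subsec_comput_adjoint_exclusion}--\ref{subsec_bound_adjoint}. Your heuristic route to the quartic term via the gap between $U$ and its Hessian approximation (using $U^{(3)}(0)=0$) is a legitimate alternative to the paper's direct projection of $C^{4+}_n$ in Lemma~\ref{lemm_projection_Q4}, though the phrasing "cancel exactly as in~\eqref{n-02}" is misleading: for $\nu^n_g$ the $n^{3/2}$-prefactor magnetisation terms do \emph{not} cancel exactly — that is precisely why $\alpha_0\sqrt n(\mc Y^n)^4$ survives and must be absorbed into the free energy via Proposition~\ref{prop_bound_moment}.

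The genuine gap is in part~(ii). You write that one takes the entropy-inequality parameter "of order $\delta^{-1}$" to produce $\frac{\delta}{2}n^{5/2}\Gamma^{\rm ex}_n$ after applying the log-Sobolev inequality, and then that "the residual exponential moments are $O(1)$ uniformly in $m$ by the concentration inequality of Appendix~\ref{app_concentration}." This does not close: with $\lambda \sim \delta^{-1}$ one needs to bound $\log\nu^{n,m}_g\bigl(\exp[\lambda\, W^{J,\phi}_p]\bigr)$ for arbitrarily small $\delta$, i.e.\ arbitrarily large $\lambda$, and Proposition~\ref{prop_concentration} only gives finiteness of this log-moment generating function for $\lambda\le\lambda_p$ — for $p=2$ (square of a Gaussian) the moment genuinely blows up once $\lambda$ crosses a finite threshold. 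This is exactly the tension the paper identifies at the start of the proof of Lemma~\ref{lemm_estimate_WphiJp}: "the choice of $\lambda$ is fixed and \ldots we need to restrict the exponential moment to configurations where $W^{J,\phi}_p$ cannot be too large." The resolution, which your proposal omits entirely, is the large-deviation argument of Appendix~\ref{app_LD} (Proposition~\ref{prop_LD_bound}): after time $\tau_1 n^{-1/2}$ one restricts to the event $\{|\ms M^n|\le\epsilon\}$ when $q\ge1$, or $\{|W^{J,\phi}_p|\le\epsilon' n\}$ when $q=0$ and $p\ge3$, and when $q=0,\,p=2$ a further renormalisation step to reach $p\ge3$ first. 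On the good event the residual factor $\epsilon^q$ (or the truncation threshold $\epsilon'$) is the free small parameter that compensates the fixed $\lambda$; the bad event is exponentially unlikely. Your alternative — absorbing the magnetisation-dependent summands into $C_1\sqrt n$ using "a priori moment bounds from Proposition~\ref{prop_bound_moment}" — is not what the paper does and would be circular: Proposition~\ref{prop_bound_moment} is proved in tandem with, not prior to, the present proposition, and moreover it controls $\E[(\mc Y^n_t)^2]$ via a Gronwall scheme that already presupposes part~(ii). The paper instead uses the deterministic bound $|\ms M^n|\le\epsilon$ on the large-deviation good event, which needs no input from the free-energy scheme.

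Two minor points: in your decomposition over slices you write $\nu^{n,m}_{1/2}$ where the conditional measure should be $\nu^{n,m}_g$; and $Q^{4+,\delta}_n$ in the paper does not contain the quartic $(\mc Y^n)^4$ term, which is kept outside, so "or a polynomial in $\mc Y^n$" slightly misdescribes its contents.
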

\begin{remark}
\begin{itemize}
	\item For the proof of Theorem~\ref{theo_free_energy} it would be enough to state item (i) when $f=f_t$ is the density of the law of the dynamics at time $t\in[0,T]$. 
The general expression will be useful in the proof of tightness in Section~\ref{sec_tightness}.
	\item The time $\tau\, n^{-1/2}$ in~\eqref{eq_bounds_error_prop_free_energy} corresponds to the time after which large deviation bounds become available uniformly on the initial condition. 
	Before time $\tau\, n^{-1/2}$ the dynamics is controlled by the coarse bound~\eqref{eq_small_time_bound_Prop61}, which would be very bad for long time due to the $\sqrt{n}\, $ prefactor in front of the entropy. 
	For suitably nice initial conditions the better bound~\eqref{eq_bounds_error_prop_free_energy} would be valid for any $t\in[0,T]$. 
	\item The exponent $4+$ in $Q^{4+,\delta}_n$ indicates that only terms involving four-point correlations and above remain in the adjoint (up to corrections that are smaller in $n$). 
	This is a rigorous statement of the claim that computing the adjoint with respect to $\nu^n_g$ cancels the remaining two-point correlation term~\eqref{eq_term_to_cancel_nonpert}. 
\end{itemize}
\end{remark}
\begin{proposition}[Bound on second moment]\label{prop_bound_moment}
For each $\delta,T>0$, there are $C_2,C'_2>0$ and a time $\tau_2>0$ such that:
\begin{align}
\forall t\in[0,T],\qquad
\partial_t \nu^n_g\big(f_t\, \sqrt{n}\, (\mathcal Y^n)^2\big)
&\leq 
\delta\,  n^{5/2}\, \Gamma^{\rm ex}_n(f_t;\nu^n_g) 
- 8\, a\, \gamma^2\,   \nu^n_g\big(f_t\, \sqrt{n}\, (\mathcal Y^n)^4\big)
\nnb
&\quad
+C_2 \,\sqrt{n}\, + {\bf 1}_{[0,\tau_2\, n^{-1/2}]}(t)\, C'_2\, \sqrt{n}\, H_n(f_t\, |\, \nu^n_g)
. 
\end{align}
\end{proposition}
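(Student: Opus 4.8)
The plan is to compute $\partial_t\,\nu^n_g\big(f_t\,\sqrt n\,(\mc Y^n)^2\big)$ through Dynkin's formula and then match the resulting terms with the right-hand side of the proposition. Since $f_t\,\nu^n_g$ is the law of the dynamics at time $t$ and the accelerated generator is $\ms L_n=\sqrt n\,L_n$, one has $\partial_t\,\nu^n_g\big(f_t\,\sqrt n\,(\mc Y^n)^2\big)=\sqrt n\,\nu^n_g\big(f_t\,\sqrt n\,L_n(\mc Y^n)^2\big)$. The quantity $\sqrt n\,L_n(\mc Y^n)^2$ was already computed in the proof of Corollary~\ref{n-s09}: the exclusion part does not act, and by~\eqref{eq_th51_0} together with the mode decomposition stated just below it,
\begin{equation*}
\sqrt n\,L_n(\mc Y^n)^2
\;=\;
-\,4\,a\,\theta\,(\mc Y^n)^2
\;-\;16\,a\,\gamma^2\,(\mc Y^n)^4
\;-\;16\,a\,\gamma^2\,\ms A_n
\;+\;\frac{a}{n}\sum_{i\in\T_n}c(\tau_i\eta)\,,
\end{equation*}
with $\ms A_n=\ms M^n\sum_i\bar\eta^m_{i-1}\bar\eta^m_i\bar\eta^m_{i+1}+(\ms M^n)^2\big\{2\sum_i\bar\eta^m_i\bar\eta^m_{i+1}+\sum_i\bar\eta^m_{i-1}\bar\eta^m_{i+1}\big\}$. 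This is a pure computation of the generator and does not involve the reference measure, so it can be quoted verbatim. It remains to estimate the four resulting terms after multiplying by $\sqrt n$ and integrating against $f_t\,\nu^n_g$.

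First I would dispose of the two elementary terms. The rate sum $\frac an\sum_i c(\tau_i\eta)$ is bounded by $(1+\gamma)^2 a\le C$, so it contributes $\le C\sqrt n$, absorbed into $C_2\sqrt n$. For the quadratic term I would apply Young's inequality in the form $4a|\theta|\,x^2\le 8a\gamma^2 x^4+a\theta^2/(2\gamma^2)$ with $x=\mc Y^n$, which gives
\begin{equation*}
-\,4\,a\,\theta\,\sqrt n\,\nu^n_g\big(f_t(\mc Y^n)^2\big)
\;\le\;
8\,a\,\gamma^2\,\nu^n_g\big(f_t\,\sqrt n\,(\mc Y^n)^4\big)\;+\;C\sqrt n\,;
\end{equation*}
combined with the $-16a\gamma^2\,\nu^n_g\big(f_t\sqrt n(\mc Y^n)^4\big)$ from the quartic term, this leaves exactly the coefficient $-8a\gamma^2$ on the quartic in the statement — the loss of the other half being the reason the constant is $8$ and not $16$ — plus a harmless $C\sqrt n$ (using that $\gamma=\tfrac12(1-\theta n^{-1/2})$ is bounded away from $0$ over the range of $n$ considered, so $a\theta^2/2\gamma^2\le C$).

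Next I would deal with $16a\gamma^2\sqrt n\,\big|\nu^n_g\big(f_t\,\ms A_n\big)\big|$. Each summand of $\ms A_n$ has the form $\ms Z_n\sum_i h(\tau_i\eta)$, where $\ms Z_n\in\{\ms M^n,(\ms M^n)^2\}$ is bounded by $1/2$ and invariant under the exclusion dynamics, and $h$ is one of the mean-$0$ cylinder functions $\bar\eta^m_{-1}\bar\eta^m_0\bar\eta^m_1$, $\bar\eta^m_0\bar\eta^m_1$, $\bar\eta^m_{-1}\bar\eta^m_1$. I would invoke the counterpart of Theorem~\ref{n-s02} (together with Remark~\ref{n-s04}) for the reference measure $\nu^n_g$: for such $\ms Z_n$, $h$ and every $\delta'>0$,
\begin{equation*}
\big|\nu^n_g\big(f_t\,\ms Z_n\,\sum_i h(\tau_i\eta)\big)\big|
\;\le\;
C(\delta')\;+\;\delta'\,n^2\,\Gamma^{\rm ex}_n(f_t;\nu^n_g)\;+\;{\bf 1}_{[0,\tau_2 n^{-1/2}]}(t)\,C'\sqrt n\,H_n(f_t\,|\,\nu^n_g)\,,
\end{equation*}
then sum over the finitely many summands, multiply by $16a\gamma^2\sqrt n$, and choose $\delta'$ small enough that the carré du champ contributions total at most $\delta\,n^{5/2}\Gamma^{\rm ex}_n(f_t;\nu^n_g)$. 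Collecting the four estimates gives the proposition.

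The hard part — and the step that really carries the proof — will be this $\nu^n_g$-version of Theorem~\ref{n-s02}. It should be obtained by the three-step scheme of Section~\ref{n-sec3}, with each step adapted to the non-product measure $\nu^n_g$. \emph{Step~1} is an integration-by-parts/energy estimate for $\nu^n_g$ in the general framework of Appendix~\ref{sec_IBP}; because $\nu^n_g$ is not a product measure, the ratio $\nu^n_g(\eta^{i,i+1})/\nu^n_g(\eta)$ equals $1+O(1/n)$ instead of $1$, producing correction terms governed by the discrete derivatives of $g$, which are tamed using the regularity and $n$-uniform bounds on $g$ from Appendix~\ref{app_prop_g}; after the spatial summation by parts of Lemma~\ref{n-l01} one is left with a spatially averaged quadratic functional of the mean-$0$ modes. \emph{Step~2} uses the logarithmic Sobolev inequality for the Kawasaki dynamics conditioned on the magnetisation — now with respect to the Gaussian-tilted canonical measures $\nu^{n,m}_g$, which is exactly Assumption~\ref{ass_LSI} and which replaces Yau's inequality used for $\nu^{n,m}_{1/2}$ — together with the decomposition $\Gamma^{\rm ex}_n(f;\nu^n_g)=\sum_m\nu^n_g(f{\bf 1}_m)\,\Gamma^{\rm ex}_n(f^m;\nu^{n,m}_g)$. \emph{Step~3} replaces the equivalence-of-ensembles argument of Lemma~\ref{n-l02}--Proposition~\ref{n-l03} by concentration and large-deviation bounds for linear statistics under the Gaussian-tilted canonical measures $\nu^{n,m}_g$; these are the estimates of Sections~\ref{app_concentration}--\ref{app_LD}, which rely on $g$ having operator norm strictly below $1$ (cf.\ Proposition~\ref{prop_g}). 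The short-time indicator term appears precisely because these large-deviation bounds are only uniform over the law of the dynamics $f_t\,\nu^n_g$ once $t\ge\tau_2 n^{-1/2}$; for smaller $t$ one falls back on the crude entropy inequality $\nu^n_g(f_t\,\cdot\,)\le\lambda^{-1}H_n(f_t|\nu^n_g)+\lambda^{-1}\log\nu^n_g(e^{\lambda\,\cdot\,})$, acceptable because it is invoked only on an interval of length $O(n^{-1/2})$ — exactly as for the term $Q^{4+,\delta}_n$ in Proposition~\ref{prop_bound_adjoint} and the remark following it.
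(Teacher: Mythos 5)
Your proposal is correct and follows the same route as the paper's proof (Section~\ref{sec_bound_moment}): compute $\sqrt{n}\, L_n(\cY^n)^2$ via the Glauber generator (the algebra is measure-independent), decompose $\bar\eta = \bar\eta^m + \ms M^n$ to isolate $(\cY^n)^2$, $(\cY^n)^4$ and the remainder $R^m = -16a\gamma^2\ms A_n$, estimate $\sqrt{n}\,\nu^n_g(f_t R^m)$ by what the paper has already made precise as Lemma~\ref{lemm_estimate_WphiJp} (exactly the $\nu^n_g$-counterpart of Theorem~\ref{n-s02} you posit, including the short-time indicator coming from the large-deviation bounds of Proposition~\ref{prop_LD_bound}), and absorb the quadratic term into the quartic by Young's inequality to obtain the $-8a\gamma^2$ coefficient. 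The only cosmetic difference is that the paper cites Lemma~\ref{lemm_estimate_WphiJp} directly rather than re-deriving it, but your sketch of its proof via the three-step scheme adapted to $\nu^n_g$ matches Section~\ref{subsec_renorm_sec6}.
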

Recall that the free energy reads:
\begin{equation}
\cF( f_t \,|\, \nu^n_g , \kappa)
=
H_n(f_t|\nu^n_g) + \kappa\,\sqrt{n}\, \nu^n_g\big(f_t (\cY^n)^2\big)
.
\end{equation}
Putting Propositions~\ref{prop_bound_adjoint}--\ref{prop_bound_moment} together, 
we see that it is enough to choose $\kappa=\alpha_0/(8a\gamma^2)$ to get rid of the fourth moment of the magnetisation when computing $\partial_t \cF( f_t \,|\, \nu^n_g , \kappa)$. 
Indeed, 
for such a $\kappa$ and any $\delta<\min\{1/2,(2\kappa)^{-1}\}$, 
letting $\tau:=\max\{\tau_1,\tau_2\}$:
\begin{align}
&\partial_t\cF( f_t \,|\, \nu^n_g , \kappa)
+n^{5/2}\, \Gamma^{\rm ex}_n(f_t;\nu^n_g)
\nnb
&\qquad 
\leq 
{\bf 1}_{[0,\tau\, n^{-1/2}]}(t)\sqrt{n}\, (C+\kappa C'_2)\, H_n(f_t\, |\, \nu^n_g)
+ (C_1+\kappa C_2)\, \sqrt{n} 
\nnb
&\qquad\leq 
{\bf 1}_{[0,\tau\, n^{-1/2}]}(t)\sqrt{n}\, (C+\kappa C'_2)\,\, \cF( f_t \,|\, \nu^n_g , \kappa)
+(C_1+\kappa C_2)\, \sqrt{n}\, 
.
\end{align}
A Gronwall bound concludes the proof of
Theorem~\ref{theo_free_energy}. \smallskip

We start with a sketch of the proof of
Proposition~\ref{prop_bound_adjoint} in
Section~\ref{sec_sketch_free_energy}, while the proof itself takes up
the following three subsections.  Proposition~\ref{prop_bound_moment} is
proven in Section~\ref{sec_bound_moment}.  
\subsection{Sketch of the argument}\label{sec_sketch_free_energy}
The proof of Proposition~\ref{prop_bound_adjoint} involves the same
steps as the proof presented in Section~\ref{n-sec2} for the new reference measure $\nu^n_g$:
\begin{itemize}
	\item[(i)] A separation of slow and fast modes through $\bar\eta = \bar\eta^m +n^{-1/4}\cY^n$.
	\item[(ii)] An estimate of fast modes through a log-Sobolev inequality for the dynamics restricted to configurations with fixed magnetisation.   
	\item[(iii)] A separate ingredient, not related to Kawasaki dynamics, 
	to bound magnetisation terms. 
\end{itemize}
The proof of items (ii) and (iii) is however quite different. 
Item (iii) has already been discussed above: 
while for $\nu^n_U$ the bound was provided by construction of the measure, 
here the reference measure $\nu^n_g$ does not describe the law of the magnetisation as precisely and an additional control on moments of the magnetisation is needed, 
provided by Proposition~\ref{prop_bound_moment}.

The more crucial difference lies in the proof of item (ii), 
as we now wish to bound the contribution of the fast modes without assuming $a$ small.  
Let us explain how this is made possible, starting from  where the problem previously was. \\

With $\nu^n_U$ as reference measure, one had to estimate the expectation of the term:
\begin{equation}
a\sqrt{n}\, \sum_{i\in \bb T_n} \bar\eta^m_i(t) \, \bar\eta^m_{i+1}(t)
,
\label{eq_term_eta_i_eta_iplus1_to_cancel_sketch}
\end{equation}
or, after a renormalisation step making use of the carr\'e du champ,
of a term of the form:
\begin{equation}
a\sqrt{n}\, \sum_{i\in \bb T_n} \bar\eta^m_i(t)\bar\eta^m_j(t) \,
\phi^n(i,j) 
,
\end{equation}
with $\phi^n\colon \T^2_n\to\R$ satisfying
$\sup_{n\geq 1}\|\phi^n\|_\infty<\infty$.  To explain how we produce a
better estimate in the present section, let us work in a simplified
context, assuming that the above term is replaced by:
\begin{equation}
a\sqrt{n}\, X_n(\eta(t))^2\;,\quad 
X_n(\eta)
:=
\frac{1}{\sqrt{n}}\sum_{i\in \bb T_n} \bar\eta^m_i\psi^n(i)\;,\quad 
\sup_{n\geq 1}\|\psi^n\|_{\infty}
<
\infty
\;.
\end{equation}
We expect $X_n(\eta(t))$ to be approximately Gaussian at each time, so
that $a\, \sqrt{n}\, X_n^2(\eta(t)) \approx \sqrt{n}$.  Our only way
to make this estimate rigorous is through the entropy
inequality~\eqref{n-35} (applied at each fixed magnetisation), which
indeed gives a $O(\sqrt{n}\, )$ bound on
$a\sqrt{n}\, \E\big[ X_n(\eta(t))^2\big]$ provided:
\begin{equation}
\sup_{n,m}\nu^{n,m}_{1/2}( e^{a CX_n(\eta)^2}) 
<
\infty
,
\end{equation}
for some $C>0$ independent of $a$, determined by the log-Sobolev
constant of $\nu^{n,m}_{1/2}$ with respect to Kawasaki dynamics
(see~\eqref{eq_LSI_Yau_sec_5}).  Subgaussian concentration under the
canonical measure gives the following bounds: for some
$c_0 = c_0(\sup_n \|\psi^n\|_\infty)>0$,
\begin{equation}
\nu^{n,m}_{1/2}
\big( X_n(\eta)^2 > \lambda \big)
\leq 
2e^{-c_0 \lambda}
,\qquad 
\lambda>0
. 
\end{equation}
The dependence in $\lambda$ of the above bound cannot be improved uniformly in $n$ (since in the limit $X_n$ is Gaussian under $\nu^{n,m}_{1/2}$). 
We therefore have no choice: for $a\, \sqrt{n}\, \E[X_n(\eta(t))^2]$
to be bounded by $O(\sqrt{n})$, 
we need $a$ to be smaller than some
value $a_0$.

The reference measure $\nu^n_g$ is introduced to cancel the quadratic
term $a\, \sqrt{n}\, \E[X_n(\eta(t))^2]$ (more precisely the quadratic
term~\eqref{eq_term_eta_i_eta_iplus1_to_cancel_sketch} but let us keep
our simplified setting).  The adjoint in $\bb L^2(\nu^n_g)$ only contains
four-point correlations and higher (up to sub-leading order,
negligible corrections when $n\gg 1$), that is, product of at least four
$\bar\eta$'s or more.  After decomposing on slow and fast modes
$\bar\eta=\bar\eta^m+\ms M^n$ writing
$\ms M^n=n^{-1/4}\cY^n\in[-1/2,1/2]$, we get an expression of the
following form: for some $\alpha_0>0$,
\begin{align}
\sqrt{n}\, L^*_n{\mb 1}(\eta)
&=
\alpha_0 \sqrt{n} (\cY^n)^4 
+
a\, \sqrt{n}\, (\ms M^n)^{2} X_n^2 
\nnb
&\quad 
+ a\, \sqrt{n}\, \ms M^n \cdot  \frac{1}{\sqrt{n}}X_n^3 +
a\, \sqrt{n}\cdot \frac{1}{n}X_n^4 
\quad+\quad \text{higher order terms}
.
\end{align}
(Here again this is a simplification, the correct expression involves $2$-, $3$- and $4$-tensors instead of powers of $X_n$ and a renormalisation involving the carr\'e du champ is necessary).  
The term $(\cY^n)^4$ does not involve the mean-0 modes and is estimated through Proposition~\ref{prop_bound_moment}, 
so let us focus on the rest. 
We claim that all other terms should typically be much smaller than $a\sqrt{n}\, X_n^2$ regardless of the size of $a$. 
Indeed, notice first that all terms in the right-hand side are all of comparable size, of order $n^{3/2}$. 
However, 
we expect the fast modes to have Gaussian concentration: $X_n(t)\approx 1$. 
On the other hand the magnetisation should satisfy $\ms M^n_t\approx n^{-1/4}$,   
so that:
\begin{equation}
a\,\sqrt{n}\, (\ms M^n_t)^{2}\cdot X_n^2(t) 
\approx
1
\qquad
a\,\sqrt{n}\, \ms M^n_t \cdot  \frac{1}{\sqrt{n}}X_n^3(t)
\approx 
\frac{1}{n^{1/4}}
,\qquad
a \sqrt{n}\,\cdot\frac{1}{n}X_n^4(t)
\approx 
\frac{1}{\sqrt{n}}
.
\label{eq_heuristic_size_quartic_sec6}
\end{equation}
This is a considerable improvement compared to $a\, \sqrt{n}\, X_n^2(t) \approx \sqrt{n}$, 
and the reason why considering $\nu^n_g$ instead of $\nu^n_U$ as reference measure allows us to control the dynamics for arbitrary values of $a$.

Making (a version of) this heuristics rigorous is the key technical work of this section, 
corresponding to Lemma~\ref{lemm_projection_Q4}. 
The difficulty is that we can only estimate expectations at time $t$ through the entropy inequality, 
which means we require bounds on log moment generating functions under $\nu^n_g$ of the terms in~\eqref{eq_heuristic_size_quartic_sec6}.  
These may be much bigger than typical values. 
Indeed, consider e.g. $a\, (\ms M^n)^2 X^2_n$ which is of order $n$ for worst-case configurations. 
Its exponential moment therefore blows up exponentially in $n$ under $\nu^n_g$ if $a$ is larger than some threshold $a_\psi$ depending only on $\psi,g$, 
even though all its moments under $\nu^n_g$ are bounded with $n$, 
simply because $\nu^n_{1/2}(\eta)=2^{-n}$ cannot compensate $\exp[a\, (\ms M^n)^2 X^2_n]$.

We avoid these pathological configurations through large deviation bounds. 
This is done differently depending on the terms as explained next.
\begin{itemize}
	\item For $\ms M^n\cdot X_n^2$ and $\ms M^n\cdot \frac{1}{\sqrt{n}}\, X_n^3$, 
	there is a prefactor $\ms M^n= \frac{1}{n}\sum_{i\in\T_n}\bar\eta_i$ that we expect to be $\approx n^{-1/4}$. 
	While such a precise bound is part of what we want to prove, 
	we do already know that $\ms M^n$ cannot be of order $1$ since this would correspond to a large deviation event. 
	Using large deviation bounds to argue that $\sup_{t\in[0,T]}|\ms M^n(t)|\leq \epsilon$ with probability $1-c(\epsilon,T)^{-1}e^{-nc(\epsilon,T)}$ for any $\epsilon>0$ and some $c(\epsilon,T)$, 
	established in Appendix~\ref{app_LD}, 
	we are left with bounding:
	\begin{equation}
		\E\Big[a\, (\ms M^n_t)^2\cdot X_n^2(t) {\bf 1}_{|\ms M^n_t|\leq \epsilon}\Big],
		\qquad 
	\E\Big[a\, \ms M^n_t \cdot  \frac{1}{\sqrt{n}}X_n^3(t){\bf 1}_{|\ms M^n_t|\leq \epsilon}\Big]
	.
	\end{equation}
	Using entropy and log-Sobolev inequalities this translates as a bound on exponential moments under the canonical measure $\nu^{n,m}_g$, for $|m-n/2|\leq \epsilon n$, of:
	\begin{equation}
	Ka\, \epsilon^2\cdot X_n^2 ,\qquad 
	Ka\, \epsilon \cdot  \frac{1}{\sqrt{n}}|X_n|^3
	,
	\end{equation}
	with $K=K(g)>0$ an explicit constant. 
	Since $\epsilon$ is arbitrary we can ensure $K a\epsilon$, $K a\epsilon^2$ are small enough for these exponential moments to be bounded with $n$. 
	\item The last term $\frac{1}{n}X_n^4$ is not multiplied by the magnetisation. 
	A similar argument nonetheless applies, relying on the boundedness of $X_n$ and the small prefactor $1/n$. 	Indeed, 
	Gaussian concentration bounds in Appendix~\ref{app_concentration} give, for some $c(g)>0$:
	\begin{equation}
	\nu^{n,m}_g\Big(\frac{K a}{n} X_n^4 >\lambda \Big) 
	\leq 
	\exp\Big[ -c(g) \, \sqrt{\frac{n \lambda\,}{aK}}\Big],\qquad 
	\lambda>0
	,
	\end{equation}
	where the $\sqrt{n}$ factor comes from the prefactor $1/n$ in $X_n^4/n$. 
	Since $\frac{a K}{n} X_n^4$ is deterministically bounded by $Ka\|\psi^n\|^4_\infty\, n$, 
	its exponential moment satisfies:
	\begin{equation}
	\nu^{n,m}_{g}\Big[ e^{\frac{a}{n} X_n^4} \Big]
	\leq 
	1 + \int_0^{Ka \|\psi^n\|_\infty\, n } 
	e^{\lambda - c(g) \sqrt{n\lambda /aK}}\, d\lambda
	.
	\end{equation}
	Observe that the negative exponential beats the $e^\lambda$ for all $\lambda$'s smaller than some $c(a,K)n$, i.e. up to \emph{macroscopic} $\lambda$'s. 
	To bound the above exponential moment, 
	it is therefore enough to restrict to regions of the state space where $|X_n|\ll \sqrt{n}$ is small at the macroscopic level, 
	which can again be done by large deviation bounds.  
	Note that the above argument is valid for terms of the form $X_n^{p}$ for any $p\geq 3$, 
	since due to the $O(n^{3/2})$ bound arising in the computation of the adjoint these terms must arise with a prefactor $n^{(2-p)/p}$, 
	which becomes small when $p\geq 3$.  
\end{itemize}
\subsection{Computation of $n^{5/2}L^{*,\rm ex}_{n}{\bf 1}$}\label{subsec_comput_adjoint_exclusion}
We first compute the contribution of Kawasaki dynamics to the adjoint. 
For $p\geq 1$ and a function $\phi:\T^p\rightarrow\R$, 
write:
\begin{equation}
\phi_{i_1,...,i_p} 
:=
\phi\Big(\frac{i_1}{n},...,\frac{i_p}{n}\Big)
,\qquad 
i_1,...,i_p\in\T_n
.
\label{eq_notation_indices}
\end{equation}
Define also:
\begin{equation}
\partial^n_{1} g_{i,j}
:=
n[g_{i+1,j}-g_{i,j}]
,
\qquad
\Delta^n_1 g_{i,j}
:= 
n^2\big[g_{i+1,j}+g_{i-1,j}-2g_{i,j}\big]
,
\qquad i,j\in\T_n
.
\end{equation}
Let $i\in\T_n$, $\eta\in\Omega_n$ and write:
\begin{equation}
\Pi^n(g)
=
\Pi^n(g)(\eta)
:= 
\frac{1}{4n}\sum_{i\neq j}\bar \eta_i\bar\eta_j g_{i,j}
,\qquad 
\bar\eta_\cdot 
:= 
\eta_\cdot-\frac{1}{2}
,
\end{equation}
so that $\nu^n_g(\eta)\propto e^{2\Pi^n(g)}\nu^n_{1/2}(\eta)$. 
Then:
\begin{equation}
2\Pi^n(g)(\eta^{i,i+1})
-
2\Pi^n(g)(\eta)
=
\frac{(\eta_i-\eta_{i+1})}{n}\sum_{j\notin\{i,i+1\}} \partial^n_1 g_{i,j}\bar\eta_j
.
\end{equation}
From this fact and the formula~\eqref{eq_general_formula_adjoint} for
the adjoint, $n^{5/2}L^{*,\rm ex}_{n}{\bf 1}$ reads:
\begin{align}
n^{5/2}L^{*,\rm ex}_{n}{\bf 1}(\eta)
&=
n^{5/2}\sum_{i\in\T_n} \eta_i(1-\eta_{i+1})
\bigg( \exp\Big[\frac{1}{n}\sum_{j\notin\{i,i+1\}}
\bar\eta_j \big[g_{i+1,j}-g_{i,j}\big]\Big] -1\bigg) 
\nonumber\\
&\quad +n^{5/2}\sum_{i\in\T_n} \eta_{i+1}(1-\eta_{i})
\bigg( \exp\Big[-\frac{1}{n}\sum_{j\notin\{i,i+1\}}
\bar\eta_j \big[g_{i+1,j}-g_{i,j}\big]\Big] -1\bigg) 
\end{align}
Since $\sup_n\|\partial_1^n g\|_\infty<\infty$, 
the argument of the exponentials is of order $1/n$ uniformly in the configuration. 
Expanding the integrals using:
\begin{equation}
e^{x}
=
1+x+\frac{x^2}{2}+\frac{x^3}{6} + \frac{x^4}{6}\int_0^1(1-t)^3e^{tx}\, dt
,\qquad 
x\in\R
,
\end{equation}
one therefore finds:
\begin{align}
n^{5/2}L^{*,\rm ex}_n{\bf 1}(\eta)
&=
\sqrt{n}\, \sum_{i\in \T_n} \big(\eta_i-\eta_{i+1}\big)\sum_{j\notin\{i,i+1\}} \bar\eta_j \partial^n_1 g_{i,j}
\nonumber\\
&\quad
+\frac{\sqrt{n}\, }{2}\sum_{i\in \T_n} \big(\eta_{i+1}(1-\eta_{i}) + \eta_i(1-\eta_{i+1})\big)\Big(\frac{1}{n}\sum_{j\notin\{i,i+1\}} \bar\eta_j \partial^n_1 g_{i,j}\Big)^2
\nonumber\\
&\quad
+\frac{1}{6\sqrt{n}\, }\sum_{i\in\T_n}(\eta_i-\eta_{i+1}) \Big(\frac{1}{n}\sum_{j\notin\{i,i+1\}}\bar\eta_j \partial^n_1 g_{i,j}\Big)^3
+O(n^{-1/2})
,
\label{eq_adjoint_exclusion_part}
\end{align}
with the $O(n^{-1/2})$ uniform on the configuration. 
Below we single out the two-point correlation terms since these will be used to cancel~\eqref{eq_term_to_cancel_nonpert}. 
It will also be convenient to introduce the following definition, 
identifying terms with the largest scaling in terms of $n$. 
\begin{definition}[Leading order terms]\label{def_leading_order}
We say that $X_n:\Omega_n\to\R$ ($n\in\N$) is of \emph{leading order}
if there are $c>0$ and a sequence $\eta_n\in\Omega_n$ such that
$X_n(\eta_n)\geq c\, n^{3/2}$ for all $n\ge 1$.  We conversely say that
$X_n$ is of \emph{sub-leading order} if
$\sup_{\eta\in\Omega_n}|X_n|=O(\sqrt{n}\, )$.
\end{definition}
As an example $\sqrt{n}\ \Pi^n(\psi) = \frac{1}{4\sqrt{n}\, }\sum_{i\neq j}\bar\eta_i\bar\eta_j \psi_{i,j}$ is of leading order for a continuous, non-identically vanishing $\psi:\T^2\to\R$; while $n^{-1/2}\, \Pi^n(\psi)$ is of sub-leading order. 

\begin{remark}
Note that sub-leading order is not the opposite of leading order in Definition~\ref{def_leading_order}. 
However, in our context, terms that are not of leading order will always be $O(\sqrt{n})$, 
making the above definition convenient.  
\end{remark}

\subsubsection{First and third lines of~\eqref{eq_adjoint_exclusion_part}} 
The fact that $\eta_{i+1}-\eta_i = \bar\eta_{i+1}-\bar\eta_i$ and an
integration by parts give:
\begin{align}
\sqrt{n}\, \sum_{i\in \bb T_n}
&\big(\eta_i-\eta_{i+1}\big)\sum_{j\notin\{i,i+1\}} \bar\eta_j
\partial^n_1 g_{i,j} 
=
\sqrt{n}\, \sum_{i\in \bb T_n} \bar\eta_i\Big(\sum_{j\notin\{i,i+1\}}
\bar\eta_j \partial^n_1 g_{i,j} - \sum_{j\notin\{i-1,i\}} \bar\eta_j
\partial^n_1 g_{i-1,j}\Big) 
\nonumber\\
&\quad =
\frac{1}{\sqrt{n}}\sum_{i\in\T_n}\sum_{\substack{j\in\T_n \\ |j-i|>1}}\bar\eta_i\bar\eta_j \Delta^n_1 g_{i,j}
+\sqrt{n}\, \sum_{i\in\T_n}\bar\eta_i\big[\bar\eta_{i-1} \partial^n g_{i,i-1} - \bar\eta_{i+1}\partial^n g_{i-1,i+1}\big]
\nonumber\\
&\quad =
\frac{1}{\sqrt{n}}\sum_{i\in\T_n}\sum_{\substack{j\in\T_n \\ |j-i|>1}}\bar\eta_i\bar\eta_j \Delta^n_1 g_{i,j}
+\sqrt{n}\, \sum_{i\in\T_n}\bar\eta_i\bar\eta_{i+1}\big[ \partial^n_1 g_{i+1,i} - \partial^n_1 g_{i-1,i+1}\big]
.
\end{align}
A similar computation shows that the third order term in the third line of~\eqref{eq_adjoint_exclusion_part} is bounded by $O(n^{-1/2})$ uniformly on the configuration:
\begin{equation}
\frac{1}{6\, \sqrt{n} }\sum_{i\in\T_n}(\eta_i-\eta_{i+1}) \Big(\frac{1}{n}\sum_{j\notin\{i,i+1\}}\bar\eta_j \partial^n_1 g_{i,j}\Big)^3
=
O(n^{-1/2})
.
\end{equation}
\subsubsection{Second line in~\eqref{eq_adjoint_exclusion_part}}
Developping the square and exchanging the order of summation, one has:
\begin{align}
\frac{\sqrt{n}\, }{2}&\sum_{i\in T_n} \big(\eta_{i+1}(1-\eta_{i}) + \eta_i(1-\eta_{i+1})\big)\Big(\frac{1}{n}\sum_{j\notin\{i,i+1\}} \bar\eta_j \partial^n_1 g_{i,j}\Big)^2
\nonumber\\
&\quad =
\frac{1}{2n^{3/2}}\sum_{i\in \T_n} \big(\eta_{i+1}(1-\eta_{i}) + \eta_i(1-\eta_{i+1})\big)\sum_{j,\ell \notin\{i,i+1\}} \bar\eta_j\bar\eta_\ell \partial^n_1 g_{i,j}\partial^n_1 g_{i,\ell}
\nonumber\\
&\quad=
\frac{1}{2n^{3/2}}\sum_{j,\ell\in\T_n} \bar\eta_j\bar\eta_\ell \Big(\sum_{i\notin\{j-1,j,\ell-1,\ell\}} \big(\eta_{i+1}(1-\eta_{i}) + \eta_i(1-\eta_{i+1})\big) \partial^n_1 g_{i,j}\partial^n_1 g_{i,\ell}\Big)
.
\label{eq_2nd_order_exclusion_0}
\end{align}
Note the following elementary identities:
\begin{align}
\eta_i(1-\eta_{i+1}) + \eta_{i+1}(1-\eta_i)
&=
2\sigma - 2\bar\eta_i\bar\eta_{i+1}
,\quad 
(\bar\eta_i)^2
=
\sigma
,\qquad
\sigma=1/4
.
\end{align}
It follows that~\eqref{eq_2nd_order_exclusion_0} becomes:  
\begin{align}
\frac{1}{2n^{3/2}}\sum_{j,\ell\in\T_n} \bar\eta_j\bar\eta_\ell \Big(&\sum_{i\notin\{j-1,j,\ell-1,\ell\}} \big(\eta_{i+1}(1-\eta_{i}) + \eta_i(1-\eta_{i+1})\big) \partial^n_1 g_{i,j}\partial^n_1 g_{i,\ell}\Big)
\nonumber\\
&\quad =
\frac{\sigma}{\sqrt{n}}\sum_{j\neq \ell } \bar\eta_j\bar\eta_\ell \Big(\frac{1}{n}\sum_{i\notin\{j-1,j,\ell-1,\ell\}}\partial^n_1 g_{i,j}\partial^n_1 g_{i,\ell}\Big)
\nnb
&\qquad +
\frac{\sigma^2}{n^{3/2}}\sum_{j\in\T_n} \sum_{i\notin\{j-1,j\}}\big(\partial^n_1 g_{i,j}\big)^2
+\sqrt{n}\, C_n^{\rm ex}(\eta)
,
\end{align}
where we isolated the configuration-independent contribution of the $j=\ell$ term and 
where $C^{\rm ex}_{n}$ is an error term containing sub-leading order two-point correlations and higher-order correlations:
\begin{align}
\sqrt{n}\, C_n^{\rm ex}(\eta)
&=
-\frac{1}{n^{3/2}}\sum_{j,\ell\in\T_n}\sum_{i\notin\{j-1,j,\ell-1,\ell\}}\bar\eta_i\bar\eta_{i+1}\bar\eta_j\bar\eta_\ell\partial^n_1 g_{i,j}\partial^n_1 g_{i,\ell}
\nonumber\\
&=
\sqrt{n}\, C_n^{\rm ex,4}(\eta) -\frac{\sigma}{n^{3/2}}\sum_{i}\bar\eta_i\bar\eta_{i+1}\sum_{j\notin\{i,i+1\}}[\partial^n_1 g_{i,j}]^2
,
\label{eq_four_point_exclusion_part}
\end{align}
where the last term is bounded by $O(\sqrt{n})$ uniformly on the configuration, and:
\begin{equation}
\sqrt{n}\, C_n^{\rm ex,4}(\eta)
:=
-\frac{1}{n^{3/2}}\sum_{j\neq \ell}\sum_{i\notin\{j-1,j,\ell-1,\ell\}}\bar\eta_i\bar\eta_{i+1}\bar\eta_j\bar\eta_\ell\partial^n_1 g_{i,j}\partial^n_1 g_{i,\ell}
.
\label{eq_def_epsilon_ex4}
\end{equation}
Overall, the exclusion part contributes as follows:
\begin{align}
n^{5/2}L^{*,\rm ex}_n{\bf 1}(\eta)
&=
\frac{1}{\sqrt{n} }\sum_{i\in\T_n}\sum_{\substack{j\in\T_n \\ |j-i|>1}}\bar\eta_i\bar\eta_j \Delta^n_1 g_{i,j}
+\sqrt{n}\, \sum_{i\in\T_n}\bar\eta_i\bar\eta_{i+1}\big[ \partial^n_1 g_{i+1,i} - \partial^n_1 g_{i-1,i+1}\big]
\nonumber\\
&\quad 
+\frac{\sigma}{\sqrt{n} }\sum_{j\neq \ell } \bar\eta_j\bar\eta_\ell \Big(\frac{1}{n}\sum_{i\notin\{j-1,j,\ell-1,\ell\}}\partial^n_1 g_{i,j}\partial^n_1 g_{i,\ell}\Big)
+\sqrt{n}\, Q_n^{\rm ex,4} + O(\sqrt{n}\, )
,
\label{eq_adjoint_exlcusion_nearfinal}
\end{align}
with the $O(\sqrt{n}\, )$ uniform in the configuration. 
In order to use the special choice of $(x,y)\mapsto g(x-y)$ as the solution of the differential equation~\eqref{eq_ODE_g}, 
let us turn all discrete derivatives into continuous ones. 
Recall from Proposition~\ref{prop_g} that $g\in C^\infty([0,1])$, thus:
\begin{equation}
\max_{i,j}|\Delta^n_1 g_{i,j} - g''_{i-j}| 
= 
O(n^{-1})
,\qquad 
\max_i |\partial^n_1 g_{i,j} - g'_{i-j}|
=
O(n^{-1})
.
\label{eq_discrete_to_continuous_der_g}
\end{equation}
Note also that, as $g$ is even, $g'$ is odd and therefore:
\begin{align}
\sigma\int_{\T}\partial_1 g(x,z)\partial_1 g(y,z)\, dz
&=
\sigma\int_{\T} g'(x-z)g'(y-z)\, dz
\nnb
&=
-\sigma\int_{\T}g'(x-z)g'(z-y)\, dz
.
\end{align}
In view of~\eqref{eq_discrete_to_continuous_der_g}, 
the replacement of discrete sums/derivative is responsible for a $O(\sqrt{n}\,)$ error uniform in the configuration, 
and overall the adjoint reads, recalling~\eqref{eq_def_epsilon_ex4}:
\begin{align}
n^{5/2}L^{*,\rm ex}_n{\bf 1}(\eta)
&=
4\sqrt{n}\, \Pi^n\Big( (x,y)\mapsto g''- \sigma\int_{\T}g'(x-z)g'(z-y)\, dz \Big)
\nnb
&\quad
+\sqrt{n}\, \sum_{i\in\T_n} \bar\eta_i\bar\eta_{i+1}[g'(0_+)-g'(1_-)]
+\sqrt{n}\, C^{\rm ex, 4}_n + O(\sqrt{n}\, )
.
\label{eq_adjoint_exclusion_final}
\end{align}

\subsection{Computation of $a\, \sqrt{n}\, L^{*,G}_{n}{\bf 1}$}\label{subsec_comput_adjoint_reaction}
Recall definition~\eqref{01} of the jump rates ($\sigma_i:= 2\bar\eta_i$ for $i\in\T_n$) 
and write it as:
\begin{align}
c(\tau_i\eta)
&=
\eta_i 
\prod_{j\sim i}(1-2\gamma\bar\eta_j)
+(1-\eta_i)
\prod_{j\sim i}(1+2\gamma\bar\eta_j)
\nnb
&=: 
\eta_i r^-(\eta)
+(1-\eta_i)r^+(\eta)
,
\label{eq_c_i_sec_free_energy}
\end{align}
noting that $r^{\pm}(\eta^i)=r^\pm(\eta)$. 
From the definition~\eqref{eq_adjoint_general_measure} of the adjoint, 
$a\, \sqrt{n}\, L^{*,G}_{n}{\bf 1}$ reads:
\begin{align}
a\, \sqrt{n}\, L^{*,G}_{n}{\bf 1}(\eta) 
&=
a\, \sqrt{n}\, \sum_{i\in\T_n} r^+_i(\eta)\bigg[\eta_i\exp\Big[\frac{1-2\eta_i}{n}\sum_{j\neq i}\bar\eta_j g_{i,j}\Big]-(1-\eta_i)\bigg]
\nonumber\\
&\quad 
+ a\sum_{i\in\T_n} r^-_i(\eta)\bigg[(1-\eta_i)\exp\Big[\frac{1-2\eta_i}{n}\sum_{j\neq i}\bar\eta_j g_{i,j}\Big]-\eta_i\bigg]
.
\label{eq_L_r_star_0}
\end{align}
The term inside the exponential above is of order $1$ in $n$ in the
worst case, but should typically be small as we expect fluctuations of
the constant mode $\sum_{i}\bar\eta_i$ to be of order $n^{3/4}$ and
that of the other modes to be Gaussian:
\begin{equation}
\frac{1}{n}\sum_{j\neq i} \bar\eta_j g_{i,j}
=
\frac{\max\{8\gamma,4\}}{n}\sum_{j\neq i} \bar\eta_j
+\frac{1}{n}\sum_{j\neq i} \bar\eta_j g^0_{i,j}
\approx
n^{-1/4} + n^{-1/2}
.
\end{equation}
It thus makes sense to expand the exponentials in~\eqref{eq_L_r_star_0}, 
which we do to order $4$ using the elementary identity:
\begin{equation}
e^{x}-1-x-\frac{x^2}{2}-\frac{x^3}{6}
= \int_0^1 \frac{x^4(1-t)^3}{6}e^{tx}\, dt 
,\qquad 
x\in\R
.
\end{equation}
The adjoint then reads:
\begin{align}
a\, \sqrt{n}\, L^{*,G}_{n}{\bf 1} 
&=
2\, a\, \sqrt{n}\, \sum_{i\in\T_n} \bar\eta_i\big[r^+_i(\eta)- r^-_i(\eta)\big]
\nonumber\\
&\quad 
+
a\, \sqrt{n}\, \sum_{i\in \T_n} \Big(-\eta_i r^+_i(\eta) +(1-\eta_i)r^-_i(\eta)\Big)\Big[\frac{1}{n}\sum_{j\neq i}\bar\eta_j g_{i,j}\Big]
\nonumber\\
&\quad 
+ \frac{a\, \sqrt{n}\, }{2}\sum_{i\in \T_n} \Big(\eta_ir^+_i(\eta)+ (1-\eta_i) r^-_i(\eta)\Big)\Big[\frac{1}{n}\sum_{j\neq i}\bar\eta_j g_{i,j}\Big]^2
\nonumber\\
&\quad
+\frac{a\, \sqrt{n}\, }{6}\sum_{i\in \T_n} \Big(-\eta_i r^+_i(\eta) +(1-\eta_i)r^-_i(\eta)\Big)\Big[\frac{1}{n}\sum_{j\neq i}\bar\eta_j g_{i,j}\Big]^3
+\sqrt{n}\, R^G_n(\eta)
,
\label{eq_adjoint_expansion_rd_part}
\end{align}
with, for some constant $C$ depending only on $\|g\|_\infty$ and recalling $g=g^0 + \min\{8\gamma,4\}$:
\begin{equation}
|R^G_n(\eta)|
\leq
\frac{Ca}{n}\sum_{i\in\T_n} \frac{1}{n^3}\Big[\sum_{j\neq i}\bar\eta_j g_{i,j}\Big]^4
\leq 
Ca\bigg[\frac{1}{n^3}+(\cY^n)^4 + \frac{1}{n}\sum_{i\in\T_n} \frac{1}{n^3}\Big(\sum_{j\neq i}\bar\eta_j g^0_{i,j}\Big)^4\bigg]
.
\label{eq_bound_remainder}
\end{equation}

For future reference, note that:
\begin{align}
-\eta_i r^+_i(\eta) +(1-\eta_i)r^-_i(\eta) 
&=
-\bar \eta_i \big[r^+_i(\eta) +r^-_i(\eta) \big]
+\frac{1}{2}\big[r^-_i(\eta) -r^+_i(\eta) \big]
\nonumber\\
&=
-2\bar\eta_i -8\gamma^2\bar\eta_i\prod_{j\sim i}\bar \eta_j
-2\gamma\sum_{j\sim i}\bar\eta_j
\end{align}
and:
\begin{align}
\eta_ir^+_i(\eta)+ (1-\eta_i) r^-_i(\eta)
&=
\bar\eta_i \big[r^+_i(\eta) -r^-_i(\eta) \big]
+\frac{1}{2}\big[r^+_i(\eta) +r^-_i(\eta) \big]
\nonumber\\
&= 
4\, \gamma\, \bar\eta_i\, \sum_{j\sim i}\bar\eta_j 
+ 1 + 4\gamma^2\prod_{j\sim i}\bar\eta_j
.
\end{align}
In the following we sort all terms in the right-hand
side~\eqref{eq_bound_remainder} in terms of how many factors of
$\bar\eta$ they involve and their overall scaling with $n$, using in
particular Definition~\ref{def_leading_order} of sub-leading order
terms.

Let us write $a\, \sqrt{n}\, L^{*,G}{\bf 1}$ as:
\begin{equation}
a\, \sqrt{n}\, L^{*,G}_{n}{\bf 1} (\eta)
=
\sqrt{n}\, \sum_{k=0}^6 C^{\, \text{k point}}_n(\eta)
+\sqrt{n}\, R^G_n(\eta)
,
\end{equation}
where $C^{\text{k point}}_n(\eta)$ contains products $\bar\eta_{i_1}...\bar\eta_{i_k}$ with $i_1,...,i_k$ all different. 
That the sum goes only up to six is due to the explicit formula~\eqref{eq_c_i_sec_free_energy} for the jump rates and the fact that we expand exponentials in the adjoint only up to order $3$. 
The next lemma thins out the number of terms to compute. 
\begin{lemma}\label{lemm_subleading}
Let $p\geq 1$ be an integer, $\psi\colon \T^2\to\R$ be a bounded
function and consider functions $X^i_n:\Omega_n\to\R$ ($i\in\T_n$) of
the form:
\begin{equation}
X^i_n
=
n^{3/2}\, \Big[\frac{1}{n}\sum_{j\neq i}\bar\eta_j \psi_{i,j}\Big]^p
.
\end{equation}
Then:
\begin{equation}
X^i_n
=
\frac{n^{3/2}}{n^p}\, \sum_{\substack{j_1,...,j_p \neq  i \\\text{all different}}} \prod_{k=1}^p\bar\eta_{j_k} \psi_{i,j_k}
+ 
\tilde X^i_n
,
\end{equation}
where $\tilde X^i_n$ is of sub-leading order. 
\end{lemma}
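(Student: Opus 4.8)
\emph{Proof plan.} The plan is to expand the $p$-th power and sort the resulting monomials according to which of the summation indices coincide. Writing out
\begin{equation*}
X^i_n
=
\frac{n^{3/2}}{n^p}\sum_{j_1,\dots,j_p\neq i}\ \prod_{k=1}^p\bar\eta_{j_k}\,\psi_{i,j_k}
,
\end{equation*}
I would group the $p$-tuples $(j_1,\dots,j_p)$ according to the set partition $\pi$ of $\{1,\dots,p\}$ defined by $k\sim k'\iff j_k=j_{k'}$. The partition into $p$ singletons contributes exactly the diagonal-free sum appearing in the statement, and I define $\tilde X^i_n$ to be the sum of the contributions of all other partitions; there are finitely many of them, a number depending only on $p$.

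For a fixed partition $\pi$ with $q:=|\pi|\leq p-1$ blocks, I would rewrite its contribution as a sum over $q$ pairwise distinct free indices $(j_B)_{B\in\pi}$, each ranging over at most $n$ values of $\T_n$. On such a term, for $k,k'$ in the same block $B$ one has $j_k=j_{k'}=:j_B$, so the product collapses to $\prod_{B\in\pi}\bar\eta_{j_B}^{|B|}\,\psi_{i,j_B}^{|B|}$; since $\bar\eta_\cdot\in\{-1/2,1/2\}$ one has $|\bar\eta_{j_B}^{|B|}|\leq 1$, while $|\psi_{i,j_B}^{|B|}|\leq(1\vee\|\psi\|_\infty)^p$. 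Hence the contribution of $\pi$ is bounded in absolute value by $(1\vee\|\psi\|_\infty)^p\,n^{3/2}\,n^{-p}\,n^{q}\leq (1\vee\|\psi\|_\infty)^p\,\sqrt{n}$, using $q\leq p-1$. Summing over the at most $B_p$ relevant partitions ($B_p$ the $p$-th Bell number) gives $\sup_{\eta\in\Omega_n}\max_{i\in\T_n}|\tilde X^i_n|\leq C(p,\|\psi\|_\infty)\,\sqrt{n}$, so $\tilde X^i_n$ is of sub-leading order in the sense of Definition~\ref{def_leading_order}, which is the claim.

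There is no genuine obstacle here: the only point worth stressing is the bookkeeping that makes the estimate work, namely that the diagonal-free term retains all $p$ free summation indices and is therefore of leading order $O(n^{3/2})$, whereas forcing any coincidence among the $j_k$ removes at least one free index, hence at least one power of $n$, which already brings the remaining contributions down to $O(\sqrt{n})$ — uniformly in $i$ and in $\eta$, because the bounds on $\bar\eta_\cdot$ and on $\psi$ are uniform.
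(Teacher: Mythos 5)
Your proof is correct and follows the same idea as the paper's: expand the $p$-th power, observe that the diagonal-free sum reproduces the stated leading term, and note that any coincidence among the indices $j_1,\dots,j_p$ removes at least one free summation variable and hence, given the $n^{-p}$ prefactor, brings the corresponding contribution down to $O(\sqrt{n})$ uniformly in $i$ and $\eta$. The paper states this in one sentence; you spell out the bookkeeping via set partitions, which is the same argument made more explicit.
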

\begin{proof}
The function $X^i_n$ is bounded by $O(n^{3/2})$ uniformly in
$i\in\T_n$ and the configuration.  Due to the $n^{-1}$ prefactor in
the sum on $\psi$, as soon as two $j$ indices are the same when
expanding the sum to the $p^{\text{th}}$ power the resulting object is
bounded by $O(\sqrt{n}\, )$ uniformly on the configuration, thus of
sub-leading order.
\end{proof}
Direct computations for the one-point term and
Lemma~\ref{lemm_subleading} for odd correlations between three or more
$\bar\eta$'s show that all $C^{k \text{ point}}_n$ terms with $k$ odd
are of sub-leading order.  Similarly, the constant, $k=0$, term is of
sub-leading order.  We can thus
rewrite~\eqref{eq_adjoint_expansion_rd_part} as:
\begin{equation}
a\, \sqrt{n}\, L^{*,G}_{n}{\bf 1} 
=
%\sqrt{n}\, \text{Const}^G_{n}
\sqrt{n}\, \text{Corr}^G_{n} + \sqrt{n}\, C^{G,4+}_{n} + \sqrt{n}\, \epsilon^G_{n}
,
\end{equation}
where $\text{Corr}^G_{n}$ involves leading order two point correlations and 
$C^{G,4+}_{n}$ contains leading order four-point correlations and higher.
We compute the right-hand side above in the next sections and show that $\sqrt{n}\, \epsilon^G_n$ is of sub-leading order: uniformly on the configuration,
\begin{equation}
\sqrt{n}\, \epsilon^{G}_n
=
O(\sqrt{n}\, )
.
\end{equation}
\begin{remark}
The adjoint does not contain leading order one-point term.  This is
due to the fact that the measure $\nu^n_g$ prescribes the correct
macroscopic density profile, i.e. the constant profile equal to $1/2$
which is invariant for the macroscopic dynamics~\eqref{eq_LLN}.
\end{remark}
\subsubsection{Correlation term in~\eqref{eq_adjoint_expansion_rd_part}}
Let us write all leading-order contributions to two-point correlations. 
In view of~\eqref{eq_adjoint_expansion_rd_part} and the identity $\bar\eta^2_\cdot =\sigma=1/4$, 
two point correlations may arise from the last line~\eqref{eq_adjoint_expansion_rd_part} only if at least two indices in the expansion of $[\frac{1}{n}\sum_{j\neq i}\bar\eta_j g_{i,j}]^p$ match ($p\in\{3,4\}$). 
The corresponding term cannot be of leading order due to the $1/n$ prefactor. 
Leading-order contributions to two-point correlation therefore only come from the first three lines of~\eqref{eq_adjoint_expansion_rd_part} and read:
\begin{align}
\sqrt{n}\, \text{Corr}^G_{n}(\eta)
&=
8\, \gamma \, a\, \sqrt{n}\, \sum_{i\in\T_n}\bar\eta_i\sum_{j\sim i}\bar\eta_{j} 
-\frac{2\, a}{\sqrt{n}\, }\sum_{i\in\T_n}\sum_{j\neq i} \bar\eta_i\bar\eta_j g_{i,j} 
\label{eq_corr_r_term_0_line2}\\
&\quad
- \frac{2\, \gamma \, a}{\sqrt{n} }\sum_{i\in\T_n}\Big(\sum_{\ell\sim i}\bar\eta_\ell\Big)\sum_{j\neq i,\ell} \bar\eta_j g_{i,j}
\label{eq_corr_r_term_0_line3}\\
&\quad 
+ \frac{a}{2\, \sqrt{n}}\sum_{j\neq \ell\in\T_n}\bar\eta_j\bar\eta_\ell\Big(\frac{1}{n}\sum_{i\notin\{j,\ell\}}g_{i,j}g_{i,\ell}\Big)
\label{eq_corr_r_term_0_line4}
.
\end{align}
Note that the sum in~\eqref{eq_corr_r_term_0_line3} does not include the constant, $j=\ell$ terms which is of sub-leading order by Lemma~\ref{lemm_subleading}.

Using translation invariance, 
the first term in~\eqref{eq_corr_r_term_0_line2} simplifies into:
\begin{align}
8\, \gamma \, a\, \sqrt{n}\, \sum_{i\in\T_n}\bar\eta_i\sum_{j\sim i}\bar\eta_{j}
&= 
16\, \gamma\, a\, \sqrt{n}\, \sum_{i\in\T_n}\bar\eta_i\bar\eta_{i+1}
.
\end{align}
This is the term~\eqref{eq_term_to_cancel_nonpert} that we are trying to cancel through the introduction of $g$.

Using the smoothness of $g$ which implies $\max_{i,j}|g_{i+1,j}+g_{i-1,j}-2g_{i,j}|=O(n^{-2})$, 
the second line~\eqref{eq_corr_r_term_0_line3} reads:
\begin{align}
& -\frac{2\, a\, \gamma}{\sqrt{n}}\sum_{i\in\T_n}\Big(\sum_{\ell\sim i}\bar\eta_\ell\Big)\sum_{j\neq i,\ell}\bar\eta_j g_{i,j}
=
 -\frac{4\, a\, \gamma}{\sqrt{n}}\sum_{i\neq j\in\T_n}\bar\eta_i \bar\eta_j g_{i,j}
+\sqrt{n}\, \epsilon^{G,1}_n(\eta)
,
\end{align}
with $\epsilon^{G,1}_n(\eta)$ an error term given by:
\begin{align}
\epsilon^{G,1}_n(\eta)
&=
-\frac{2\, a\, \gamma}{n^2}\sum_{i}\sum_{j\neq i,i+1}\bar\eta_i\bar\eta_j\partial^n_1 g_{i,j}
+ \frac{4\, a\, \gamma}{n}\sum_{i\in\T_n}\bar\eta_i\bar\eta_{i+1}g_{i,i+1}
.
\end{align}
In particular $\sqrt{n}\, \epsilon^{G,1}_n$ is of sub-leading order, 
and altogether the correlation term reads, with $O(\sqrt{n}\, )$ uniform in the configuration:
\begin{align}
\sqrt{n}\, \text{Corr}^G_n(\eta)
&=
16\, \gamma\, a\, \sqrt{n}\, \sum_{i\in\T_n}\bar\eta_i\bar\eta_{i+1}
-\frac{2\, a}{\sqrt{n} }\big(1+2\gamma\big)\sum_{i\in\T_n}\sum_{j\neq i}\bar\eta_i\bar\eta_j g_{i,j}
\nonumber\\
&\quad
+ \frac{a}{2\, \sqrt{n}}\sum_{j\neq \ell\in\T_n}\bar\eta_j\bar\eta_\ell\Big(\frac{1}{n}\sum_{i\notin\{j,\ell\}}g_{i,j}g_{i,\ell}\Big)
+O(\sqrt{n}\, )
.
\label{eq_expr_CorrG_interm}
\end{align}
As for the exclusion part in~\eqref{eq_adjoint_exclusion_final}, 
we use the regulariy of $g$ to replace discrete sums by integrals up to an error of order $1/n$.  
Since all terms in~\eqref{eq_expr_CorrG_interm} are bounded by $O(n^{3/2})$ uniformly in the configuration, 
the replacement yields uniform $O(\sqrt{n}\, )$ error terms and we find:
\begin{align}
\sqrt{n}\, \text{Corr}^G_n(\eta)
&=
4\, \sqrt{n}\ \Pi^n\Big((x,y)\mapsto -2\, a\, \big(1+2\, \gamma\big)+\frac{a}{2}\int g(z-x)g(z-y)\, dz\Big)
\nnb
&\quad+
16\, \gamma \, a\, \sqrt{n}\, \sum_{i\in\T_n}\bar\eta_i\bar\eta_{i+1}
+O(\sqrt{n}\, )
\label{eq_Corr_r_final}
.
\end{align}
\subsubsection{Four point correlations and higher in~\eqref{eq_adjoint_expansion_rd_part}} 
In~\eqref{eq_adjoint_expansion_rd_part}, four-point correlations arise
from the linear, quadratic, and cubic term in $g$ as well as the
remainder.  In total:
\begin{align}
\sqrt{n}\, C^{G,4+}_n(\eta) 
&=
-8\, a\, \gamma^2\sqrt{n}\, \sum_{i\in\T_n} \bar\eta_{i-1}\bar\eta_i\bar\eta_{i+1}\, \frac{1}{n}\sum_{j\neq i}\bar\eta_j g_{i,j}
\nonumber\\
&\quad
+\frac{a\, \sqrt{n}}{2}\sum_{i\in\T_n}\Big[4\gamma \bar\eta_i\sum_{j\sim i}\bar\eta_j + 4\gamma^2 \bar\eta_{i-1}\bar\eta_{i+1}\Big]\Big[\frac{1}{n}\sum_{\ell\neq i}\bar\eta_{\ell}g_{i,\ell}\Big]^2
\nonumber\\
&\quad
+\frac{a\, \sqrt{n} }{6}\sum_{i\in\T_n}\Big(-2\bar\eta_i-2\gamma\sum_{j\sim i}\bar\eta_j\Big)\Big[\frac{1}{n}\sum_{\ell\neq i}\bar\eta_\ell g_{i,\ell}\Big]^3
\nnb
&\quad
-\frac{a\,\sqrt{n} }{6}\sum_{i\in\T_n}8\gamma^2\bar\eta_{i-1}\bar\eta_i\bar\eta_{i+1}\Big[\frac{1}{n}\sum_{\ell\neq i}\bar\eta_\ell g_{i,\ell}\Big]^3
+ \sqrt{n}\, R^{G,4}_n(\eta)
,
\label{eq_def_C4}
\end{align}
with the remainder $R^{G,4}_n$ corresponding to four point correlations in $R^G_n$ and bounded as in~\eqref{eq_bound_remainder}:
\begin{equation}
|R^{G,4}_n|
\leq 
a\, C(\|g\|_\infty)\bigg[(\cY^n)^4 +\frac{1}{n}\sum_{i\in\T_n}\frac{1}{n^3}\Big(\sum_{j\neq i}\bar\eta_jg^0_{i,j}\Big)^4\bigg]
.
\label{eq_bound_RG4}
\end{equation}
\subsection{Bounds on the adjoint}\label{subsec_bound_adjoint}
In this section, 
we use the expression of the adjoint established in the last two sections to prove Proposition~\ref{prop_bound_adjoint}. 
\subsubsection{Choosing $g$}
Recall that $g$ is chosen in Proposition~\ref{prop_g} as the solution of a certain ordinary differential equation $P(g)=0$ on $(0,1)$, with boundary conditions $g'(1_-)-g'(0_+)=16\, \gamma\, a$. 
Putting together the the contributions~\eqref{eq_adjoint_exclusion_final}--\eqref{eq_def_C4} of the exclusion and Glauber part, 
we arrive at the following expression for the adjoint:
\begin{align}
\sqrt{n}\, L^*_n{\bf 1}
&=
16\, \gamma\, a\, \sqrt{n}\, \sum_{i\in\T_n}\bar\eta_i\bar\eta_{i+1} + (g'(0_+)-g'(1_-))\sqrt{n}\, \sum_{i\in\T_n}\bar\eta_i\bar\eta_{i+1} \nnb
&\quad 
+4\, \sqrt{n}\,  \Pi^n(P(g)) +\sqrt{n}\, C^{G,4+}_n + \sqrt{n}\, C_n^{\rm ex,4}+ O(\sqrt{n}\, )
,
\end{align}
with the $O(\sqrt{n}\, )$ uniform on the configuration. 
As $P(g)=0$ and the boundary condition on $g$ precisely cancels the first term in the right-hand side above (corresponding to~\eqref{eq_term_to_cancel_nonpert}), 
\begin{align}
\sqrt{n}\, L^*_n{\bf 1}
&=
\sqrt{n}\, C^{4+}_n + O(\sqrt{n}\, ),
\qquad 
C^{4+}_n 
:=
C^{G,4+}_n + C_n^{\rm ex,4}
.
\label{eq_adjoint_only4point}
\end{align}
We estimate this term next and explain how to define $Q^{4+,\delta}_n$ satisfying items (i) and (ii) of Proposition~\ref{prop_bound_adjoint} from $C^{4+}_n$ and a renormalisation step. 
\subsubsection{Decomposition into slow and fast modes}
Before renormalising $C^{4+}_n$,  
we separate the contribution of the magnetisation from the rest, decomposing as before: 
\begin{equation}
\label{n-03}
\bar\eta_i 
=
\bar\eta^m_i + \ms M^n
,\qquad
\ms M^n
:=
\frac{1}{n}\sum_{i\in\T_n}\bar\eta_i 
=
\frac{1}{n^{1/4}}\mathcal Y^n
\in [-1/2, 1/2]
.
\end{equation}
Each term in $C^{4+}_n$ can then be rewritten as a function of $\mathcal Y^n$ and $\bar\eta^m$ using the following elementary identity: 
for any $p\geq 1$ and $\phi:\T_n^p\to\R$, 
\begin{equation}
\sum_{i_1,...,i_{p}}\phi_{i_1,...,i_p}\prod_{j=1}^p\bar\eta_{i_j}
=
\sum_{P\subset\{1,...,p\}}\big(\ms M^n\big)^{p-|P|}\sum_{i_1,...,i_p}\phi_{i_1,...,i_p}\prod_{p\in P}\bar\eta^m_{i_p} 
.
\label{eq_recenterting_WJphip}
\end{equation}
The next lemma identifies the leading order terms when using~\eqref{eq_recenterting_WJphip} on $C^{4+}_n$. 
We first fix some notations. 
For $J\subset\Z$ a finite set, $p\in\N$ and $\phi:\T^p\to\R$, 
define (recall the notation $\phi_{i_1,...,i_p}$ from~\eqref{eq_notation_indices}):
\begin{equation}
W^{J,\phi}_p(\eta)
:=
\frac{1}{n^{p-1}}\sum_{\substack{i_1,...,i_p\in\T_n \\ \text{all different}}} \bar\eta^m_{i_1+J}\bar\eta^m_{i_2}...\bar\eta^m_{i_p}\phi_{i_1,...,i_p}
.
\label{eq_def_WJphi_p_sec_relent}
\end{equation}
In this notation, $W^{J,\phi}_p$ is a homogeneous polynomial in the $\bar\eta^m$ of degree $p+|J|-1$ and with $p$ independent indices. 
Let:
\begin{equation}
\mathcal J 
:= 
\{\{0\}, \{0,1\},\{-1,1\},\{-1,0,1\}\}
.
\end{equation}
Define the set $\cI$ as:
\begin{align}
\cI
:=
\left\{ (q,J,p)\in\{0,...,5\}\times \cJ\times\{1,...,4\} 
:
\begin{cases} q+p+|J|-1\geq 4, \\
|J|+p-1\geq 2, \\
{\bf 1}_{q=0}(p+|J|-1)\geq 4
\end{cases}\right\}
.
\label{eq_def_cI} 
\end{align}
\begin{lemma}\label{lemm_projection_Q4}
Recall that $C^{4+}_n$ in~\eqref{eq_adjoint_only4point} is given by
the sum of~\eqref{eq_def_epsilon_ex4}--\eqref{eq_def_C4} and recall
from \eqref{n-03} the definition of $\ms M_n $.
There are $\alpha_0>0$ and functions $\phi^{J,p}:\T^p\to\R$ ($J\in\mathcal J, 1\leq p\leq 4$), 
all defined by linear combinations of products of $g$ and its derivatives and in particular continuous on the closure of the set $\{(x_1,...,x_p)\in[0,1]^p\text{ all different}\}$, 
such that:
\begin{equation}
\sqrt{n}\, C^{4+}_n 
=
\alpha_0 \sqrt{n}\, (\cY^n)^4
+\sqrt{n}\, C^{4+,m}_n
,
\label{eq_Q4+_as_mean4_plus_Qm}
\end{equation}
with, abbreviating $W^{J,\phi^{J,p}}_p$ into $W^{J,\phi}_p$:
\begin{align}
\sqrt{n}\, C^{4+,m}_n
&\leq
\sqrt{n}\, \sum_{(q,J,p)\in\cI}(\ms M^n)^q \cdot W^{J,\phi}_p
+ O(\sqrt{n}\, )
.
\label{eq_expression_Q_4_as_Wjp}
\end{align}
Above, 
the $O(\sqrt{n}\, )$ is uniform in the configuration and the inequality in~\eqref{eq_expression_Q_4_as_Wjp} is only due to the inequality in~\eqref{eq_bound_RG4}. 
\end{lemma}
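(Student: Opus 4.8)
The plan is to prove the lemma by a direct, if lengthy, bookkeeping starting from the explicit expression of $C^{4+}_n=C^{G,4+}_n+C^{\rm ex,4}_n$ obtained in Sections~\ref{subsec_comput_adjoint_exclusion}--\ref{subsec_comput_adjoint_reaction}, namely~\eqref{eq_def_epsilon_ex4}, \eqref{eq_four_point_exclusion_part} and~\eqref{eq_def_C4}. First I would substitute the decomposition~\eqref{n-03}, $\bar\eta_i=\bar\eta^m_i+\ms M^n$, into every factor of $\bar\eta$ appearing in $C^{4+}_n$ and expand using the identity~\eqref{eq_recenterting_WJphip}. Each resulting summand is of the form $(\ms M^n)^q$ times a sum of a product of $\bar\eta^m$'s against a kernel built from values of $g$ and its discrete derivatives $\partial^n_1 g,\Delta^n_1 g$; the $\bar\eta^m$ factors consist of one ``cluster'' $\bar\eta^m_{i+J}$, with $J$ one of $\{0\},\{0,1\},\{-1,1\},\{-1,0,1\}$ (dictated by the local structure of the jump rates~\eqref{01} and of the exclusion term $\bar\eta_i\bar\eta_{i+1}$), together with $p-1$ further single factors carrying independent indices. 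Crucially, recentering preserves the total $\bar\eta$-degree $d$ of a monomial, and $C^{4+}_n$ only contains monomials with $d\in\{4,6\}$, so every summand satisfies $q+(|J|+p-1)=d\ge 4$, and $|J|+p-1=d\ge 4$ whenever $q=0$.

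Next I would discard all sub-leading contributions in the sense of Definition~\ref{def_leading_order}, of which there are two sources. Whenever two independent summation indices, or one of them and an index of the cluster, are forced to coincide, the extra $n^{-1}$ prefactor renders the term $O(\sqrt n\,)$ by Lemma~\ref{lemm_subleading}; this lets one replace each power $\big[\tfrac1n\sum_j\bar\eta_j g_{i,j}\big]^p$ by its genuinely multilinear part, all indices distinct. Second, replacing the discrete objects $\partial^n_1 g,\Delta^n_1 g$ and discrete averages by $g',g''$ and integrals against $g$, exactly as in Sections~\ref{subsec_comput_adjoint_exclusion}--\ref{subsec_comput_adjoint_reaction}, costs only $O(\sqrt n\,)$ uniformly in the configuration, using $g\in C^\infty([0,1])$ from Proposition~\ref{prop_g}. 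After these reductions the kernel of each surviving term is a fixed function, continuous on $\{(x_1,\dots,x_p)\in[0,1]^p\ \text{distinct}\}$ and equal to a linear combination of products of $g,g',g''$; collecting all survivors with a given pair $(J,p)$ defines $\phi^{J,p}$.

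It then remains to treat the monomials with zero or one factor of $\bar\eta^m$ after recentering. A single $\bar\eta^m$ either appears isolated, so that $\sum_i\bar\eta^m_{i+k}=0$ makes the term vanish exactly, or is summed against a mean-zero kernel — $g'$, with $\int_\T g'=g(1_-)-g(0_+)=0$ since $g\in C(\T)$, or (after peeling off the constant part of $g$, which annihilates $\sum_i\bar\eta^m_i$) against $g^0=g-\int_\T g$, which has zero mean by definition — and is therefore $O(\sqrt n\,)$. For the purely-magnetisation terms ($q=d$, no $\bar\eta^m$) the exclusion contribution~\eqref{eq_def_epsilon_ex4} vanishes at leading order because each of its two $\partial^n_1 g$ factors averages to $O(1)$; the surviving purely-magnetisation part comes from the linear-, quadratic- and cubic-in-$g$ pieces of~\eqref{eq_def_C4} together with the $(\cY^n)^4$ term in the bound~\eqref{eq_bound_RG4}, and is a finite linear combination of even powers $(\cY^n)^{2k}$, $2\le k\le 3$. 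Since $|\cY^n|\le n^{1/4}/2$ one has $(\cY^n)^{2k}\le(\sqrt n\,/4)^{k-2}(\cY^n)^4$, so this part is bounded above by $\alpha_0\sqrt n\,(\cY^n)^4$ for a suitable $\alpha_0>0$; defining $C^{4+,m}_n:=C^{4+}_n-\alpha_0(\cY^n)^4$ then gives~\eqref{eq_Q4+_as_mean4_plus_Qm} with the purely-magnetisation part of $C^{4+,m}_n$ non-positive, hence harmless for an upper bound. The terms that remain genuinely carry $\ge 2$ factors of $\bar\eta^m$, so they are of the form $(\ms M^n)^q W^{J,\phi^{J,p}}_p$ with $q+p+|J|-1=d\ge 4$, $|J|+p-1\ge 2$, and $p+|J|-1=d\ge 4$ when $q=0$, that is with $(q,J,p)\in\cI$; bounding $|\ms M^n|\le 1/2$ and the remainder via~\eqref{eq_bound_RG4} (the sole reason for the inequality in~\eqref{eq_expression_Q_4_as_Wjp} rather than an equality) concludes.

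The main obstacle is purely the combinatorial volume: one must carry the recentering through every monomial of~\eqref{eq_def_epsilon_ex4}--\eqref{eq_def_C4} (degree up to six in $\bar\eta$, each multiplied by up to a cubic power of $\tfrac1n\sum_j\bar\eta_j g_{i,j}$), and at each step correctly decide whether a term is of leading order or absorbed into $O(\sqrt n\,)$ — the non-obvious cancellations being those of the low-$\bar\eta^m$-degree pieces via $\int_\T g'=\int_\T g^0=0$, and the various coincidences between clusters that only become visible after the discrete-to-continuous replacement. Tracking the remainder term $R^{G,4}_n$ and the diagonal restrictions in $C^{\rm ex,4}_n$ requires some care but introduces no new idea.
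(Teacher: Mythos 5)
Your proposal is correct and follows essentially the same route as the paper: recentre $\bar\eta = \bar\eta^m + \ms M^n$ via~\eqref{eq_recenterting_WJphip}, read off the constraints on $(q,J,p)$ from degree counting, discard sub-leading terms via Lemma~\ref{lemm_subleading} and the discrete-to-continuous replacement, and kill the single-$\bar\eta^m$ pieces using $\sum_i \bar\eta^m_i = 0$ together with the translation invariance / mean-zero structure of $g$, $g'$, $g^0$. The paper's own proof is much terser: after the degree-counting remark it only works out two representative single-$\bar\eta^m$ examples and asserts the rest is analogous, whereas you additionally walk through the purely-magnetisation contribution in detail — in particular the sign observation that the only surviving degree-$6$ pure-magnetisation term from the last line of~\eqref{eq_def_C4} is non-positive, so it can be dropped after setting $\alpha_0$ equal to the (positive) degree-$4$ coefficient. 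That extra step is actually not spelled out in the paper, so your version is more complete there, even if it technically introduces a second source of inequality beyond~\eqref{eq_bound_RG4} (the paper's closing sentence glosses over this). One small imprecision: $C^{4+}_n$ includes $R^{G,4}_n$, which is not a polynomial of fixed degree in $\bar\eta$, so the assertion ``$C^{4+}_n$ only contains monomials with $d\in\{4,6\}$'' applies only to the explicit polynomial part; you do however treat $R^{G,4}_n$ correctly via the bound~\eqref{eq_bound_RG4} rather than by expansion, so this is cosmetic.
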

\begin{remark}\label{rmk_continuity_phi}
The functions $\phi$ are in fact all continuous except the ones that arise from $C^{\rm ex,4}_n$ in~\eqref{eq_def_epsilon_ex4}, 
since this term involves the derivative of $g$ which has a jump. 
We make the continuity properties of the $\phi$'s explicit to invoke large deviation bounds in the next section. 
\end{remark}
\begin{proof}
The quantity $q+p+|J|-1$ corresponds to the number of factors of $\bar\eta$ in $\sqrt{n}\, (\ms M^n)^q W_p^{J,\phi}$, 
thus the restrictions to $q+p+|J|-1\geq 4$, ${\bf 1}_{q=0}(p+|J|-1)\geq 4$ are straightforward from the definition~\eqref{eq_def_epsilon_ex4}--\eqref{eq_def_C4} of $C^{4+}_n$. 
We therefore only need to explain why only terms with $|J|+p-1\geq 2$ are of leading order, 
or equivalently why terms containing a single factor of $\bar\eta^m$ are not of leading order. 
We give an argument for two terms in $C^{4+}_n$, 
others being treated in the same way as one of the two. 
Consider first:
\begin{equation}
-8\, a\, \gamma^2\, \sqrt{n}\, \sum_{i\in\T_n}\bar\eta_{i-1}\bar\eta_i\bar\eta_{i+1}\frac{1}{n}\sum_{j\neq i}\bar\eta_j g_{i,j}
.
\end{equation}
Turning $\bar\eta$ into $\bar\eta^m$ using~\eqref{eq_recenterting_WJphip}, 
the resulting sums involving a single $\bar\eta^m$ read:
\begin{equation}
\sqrt{n}\, (\ms M^n)^3\sum_{i\in\T_n}\big[\bar\eta^m_{i-1}+\bar\eta^m_i +\bar\eta^m_{i+1}\big]\frac{1}{n}\sum_{j:j\neq i}g_{i,j}
+ \sqrt{n}\, (\ms M^n)^3 \cdot \frac{1}{n}\sum_{j\in\T_n}\bar\eta^m_j \sum_{i:i\neq j}g_{i,j}
. 
\label{eq_sum_singlebar_etam_vanishes}
\end{equation}
As $g$ is translation-invariant, the sums on $g$ do not depend on $i$:
\begin{equation}
\sum_{j:j\neq i}g_{i,j}
=
\sum_{k=1}^{n-1}g_{k}
=
\sum_{j:j\neq i}g_{j,i}
\end{equation}
Since $\sum_i\bar\eta^m_i =0$ by definition, 
it follows that~\eqref{eq_sum_singlebar_etam_vanishes} vanishes identically.

Consider now $\sqrt{n}\, C^{\rm ex,4}_n$:
\begin{equation}
\sqrt{n}\, C^{\rm ex,4}_n(\eta)
=
-\frac{1}{n^{3/2}}\sum_{j\neq \ell}\sum_{i\notin\{j-1,j,\ell-1,\ell\}}\bar\eta_i\bar\eta_{i+1}\bar\eta_j\bar\eta_\ell\partial^n_1 g_{i,j}\partial^n_1 g_{i,\ell}
.
\end{equation}
Terms with a single remaining $\bar\eta^m$ read:
\begin{align}
-\frac{(n^{-1/4}\cY^n)^3}{n^{3/2}}\sum_{j\neq \ell}\sum_{i\notin\{j-1,j,\ell-1,\ell\}}\big[\bar\eta^m_i+\bar\eta^m_{i+1}+\bar\eta^m_j+\bar\eta^m_\ell]\partial^n_1 g_{i,j}\partial^n_1 g_{i,\ell}
=
O(n^{-1/2})
,
\end{align}
with the $O(n^{-1/2})$ uniform in the configuration, 
where we again used the translation invariance of $g$, 
for instance in the form:
\begin{equation}
\partial^n_1 g_{i,j}
=
n[g_{i+1-j}-g_{i,j}]\quad
\Rightarrow\quad
\max_{i\in\T_n}\sum_{j \notin\{i,i+1\}}\partial^n_1 g_{i,j}
=
O_n(1)
.
\end{equation}
\end{proof}

\subsubsection{Renormalisation and bounds}\label{subsec_renorm_sec6}
In this section we define the function $\sqrt{n}\, Q^{4+,\delta}_n$ of Proposition~\ref{prop_bound_adjoint} and prove the corresponding bounds. 
This is the key technical estimate as explained in Section~\ref{sec_sketch_free_energy}, 
where we use the fact that the adjoint only involves correlations between strictly more than two $\bar\eta$'s and large deviation bounds to control the adjoint whatever the size of the parameter $a$. 

Equation~\eqref{eq_adjoint_only4point} and Lemma~\ref{lemm_projection_Q4} give:
\begin{align}
\sqrt{n}\, L_n^*{\mb 1}
\leq
\alpha_0 \sqrt{n}\, (\cY^n)^4+\sqrt{n}\, \sum_{(q,J,p)\in\cI}(\ms M^n)^q \cdot W^{J,\phi}_p
+ \sqrt{n}\, \epsilon_n
,\qquad 
\sup_{n,\eta}|\epsilon_n(\eta)|
<
\infty
.
\label{eq_recap_Lstar_before_bounds}
\end{align}
The bound of Proposition~\ref{prop_bound_adjoint} on the adjoint will therefore follow from corresponding bounds on each $W^{J,\phi}_p$ in~\eqref{eq_expression_Q_4_as_Wjp}. 
It is tempting to define $\sqrt{n}\, Q^{4+,\delta}_n$ as the right-hand side above without the quartic term, 
but some of the $W^{J,\phi}_p$ can only be bounded after an intermediate renormalisation step. 
This is stated in the next lemma, 
the technical heart of the free energy estimate.  

\begin{lemma}\label{lemm_estimate_WphiJp}
Let $(p,J,q)\in\cI$, where this set is defined in~\eqref{eq_def_cI}. 
Let $\phi=\phi^{J,p}:\T^p\to\R$ be bounded and continuous on the closure of $\{x_1,...,x_p\text{ all different}\}$. 

\begin{itemize}
	\item[(i)] (Renormalisation step).  
	For each $\delta>0$, 
	there is a non-negative function $E^\delta_n=E_n^{J,\phi,p,q,\delta}:\Omega_n\to\R_+$ such that, 
	for any density $f$ for $\nu^n_g$:
\begin{equation}
\big|\, \nu^{n}_g\big(f\, \sqrt{n}\, (\ms M^n)^{q}\, W^{J,\phi}_p\big)\, \big| 
\leq 
\frac{\delta n^{5/2}}{2} \, 
\Gamma^{\rm ex}_n(f;\nu^n_g)
+
\nu^n_g\big( f \, \sqrt{n}\, E^\delta_n\big)
\end{equation}
There are constants $C(\delta),C'(\delta)>0$ such that the function $E^\delta_n$ satisfies:
	\begin{equation}
\forall \eta\in\Omega_n,\qquad 
E^\delta_n(\eta)
\leq 
C(\delta)\|\phi\|_\infty\, n
,
\label{eq_size_En}
\end{equation}
	and:
	\begin{equation}
\nu^{n}_g\big(f\, \sqrt{n}\,  E^\delta_n\, \big)
\leq 
C'(\delta)\, \sqrt{n}\big( H_n(f \, |\, \nu^n_g) + 1\big)
.
\label{eq_small_time_bound_lemmWj}
\end{equation}
	\item[(ii)] (Long time bound).  
	There is a time $\tau$ depending only on $\|\phi\|_\infty,\delta$ and constants $C_T=C(\delta,T,\|\phi\|_\infty)>0$ such that, 
for all $t\in[\tau\, n^{-1/2},T]$:
\begin{align}
\nu^{n}_g\big(f_t\, \sqrt{n}\,  E^\delta_n\, \big)
&\leq 
\frac{\delta n^{5/2}}{2} \,
\Gamma^{\rm ex}_n(f_t;\nu^n_g)
+C_T\, \sqrt{n} 
.
\label{eq_bound_Wjp_adjoint}
\end{align}
\end{itemize}
\end{lemma}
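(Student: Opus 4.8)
The plan is to prove Lemma~\ref{lemm_estimate_WphiJp} by first performing a renormalisation step that replaces the local product $\bar\eta^m_{i_1+J}\bar\eta^m_{i_2}\cdots$ by a spatial average (paying the price of the exclusion carr\'e du champ, exactly as in the proof of Theorem~\ref{n-s02}), and then by bounding the resulting smoothed quantities through the entropy inequality~\eqref{n-35}, a log-Sobolev inequality for the canonical dynamics and large-deviation/concentration estimates. More precisely, fix $(q,J,p)\in\cI$ and $\phi=\phi^{J,p}$. Since $q+p+|J|-1\geq 4$ the term is automatically at least quartic, and since $|J|+p-1\geq 2$ it contains at least two $\bar\eta^m$'s; the point is to exploit both the small prefactor $(\ms M^n)^q$ when $q\geq 1$ and the Gaussian concentration of the remaining mean-$0$ modes when $q=0$.

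For the \textbf{renormalisation step} (item (i)), I would mimic Lemma~\ref{n-l01}: write one of the $\bar\eta^m$ factors, say the one carrying index $i_1$, using $\sum_{j}\bar\eta^m_j=0$ to express it as a telescoping sum $\bar\eta^m_{i_1+J}=\sum_{k}[1-(k/n)](\bar\eta^m_{\cdots}-\bar\eta^m_{\cdots})$ of discrete gradients, apply the integration-by-parts Lemma~\ref{n-s03} (valid also for $\nu^n_g$ via the general framework of Appendix~\ref{sec_IBP}, since the test function is exclusion-invariant once we freeze the relevant occupation variables), and absorb the gradient part into $\delta n^{5/2}\Gamma^{\rm ex}_n(f;\nu^n_g)/2$. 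What remains is a quadratic-in-$f$ expression of the form $\nu^n_g(f\, \sqrt{n}\, (\ms M^n)^q\, (\text{smoothed $W$})^2/\delta)$ which I take as (a prototype of) $\sqrt{n}\, E^\delta_n$. This object is still a polynomial in the $\bar\eta^m$ of the same degree but now sufficiently averaged in space; its deterministic bound $E^\delta_n\leq C(\delta)\|\phi\|_\infty n$ follows from $|\bar\eta^m|\leq 1$ and counting the number of free indices with their $1/n$ normalisations, which gives~\eqref{eq_size_En}. For~\eqref{eq_small_time_bound_lemmWj}, I would apply the entropy inequality~\eqref{n-35} with parameter $\beta\sim 1/n$, using the crude deterministic bound on $E^\delta_n$: then $\nu^n_g(f\sqrt{n}E^\delta_n)\leq \sqrt{n}\beta^{-1}H_n(f|\nu^n_g)+\sqrt{n}\beta^{-1}\log\nu^n_g(e^{\beta\sqrt{n}E^\delta_n})$, and since $\beta\sqrt{n}E^\delta_n=O(1)$ deterministically the log term is $O(1)$, yielding the claimed $C'(\delta)\sqrt{n}(H_n(f|\nu^n_g)+1)$.

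The \textbf{long-time bound} (item (ii)) is the crux and follows the three-case analysis sketched in Section~\ref{sec_sketch_free_energy}. Decompose the state space according to $|\ms M^n|\leq\epsilon$ or not: by the large-deviation estimates of Appendix~\ref{app_LD}, for $t\geq \tau n^{-1/2}$ the event $\{\sup_{s}|\ms M^n(s)|>\epsilon\}$ has probability $\leq c(\epsilon,T)^{-1}e^{-nc(\epsilon,T)}$, which combined with the deterministic bound $E^\delta_n=O(n)$ makes the contribution of that event $O(\sqrt{n}\, e^{-cn}\cdot n)=o(\sqrt{n})$ — here it is essential that $E^\delta_n$ is only polynomially large in $n$. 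On the complementary event, I distinguish: if $q\geq 1$ the prefactor $(\ms M^n)^q\leq\epsilon^q$ is small, so after writing $\nu^n_g(f_t\cdots)=\sum_m\nu^n_g(f_t{\bf 1}_m)\nu^{n,m}_g(f^m_t\cdots)$, using the entropy inequality at each fixed $m$ and the log-Sobolev inequality for the canonical dynamics (Assumption~\ref{ass_LSI}) to convert $H_n(f^m_t|\nu^{n,m}_g)$ into $n^2\Gamma^{\rm ex}_n(f^m;\nu^{n,m}_g)$, one is left with exponential moments $\nu^{n,m}_g(e^{K a\epsilon^q X_n^2})$ (resp.\ with a cubic exponent when $p$ is odd); choosing $\epsilon=\epsilon(a,K)$ small makes these bounded uniformly in $n,m$ by the subgaussian concentration of Appendix~\ref{app_concentration}, and the $\Gamma^{\rm ex}$ term is absorbed into $\delta n^{5/2}\Gamma^{\rm ex}_n(f_t;\nu^n_g)/2$ (using the identity $\sum_m\nu^n_g(f{\bf 1}_m)\Gamma^{\rm ex}_n(f^m;\nu^{n,m}_g)=\Gamma^{\rm ex}_n(f;\nu^n_g)$). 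If instead $q=0$, then necessarily $p+|J|-1\geq 4$, so the smoothed quantity is of the form $n^{(2-r)/r}X_n^r$ with $r\geq 3$ after the renormalisation — equivalently carries a prefactor $1/n$ or smaller — and I bound its exponential moment under $\nu^{n,m}_g$ using the stretched-exponential concentration $\nu^{n,m}_g(Ka n^{-1}X_n^4>\lambda)\leq e^{-c(g)\sqrt{n\lambda/aK}}$ of Appendix~\ref{app_concentration}: the negative exponential dominates $e^\lambda$ up to macroscopic $\lambda\sim c(a)n$, so restricting once more to $\{|\ms M^n|\leq\epsilon\}$ (hence $|X_n|$ at most of order $\sqrt{n}$ times a small constant, never macroscopically large) via the large-deviation bound keeps the exponential moment bounded with $n$.

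The main obstacle I anticipate is the bookkeeping in the renormalisation step for the $q=0$ terms: one must verify that after extracting enough discrete gradients to use integration by parts, the surviving quadratic expression $E^\delta_n$ genuinely carries the scaling $n^{(2-r)/r}$ with $r\geq 3$ (i.e.\ a prefactor $\leq 1/n$ in the relevant variable), so that the stretched-exponential concentration of Appendix~\ref{app_concentration} applies with a macroscopic threshold; a careless renormalisation could leave an $r=2$ term for which only the subgaussian (not stretched-exponential) bound holds, and then the argument would again force $a$ to be small. The constraint ${\bf 1}_{q=0}(p+|J|-1)\geq 4$ in the definition~\eqref{eq_def_cI} of $\cI$ is precisely what guarantees this cannot happen, and the proof of item (ii) must make that structural fact quantitative. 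A secondary technical point is that the $\phi^{J,p}$ coming from $C^{\rm ex,4}_n$ in~\eqref{eq_def_epsilon_ex4} are only continuous away from the diagonal (Remark~\ref{rmk_continuity_phi}), so the large-deviation input of Appendix~\ref{app_LD} has to be invoked in a form that tolerates such discontinuities — this is why the lemma is stated with $\phi$ merely continuous on the closure of $\{x_1,\dots,x_p\ \text{all different}\}$ rather than on all of $\T^p$.
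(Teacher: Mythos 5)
Your overall strategy — split into cases according to whether $q\geq 1$ and how many independent indices survive, renormalise to get spatial averages, then combine entropy inequality, Assumption~\ref{ass_LSI}, concentration, and large-deviation truncation — matches the paper's proof. Two steps of the argument, however, do not hold as you have written them.

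First, your justification of~\eqref{eq_small_time_bound_lemmWj} is incorrect. You claim that the deterministic bound $E^\delta_n\leq C(\delta)\|\phi\|_\infty\,n$ together with the entropy inequality at parameter $\beta\sim 1/n$ suffices. But the entropy inequality gives $\nu^n_g(f\,\sqrt{n}\,E^\delta_n)\leq \beta^{-1}H_n(f|\nu^n_g)+\beta^{-1}\log\nu^n_g(e^{\beta\sqrt{n}E^\delta_n})$; to land on the claimed $C'(\delta)\,\sqrt{n}\,(H_n(f|\nu^n_g)+1)$ you need $\beta^{-1}=O(\sqrt{n})$, i.e.\ $\beta\sim n^{-1/2}$. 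Then $\beta\,\sqrt{n}\,E^\delta_n\sim E^\delta_n$ can be as large as $Cn$, so the deterministic bound only yields $\log\nu^n_g(e^{\beta\sqrt{n}E^\delta_n})=O(n)$ and a final error of order $n^{3/2}$, not $\sqrt{n}$. Conversely, choosing $\beta$ small enough that $\beta\sqrt{n}E^\delta_n=O(1)$ uniformly forces $\beta\lesssim n^{-3/2}$, so $\beta^{-1}\gtrsim n^{3/2}$ and the $H_n$ term is over-weighted. Either way the target estimate does not come out. What makes~\eqref{eq_small_time_bound_lemmWj} work in the paper is the concentration input (Corollary~\ref{coro_concentration_nun_g}): one applies the entropy inequality at a fixed $\beta=\lambda_p/\|\phi\|_\infty$ to $E^\delta_n=|W^{J,\phi}_p|$ and uses the bound $\log\nu^n_g(e^{\lambda_p|W^{J,\phi}_p|/\|\phi\|_\infty})\leq C_p$ to make the log-moment an $O(1)$ constant before multiplying by $\sqrt{n}$; the deterministic bound alone is not enough.

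Second, in the $q=0$ branch you propose to restrict to $\{|\ms M^n|\leq\epsilon\}$ and assert that this implies $|X_n|$ is at most $\epsilon\sqrt{n}$. That implication is false: $X_n=n^{-1/2}\sum_i a_i\bar\eta^m_i$ involves only the mean-zero modes $\bar\eta^m$, which are unconstrained by the value of $\ms M^n$ alone. What the paper restricts to is not a small magnetisation but a small weak distance ${\rm d}(\pi^n,dx/2)\leq\epsilon$ (via Proposition~\ref{prop_LD_bound} and Remark~\ref{rmk_no_global_continuity_LD}), which does bound $n^{-1}\sum_k\phi_{i,j,k}\bar\eta^m_k$ uniformly in $i,j$ and hence gives the needed truncation $\{|W^{J,\phi}_p|\leq\epsilon'n\}$ for all $\phi$ continuous away from the diagonal; this is a strictly stronger large-deviation event than $\{|\ms M^n|\leq\epsilon\}$. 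Without this stronger restriction, the exponential moment of $\lambda W^{J,\phi}_p{\bf 1}_{|\ms M^n|\leq\epsilon}$ can still blow up exponentially in $n$ for $a$ large, and the uniform-in-$a$ conclusion of item~(ii) fails.
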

With this lemma we can now define the quantity $Q^{4+,\delta}_n\geq 0$ appearing in Proposition~\ref{prop_bound_adjoint} as:
\begin{equation}
Q^{4+,\delta}_n 
:=
\sum_{(p,J,q)\in\cI} E^{\delta/|\cI|}_n + |\epsilon_n|
,
\end{equation}
where $\epsilon_n$ is the same as in~\eqref{eq_recap_Lstar_before_bounds}. 
Then $Q^{4+,\delta}_n$ satisfies the bounds of Proposition~\ref{prop_bound_adjoint}, 
the proof of which will therefore be complete once we prove Lemma~\ref{lemm_estimate_WphiJp}. 
\begin{remark}
Note that points $(i)$ and $(ii)$ are qualitatively different. Point $(i)$ is a statement that applies whatever the function $f$, in particular more generally than if $f=f_t$ is the law of the dynamics. 
On the contrary, $(ii)$ makes use of additional properties of the dynamics (such as large deviation bounds) and is therefore only stated for $f=f_t$. 
The time $\tau \, n^{-1/2}$ corresponds to the moment when large deviations bounds are applicable regardless of the initial condition of the dynamics. 
\end{remark}
\begin{proof}[Proof of Lemma~\ref{lemm_estimate_WphiJp}]
We start by decomposing the measure $f_t\nu^n_g$ in terms of slow and fast modes as in~\eqref{eq_recentering_sec_product}: 
\begin{align}
\nu^{n}_g\big(f_t\sqrt{n}\, (\ms M^n)^{q}W^{J,\phi}_p\big)
=
\sum_{m=0}^n \Big[\frac{m}{n}-\frac{1}{2}\Big]^{q} \nu^n_g\big(f_t{\bf 1}_{\sum_i\eta_i=m}\big) \nu^{n,m}_g\big(f_t^m \,  \sqrt{n}\, W^{J,\phi}_p\big)
.
\label{eq_magnetisation_splitting_renorm_non_pert}
\end{align}
We estimate this last average for fixed $m$. 
To do so, 
we can use the entropy inequality \eqref{n-35}, 
a log-Sobolev inequality for $\nu^{n,m}_g$ (see Assumption~\ref{ass_LSI}) 
and concentration results for the $W^{J,\phi}_p$, 
getting something of the form:
\begin{align}
\nu^{n,m}_g\big(f_t^m\, \sqrt{n}\, W^{J,\phi}_p\big)
&\leq 
\frac{\mf c_{\rm LS}\, n^{5/2}}{\lambda}\, \Gamma^{\rm ex}_n(f_t^m;\nu^{n,m}_g)
+\frac{\sqrt{n}\, }{\gamma}\log \nu^{n,m}_g\big[\exp(\lambda\, W^{J,\phi}_p)\big]
\nnb
&\leq 
\frac{\mf c_{\rm LS}\, n^{5/2}}{\lambda}\, \Gamma^{\rm ex}_n(f_t^m;\nu^{n,m}_g)
+O(\sqrt{n}\, )
,
\end{align}
where the second inequality holds provided $\lambda$ is small enough to
apply the concentration results in
Proposition~\ref{prop_concentration}.  The issue is that this means
$\lambda^{-1}$ may be big, whereas the parameter in front of
$\Gamma^{\rm ex}_n(f_t^m;\nu^{n}_g)$ in the
statement~\eqref{eq_bound_Wjp_adjoint} of the lemma should be at most
$\delta>0$.  This means the choice of $\lambda$ is fixed and, as
explained in Section~\ref{sec_sketch_free_energy}, 
we need to restrict the exponential moment to configurations where $W^{J,\phi}_p$ cannot be too large so that concentration
bounds apply.

If $q\geq 1$, i.e. a factor of $[m/n-1/2]$ appears in~\eqref{eq_magnetisation_splitting_renorm_non_pert}, 
then we can use this factor as a small parameter by restricting to magnetisations $|m/n-1/2|\leq \epsilon$ for a small $\epsilon>0$, up to a controllable error as we explain afterwards.

If $q=0$ there is no $[m/n-1/2]$ term and the estimate is more subtle.
We will again use a suitable truncation, but the fact that it does
give us a small parameter is only true because the $W^{J,\phi}_p$ to
estimate when $q=0$ involve four-point correlations or more (in fact
three-point correlations or more is the necessary assumption),
see Proposition~\ref{prop_concentration}. 

In each situation the small parameters are created by large deviation bounds, 
which only apply after a time of order $n^{-1/2}$ due to the initial condition (see Appendix~\ref{app_LD}). \\

To define $E_n^\delta$ and prove~\eqref{eq_bound_Wjp_adjoint}, 
we consider four different cases: 
$q\geq1$, $p\geq 2$; $q\geq1$, $p=1$; $q=0$, $p\ge 3$; and $q=0$,
$p=2$. The second and fourth cases require a renormalisation
argument. \\

\noindent\textbf{Case $q\geq 1,p\geq 2$.} 
With no need for renormalisation we can define $E_n^\delta$ independently of $\delta$:
\begin{equation}
E_n^\delta
:=
\big|\, (\ms M^n)^q \, W^{J,\phi}_p\, \big|
,
\end{equation}
Proposition~\ref{prop_concentration} and Corollary~\ref{coro_concentration_nun_g} imply the following bound on exponential moments of $W^{J,\phi}_p$ for $p\geq 2$: 
for some $\lambda_p,C_p>0$,
\begin{align}
\forall \lambda\in[0,\lambda_p],\qquad 
\sup_{|m/n-1/2|\leq 1/4}\log \nu^{n,m}_g\Big[\exp \big[\lambda\, |W^{J,\phi}_p|/\|\phi\|_\infty\, \big]\Big]
&\leq 
\frac{C_p\, \lambda}{n^{p-2}}
\nnb
\log \nu^{n}_g\Big[\exp \big[\lambda\, |W^{J,\phi}_p|/\|\phi\|_\infty\, \big]\Big]
&\leq 
\frac{C_p\, \lambda}{n^{p-2}}
.
\label{eq_concentration_WJphi_interm}
\end{align}
The trivial bound $E_n^\delta\leq \|\phi\|_\infty\, n$ and the second line above yield~\eqref{eq_small_time_bound_lemmWj} upon applying the entropy inequality:
\begin{align}
\nu^n_g\big(f\sqrt{n}\, E_n^\delta\big)
&\leq 
\frac{\|\phi_\infty\|\, \sqrt{n}}{\lambda_p}\Big(H_n(f\, |\, \nu^n_g) 
+
\log\nu^{n}_g\Big[\exp \big[\lambda_p\, |W^{J,\phi}_p|/\|\phi\|_\infty\, \big]\Big]
\Big)
\nnb
&\leq 
\frac{\|\phi_\infty\|\, \sqrt{n}}{\lambda_p}\Big(H_n(f\, |\, \nu^n_g) 
+ C_p\Big)
.
\label{eq_proof_bound_smalltime_lemmeWj}
\end{align}
Let us now prove~\eqref{eq_bound_Wjp_adjoint}. 
Let $\epsilon\leq 1/4$ and let $\tau_\epsilon = (2\epsilon^2)^{-1}$ so that Proposition~\ref{prop_LD_bound} gives the existence of $c(\epsilon,T)>0$ such that:
\begin{equation}
\sup_{t\in[\tau_\epsilon\, n^{-1/2},T]}\nu^n_g \big(f_t{\bf 1}_{|\sum_i\bar\eta_i|\geq n\epsilon}\big)
\leq 
c(\epsilon,T)^{-1}\, e^{-c(\epsilon,T)\, n}
.
\label{eq_LD_bound_magnetisation_lemm_WJphi}
\end{equation}
The splitting~\eqref{eq_magnetisation_splitting_renorm_non_pert} then becomes:
\begin{equation}
\nu^n_g(f_t E_n^\delta)
\leq 
c(\epsilon,T)^{-1}\, e^{-c(\epsilon,T)\, n}
+
\sum_{m:|m-n/2|\leq \epsilon n} 
\nu^n_g\big(f_t{\bf 1}_{\sum_i\eta_i=m}\big) 
\epsilon^q\, \nu^{n,m}_g(f^m_t |W^{J,\phi}_p|)
.
\label{eq_magnetisation_splitting_withepsilon}
\end{equation}
The entropy and log-Sobolev inequalities at fixed $m$ give:
\begin{align}
\epsilon^{q}\, \Big|\nu^{n,m}_g\big(f_t^mW^{J,\phi}_p\big)\Big|
&\leq 
\epsilon^{q/2}\, H_n(f_t^m|\nu^{n,m}_g) + \epsilon^{q/2}\, \log \nu^{n,m}_g\Big[\exp \big[\epsilon^{q/2}|W^{J,\phi}_p|\, \big]\Big]
\nnb
&\leq 
\epsilon^{q/2}\, {\mf c}_{\rm LS}\, n^2\, \Gamma^{\rm ex}_n(f_t^m;\nu^{n,m}_g) + \epsilon^{q/2} \log \nu^{n,m}_g\Big[\exp \big[\epsilon^{q/2}|W^{J,\phi}_p|\, \big]\Big]
\nnb
&\leq 
\delta\,  n^2\, \Gamma^{\rm ex}_n(f_t^m;\nu^{n,m}_g)+ \frac{\epsilon^{q/2}C_p\|\phi\|_\infty}{n^{p-2}}
,
\label{eq_use_entropy_LSI_WJphip}
\end{align}
where to get the last line we also took $\epsilon$ smaller than $\delta/{\mf c}_{\rm LS}$ and than $\lambda_p/\|\phi\|_\infty$, with $\lambda_p$ as in~\eqref{eq_concentration_WJphi_interm}.  
Multiplying by $\sqrt{n}\, $, 
summing on $m$ and using:
\begin{equation}
\sum_{m}\nu^n_g\big(f_t{\bf 1}_{\sum_i\eta_i=m}\big) 
\Gamma^{\rm ex}_n(f_t^m;\nu^{n,m}_g)
=
\Gamma^{\rm ex}_n(f_t;\nu^{n}_g)
,
\end{equation}
we obtain the claim~\eqref{eq_bound_Wjp_adjoint} in the $q\geq 1, p\geq 2$ case.\\

\noindent\textbf{Case $q\geq 1,p=1$.} 
In this case a direct application of the entropy inequality gives a worse bound than~\eqref{eq_bound_Wjp_adjoint}. 
However, 
due to $p+|J|-1\geq 2$ by definition~\eqref{eq_def_cI} of the set $\cI$ of allowed $(q,J,p)$, 
the $W^{J,\phi}_1$ we need to estimate have $|J|>1$. 
This enables us to use a renormalisation step as in the proof of Theorem~\ref{n-s02}, 
paying a little bit of the carré du champ to turn $W^{J,\phi}_1$ into $W^{J,\phi}_p$ with $p\geq 2$. 
For definiteness we look for an upper bound on:
\begin{equation}
\big|\nu^n_g\big(f_t\, \sqrt{n}\,  \ms M^n W^{\{-1,0,1\},\phi}_1\big)\big|
.
\end{equation}
In order for the entropy inequality to give a useful bound, 
we first replace the local term $\bar\eta_{i-1}^m\bar\eta_{i}^m\bar\eta_{i+1}^m$ by a space average. 
To do so, let $\ell_n\in[n/3,n/2-2]$ and set $\bar\eta_{j} := \bar\eta_{j\mod n}$ for $j\in\Z$. 
Recall the elementary identity:
\begin{equation}
\bar\eta^m_{i+1}
=
\frac{1}{\ell_n}\sum_{j=i+1}^{i+\ell_n}\bar\eta^m_{j}
+
\sum_{j\in\T_n} (\bar\eta^m_{j}-\bar\eta^m_{j+1}) I_{\ell_n}(j-i-1)
,\quad 
I_{\ell_n}(p)
=
\frac{\ell_n-1-p}{\ell_n}{\mb 1}_{[0,\ell_n-1]}(p)
.
\label{eq_def_I_elln}
\end{equation}
Integration by parts (Lemma~\ref{lemm_IBP})  applied for each $j\in\T_n$ with the $h$ in Lemma~\ref{lemm_IBP} given by $h=\sum_{i\in\T_n}\bar\eta^m_{i-1}\bar\eta^m_{i}I_{\ell_n}(j-i-1)$ then yields, for some $C=C(g)>0$ and each $\delta>0$:
\begin{align}
& \sqrt{n}\, 
\nu^{n}_g\big(f_t\ms M^n\, \sum_{i\in\T_n}\bar\eta^m_{i-1}\bar\eta^m_i\bar\eta^m_{i+1}\phi_i\Big)
\nnb
&\ \leq 
\sqrt{n}\, \nu^{n}_g\big(f_t\, \ms M^n\, \frac{1}{\ell_n }\sum_{i,j}\bar\eta^m_{i-1}\bar\eta^m_{i}\bar\eta^m_{j}\phi_i{\bf 1}_{[i+1,i+\ell_n]}(j)\Big)
+
\frac{\delta\, n^{5/2}}{2}\, \Gamma^{\rm ex}_n(f_t;\nu^{n}_g) 
\nonumber\\
&\quad 
+\frac{C \, \sqrt{n}\, }{\delta}\frac{1}{n}\sum_j \nu^{n}_g\Big[f_t \, (\ms M^n)^2\, \Big(\frac{1}{\sqrt{n}}\sum_{i}\bar\eta^{m}_{i-1}\bar\eta^m_{i}\phi_i I_{\ell_n}(j-i-1)\Big)^2\Big]
\label{eq_renormalisation_adjoint_middle}\\
&\quad 
+\frac{C \, \sqrt{n}\, }{n}\sum_j \nu^{n}_g\Big[f_t\, \ms M^n\,  \frac{1}{n}\sum_i\sum_{\ell\neq {j,j+1}}\partial^n_1 g_{j,\ell}\bar\eta^m_\ell\bar\eta^m_{i-1}\bar\eta^m_{i}\phi_i I_{\ell_n}(j-i-1) \Big]
\label{eq_renormalisation_adjoint}
\nnb
&\quad
+  C\|\phi\|_\infty \sqrt{n}\, \nu^n_g(f_t|\ms M^n|)
,
\end{align}
where the $C\|\phi\|_\infty\, \sqrt{n}$ in the last line is a crude bound on the error in Lemma~\ref{lemm_IBP} summed over $j\in\T_n$. 
Let $E_n^\delta$ denote the sum of the absolute value of all terms being averaged under $f_t\nu^n_g$ in~\eqref{eq_renormalisation_adjoint_middle}, 
so that~\eqref{eq_renormalisation_adjoint_middle} can be bounded from above by:
\begin{equation}
\nu^{n}_{g}(f_tE_n^\delta) + \frac{\delta\, n^{5/2}}{2}\, \Gamma^{\rm ex}_n(f_t;\nu^{n}_g) 
.
\end{equation}
Note that $E^\delta_n$ is then given by $C\|\phi\|_\infty \, |\ms M^n|$ plus a sum of terms of the form $|(\ms M^n)^{q'} W^{J,\phi}_{p'}|$ with $q'\geq 1$ and $p'\geq 2$. 
These terms have already been shown to satisfy~\eqref{eq_size_En}--\eqref{eq_small_time_bound_lemmWj}--\eqref{eq_bound_Wjp_adjoint}, 
so $E^\delta_n$ also does, 
concluding the proof in the $q\geq 1,p=1$ case. 
\\

\noindent\textbf{Case $q=0$, $p\geq 3$.} 
In this case also no renormalisation will be needed, 
so $E_n^\delta$ is independent of $\delta$ and simply given by $|\, W^{J,\phi}_p\, |$. 
By definition the uniform bound~\eqref{eq_size_En} holds and the exact same argument as in~\eqref{eq_proof_bound_smalltime_lemmeWj} also gives the bound~\eqref{eq_small_time_bound_lemmWj} in terms of the relative entropy.

We therefore focus on the long-time bound~\eqref{eq_bound_Wjp_adjoint}. 
Since $q=0$ restricting to small magnetisations is not enough to estimate exponential moments of $W^{J,\phi}_p$.
Instead, we directly ask for $|W^{J,\phi}_p|$ to be at most $\epsilon'\, n$ for a sufficiently small $\epsilon'>0$ to be chosen later, 
arguing below that this is indeed a large deviation event. 
That this restriction on the size of $W^{J,\phi}_p$ is indeed sufficient to control the exponential moment was explained in Section~\ref{sec_sketch_free_energy}. 
For concreteness let us bound:
\begin{equation}
W^{\{0,1\},\phi}_3
=
\frac{1}{n^2}\sum_{i,j,k\in\T_n} \bar\eta^m_{i}\bar\eta^m_{i+1}\bar\eta^m_j\bar\eta^m_k \, \phi_{i,j,k}
.
\end{equation}
We start by defining the good event. 
Recall that $\pi^n(dx)=\frac{1}{n}\sum_{i\in\T_n}\eta_i\delta_{i/n}$ denotes the empirical measure. 
Proposition~\ref{prop_LD_bound} controls the probability that $(\pi^n_s)_{s\geq 0}$ differs from $dx/2$. 
Observe that sums involving the $\bar\eta^m$ are also functions of the empirical measure. 
Indeed, if $i,j\in\T_n$, 
\begin{align}
\frac{1}{n}\sum_{k\in\T_n}\phi_{i,j,k}\bar\eta^m_k 
&=
\pi^n(\phi_{i,j,\cdot}) - \frac{\pi^n(1)}{n}\sum_{k\in\T_n}\phi_{i,j,k}
\nnb
&=
\bigg(\pi^n(\phi_{i,j,\cdot}) - \frac{1}{2}\int \phi_{i,j}(z)\, dz\bigg)- \Big(\pi^n(1)-\frac{1}{2}\Big)\cdot\frac{1}{n}\sum_{k\in\T_n}\phi_{i,j,k} 
\nnb
&\quad - \bigg(\frac{1}{2n}\sum_{k\in\T_n}\phi_{i,j,k} - \frac{1}{2}\int \phi_{i,j}(z)\, dz\bigg)
.
\end{align}
The last difference is constant and equal to $o_n(1)$ uniformly in $i,j$, 
while the continuity of $\phi_{i,j}(\cdot)$ on the closure of $[0,1]\setminus\{i/n,j/n\}$ (with uniform norm independent of $n$) is enough to apply the large deviation bounds of Proposition~\ref{prop_LD_bound} can be applied as explained in Remark~\ref{rmk_no_global_continuity_LD}. 
For each $\epsilon'>0$, there is thus $c(\epsilon',T)>0$ such that:
\begin{equation}
\forall n\geq 1,
\qquad 
\sup_{t\in[\tau_{\epsilon'}\, n^{-1/2},T]}\nu^n_g\bigg(f_t \, {\bf 1}\Big\{\Big|\sum_{k\in\T_n} \bar\eta^m_k\phi_{i,j,k}\Big| \geq \epsilon' n\Big\}\bigg)
\leq 
c(\epsilon',T)^{-1}\, e^{-n c(\epsilon',T)}
.
\end{equation}
The compactness of $[0,1]^3$ and continuity of $\phi$ on the closure of $\{x_1,x_2,x_3 \text{ all different}\}$ imply that the bound is in fact valid uniformly on $i,j$: 
for a different constant $c(\epsilon',T)$,
\begin{equation}
\forall n\geq 1,
\qquad 
\sup_{t\in[\tau_{\epsilon'}\, n^{-1/2},T]}\nu^n_g\bigg(f_t\,  {\bf 1}\Big\{\exists i,j\in\T_n, \ \Big|\sum_{k\in\T_n} \bar\eta^m_k\phi_{i,j,k}\Big| \geq \epsilon' n\Big\}\bigg)
\leq 
c(\epsilon',T)^{-1}\, e^{-n c(\epsilon',T)}
.
\end{equation}
Restricting also to magnetisation at most $1/4$ using the large deviation bound~\eqref{eq_LD_bound_magnetisation_lemm_WJphi} and taking $\epsilon'\leq 1/4$ so that $\tau_{\epsilon'}\geq \tau_{1/4}$, 
the splitting~\eqref{eq_magnetisation_splitting_renorm_non_pert} then becomes, for any $t\geq \tau_{\epsilon'}\, n^{-1/2}$:
\begin{align}
\big|\nu^{n}_g\big(f_t\,\sqrt{n}\, W^{J,\phi}_p \big)\big|
&\leq 
n^{3/2}\big[c(1/4,T)e^{-n/c(1/4,T)} + c(\epsilon',T)e^{-n/c(\epsilon',T)}\big]
\\
&\qquad +\sum_{|m-n/2|\leq n/4}\nu^n_g\big(f_t{\bf 1}_{\sum_i\eta_i = m}\big)\nu^{n,m}_g\big(f^m_t\, \sqrt{n}\, \big|W^{J,\phi}_p \big|{\bf 1}_{|W^{J,\phi}_p|\leq \epsilon' n}\big)
\nonumber
.
\end{align}
Applying the entropy- and log-Sobolev inequalities as in~\eqref{eq_use_entropy_LSI_WJphip} and invoking Corollary~\ref{coro_better_concentration} 
to control exponential moments of $W^{J,\phi}_p {\bf 1}_{|W^{J,\phi}_p|\leq \epsilon' n}$ uniformly in $m$ with $|m/n-1/2|\leq 1/4$, 
we find that, 
for $\epsilon'$ small enough depending on $\|\phi\|_\infty,\delta$, 
$t\geq \tau_{\epsilon'}$ and some $C(\epsilon',T)>0$:
\begin{equation}
\big|\nu^n_g(f_t \, \sqrt{n}\, W^{J,\phi}_p)\big|
\leq 
\delta \, n^{5/2}\, \Gamma_n^{\rm ex}(f_t;\nu^n_g) + C(\delta)\, \sqrt{n}\,  + C(\epsilon',T)^{-1} e^{-nC(\epsilon',T)}
.
\label{eq_bound_WJphi_q0}
\end{equation}
\noindent{\bf Case $q=0$, $p=2$.} 
For $p=2$ and $q=0$, 
any $(p,q,J)\in\cI$ satisfies $J=\{-1,0,1\}$, 
in other words $W^{\{-1,0,1\},\phi}_2$ contains products of four $\bar\eta^m$'s. 
However, at the level of exponential moments, 
$W^{J,\phi}_2$ does not satisfy better bounds than the square of a Gaussian, 
which we only knew how to bound upon multiplication with a small parameter as explained in Section~\ref{sec_sketch_free_energy}. 
In particular the truncation argument of the previous $q=0$, $p\geq 3$ case does not give better bounds on exponential moments of $W^{\{-1,0,1\},\phi}_2$. 
We therefore first use a renormalisation step to turn $W^{\{-1,0,1\},\phi}_2$ into a sum of $W^{J,\phi}_{p'}$ with $p'\geq 3$. 
For each of these terms the previous $q=0$, $p\geq 3$ case then applies.

Let $\chi:\R\to[0,1]$ be a smooth function, compactly supported in
$[1/2,1]$.  Write for short $W$ for $W^{\{-1,0,1\},\phi}_2$:
\begin{equation}
W 
:=
\frac{1}{n}\sum_{i\in\T_n} \bar\eta^m_{i-1}\bar\eta^m_i\bar\eta^m_{i+1}\sum_{|j-i|>1}\phi_{i,j}\bar\eta^m_j 
.
\end{equation}
Decompose $\phi=\phi^r+\phi^\ell$ and correspondingly $W=W^r+W^\ell$, 
where:
\begin{equation}
\phi^r(x,y)
=
\phi(x,y)\chi(y-x),
\quad
\phi^\ell(x,y)
=
\phi(x,y)(1-\chi(y-x)),
\qquad 
(x,y)\in\T^2
.
\end{equation}
In this way the sum on $j$ in $W^{r}$ does not involve the
$\lfloor n/2\rfloor$ indices to the right of $i$, i.e. indices between
$i+1$ and $i+\lfloor n/2\rfloor$; while $j$ does not take
any of the $\lfloor n/2\rfloor$ indices to the left of $i$ in
$W^\ell$.  We can therefore take $\ell_n = \lfloor n/4\rfloor$, say,
and turn $\bar\eta^m_{i+1}$ into
$\ell_n^{-1}\sum_{k=i+1}^{i+\ell_n}\bar\eta^m_k$ for $W^r$ through a
series of Kawasaki moves that never change the state of one of the
$\bar\eta_j$.  Similarly, we can average to the left of $i-1$ by
turning $\bar\eta^m_{i-1}$ into
$\ell_n^{-1}\sum_{k=i-\ell_n}^{i-1}\bar\eta^m_k$ for $W^\ell$.  Since the estimate in both cases is identical, we only work with $W^r$.

Integration by parts (Lemma~\ref{lemm_IBP}) gives, 
for each $\delta>0$, 
the existence of $C=C(g)>0$ such that, 
for any density $f$ for $\nu^n_g$:
\begin{align}
&\sqrt{n}\, \nu^{n}_g(f  W^r)
\nnb
&\ \leq 
\nu^{n}_g\bigg(f \frac{\sqrt{n}\, }{\ell_n n}\sum_{i,k\in\T_n}\sum_{|j-i|>1}\bar\eta^m_{i-1}\bar\eta^m_{i}\bar\eta^m_{j}\bar\eta^m_k\phi^r_{i,j}\,  {\bf 1}_{[i+1,i+\ell_n]}(k)\bigg)
+\delta \, n^{5/2}\,  \Gamma^{\rm ex}_n(f;\nu^{n}_g)
\nonumber\\
&\quad 
+\frac{C\, \sqrt{n}\, }{\delta}\frac{1}{n}\sum_{k\in\T_n} \nu^{n}_g\Big[f \Big(\frac{1}{n^{3/2}}\sum_{i}\bar\eta^{m}_{i-1}\bar\eta^m_{i}I_{\ell_n}(k-i-1)\sum_{|j-i|>1}\phi^r_{i,j} \bar\eta^m_j \Big)^2\Big]
\label{eq_renormalisation_adjoint_middle_q0term}\\
&\quad 
+\frac{C\sqrt{n}\, }{n}\sum_{k\in\T_n} \nu^{n}_g\Big[f\frac{1}{n^2}\sum_{i, j:|j-i|>1}\sum_{\ell\neq {j,j+1}}\partial^n_1 g_{j,\ell}\bar\eta^m_\ell\bar\eta^m_{i-1}\bar\eta^m_{i}\bar\eta^m_j\, \phi^r_{i,j}\, I_{\ell_n}(k-i-1)\Big]
\label{eq_renormalisation_adjoint_q0term}
\\
&\quad 
+C\|\phi\|_\infty \sqrt{n}\, \nu^n_g(f_t|\ms M^n|)
\end{align}
For the middle line~\eqref{eq_renormalisation_adjoint_middle_q0term}, 
apply Jensen inequality on the sum on $i$ to find:
\begin{align}
\nu^{n}_g\Big[f \Big(\frac{1}{n^{3/2}}&\sum_{i}\bar\eta^{m}_{i-1}\bar\eta^m_{i}I_{\ell_n}(k-i-1)\sum_{|j-i|>1}\phi^r_{i,j} \bar\eta^m_j \Big)^2\Big]
\nnb
&\qquad
\leq 
\frac{1}{n}\sum_{i}\nu^{n}_g\Big[f \Big(\frac{1}{\sqrt{n}\, }\sum_{|j-i|>1}\phi^r_{i,j} \bar\eta^m_j \Big)^2\Big]
,
\end{align}
which is then of the form $n^{-1/2}\, W^{\{0\},\psi^i}_{1} \leq \sqrt{n}\, \|\phi\|_\infty$ for each $i\in\T_n$. 
On the other hand, 
the observables in the right-hand side of~\eqref{eq_renormalisation_adjoint_q0term} in the first and last line are now of the form $W^{J,\psi}_p$ for $p\geq 3$ with $\psi$ bounded and continuous on the closure of $\{x_1,x_2,x_3\text{ all different}\}$. 
The case $q=0$, $p\geq 3$ thus applies, 
and $E_n^\delta$ in the present case is given by the sum of the $E_n^\delta$ associated with the $W^{J,\psi}_p$ arising in~\eqref{eq_renormalisation_adjoint_q0term} as well as those coming from the renormalisation of $W^\ell$. 
This concludes the proof. 
\end{proof}
\subsection{Moment bound}\label{sec_bound_moment}
In this section we prove Proposition~\ref{prop_bound_moment}.  Let
$t\geq 0$.  The semi-martingale decomposition of $(\cY^n_t)^2$ implies
(the first identity is a tautology):
\begin{align}
\sqrt{n}\, \partial_t \E\big[(\cY^n_t)^2\big]
&=
\sqrt{n}\, \partial_t \nu^n_g\big(f_t (\cY^n)^2\big)
=
a\, n\, \nu^n_g\big(f_t L^G_n(\cY^n)^2\big)
\nnb
&=
\nu^n_g\Big(f_t\Big[-4\,a\, n^{1/4}\, \cY^n \sum_{i\in\T_n} \bar\eta_i c_i(\eta) + \frac{1}{\sqrt{n}}\sum_{i\in\T_n} c(\tau_i\eta)\Big]\Big)
.
\end{align}
The last term in the bracket is bounded by $O(\sqrt{n}\, )$ uniformly in the configuration. 
Moreover, recalling the expression~\eqref{eq_c_i_sec_free_energy} of the jump rate $c$\,:
\begin{equation}
\bar\eta_i c(\tau_i\eta)
=
\bar\eta_i\big[1+4\gamma^2\bar\eta_{i-1}\bar\eta_{i+1}\big] - \gamma(\bar\eta_{i-1}+\bar\eta_{i+1})
.
\end{equation}
As a result and as $1-2\gamma=\theta \, n^{-1/2}$,
\begin{align}
\sqrt{n}\, \partial_t \E\big[(\cY^n_t)^2\big]
&=
-4\, a\, n^{1/4}\, \nu^n_g\Big(f_t \cY^n \sum_i \bar\eta_i[1-2\gamma]\Big) 
\nnb
&\qquad
-16\, a\, \gamma^2\, n^{1/4}\,  \nu^n_g\Big(f_t \cY^n \sum_i\bar\eta_{i-1}\bar\eta_i\bar\eta_{i+1}\Big) + O(\sqrt{n}\, )
\nnb
&=
-4\, a\, \theta\, \sqrt{n}\, \nu^n_g\big(f_t (\cY^n)^2 \big) 
-16\, a\, \gamma^2 \, \sqrt{n}\, \nu^n_g\big(f_t (\cY^n)^4\big) 
\nnb
&\qquad 
+\sqrt{n}\, \nu^n_g\big(f_t R^m\big)+ O(\sqrt{n}\, )
,
\label{eq_Ito_Y4}
\end{align}
where $O(\sqrt{n}\, )$ is uniform in time and the remainder $R^m$ is obtained by recentering $\bar\eta$ as $\bar\eta^m+\ms M^n $ as usual:
\begin{align}
R^m(\eta)
:=
-16\, a\, \gamma^2\, \ms M^n\sum_{i\in\T_n} \bar\eta^m_{i-1}\bar\eta^m_i\bar\eta^m_{i+1}
-16\, a\, \gamma^2\, (\ms M^n)^2\sum_{i\in\T_n} \big[\bar\eta^m_{i-1}\bar\eta^m_{i+1}+2\bar\eta^m_i\bar\eta^m_{i+1}\big]
.
\end{align}
According to Lemma~\ref{lemm_estimate_WphiJp}, 
this remainder satisfies, for some $\tau>0$, 
each $\delta>0$ and each $t\in[0,T]$:
\begin{equation}
|\nu^n_g(f_tR^m)|
\leq 
\delta\, n^{5/2}\, \Gamma_n^{\rm ex}(f_t;\nu^n_g) + C_1(\delta,T)\sqrt{n}
+{\bf 1}_{[0,\tau\, n^{-1/2}]}(t)\, C_2(\delta,T)\, \sqrt{n}\, H_n(f_t\, |\, \nu^n_g)
.
\label{eq_def_tau_moment_bound_sec_65}
\end{equation}
Absorbing also the $(\cY^n)^2$ term in~\eqref{eq_Ito_Y4} into $-(\cY^n)^4$ using the elementary identity:
\begin{equation}
4\theta x^2
\leq 
\frac{\theta^2}{\gamma^2}  + 8\gamma^2x^4,
\qquad 
x\in\R,
\end{equation}
we find that 
for each $\delta>0$, 
there is $C'(\delta,T)>0$ such that, for any $t\in[0,T]$:
\begin{align}
\sqrt{n}\, \partial_t \E\big[(\cY^n_t)^2\big]
&\leq 
\delta\, n^{5/2}\, \Gamma^{\rm ex}_n(f_t;\nu^n_g)
-8\, a\, \gamma^2\, \sqrt{n}\,  \nu^n_g\big(f_t (\cY^n)^4\big) 
\nnb
&\quad 
+ C'(\delta,T)\, \sqrt{n} 
+{\bf 1}_{[0,\tau\, n^{-1/2}]}(t)\, C_2(\delta,T)\, \sqrt{n}\,  H_n(f_t\, |\, \nu^n_g)
.
\label{eq_bound_derivativeY2}
\end{align}
This proves Proposition~\ref{prop_bound_moment}. $\hfill\square$\\

The next corollary is useful in the proof of tightness. 
\begin{corollary}\label{coro_4th_moment}
Let $T>0$. 
Then:
\begin{equation}
\sup_{n\geq 1}\E\Big[\int_0^T (\cY^n_t)^4\, dt\Big]
<
\infty
.
\end{equation}
\end{corollary}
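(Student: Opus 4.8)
The plan is to derive Corollary~\ref{coro_4th_moment} as an immediate consequence of the moment bound established in Proposition~\ref{prop_bound_moment} (equivalently of the refined estimate~\eqref{eq_bound_derivativeY2}) combined with the free energy bound of Theorem~\ref{theo_free_energy} and the entropy/free energy bound at time $0$ from Proposition~\ref{prop_free_energy_time0}. First I would integrate~\eqref{eq_bound_derivativeY2} in time over $[0,T]$. Since $\Gamma^{\rm ex}_n(f_t;\nu^n_g)\ge 0$ and appears with a negative sign after integration once combined with the $-8a\gamma^2\sqrt n\,\nu^n_g(f_t(\cY^n)^4)$ term, we obtain, after dividing by $8a\gamma^2\sqrt n$,
\begin{equation*}
\int_0^T \nu^n_g\big(f_t(\cY^n)^4\big)\, dt
\;\le\;
\frac{1}{8a\gamma^2\sqrt n}\Big\{\nu^n_g\big(f_0(\cY^n)^2\big)\sqrt n + C'(\delta,T)\sqrt n\, T + C_2(\delta,T)\,\sqrt n\int_0^{\tau n^{-1/2}} H_n(f_t\,|\,\nu^n_g)\, dt\Big\}.
\end{equation*}

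Next I would control the three terms on the right. The term $\nu^n_g(f_0(\cY^n)^2) = \E_{\mu_n}[(\cY^n)^2]$ is $O(1)$ by Proposition~\ref{prop_free_energy_time0} (using that the hypotheses of Theorem~\ref{theo_convergence_magnetisation}, hence of Theorem~\ref{theo_fastmodes}, include $H_n(\mu_n|\nu^n_{1/2})=O(\sqrt n)$, so $E_{\mu_n}[(\cY^n)^2]$ is uniformly bounded). The middle term is manifestly $O(\sqrt n)$, so divided by $\sqrt n$ it is $O(1)$. For the last term, note that on the short initial interval $[0,\tau n^{-1/2}]$ we use Theorem~\ref{theo_free_energy}: since $\cF(f_t|\nu^n_g,\kappa)\le C(T)\,\cF(f_0|\nu^n_g,\kappa)+C(T)\sqrt n$ and $\cF(f_0|\nu^n_g,\kappa)=H_n(\mu_n|\nu^n_g)+\kappa\sqrt n\,E_{\mu_n}[(\cY^n)^2]=O(\sqrt n)$ again by Proposition~\ref{prop_free_energy_time0}, we get $H_n(f_t|\nu^n_g)\le\cF(f_t|\nu^n_g,\kappa)=O(\sqrt n)$ uniformly on $[0,T]$. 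Hence $\sqrt n\int_0^{\tau n^{-1/2}}H_n(f_t|\nu^n_g)\,dt \le \sqrt n\cdot\tau n^{-1/2}\cdot O(\sqrt n)=O(\sqrt n)$, which divided by $\sqrt n$ is again $O(1)$. Putting these together gives $\sup_{n\ge 1}\int_0^T\nu^n_g(f_t(\cY^n)^4)\,dt<\infty$, i.e. $\sup_{n\ge 1}\E\big[\int_0^T(\cY^n_t)^4\,dt\big]<\infty$, using Fubini to exchange the time integral and the expectation (legitimate since the integrand is non-negative).

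There is essentially no obstacle here; the only point requiring mild care is the bookkeeping of which constants depend on what, and the verification that the hypotheses invoked (the relative entropy bound on $\mu_n$, the validity of Theorem~\ref{theo_free_energy} under Assumption~\ref{ass_LSI}, or the small-$a$ counterpart Theorem~\ref{n-s05} together with the second moment bound~\eqref{n-40} when $a\le\mf a_0$) are all available under the standing assumptions of this subsection. In the small-$a$ regime one may alternatively bypass $\nu^n_g$ entirely: Corollary~\ref{n-s11} directly yields $\int_0^t\E_{\mu_n}[(\cY^n_s)^4]\,ds \le (16a\gamma^2)^{-1}e^{4a|\theta|t}\{E_{\mu_n}[(\cY^n)^2]+\mf c_3 a(n^{-1/2}H_n(f_0|\nu^n_U)+t)\}$, and the right-hand side is uniformly bounded in $n$ by Proposition~\ref{prop_free_energy_time0}. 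Either route closes the argument, so I would present the $\nu^n_g$-based computation as the main line and remark that Corollary~\ref{n-s11} gives the bound directly when $a\le\mf a_0$.
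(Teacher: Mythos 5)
Your overall strategy matches the paper's: integrate the moment bound of Proposition~\ref{prop_bound_moment}/\eqref{eq_bound_derivativeY2} over $[0,T]$, rearrange to isolate $8a\gamma^2\int_0^T\nu^n_g(f_t(\cY^n)^4)\,dt$ on the left, and then bound each remaining term using Theorem~\ref{theo_free_energy} and Proposition~\ref{prop_free_energy_time0}. Your term-by-term estimates for the initial second moment, the $C'(\delta,T)\sqrt n\,T$ term, and the short-time entropy integral over $[0,\tau n^{-1/2}]$ are all correct, as is your observation that Corollary~\ref{n-s11} gives a direct route when $a\le\mf a_0$.

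However, there is a genuine gap: in~\eqref{eq_bound_derivativeY2} the carr\'e du champ term $\delta\, n^{5/2}\,\Gamma^{\rm ex}_n(f_t;\nu^n_g)$ appears with a \emph{positive} coefficient, so after moving $-8a\gamma^2\sqrt n\,\nu^n_g(f_t(\cY^n)^4)$ to the left and integrating, it survives as a positive contribution $\delta\int_0^T n^{5/2}\Gamma^{\rm ex}_n(f_t;\nu^n_g)\,dt$ on the right-hand side. Your displayed bound omits this term, and the justification offered ("appears with a negative sign") is incorrect — it is only the fourth-moment term that has a negative sign before rearrangement. Without controlling the carr\'e du champ integral the argument does not close. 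The missing ingredient is precisely the dissipation estimate that Theorem~\ref{theo_free_energy} supplies: since $\ms G_n(f_s;\nu^n_g)\ge n^{5/2}\Gamma^{\rm ex}_n(f_s;\nu^n_g)$, the theorem (together with Proposition~\ref{prop_free_energy_time0} to control $\cF(f_0|\nu^n_g,\kappa)$) gives $\sup_{n\ge1}\int_0^T n^{5/2}\Gamma^{\rm ex}_n(f_t;\nu^n_g)\,dt=O(\sqrt n)$, so that $\delta\int_0^T n^{5/2}\Gamma^{\rm ex}_n\,dt/(8a\gamma^2\sqrt n)=O(1)$ as required. Reinstate this term and its bound and your proof coincides with the paper's.
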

\begin{proof}
Using Equation~\eqref{eq_bound_derivativeY2}, 
we can express $\int_0^T(\cY^n_s)^4\, ds$ in terms of $(\cY^n_T)^2$, $(\cY^n_0)^2$. 
Indeed,  
for each $\delta>0$, 
there is $C(\delta,T)>0$ such that:
\begin{align}
8\, a\, \gamma^2\, \E\Big[\int_0^T (\cY^n_t)^4\, dt\Big]
&\leq 
\delta \int_0^T n^2\, \Gamma^{\rm ex}_n(f_t;\nu^n_g)\, dt
+
\E\Big[\int_0^T R^m(\eta_t)\, dt\Big] 
\nnb
&\quad - \E[(\cY^n_t)^2] + \E[(\cY^n_0)^2] + C(\delta,T)
.
\end{align}
Theorem~\ref{theo_free_energy} implies that $\E[(\cY^n_t)^2]$ is bounded uniformly in $n$ and in $t\in[0,T]$. 
Moreover, 
by Lemma~\ref{lemm_estimate_WphiJp} $R^m$ satisfies, 
for each $\delta'>0$, some $\tau>0$ and each $t\geq 0$:
\begin{equation}
\big|\E\big[R^m(\eta_t)\big]\big|
\leq 
\delta' n^{2}\, \Gamma^{\rm ex}_n(f_t,\nu^n_g) + C(\delta,T) 
+{\bf 1}_{[0,\tau\, n^{-1/2}]}(t)\,  H_n(f_t\, |\, \nu^n_g)
. 
\end{equation}
The proof is concluded by integrating the last equation on $[0,T]$ and using the following bound on the carré du champ, 
consequence of Theorem~\ref{theo_free_energy} (and Proposition~\ref{prop_free_energy_time0} to control the free energy at initial time appearing in Theorem~\ref{theo_free_energy}):
\begin{equation}
\sup_{n\geq 1}\int_0^T n^{2}\, \Gamma^{\rm ex}_n(f_t;\nu^n_g)\, dt
<
\infty
.
\end{equation}
\end{proof}

\section{Proof of Theorem \ref{theo_convergence_magnetisation}}
\label{sec3}

In this section, we prove Theorem~\ref{theo_convergence_magnetisation}
using the bounds of Theorem \ref{n-s05} and
Theorem~\ref{theo_free_energy} on relative entropy and free energy,
respectively.

Throughout this section, 
the dynamics is assumed to start from $\mu_n$, 
with $(\mu_n)_{n\ge 1}$ a sequence of
probability measures such that
\begin{equation}
\label{n-26}
H_n(\mu_n \,|\, \nu^n_{1/2}) \,\le C_0\, \sqrt{n}
\end{equation}
for some finite constant $C_0$, 
and:
\begin{equation}
\mu_n\big(\cY^n\in \cdot\big) \quad \text{ converges weakly}.
\end{equation}
A time $T>0$ is also fixed. 
Recall that ${\tt Y}^n(H)=\frac{1}{n^{3/4}}\sum_{i\in\T_n}\bar\eta_i H_i$ for $H\in C^\infty(\T)$. 
We prove below at the end of Section~\ref{sec_martingale_pb} (see~\eqref{eq_proof_other_modes_negligible_n34}) that:
\begin{equation}
\E_{\mu_n}\Big[ \, \int_0^T \big|\, {\tt Y}^n_t(H)-\cY^n_t \<H\>\,\big|\, dt\, \Big]
=
o_n(1)
,\qquad 
\< H\> := \int_{\T}H(x)\, dx
.
\label{eq_other_modes_negligible_n34}
\end{equation}
We therefore focus on proving the part of Theorem~\ref{theo_convergence_magnetisation} relative to the convergence in law of $(\cY^n_t)_{t\in[0,T]}$. 
For $y:[0,T]\to\R$ such that $\int_0^Ty_s^4\, ds<\infty$, define:
\begin{equation}
\label{n-04}
W_t(y_0,y)
:=
y_t -y_0 + 2\, a\, \int_0^t\big [\, \theta\, y_s + y_s^3\, \big]\, ds
,\qquad 
t\in[0,T]
.
\end{equation}
It is convenient above to single out the initial condition in the
notation $W_t(y_0,y)$ as we will control trajectories only in a suitable $\bbL^p$ space.

Recall the following standard well-posedness result (Theorem 5.5.15
in \cite{ks}). 
\begin{lemma}\label{lemma_martingale_problem_well_posed}
There is a unique probability measure supported on continuous
trajectories $(\cY_t)_{t\in[0,T]}$ on $[0,T]$ such that:
\begin{equation}
\big(W_t(\cY_0,\cY)\big)_{t\in[0,T]}
=
(\sqrt{a}\, B_t)_{t\in[0,T]}
\quad
\text{in distribution,}
\end{equation}
where $B_\cdot$ is a standard Brownian motion. 
\end{lemma}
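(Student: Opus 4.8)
The statement to be proven is Lemma~\ref{lemma_martingale_problem_well_posed}, which is essentially a restatement of the classical well-posedness result (existence and uniqueness in law) for the SDE
\[
\mathrm{d}\cY_t = -2a\theta\,\cY_t\,\mathrm{d}t - 2a\,\cY_t^3\,\mathrm{d}t + \sqrt{a}\,\mathrm{d}B_t,
\]
with $B$ a standard Brownian motion, in the sense of the martingale problem. Here is how I would organise the proof.

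\textbf{Plan.} The drift $b(y) = -2a\theta y - 2a y^3$ is locally Lipschitz but not globally Lipschitz, so the standard Lipschitz existence/uniqueness theorem does not apply directly; however, $b$ is \emph{one-sided Lipschitz} (dissipative): for all $y,y'\in\R$,
\[
(y-y')(b(y)-b(y')) = -2a\theta(y-y')^2 - 2a(y-y')(y^3-(y')^3) \leq 2a|\theta|(y-y')^2,
\]
since $(y-y')(y^3-(y')^3)\geq 0$. This is exactly the hypothesis under which \cite[Theorem 5.5.15]{ks} (or the surrounding results in that section, together with the Yamada--Watanabe theorem) gives pathwise uniqueness, hence uniqueness in law, and the dissipativity also prevents explosion so that a global strong solution exists. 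The diffusion coefficient is constant, hence trivially Lipschitz and non-degenerate.

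\textbf{Steps.} First I would note that any trajectory $(\cY_t)_{t\in[0,T]}$ with $\int_0^T \cY_s^4\,ds<\infty$ for which $W_\cdot(\cY_0,\cY)$ is distributed as $\sqrt a\,B_\cdot$ is, by the very definition~\eqref{n-04}, a continuous process satisfying $\cY_t = \cY_0 + \int_0^t b(\cY_s)\,ds + \sqrt a\,B_t$; conversely any weak solution of the SDE has continuous paths and almost surely locally bounded (hence $\bbL^4$-integrable on $[0,T]$) trajectories by continuity, so the two formulations coincide and it suffices to prove existence and uniqueness in law for the SDE. Second, for existence: since $b$ is continuous and satisfies the linear growth bound $y b(y) \leq 2a|\theta|\,y^2$ (so $y b(y)\le C(1+y^2)$), a global weak solution exists by the standard construction (e.g. Euler--Maruyama with a Lyapunov argument using $V(y)=1+y^2$, whose generator applied to $V$ is bounded by $C(1+V)$, ruling out explosion), or directly by truncation of the drift combined with the a priori moment bound $\E[\sup_{t\le T}\cY_t^2]<\infty$ obtained from Itô's formula on $\cY_t^2$ and Gronwall. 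Third, for uniqueness: pathwise uniqueness follows from the one-sided Lipschitz estimate above by applying Itô's formula to $(\cY_t^1-\cY_t^2)^2$ for two solutions driven by the same Brownian motion with the same initial condition — the stochastic integral terms cancel, and one is left with $\frac{d}{dt}\E[(\cY_t^1-\cY_t^2)^2]\le 4a|\theta|\,\E[(\cY_t^1-\cY_t^2)^2]$, so Gronwall forces the difference to be identically zero. By the Yamada--Watanabe theorem, pathwise uniqueness plus existence of a weak solution yields uniqueness in law, which is precisely the assertion of the lemma.

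\textbf{Main obstacle.} There is no real obstacle here — the result is classical and the lemma merely cites \cite[Theorem 5.5.15]{ks}. The only point requiring a word of care is that the cubic drift is superlinear, so one cannot simply quote the globally Lipschitz theorem; the proof must invoke (or recall) the dissipativity/one-sided Lipschitz refinement. Since $-2a y^3$ points inward, both non-explosion and the uniqueness estimate come for free, and the argument is a two-line Gronwall computation in each case. Accordingly the cleanest write-up is to state that the drift is locally Lipschitz with one-sided Lipschitz constant $2a|\theta|$ and of at most quadratic growth in the sense $yb(y)\le 2a|\theta|y^2$, and then invoke the cited theorem; the equivalence of the martingale-problem formulation~\eqref{n-04} with the SDE is immediate from the definitions.
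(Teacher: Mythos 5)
The paper offers no proof of this lemma and merely points the reader to \cite[Theorem 5.5.15]{ks}; your self-contained argument — one-sided Lipschitz (dissipative) drift to get pathwise uniqueness by Gronwall, a Lyapunov bound $yb(y)\le 2a|\theta|(1+y^2)$ with truncation to rule out explosion and obtain weak existence, and Yamada--Watanabe to pass to uniqueness in law — is correct and is precisely the classical machinery the citation is invoking. Your observation that the cubic drift is only locally Lipschitz and that the dissipativity refinement is what actually makes the argument go through is well taken; the equivalence between the $W_\cdot(\cY_0,\cY)$-formulation and the SDE is indeed immediate once one notes that continuity of the trajectory on $[0,T]$ automatically gives $\int_0^T\cY_s^4\,ds<\infty$.
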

We will more precisely use the following corollary, 
proven at the end of Section~\ref{sec_continuity}.
\begin{corollary}\label{coro_martingale_problem_well_posed}
Let $p\geq 1$. 
There is a unique probability measure $\bb Q$ on $\R\times \bb L^p([0,T],\R)$ such that:
\begin{itemize}
	\item[(i)] $\bb Q( \{ (y_0,y): y\text{ has a continuous representative starting at $y_0$}\}) = 1$. 
	\item[(ii)]  ${\rm Law}_{\bb Q}\big((W_t(\cY_0,\cY))_{t\in[0,T]}\big)={\rm Law}((\sqrt{a}\, B_t)_{t\in[0,T]})$ as measures on $\bb L^p([0,T],\R)$. 
\end{itemize}
\end{corollary}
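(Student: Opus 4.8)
\textbf{Proof plan for Corollary~\ref{coro_martingale_problem_well_posed}.}

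The plan is to deduce this corollary from the well-posedness of the SDE (Lemma~\ref{lemma_martingale_problem_well_posed}) by transferring existence and uniqueness from the space of continuous trajectories $C([0,T],\R)$ to the quotient-type space $\R\times\bb L^p([0,T],\R)$, where a trajectory is only recorded up to $\bb L^p$-equivalence together with its initial point. The main structural observation is that the map sending a pair $(y_0,\cY)$ with $\cY\in C([0,T],\R)$ and $\cY_0=y_0$ to its class in $\R\times\bb L^p([0,T],\R)$ is injective on continuous trajectories: two continuous functions that agree almost everywhere and have the same value at $t=0$ are in fact equal everywhere (by continuity, they agree on a dense set, hence everywhere). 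Thus there is a bijection between continuous trajectories and the subset of $\R\times\bb L^p([0,T],\R)$ described in item (i), and this bijection is bimeasurable (the inclusion $C([0,T],\R)\hookrightarrow \bb L^p([0,T],\R)$ is continuous, and on the continuous representatives the $\bb L^p$-norm controls the sup-norm only after combining with the initial condition and the drift structure — but for the measurability statement it suffices that the evaluation maps $y\mapsto y_t$ are $\bb Q$-a.s.\ well-defined once (i) holds, which pins down the continuous representative uniquely).

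\textbf{Existence.} First I would take $\bb Q$ to be the pushforward, under $\cY\mapsto (\cY_0,\cY)$ followed by the natural projection to $\R\times\bb L^p([0,T],\R)$, of the unique law from Lemma~\ref{lemma_martingale_problem_well_posed}. Item (i) holds by construction. For item (ii), note that $W_t(\cY_0,\cY)$ as defined in~\eqref{n-04} depends on $\cY$ only through $\cY_0$ and through the integrals $\int_0^t \cY_s\, ds$, $\int_0^t \cY_s^3\, ds$, and the value $\cY_t$; the integrals are $\bb L^p$-equivalence invariant (for $p\geq 3$ they are even continuous in $\bb L^p$, and for $1\leq p<3$ one uses that the relevant trajectories are a.s.\ in $C([0,T],\R)$, hence bounded, so all powers are integrable), and the terminal value $\cY_t$ is recovered from the continuous representative guaranteed by (i). Hence $(W_t(\cY_0,\cY))_{t\in[0,T]}$ is a.s.\ a well-defined continuous function of the $\bb L^p$-class, and its law under $\bb Q$ coincides with the law of $(\sqrt a\, B_t)_{t\in[0,T]}$ by Lemma~\ref{lemma_martingale_problem_well_posed}. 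One checks this holds as an identity of measures on $\bb L^p([0,T],\R)$ since both sides are supported on $C([0,T],\R)\subset \bb L^p([0,T],\R)$.

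\textbf{Uniqueness.} Conversely, suppose $\bb Q'$ satisfies (i) and (ii). By (i), $\bb Q'$-a.s.\ the pair $(y_0,y)$ has a unique continuous representative $\cY$ with $\cY_0=y_0$; let $\Phi$ denote this (a.s.-defined, measurable) map $(y_0,y)\mapsto \cY\in C([0,T],\R)$. Then $\Phi_*\bb Q'$ is a probability measure on continuous trajectories, and condition (ii) for $\bb Q'$ translates, via the invariance discussed above, into the statement that under $\Phi_*\bb Q'$ the process $(W_t(\cY_0,\cY))_{t\in[0,T]}$ has the law of $(\sqrt a\, B_t)_{t\in[0,T]}$. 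By the uniqueness part of Lemma~\ref{lemma_martingale_problem_well_posed}, $\Phi_*\bb Q' $ equals the unique solution law; pushing forward again to $\R\times\bb L^p([0,T],\R)$ and using that on the set in (i) the projection is the inverse of $\Phi$, we get $\bb Q'=\bb Q$.

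\textbf{Main obstacle.} The step I expect to require the most care is verifying that the functional $(y_0,y)\mapsto (W_t(\cY_0,\cY))_{t\in[0,T]}$ is genuinely well-defined and measurable as a map into $\bb L^p([0,T],\R)$ — i.e.\ reconciling the $\bb L^p$-level description of trajectories with the pointwise evaluation $\cY_t$ and the cubic term $\cY_s^3$ appearing in~\eqref{n-04}. For $p<3$ this is where one must lean on the a.s.\ continuity from (i) to make sense of $\int_0^t\cY_s^3\,ds$ and of $\cY_t$, and one should be slightly careful that the relevant measurable sets (e.g.\ the set in item (i)) are Borel in $\R\times\bb L^p([0,T],\R)$. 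This is routine but is the only point where the argument is not a one-line transfer from the continuous-path setting.
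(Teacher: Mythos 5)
Your proposal is essentially the paper's proof: both establish a measurable bijection between continuous trajectories (the setting of Lemma~\ref{lemma_martingale_problem_well_posed}) and the subset of $\R\times\bb L^p([0,T],\R)$ described in item (i), and then transfer well-posedness across that bijection. The one place where you wave your hand is exactly where the paper does genuine work: you assert that the map $\Phi$ sending an $\bb L^p$-class to its continuous representative is measurable. This is not automatic, because the $\bb L^p$ norm does not control the sup-norm, so $j^{-1}(\cB(\bb L^p))$ is in general strictly smaller than the Borel $\sigma$-algebra $\cB(C)$ of $C([0,T],\R)$, and pushing $\bb Q$ back to $C([0,T],\R)$ only gives you a measure on the smaller $\sigma$-algebra. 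The paper bridges this gap by introducing the $\bb L^p$-continuous averaging functionals $K_{\epsilon,t}$, observing that $K_{\epsilon,t}y\to y(t)$ for continuous $y$, and extending from cylinder functions via dominated convergence and Carath\'eodory's extension theorem. If you execute your plan by defining the evaluation maps $(y_0,y)\mapsto\cY_t$ as pointwise limits of the $\bb L^p$-continuous $K_{\epsilon,t}$ (which also makes the set in item (i) Borel), you recover the paper's argument; as written, the measurability of $\Phi$ and Borel-ness of the set in (i) are stated but not justified, and those are the whole content of the corollary beyond the well-posedness already supplied by Lemma~\ref{lemma_martingale_problem_well_posed}.
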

The proof of Theorem~\ref{theo_convergence_magnetisation} is structured as follows. 
\begin{itemize}
	\item We show that
the sequence of laws of $(\cY^n_0,(\cY^n_t)_{t\in[0,T]})$ is tight in the space of probability
measures on $\R\times \bbL^p([0,T],\R)$ for $p\in(1,4/3)$ (Section~\ref{sec_tightness}). 
	\item The process $(W_t(\cY^n_0,\cY^n))_{t\in[0,T]}$ is shown to
        converge in distribution in $\bbL^p([0,T],\R)$ for
        $p\in(1,4/3)$ to $(\sqrt{a}\, B_t)_{t\in[0,T]}$,  a multiple of Brownian motion (Section~\ref{sec_martingale_pb}). 
        Recall that a probability measure $m$ on $C([0,T],\R)$ can be canonically lifted to a probability measure $\tilde m$ on $\bb L^p([0,T],\R)$ through $\tilde m(A) = m(j^{-1}(A))$ for a Borel set $A$ in $\bb L^p([0,T],\R)$, 
where $j:C([0,T],\R)\to\bb L^p([0,T],\R)$ is the measurable mapping of a continuous function to its $\bb L^p$ representative. 
The law of $(\sqrt{a}\, B_t)_{t\in[0,T]}$ can therefore unambiguously be defined as a probability measure on $\bb L^p([0,T],\R)$. 
	\item Given $y_0\in\R$, the function 
        $(y_t)_{t\in [0,T]} \mapsto (W_t(y_0,y))_{t\in [0,T]}$ is not
        continuous in $\bb L^p([0,T],\R)$ for $p\in(1,4/3)$. The previous point thus does not prove that limit points of the law of
        $\cY^n_\cdot$ are concentrated on trajectories $\cY$ such that
        $W_\cdot(\cY_0, \cY)=\sqrt{a}\, B_\cdot$ in law.  We prove that it is
        nonetheless the case in Section~\ref{sec_continuity}, 
        that limit points are concentrated on couples $(y_0,y)$ where $y\in \bb L^p([0,T],\R)$ has a continuous representative starting at $y_0$ and therefore
        conclude by Corollary~\ref{coro_martingale_problem_well_posed}.
\end{itemize}
\subsection{Tightness}\label{sec_tightness}
Recall the semi-martingale decomposition of the magnetisation:
\begin{equation}
\cY^n_t
=
\cY^n_0 + \int_0^t \sqrt{n}\, L_n\cY^n_s\, ds
+ M^n_t
,
\label{eq_semimart_decomp_Y}
\end{equation}
where $M^n_\cdot$ is a martingale. 
We wish to prove that, 
for some $p>1$, 
the laws of $(\cY^n_0,(\cY^n_t))_{t\in[0,T]}$ $(n\geq 1)$ are tight as probability measures on $\R\times\bbL^p([0,T],\R)$. 
Since $\cY^n_0$ converges weakly by assumption, 
it is enough to prove tightness in $\bbL^p([0,T])$ of the other two terms in~\eqref{eq_semimart_decomp_Y}.

The next lemma shows that the laws of the martingale term $M^n_\cdot$ are in fact tight probability measures on the space $\cD([0,T],\R)$ of left-limited, right-continuous processes. 
\begin{lemma}[Tightness of martingale term]
The process $(M^n_t)_{t\in[0,T]}$ satisfies:
\begin{itemize}
	\item[(i)] $\sup_{n\geq 1}\sup_{t\in[0,T]}\E_{\mu_n}[(M^n_t)^2]<\infty$.
	\item[(ii)] (Aldous criterion). If $\cS_T$ denotes the set of stopping times bounded by $T$, 
	\begin{equation}
	\limsup_{\delta\to 0}\sup_{\eta\leq \delta}\sup_{\tau\in \cS_T}\limsup_{n\to\infty} \E^n_{\mu_n}\big[\, | M^n_{(\tau+\eta)\wedge T}-M^n_\tau\, |\big]
	=
	0
	.
	\end{equation}
\end{itemize}
In particular the sequence of laws of $(M^n_t)_{t\in[0,T]}$ is tight in the space of probability
measures on $\cD([0,T],\R)$. 
\end{lemma}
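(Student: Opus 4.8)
The plan is to verify the two bulleted properties, from which tightness in $\cD([0,T],\R)$ follows by the classical Aldous--Rebolledo criterion (see e.g.\ \cite[Chapter 4]{kl}): a sequence of laws of càdlàg processes with uniformly bounded second moments satisfying the Aldous criterion on the processes themselves is tight; here it suffices to check the criterion for the martingale since the second-moment bound handles the one-point marginals.

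First I would establish $(i)$. The quadratic variation of $M^n_\cdot$ was computed in the heuristic section: up to the time re-scaling it equals $\int_0^t a\, n^{-1/2}\sum_{i\in\T_n} c(\tau_i\eta_s)\, ds$ (compare~\eqref{n-17}), and since $0\le c(\tau_i\eta)\le (1+\gamma)^2\le 4$ pointwise, one has $\<M^n\>_t\le 4a\, n^{1/2}\cdot n/n^{3/2}\, t = 4a\, t$; more precisely, writing $M^n_t=\cY^n_t-\cY^n_0-\int_0^t\sqrt{n}\, L_n\cY^n_s\, ds$ and using Itô's formula for the jump process, $\E_{\mu_n}[(M^n_t)^2]=\E_{\mu_n}[\<M^n\>_t]$ is bounded by a constant times $T$, uniformly in $n$ and $t\in[0,T]$. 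This is a short computation with the explicit carré du champ $\Gamma^G_n$ and the uniform bound on the rates $c$, with no dependence on the initial distribution beyond it being a probability measure.

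Next I would establish the Aldous criterion $(ii)$. By the optional stopping theorem and the predictable representation of $\<M^n\>$, for a stopping time $\tau\in\cS_T$ and $\eta\le\delta$,
\begin{equation}
\E^n_{\mu_n}\big[\, |M^n_{(\tau+\eta)\wedge T}-M^n_\tau|\, \big]^2
\le
\E^n_{\mu_n}\big[\, (M^n_{(\tau+\eta)\wedge T}-M^n_\tau)^2\, \big]
=
\E^n_{\mu_n}\big[\, \<M^n\>_{(\tau+\eta)\wedge T}-\<M^n\>_\tau\, \big]
,
\end{equation}
and the right-hand side is bounded by $\E^n_{\mu_n}\big[\int_\tau^{(\tau+\eta)\wedge T} a\, n^{-1/2}\sum_i c(\tau_i\eta_s)\, ds\big]\le 4a\,\delta$ using again $c\le 4$. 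Taking $\limsup_{n\to\infty}$, then $\sup_{\tau,\eta\le\delta}$, then $\limsup_{\delta\to0}$ yields $0$. The main obstacle here is merely bookkeeping with the time-change factor $\sqrt{n}$ (the dynamics runs under $\ms L_n=\sqrt{n}\, L_n$, so the relevant quadratic variation carries the $\upsilon_n=\sqrt{n}$ acceleration that exactly cancels the $b_n^{-2}=n^{-3/2}$ normalisation up to the residual $n^{-1/2}\sum_i$ which is $O(1)$ by the bound on $c$); once that is tracked correctly, both estimates are immediate from the uniform boundedness of the jump rates, and crucially neither step uses the relative entropy bound, so the argument is robust and does not require $a\le\mf a_0$ or Assumption~\ref{ass_LSI}. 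Finally, the uniform second-moment bound $(i)$ controls the marginals and, combined with the Aldous criterion $(ii)$, gives tightness of $(M^n_\cdot)_{n\ge1}$ in $\cD([0,T],\R)$ by the standard criterion; this in turn, together with the tightness of the drift term $\int_0^\cdot\sqrt{n}\, L_n\cY^n_s\, ds$ in $\bbL^p$ (handled separately in this section using Corollary~\ref{coro_4th_moment} and the replacement estimates), feeds into the tightness of $(\cY^n_0,(\cY^n_t)_{t\in[0,T]})$ in $\R\times\bbL^p([0,T],\R)$.
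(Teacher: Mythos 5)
Your approach is exactly the paper's: bound the quadratic variation pointwise using the uniform bound on the jump rates $c$, deduce (i) from $\E[(M^n_t)^2]=\E[\<M^n\>_t]$, and deduce (ii) by applying the same bound to the shifted martingale $M^n_{\tau+\cdot}-M^n_\tau$, whose quadratic variation is $\<M^n\>_{\tau+\cdot}-\<M^n\>_\tau$; tightness in $\cD([0,T],\R)$ then follows from the standard Aldous criterion.

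One notational slip: you write the quadratic variation as $\int_0^t a\,n^{-1/2}\sum_{i\in\T_n}c(\tau_i\eta_s)\,ds$, but the correct prefactor is $a/n$, not $a\,n^{-1/2}$. Indeed the carr\'e du champ of $\cY^n=n^{-3/4}\sum_i\bar\eta_i$ under the Glauber part carries a $n^{-3/2}$ (from the squared jump $(1-2\eta_i)^2/n^{3/2}$), and multiplying by the acceleration $a\sqrt{n}$ yields $a/n$. With your stated prefactor the bound would be $4a\sqrt{n}\,t$, contradicting the $4a\,t$ you correctly compute a line later from $a\,\upsilon_n\, n/b_n^2 = a\cdot n^{1/2}\cdot n/n^{3/2}=a$. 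The same typo would invalidate the $4a\,\delta$ bound in step (ii); with the correct prefactor $a/n$, both bounds hold as you state them, uniformly in $n$ and with no use of the relative entropy estimates or any restriction on $a$, exactly as in the paper.
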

\begin{proof}
Let $t\in[0,T]$. 
Recall that the quadratic variation of $M^n_\cdot$ reads:
\begin{equation}
\<M^n\>_t \,=\, \int_0^t  \frac{a}{n} \sum_{i\in\T_n} c(\tau_i
\eta^n(s)) \, ds 
\leq 
a\, (1+\gamma^2)\, t
.
\end{equation}
Since $((M^n_s)^2-\<M^n\>_s)_{s\geq 0}$ is a martingale, 
\begin{align}
\E^n_{\mu_n}[\, |M^n_{t}|^2\, ]
=
\E^n_{\mu_n}[\, \<M^n\>_{t}\, ]
\leq
a\, (1+\gamma^2)\, T
.
\label{eq_BDG}
\end{align}
This proves item (i).  Since for $\tau\in\cS_T$ the process
$M^n_{\tau+\cdot}-M^n_{\tau}$ is also a martingale with quadratic
variation $\<M^n\>_{\tau+\cdot}-\<M^n\>_{\tau}$, the same argument
also gives item (ii).
\end{proof}

We now prove tightness of the drift term
in~\eqref{eq_semimart_decomp_Y}. 
Proving tightness in $\cD([0,T],\R)$ would require a uniform control on increments of the drift term.  
We only have controls on a weaker modulus of continuity due to the delicate replacement estimates involved. 
This is the
reason why tightness of $\cY^n_\cdot$ is proven in
$\bbL^p([0,T],\R)$ only.

For $p\ge 1$ and $0<\epsilon<1$, define the fractional Sobolev space $\bbW^{\epsilon,p}([0,T],\R)$ as 
the subset of $\bbL^p([0,T],\R)$ for which the following
norm is finite:
\begin{equation}
\|u\|_{\bbW^{\epsilon,p}([0,T],\R)}
:=
\|u\|_{\bbL^p([0,T])} + [u]_{\epsilon,p,T},
\qquad 
[u]_{\epsilon,p,T}
=
\bigg(\int_0^T\int_0^T \frac{|u(t)-u(s)|^p}{|t-s|^{\epsilon p+1}}\,
ds\, dt\bigg)^{1/p} 
.
\end{equation}
It is a standard result (see e.g.~\cite[Theorem
7.1]{FractionalSobolev}) that the embedding of
$\bbW^{\epsilon,p}([0,T],\R)$ into $\bbL^{p}([0,T],\R)$ is compact for
any $0<\epsilon<1$ and $p\geq 1$.  Thus, tightness of the drift term
$v(t) = \int_{[0,t]} \sqrt{n}\, L_n\cY^n_r\, dr$ in
$\bbL^{p}([0,T],\R)$ follows from the estimate:
\begin{equation}
\sup_{s,t\in[0,T]}\sup_{n\geq 1}\E^n_{\mu_n}
\bigg[\frac{1}{|t-s|^{\epsilon p}}\, \Big| \int_s^t \sqrt{n}\,
L_n\cY^n_u\, du\, \Big|^p\, \bigg]
<
\infty
\label{eq_tightness_Wsp}
\end{equation}	
for some $\epsilon\in(0,1)$, because in this case
$\sup_{n\geq 1}\E^n_{\mu_n} [\|v\|_{\bbW^{\epsilon',p}([0,T],\R)}] <
\infty$ holds for any $\epsilon'\in(0,\epsilon)$.
         
We first show in the next lemma that one can replace the integral term
in~\eqref{eq_tightness_Wsp} by the drift of the
SDE~\eqref{eq_SDE_magnetisation_in_thm}.

\begin{lemma}\label{lemm_replacement_tightness}
Define:
\begin{equation}
\cR^n_t 
:=
\int_0^t\big\{\, \sqrt{n}\,  L_n\mathcal Y^n_s\, \,+\, 2\, \theta \, a\, \mathcal Y^n \,+\,
2 \, a \, (\mc Y^n_s)^3 \big\} \, ds 
,\qquad 
t\geq 0
.
\end{equation}
Then $\cR^n_\cdot$ satisfies~\eqref{eq_tightness_Wsp} for $p\in(1,4/3)$ and $\epsilon = \frac{p-1}{p}$.  
\end{lemma}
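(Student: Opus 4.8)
\textbf{Proof strategy for Lemma~\ref{lemm_replacement_tightness}.}
The plan is to estimate $\E^n_{\mu_n}[|\cR^n_t - \cR^n_s|^p]$ for $s<t$ in $[0,T]$ and show it is $O(|t-s|)$, which is stronger than what is needed since $p\in(1,4/3)$ forces $p-1<p/3<1$, so $|t-s| = |t-s|^{\epsilon p}\cdot |t-s|^{1-\epsilon p}\le T^{1-\epsilon p}|t-s|^{\epsilon p}$ with $\epsilon p = p-1<1$. First I would write out $\sqrt{n}\, L_n\cY^n_s$ explicitly. Since the exclusion part does not move the magnetisation, $\sqrt n\, L_n\cY^n = a\sqrt n\, L^G_n\cY^n = -\frac{a}{n^{1/4}}\sum_{i\in\T_n} c(\tau_i\eta)[1-2\eta_i]\cdot\frac{1}{\ldots}$; more precisely, from the computation already done in~\eqref{n-24} and the expansion~\eqref{eq_jp_rates_as_bareta} of the jump rates together with $1-2\gamma = \theta/\sqrt n$, one gets
\begin{equation}
\sqrt n\, L_n\cY^n
=
-2a\theta\,\cY^n
- 2a\gamma^2\, n^{1/4}\sum_{i\in\T_n}\bar\eta_{i-1}\bar\eta_i\bar\eta_{i+1}
\end{equation}
(the same kind of identity as the first display in~\eqref{eq_th51_0}, without the square). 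Recentering $\bar\eta = \bar\eta^m + \ms M^n$ with $\ms M^n = n^{-1/4}\cY^n$ and using $\gamma^2 = 1/4 + O(n^{-1/2})$, the cubic sum produces exactly $2a(\cY^n)^3$ up to a term $a\, n^{1/4}\,\ms A_n$ where $\ms A_n$ is a linear combination of $\ms M^n\sum_i\bar\eta^m_{i-1}\bar\eta^m_i\bar\eta^m_{i+1}$ and $(\ms M^n)^2\sum_i\bar\eta^m_i\bar\eta^m_{i+1}$, $(\ms M^n)^2\sum_i\bar\eta^m_{i-1}\bar\eta^m_{i+1}$, plus $O(n^{-1/4})$ deterministic corrections (the $\theta/\sqrt n$ discrepancy between $2\gamma$ and $1$ and between $\gamma^2$ and $1/4$ multiplied against bounded quantities). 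Consequently
\begin{equation}
\cR^n_t - \cR^n_s
=
-a\int_s^t n^{1/4}\, \ms A_n(\eta^n_u)\, du
+ O(n^{-1/4}(t-s))
.
\end{equation}

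Next I would bound $\E^n_{\mu_n}[|\int_s^t n^{1/4}\ms A_n(\eta^n_u)\, du|^p]$. By Jensen's inequality in the time variable this is at most $(t-s)^{p-1}\int_s^t \E^n_{\mu_n}[|n^{1/4}\ms A_n(\eta^n_u)|^p]\, du$, so it suffices to control $\E^n_{\mu_n}[|n^{1/4}\ms A_n(\eta^n_u)|^p]$ uniformly in $u\in[0,T]$ and $n$. Each summand of $n^{1/4}\ms A_n$ is of the form $\sqrt n\,(\ms M^n)^q\, W^{J,\phi}_{p'}$ with the notation of~\eqref{eq_def_WJphi_p_sec_relent} (after accounting for the $n^{1/4}$ prefactor: $\ms M^n = n^{-1/4}\cY^n$ so e.g. $n^{1/4}\ms M^n\sum_i\bar\eta^m_{i-1}\bar\eta^m_i\bar\eta^m_{i+1} = \sqrt n\,\ms M^n\cdot\frac1n\sum_i\bar\eta^m_{i-1}\bar\eta^m_i\bar\eta^m_{i+1} = \sqrt n\,\ms M^n W^{\{-1,0,1\},1}_1$, and similarly $n^{1/4}(\ms M^n)^2\sum_i\bar\eta^m_i\bar\eta^m_{i+1}$ corresponds to $(q,J,p')$ with $q+|J|+p'-1\ge 4$), so each belongs to the family treated in Lemma~\ref{lemm_estimate_WphiJp} and in Proposition~\ref{prop_bound_adjoint}. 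For $p=1$ this is exactly the content of those results: using the entropy inequality at fixed magnetisation, the log-Sobolev Assumption~\ref{ass_LSI}, and the concentration/large-deviation bounds, $\E^n_{\mu_n}[|n^{1/4}\ms A_n(\eta^n_u)|]\le \delta n^2\Gamma^{\rm ex}_n(f_u;\nu^n_g) + C(\delta,T)\sqrt n$ for $u\ge \tau n^{-1/2}$ (and the coarse entropy bound before that), which after dividing by... wait: we need a genuinely $O(1)$ bound, not $O(\sqrt n)$. The point is that $\cR^n_t-\cR^n_s$ only involves $n^{1/4}\ms A_n$, not $\sqrt n\, L^*_n\mb 1$, so the relevant quantity is $\E^n_{\mu_n}[|n^{1/4}\ms A_n(\eta^n_u)|]$ without an extra $\sqrt n$; expanding $\ms A_n$ in $W^{J,\phi}_{p'}$ and applying Lemma~\ref{lemm_estimate_WphiJp} \emph{without} the $\sqrt n$ prefactor (or equivalently dividing its statement by $\sqrt n$) gives $\delta\, n^2 n^{-1/2}\Gamma^{\rm ex}_n(f_u;\nu^n_g) + C(\delta,T)$, and then one uses $\int_0^T n^{3/2}\Gamma^{\rm ex}_n(f_u;\nu^n_g)\, du = \int_0^T n^{5/2}\Gamma^{\rm ex}_n\cdot n^{-1}\, du$ is $O(n^{-1})\cdot O(1)$ by the carré-du-champ bound from Theorem~\ref{theo_free_energy} (as in the proof of Corollary~\ref{coro_4th_moment}), which vanishes. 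Thus $\int_s^t\E^n_{\mu_n}[|n^{1/4}\ms A_n(\eta^n_u)|]\, du \le C(T)(t-s) + o_n(1)$, actually one wants the bound uniformly so I would keep the $o_n(1)$ as an additive $o_n(1)$ and absorb it, or better: bound it by $C(T)(t-s) + C(T)|t-s|$ after noting the carré du champ integral over $[s,t]$ is itself $\le$ the integral over $[0,T]$ times nothing — here one must be slightly careful and I would instead just keep $\int_s^t n^{3/2}\Gamma^{\rm ex}_n(f_u;\nu^n_g)\, du\le \int_0^T n^{3/2}\Gamma^{\rm ex}_n(f_u;\nu^n_g)\, du = O(n^{-1})$ as a crude bound, giving $\E^n_{\mu_n}[|\cR^n_t-\cR^n_s|]\le C(T)(t-s) + C(T)n^{-1}$, and the $n^{-1}$ term, being $\le C(T)n^{-1}$, is in particular bounded, which is all tightness needs (alternatively absorb it into $(t-s)$ by splitting $[0,T]$ into $\sim n$ pieces — not necessary).

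For general $p\in(1,4/3)$ I would reduce to $p=1$: since $|n^{1/4}\ms A_n|$ is deterministically bounded by $C\sqrt n$ (each $\bar\eta^m_i$ is bounded and the sums contribute at most $n$, divided by appropriate powers), we have $|n^{1/4}\ms A_n|^p\le (C\sqrt n)^{p-1}|n^{1/4}\ms A_n|$, and this loses a factor $n^{(p-1)/2}$; that is too lossy. The cleaner route is to prove the $L^1$ bound for $\E^n_{\mu_n}[|\int_s^t n^{1/4}\ms A_n\, du|]$ directly — as just sketched — and then interpolate: $\E^n_{\mu_n}[|\int_s^t n^{1/4}\ms A_n\, du|^p]\le \big(\E^n_{\mu_n}[|\int_s^t n^{1/4}\ms A_n\, du|]\big)^{2-p}\big(\E^n_{\mu_n}[|\int_s^t n^{1/4}\ms A_n\, du|^2]\big)^{p-1}$ by Hölder (valid for $p\in[1,2]$). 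The $L^2$ factor is bounded by $(t-s)\int_s^t\E^n_{\mu_n}[(n^{1/4}\ms A_n(\eta^n_u))^2]\, du$, and $(n^{1/4}\ms A_n)^2\le n^{1/2}\ms A_n^2\cdot$ — still potentially growing; but since $\ms A_n$ consists of $W^{J,\phi}_{p'}$ terms and $\sqrt n\,(\ms M^n)^q(W^{J,\phi}_{p'})^2$ is again of the type controlled by Lemma~\ref{lemm_estimate_WphiJp} (products of $\bar\eta^m$ with a bounded kernel, the square only doubling the tensor degree), one gets $\E^n_{\mu_n}[(n^{1/4}\ms A_n(\eta^n_u))^2]\le C(T)$ uniformly, hence the $L^2$ factor is $\le C(T)(t-s)^2$. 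Combining, $\E^n_{\mu_n}[|\cR^n_t-\cR^n_s|^p]\le C(T)(t-s)^{2-p}\cdot (t-s)^{2(p-1)} + O(n^{-1})^p = C(T)(t-s)^{p} + o_n(1)$, and $(t-s)^p\le T\,(t-s)^{p-1}=T\,(t-s)^{\epsilon p}$ with $\epsilon=(p-1)/p$, which gives~\eqref{eq_tightness_Wsp} for $\cR^n$. The main obstacle is the bookkeeping in the first step: correctly identifying that every term produced by recentering $\bar\eta=\bar\eta^m+\ms M^n$ in $\sqrt n\, L_n\cY^n + 2a\theta\cY^n + 2a(\cY^n)^3$ is either an admissible $W^{J,\phi}_{p'}$-term covered by Lemma~\ref{lemm_estimate_WphiJp} or a deterministic $O(n^{-1/4})$ correction, and checking that none of them is of "leading order" once the $\cY^n$ and $(\cY^n)^3$ terms have been peeled off — this is the analogue, at the level of $L^*_n\mb 1$ without one power of $\cY^n$, of Lemma~\ref{lemm_projection_Q4}, and it is where all the work of Sections~\ref{n-sec3} and~\ref{sec_free_energy_bounds} is reused.
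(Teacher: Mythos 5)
Your approach diverges from the paper in a way that runs into a real obstacle.

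The central gap is the step where you try to bound $\int_s^t \E^n_{\mu_n}\big[\,|R^m(\eta^n_u)|\,\big]\, du$ (or equivalently the $p=1$ version of your Jensen bound) by appealing to Lemma~\ref{lemm_estimate_WphiJp} and the renormalisation machinery. Those results control $\big|\, \nu^n_g(f_u\, R^m)\,\big| = \big|\, \E^n_{\mu_n}[R^m(\eta^n_u)]\,\big|$, with the absolute value \emph{outside} the expectation. The renormalisation step (integration by parts against the carr\'e du champ) trades $R^m$ for a \emph{signed} inequality that holds only after taking the expectation against a density; it does not survive taking $|R^m|$ pointwise. The paper makes this point explicitly right after~\eqref{eq_tightness_Aldous_interm0}: ``cannot directly be bounded by putting the absolute value inside the time integral as $R^m$ needs to be renormalised in terms of the carr\'e du champ.'' To get around this, the paper splits $R^m = (R^m - S^m - n^{-3/4}(1+c_1)Q^{4+,\delta}_n) + (S^m + n^{-3/4}(1+c_1)Q^{4+,\delta}_n)$, estimates the tail probabilities of the time integral of the first piece via Feynman--Kac plus the entropy inequality (the point being that the supremum in~\eqref{eq_bound_FK_tightness} is nonpositive by construction of $S^m$, $Q^{4+,\delta}_n$, giving a rate $\lambda\, n^{3/4}$ that beats the $O(\sqrt n)$ entropy), and only for the second, \emph{nonnegative} piece does it move the absolute value inside the time integral. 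This yields $\E^n_{\mu_n}[|\cR^n_t - \cR^n_s|] \le c(T)\log n/n^{1/4}$ --- note this is $o_n(1)$ but carries \emph{no} factor of $|t-s|$, unlike what you assert.

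Two further problems with your interpolation route. First, your claim that $\E^n_{\mu_n}[(n^{1/4}\ms A_n(\eta^n_u))^2] \le C(T)$ uniformly is false: the remainder from recentring $\sqrt n\, L_n\cY^n$ is $R^m \approx n^{-1/4}\sum_i\bar\eta^m_{i-1}\bar\eta^m_i\bar\eta^m_{i+1} + \cdots$, which is typically of order $n^{1/4}$ (a sum of $\approx\!\sqrt n$ over a local cubic cylinder function divided by $n^{1/4}$), so its second moment is of order $\sqrt n$, not $O(1)$. (You also seem to have written the $\ms A_n$ from Corollary~\ref{n-s09}, which is the recentring of $\sqrt n\, L_n(\cY^n)^2$ and carries an extra $\ms M^n$; the relevant object here is $R^m$ from~\eqref{eq_def_Rm}.) Second, even with the corrected $L^2$ order the $|t-s|$ dependence you need cannot come from the $L^1$ bound, since, as noted, the $L^1$ bound obtained by Feynman--Kac is a constant in $|t-s|$.

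The paper's proof avoids all of this by a simpler device: because $|R^m| \le 4n^{3/4}$ deterministically, one has $\E^n_{\mu_n}[|\cR^n_t - \cR^n_s|^{1+q}] \le (32\gamma^2 a\, n^{3/4}|t-s|)^q\, \E^n_{\mu_n}[|\cR^n_t - \cR^n_s|]$; combining with the $L^1$ bound $O(\log n / n^{1/4})$ gives the required $W^{\epsilon,p}$ estimate precisely for $q < 1/3$, i.e.\ $p \in (1,4/3)$ and $\epsilon = (p-1)/p$. The $|t-s|$ factor comes entirely from the deterministic bound, not from any improvement of the $L^1$ estimate with $|t-s|$; this is what forces the restriction on $p$.
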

Assuming Lemma~\ref{lemm_replacement_tightness}, the proof of
tightness of the drift term is concluded if we
prove~\eqref{eq_tightness_Wsp} for $\int_0^\cdot\cY^n_s\, ds$,
$\int_0^\cdot(\cY^n_s)^3\, ds$.  Recall
Corollaries~\ref{n-s11}--\ref{coro_4th_moment}:
\begin{equation}
\sup_{n\geq 1}\E^n_{\mu_n}\Big[\int_{0}^{T}|\cY^n_s|^4\, ds\Big]
<
\infty
.
\end{equation}
H\"older inequality then yields the claim with $p=4/3$ and
$\epsilon p =1/3$: 
e.g. for $(\cY^n)^3$, 
\begin{equation}
\E^n_{\mu_n}\Big[ \Big( \int_{s}^{t}|\cY^n_s|^3\, ds\Big)^{4/3}\,  \Big]
\leq 
|t-s|^{1/3} \, \E^n_{\mu_n}\Big[\int_{0}^{T}|\cY^n_s|^4\, ds\Big]
,\qquad 
s,t\in[0,T]
.
\end{equation}
\begin{proof}[Proof of Lemma~\ref{lemm_replacement_tightness}]
Computations similar to those carried out to estimate $\sqrt{n}\, L_n(\cY^n)^2$ in Section~\ref{sec_bound_moment} give:
\begin{align}
\sqrt{n}\, L_n\mathcal Y^n
=
-2\, \theta\, a\, \cY^n -8\, a\, \gamma^2\, (\cY^n)^3
-8\, a\, \gamma^2\, R^m,
\end{align}
where we recall $\gamma=\frac{1}{2}(1-\theta n^{-1/2})$
and with $R^m$ the replacement term obtained by projecting on mean-0 modes: 
for each $\eta\in\Omega_n$, 
writing $\bar\eta^m_i = \bar\eta_i - \frac{1}{n}\sum_{j\in\T_n}\bar\eta_j$,
\begin{equation}
R^m(\eta)
=
\frac{1}{n^{1/4}}\sum_{i\in\T_n}\bar\eta^m_{i-1}\bar\eta^m_{i-1}\bar\eta^m_{i+1} + \frac{\cY^n}{\sqrt{n}\, }\sum_{i\in\T_n}\big[2\bar\eta^m_{i}\bar\eta^m_{i+1} + \bar\eta^m_{i-1}\bar\eta^m_{i+1}\big]
.
\label{eq_def_Rm}
\end{equation}
A renormalisation step as carried out e.g. in Lemma \ref{n-l01} or in
the proof of Lemma~\ref{lemm_estimate_WphiJp}
(see~\eqref{eq_renormalisation_adjoint_middle}) shows that there
exists $S^{m}\geq 0$ (given by $n^{-1/4}E^{1}_n$ there), with
$\|S^{m}\|_\infty= O(n^{3/4})$, such that:
\begin{equation}
n^{3/4}|\nu^n_{\rm ref}(fR^m)|
\leq 
\frac{n^{5/2}}{2} \, \Gamma^{\rm ex}_n(f;\nu^n_{\rm ref}) + 
n^{3/4}\, \nu^n_{\rm ref}(f\, S^m)
.
\label{eq_def_Sm}
\end{equation}
With the notations~\eqref{eq_def_WJphi_p}, 
$n^{3/4}S^m$ is a sum of terms of the form $\sqrt{n}\, W^{J,\phi}_2$ for bounded $\phi$ and $J\in\{ \{0\},\{0,1\}\}$. 
In particular, 
Theorem~\ref{n-s02} (for $a\leq {\mf a}_0$) and Lemma~\ref{lemm_estimate_WphiJp} and the $O(\sqrt{n}\, )$relative entropy bound in Theorem~\ref{theo_free_energy} ($a\geq {\mf a}_0$)
show that there exists $\tau>0$ and $c_1,c_2(T)>0$ such that, for all $u\in[0,T]$:
\begin{equation}
\nu^n_{\rm ref}\big(\, f_u\, |S^m|\, \big)
\leq 
c_1\, n^{5/2-3/4}\, \Gamma^{\rm ex}_n(f_u;\nu^n_{\rm ref})
+
c_2(T)\, n^{-1/4}\, \big(1
+{\bf 1}_{[0,\tau\, n^{-1/2}]}(t)\, \sqrt{n}\, \big)
.
\label{eq_bound_Sm}
\end{equation}

A similar bound holds for $L^*_n \mb 1$ as proven in~\eqref{n-09}
and Theorem~\ref{n-s02} if $a\leq \mf a_0$, or in
Proposition~\ref{prop_bound_adjoint},
and~\eqref{eq_bounds_error_prop_free_energy} for $a\geq {\mf a}_0$:
for each $\delta>0$,
\begin{equation}
|\nu^n_{\rm ref}(f\, \sqrt{n}\, L^*_n{\mb 1})|
\leq 
\frac{\delta\, n^{5/2}}{2}\, \Gamma^{\rm ex}_n(f;\nu^n_{\rm ref}) + 
\nu^n_{\rm ref}(f \, \sqrt{n}\, Q_n^{4+,\delta})
,
\label{eq_def_ellm}
\end{equation}
where for $u\in[0,T]$:
\begin{align}
\nu^n_{\rm ref}\big(\, f_u|Q_n^{4+,\delta}|\,\big)
&\leq 
\frac{\delta\, n^{2}}{2}\, \Gamma^{\rm ex}_n(f;\nu^n_g) + {\mb 1}_{a>{\mf a_0}}\alpha_0 \, \nu^n_{\rm ref}(f_u(\cY^n)^4)
\nnb
&\quad 
+c(T,\delta)\, \big( 1+\sqrt{n}\, {\bf 1}_{[0,\tau\, n^{-1/2}]}(t)\big)
.
\label{eq_bound_ellm}
\end{align}
Fix $0\leq s\leq t\leq T$.  We now proceed to bound the expectation of
$|\cR^n_t-\cR^n_s|$.  Let $q>0$ to be chosen later.  As $R^m$ is
bounded by $4\, n^{3/4}$, the
expectation of $|\cR^n_t-\cR^n_s|^{1+q}$ satisfies:
\begin{align}
\E^n_{\mu_n}\big[\, |\cR^n_t-\cR^n_s|^{1+q}\, \big]
\leq 
(32\, \gamma^2\, a\,   n^{3/4}\, |t-s|)^{q} \E^n_{\mu_n}
\big[\, |\cR^n_t-\cR^n_s|\, \big]
.
\end{align}
Let us show the existence of $c(T)>0$ such that:
\begin{equation}
\E^n_{\mu_n}\big[\, |\cR^n_t-\cR^n_s|\, \big]
\leq 
\frac{c(T)\log n}{n^{1/4}}
,\qquad 
n\geq 1
.
\label{eq_expectation_cR_tightness}
\end{equation}
If this holds then item (ii) holds for any $p=1+q$, $\epsilon>0$ with
$q\in(0,1/3)$ and $\epsilon p = q$, that is, 
$p\in(1,4/3)$ and $\epsilon=(p-1)/p$ as
claimed.  

 To bound the expectation in~\eqref{eq_expectation_cR_tightness}, 
write:
\begin{align}
\E^n_{\mu_n}\big[\, |\cR^n_t-\cR^n_s|\, \big]
&=
8\, a\, \gamma^2\, 
\E^n_{\mu_n}\Big[\, \Big|\int_s^t R^m(\eta_u)\, du\, \Big|\, \Big]
\nnb
&=
8\, a\, \gamma^2\, \int_{0}^{4\, n^{3/4}(t-s)}
\, \P^n_{\mu_n}\Big(\, \Big|\int^{t}_{s} R^m(\eta_u)\, du\, \Big|> \lambda\, \Big)\, d\lambda
.
\label{eq_tightness_Aldous_interm0}
\end{align}
Note that~\eqref{eq_tightness_Aldous_interm0} cannot directly be bounded by
putting the absolute value inside the time integral as $R^m$ needs to be
renormalised in terms of the carr\'e du champ as in~\eqref{eq_def_Sm}.
Instead, we subtract the variable $S^m$ obtained from $R^m$ after
replacement and separately estimate the better behaved $S^m$ and the
replacement cost.  To estimate the latter it turns out to be
convenient to also subtract $Q_n^{4+,\delta}$, so we write 
($c_1$ is the
constant appearing in~\eqref{eq_bound_Sm}):
\begin{align}
\E^n_{\mu_n}&\Big[\, \Big|\int_s^t R^m(\eta_u)\, du\, \Big|\, \Big]
\nnb
&\quad \leq 
\int_0^{4n^{3/4}|t-s|}\, \P^n_{\mu_n}\Big(\int^{t}_{s}\big[R^m-S^m - n^{-3/4}(1+c_1)\, Q_n^{4+,\delta}\big](\eta_u)\, du\, > \frac{\lambda}{2} \Big)\, d\lambda
\nnb
&\qquad
+\int_0^{4n^{3/4}|t-s|}\, \P^n_{\mu_n}\Big(\int^{t}_{s}\big[-R^m-S^m - n^{-3/4}(1+c_1)\, Q_n^{4+,\delta}\big](\eta_u)\, du\, > \frac{\lambda}{2} \Big)\, d\lambda
\nnb
&\qquad +
4\, \E^n_{\mu_n}\bigg[\, \Big|\int^{t}_{s}\Big[S^m +n^{-3/4}(1+c_1)\, Q_n^{4+,\delta}\big](\eta_u)\, du\, \Big|\, \bigg]
.
\label{eq_tightness_Aldous_interm1}
\end{align}
Consider first the first two lines of~\eqref{eq_tightness_Aldous_interm1}. 
The integral of each probability will be estimated similarly, 
so we
only estimate the first line. 
Recall that the reference measure $\nu^n_{\rm ref}$ is defined as:
\begin{equation}
\nu^n_{\rm ref}
=
\begin{cases}
\nu^n_U\quad &\text{if }a\leq\mf a_0,\\
\nu^n_g\quad &\text{if }a> \mf a_0,
\end{cases}
\end{equation}
with $\mf a_0$ given by Theorem~\ref{n-s05}. 
Fix $\lambda>0$.  Since
$\sup_{u\leq T}H_n(f_u\, |\, \nu^n_{\rm ref})\leq c(T)\, \sqrt{n}$, the entropy
inequality and Markov property give:
\begin{align}
\P^n_{\mu_n}&\Big(\int_s^{t}\big[R^m-S^m - n^{-3/4}(1+c_1) \, Q_n^{4+,\delta}\big] (\eta_u)\, du\, > \lambda \Big)
\nnb
&\qquad 
\leq 
\frac{c(T)\, \sqrt{n} +\log 2}{-\log\P^n_{\nu^n_{\rm ref}}\Big(\int_0^{t-s} [R^m-S^m - n^{-3/4}(1+c_1)\, Q_n^{4+,\delta}] (\eta_u)\, du\, > \lambda\Big)}
.
\end{align}
The exponential Chebychev inequality and Feynman-Kac formula 
(\cite[Lemma B.1]{jl}) then give the following estimate of the
denominator:
\begin{align}
\log&\, \P^n_{\nu^n_{\rm ref}}\Big(\int_0^{t-s} \big[R^m-S^m - n^{-3/4}(1+c_1) \,Q_n^{4+,\delta}\big] (\eta_u)\, du\, > \lambda \Big)
\nnb
&\quad
\leq 
-\frac{\lambda\,  n^{3/4}}{2(1+c_1)}
+ \sup_{\substack{f\geq 0 \\ \nu^n_{\rm ref}(f)=1}}\Big\{ 
\nu^n_{\rm ref}\Big(f\Big[\frac{n^{3/4}}{2(1+c_1)}\, (R^m-S^m)
+ \frac{1}{2}\big(L^*_n{\mb 1} - Q_n^{4+,\delta}\big)\Big]\Big)
\nnb
&\hspace{7cm}
- \frac{n^{5/2}}{2}\, \Gamma^{\rm ex}_n(f;\nu^n_{\rm ref}) \Big\}
.
\label{eq_bound_FK_tightness}
\end{align}
Choose $\delta\leq 1$.  The quantities $S^m,Q_n^{4+,\delta}$
in~\eqref{eq_def_Sm}--\eqref{eq_def_ellm} were defined precisely so
that the above variational principle is at most $0$.  Thus, bounding
the next probability by $1$ if $\lambda<n^{-1/4}|t-s|$ and using the
bound~\eqref{eq_bound_FK_tightness} otherwise:
\begin{align}
&\int_{ 0}^{4\, n^{3/4}(t-s)}
\P_{\mu_n}\Big(\int_s^t\big[R^m-S^m - n^{-3/4}(1+c_1)\, Q_n^{4+,\delta}\big] (\eta_u)\, du\, > \lambda \Big)\, d\lambda
\nnb
&\qquad 
\leq 
\frac{|t-s|}{n^{1/4}} + 
2\, \int_{n^{-1/4}|t-s|}^{4\, n^{3/4}|t-s|}
\frac{c(T)\, \sqrt{n} +\log 2}{\lambda\, n^{3/4}}\, d\lambda
\leq 
\frac{c'(T)\log n}{n^{1/4}}
.
\end{align}
This bounds the contribution to~\eqref{eq_expectation_cR_tightness} of the first two lines of~\eqref{eq_tightness_Aldous_interm1}.

Consider next the last line of~\eqref{eq_tightness_Aldous_interm1}. 
Recall the entropy estimate of
Theorems~\ref{n-s05}--\ref{theo_free_energy}:
\begin{equation}
\sup_{s\leq T}H_n(f_s\, |\, \nu^n_{\rm ref}) 
+ \int_0^Tn^{5/2}\, \Gamma^{\rm ex}_n(f_s;\nu^n_{\rm ref})\, ds
\leq 
c(T)\, \sqrt{n} 
.
\end{equation}
This bound and the bounds~\eqref{eq_bound_Sm}--\eqref{eq_bound_ellm} on $S^m,Q_n^{4+,\delta}$ give, for a different $c(T)>0$:
\begin{align}
\E_{\mu_n} \Big[\, \int_{0}^{T}|S^m +n^{-3/4}(1+c_1)& \, Q_n^{4+,\delta}|(\eta_u)\, du\, \Big]
\nnb
&\qquad
\leq 
c(T)\, n^{-1/4} + \alpha_0\, {\bf 1}_{a>\mf a_0}\, n^{-1/4}\int_0^T\E_{\mu_n}\big[(\cY^n_t)^4\big]\, dt
.
\end{align}
The last term is bounded by $O(n^{-1/4})$ according to
Corollary~\ref{coro_4th_moment}. 
This concludes the proof of~\eqref{eq_expectation_cR_tightness}, 
thus of Lemma~\ref{lemm_replacement_tightness}. 
\end{proof}
\noindent{\bf Conclusion of the argument:} Up to this point, we proved
that the martingale part of the decomposition
\eqref{eq_semimart_decomp_Y} is tight in $\cD([0,T], \bb R)$, and the
drift part is tight in $\bb L^p([0,T], \bb R)$, $1<p<4/3$.  Observe
that the topology of $\cD([0,T], \bb R)$ is stronger than the one of
$\bb L^p([0,T], \bb R)$ for any $p\geq 1$: 
real-valued functions continuous on
$\bb L^p([0,T],\R)$ are also continuous on $\cD([0,T],\R)$.  A
converging subsequence of the laws of $(M^n_t)_{t\in[0,T]}$ in
$\cD([0,T],\R)$ therefore also converges in $\bb L^p([0,T],\R)$
($p\geq 1$).  We conclude that the process $(\mc Y^n_t)_{t\in[0,T]}$
is tight in $\bb L^p([0,T],\R)$ for any $1<p<4/3$.

\subsection{Convergence of the martingale term}\label{sec_martingale_pb}
Recall the definition of the martingale $M^n_\cdot$ and its quadratic variation:
\begin{align}
M^n_t 
\, &=\, 
\cY^n_t - \cY^n_0 - \int_0^t a\, \sqrt{n}\, L_n\cY^n_s\, ds
,\nnb
\<M^n\>_t \,&=\, \int_0^t  \frac{a}{n} \sum_{i\in\T_n} c(\tau_i
\eta^n(s)) \, ds 
,\qquad 
t\geq 0
.
\label{nn-01}
\end{align}
\begin{proposition}\label{prop_cv_martingale}
The martingale $(M^n_t)_{t\in[0,T]}$ converges weakly in $\cD([0,T],\R)$ to $(\sqrt{a}\, B_t)_{t\in[0,T]}$, 
where $(B_t)_{t\in[0,T]}$ is a standard Brownian motion. 
\end{proposition}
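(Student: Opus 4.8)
The plan is to apply the martingale central limit theorem (see e.g. \cite[Theorem VIII.3.11]{kl} or the analogous statement in \cite{ks}). Since tightness of $(M^n_t)_{t\in[0,T]}$ in $\cD([0,T],\R)$ has already been established, and since $M^n_\cdot$ has jumps of size $O(n^{-3/4})$ (a single Glauber flip changes $\cY^n$ by $n^{-3/4}$, and the exclusion part does not change the magnetisation), the only thing left to check is that the quadratic variation $\<M^n\>_t$ converges in probability, for each fixed $t$, to the deterministic limit $a\, t$. Given the expression
\begin{equation*}
\<M^n\>_t \,=\, \int_0^t  \frac{a}{n} \sum_{i\in\T_n} c(\tau_i \eta^n(s)) \, ds,
\end{equation*}
it suffices to show that $\frac1n\sum_{i\in\T_n} c(\tau_i\eta^n(s))$ is close, after integration in time, to its average under the appropriate reference measure, which is $1+O(n^{-1})$ since $\nu^n_{\rm ref}\big(c(\tau_0\eta)\big) = 1 - \gamma(2\rho-1)\cdot 2\mathbb{E}[\bar\eta_0] + \ldots \to 1$ at $\rho=1/2$, $\gamma\to 1/2$.

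First I would make the replacement precise. Using the expression \eqref{eq_jp_rates_as_bareta} for $c(\tau_i\eta)$ in terms of the $\bar\eta$'s,
\begin{equation*}
\frac1n\sum_{i\in\T_n} c(\tau_i\eta)
= 1 - \frac{4\gamma}{n}\sum_{i\in\T_n}\bar\eta_i[\bar\eta_{i-1}+\bar\eta_{i+1}] + \frac{4\gamma^2}{n}\sum_{i\in\T_n}\bar\eta_{i-1}\bar\eta_{i+1}.
\end{equation*}
The two genuinely random terms are $\frac1n\sum_i \bar\eta_i\bar\eta_{i\pm1}$ and $\frac1n\sum_i\bar\eta_{i-1}\bar\eta_{i+1}$, each of which is a normalised sum of two-point functions. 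I would argue that $\int_0^t$ of each of these vanishes in $\bb L^1(\bb P^n_{\mu_n})$ as $n\to\infty$. This is exactly the kind of two-point replacement estimate already available: after the mode decomposition $\bar\eta_i = \bar\eta^m_i + \ms M^n$, the magnetisation contribution is $(\ms M^n)^2 = n^{-1/2}(\cY^n)^2$ whose time integral is $O(n^{-1/2})$ in expectation by Corollary~\ref{n-s11} (or the free energy bound of Theorem~\ref{theo_free_energy} when $a\geq\mf a_0$), and the mean-zero contribution $\frac1n\sum_i\bar\eta^m_i\bar\eta^m_{i+1}$ is controlled, after integration against $f_s\,\nu^n_{\rm ref}$ and renormalisation via the carré du champ, by Theorem~\ref{n-s02} together with the entropy/carré-du-champ bound $\int_0^T n^{5/2}\Gamma^{\rm ex}_n(f_s;\nu^n_{\rm ref})\,ds = O(\sqrt n)$ of Theorems~\ref{n-s05}--\ref{theo_free_energy}; both give $o_n(1)$ after multiplication by the $1/n$ prefactor present here (indeed the prefactor is much smaller than the $\sqrt n$ that appears in the adjoint computations, so there is ample room). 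Hence $\<M^n\>_t \to a\,t$ in $\bb L^1$, in particular in probability, for each $t\in[0,T]$.

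With tightness, the vanishing-jumps condition, and $\<M^n\>_t\to a\,t$ in hand, the martingale CLT yields that $(M^n_t)_{t\in[0,T]}$ converges weakly in $\cD([0,T],\R)$ to a continuous Gaussian martingale with deterministic quadratic variation $a\,t$, i.e. to $(\sqrt a\, B_t)_{t\in[0,T]}$ with $B$ a standard Brownian motion. The main obstacle is purely in the replacement step for the mean-zero two-point term $\frac1n\sum_i\bar\eta^m_i\bar\eta^m_{i+1}$: one must route it through the reference measure $\nu^n_{\rm ref}$ (either $\nu^n_U$ or $\nu^n_g$ depending on the regime), apply the entropy inequality at each time slice, and invoke the integration-by-parts plus log-Sobolev machinery of Section~\ref{n-sec3}; but since all those estimates are already proved and only an $o_n(1)$ conclusion is needed (not the sharp $O(\sqrt n)$), this is a soft consequence of the quantitative bounds already established. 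No new idea is required beyond assembling Corollary~\ref{n-s11}/Theorem~\ref{theo_free_energy}, Theorem~\ref{n-s02}, and the standard martingale CLT.
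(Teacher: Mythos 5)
The overall architecture of your proposal matches the paper's: apply the martingale CLT from Jacod--Shiryaev (the paper cites \cite[Theorem VIII.3.11]{js}, not \cite{kl}), note that jumps are $O(n^{-3/4})$, reduce to $\<M^n\>_t \to a\,t$ in probability, and split the random part of the quadratic variation via $\bar\eta_i = \bar\eta^m_i + \ms M^n$. The magnetisation contribution is handled exactly as you say, through Corollary~\ref{n-s11} or Theorem~\ref{theo_free_energy}.

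There is however a gap in the way you handle the mean-zero contribution $L^m_s = -\frac{8a\gamma}{n}\sum_i\bar\eta^m_i\bar\eta^m_{i+1}(s) + \ldots$. You cite Theorem~\ref{n-s02} as the estimate that does the job, but that theorem bounds
\[
\big|\, E_{\nu^n_U}\big[\, f\, \mc Z_n\, \textstyle\sum_i \bar\eta^m_i\bar\eta^m_{i+1}\,\big]\,\big|
\,\le\, \mf c_1 + \mf c_2\, n^2\, \Gamma^{\rm ex}_n(f;\nu^n_U)\;,
\]
i.e.\ it controls the \emph{expectation} $\E_{\mu_n}[L^m_s]$ in terms of the carré du champ, not the $\bb L^1$ norm $\E_{\mu_n}[|L^m_s|]$. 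To deduce $\<M^n\>_t \to a\,t$ in probability you need to show $\E_{\mu_n}\big[\,|\!\int_0^t L^m_s\,ds|\,\big]\to 0$ (or a variance bound), and a bound on $|\E[\int L^m_s\,ds]|$ alone does not give that: a random variable can have vanishing mean without concentrating.

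The paper proves the stronger claim $\int_0^t \E_{\mu_n}[|L^m_s|]\,ds\to 0$ directly, not via Theorem~\ref{n-s02}. It first restricts to the good set $A_n = \{|\sum_i\bar\eta_i|\le n/4\}$ using the large-deviation bound of Proposition~\ref{prop_LD_bound} (this step is only needed when $a>\mf a_0$, because Proposition~\ref{prop_concentration} requires a uniform bound on the magnetisation). Then, conditioning on the magnetisation, it applies the entropy inequality together with the log-Sobolev inequality (Assumption~\ref{ass_LSI} or \cite[Theorem 4]{yau}) and the exponential moment bounds of Proposition~\ref{n-l03}/\ref{prop_concentration} \emph{to the absolute value} $|L^m_s|$, choosing $\lambda=\sqrt{n}$; the carré-du-champ term then has a prefactor $n^{3/2}$ whose time integral is $O(n^{-1/2})$ by Theorems~\ref{n-s05}--\ref{theo_free_energy}. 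You gesture towards "entropy inequality at each time slice" and "log-Sobolev machinery" at the end, so you are close, but the argument needs to invoke those ingredients directly on $|L^m_s|$ rather than routing through Theorem~\ref{n-s02}, and for $a>\mf a_0$ the restriction to $A_n$ is a necessary preliminary step.
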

Recall from \eqref{n-04} the definition of $W_t(y_0,y)$  for $y\colon
[0,T]\to\R$ such that $\int_{[0,T]} y_s^4 \, ds <\infty$ and $t>0$. 
Lemma~\ref{lemm_replacement_tightness} gives:
\begin{equation}
\lim_{n\to\infty}\E_{\mu_n}\Big[ \, \|M^n_\cdot - W_\cdot(\cY^n_0,\cY^n)\|_{\bbL^{p}([0,T])}\, \Big]
=
0
,\qquad 
p\in(1,4/3)
.
\end{equation}
Proposition~\ref{prop_cv_martingale} thus has the following immediate corollary. 
\begin{corollary}\label{coro_cv_W_t(Y)}
For each $p\in(1,4/3)$, 
\begin{equation}
\lim_{n\to\infty} (W_t(\cY^n_0,\cY^n) )_{t\in[0,T]}
=
(\sqrt{a}\, B_t)_{t\in[0,T]}\quad
\text{in law in }\bb L^{p}([0,T],\R)
.
\end{equation}
\end{corollary}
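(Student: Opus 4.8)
The plan is to deduce the statement directly from Proposition~\ref{prop_cv_martingale} together with Lemma~\ref{lemm_replacement_tightness}. First I would recall that $W_t(\cY^n_0, \cY^n) - M^n_t = -\cR^n_t$ in the notation of Lemma~\ref{lemm_replacement_tightness}: indeed, by the semi-martingale decomposition~\eqref{eq_semimart_decomp_Y} we have $M^n_t = \cY^n_t - \cY^n_0 - \int_0^t \sqrt{n}\, L_n \cY^n_s\, ds$, while by definition~\eqref{n-04}, $W_t(\cY^n_0,\cY^n) = \cY^n_t - \cY^n_0 + 2a\int_0^t[\theta\,\cY^n_s + (\cY^n_s)^3]\, ds$, so that $W_t(\cY^n_0,\cY^n) - M^n_t = \int_0^t\{\sqrt{n}\, L_n\cY^n_s + 2\theta a\,\cY^n_s + 2a(\cY^n_s)^3\}\, ds = \cR^n_t$. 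Lemma~\ref{lemm_replacement_tightness} applied with $p\in(1,4/3)$ and $\epsilon = (p-1)/p$ gives a uniform bound on $\E^n_{\mu_n}\big[|t-s|^{-\epsilon p}\,|\cR^n_t - \cR^n_s|^p\big]$, hence on $\E^n_{\mu_n}[\|\cR^n\|_{\bbW^{\epsilon,p}([0,T],\R)}^p]$, which in particular yields $\sup_n \E^n_{\mu_n}[\|\cR^n\|_{\bbL^p([0,T],\R)}]<\infty$; taking $s=0$ in the modulus bound and using $\cR^n_0 = 0$ actually gives $\E^n_{\mu_n}[\,|\cR^n_t|\,]\le C t^{\epsilon}$, and more carefully the proof of the lemma shows $\E^n_{\mu_n}[\,|\cR^n_t - \cR^n_s|\,] = O(n^{-1/4}\log n)$, so that $\E^n_{\mu_n}[\|\cR^n\|_{\bbL^p([0,T],\R)}] \to 0$ as $n\to\infty$.

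Next I would combine this with the convergence of the martingale term. Proposition~\ref{prop_cv_martingale} states that $(M^n_t)_{t\in[0,T]}$ converges weakly in $\cD([0,T],\R)$ to $(\sqrt{a}\,B_t)_{t\in[0,T]}$. Since the inclusion $\cD([0,T],\R)\hookrightarrow \bbL^p([0,T],\R)$ is continuous (as noted at the end of Section~\ref{sec_tightness}, functions continuous on $\bbL^p$ are continuous on $\cD$), the continuous mapping theorem gives weak convergence of $(M^n_t)_{t\in[0,T]}$ to $(\sqrt{a}\,B_t)_{t\in[0,T]}$ as random elements of $\bbL^p([0,T],\R)$, where the limiting law is the canonical lift $\widetilde m$ of Wiener measure described in the statement of Corollary~\ref{coro_martingale_problem_well_posed}. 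Then I would invoke Slutsky's lemma in the Polish space $\bbL^p([0,T],\R)$: since $W_\cdot(\cY^n_0,\cY^n) = M^n_\cdot + \cR^n_\cdot$ with $M^n_\cdot$ converging in law and $\|\cR^n\|_{\bbL^p([0,T],\R)}\to 0$ in $L^1$ (hence in probability), the sum converges in law to the same limit $(\sqrt{a}\,B_t)_{t\in[0,T]}$ in $\bbL^p([0,T],\R)$.

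There is essentially no obstacle here beyond bookkeeping: the substance is entirely in Proposition~\ref{prop_cv_martingale} and Lemma~\ref{lemm_replacement_tightness}, both of which we may assume. The one point that requires a little care is making sure the limiting object is unambiguously defined as a measure on $\bbL^p([0,T],\R)$ — this is exactly the content of the parenthetical remark in the second bullet of the proof outline of Theorem~\ref{theo_convergence_magnetisation}, namely that Wiener measure on $C([0,T],\R)$ pushes forward via the $\bbL^p$-representative map $j$ to a well-defined Borel measure on $\bbL^p([0,T],\R)$, and that this is the limit appearing in the continuous mapping theorem step. Once this is in place, the proof is the two-line argument "$M^n_\cdot \Rightarrow \sqrt{a}B_\cdot$ and $\cR^n_\cdot \to 0$ in probability, hence $M^n_\cdot + \cR^n_\cdot \Rightarrow \sqrt{a}B_\cdot$".
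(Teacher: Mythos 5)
Your proof is correct and follows essentially the same route as the paper, which states the corollary as an immediate consequence of Lemma~\ref{lemm_replacement_tightness} (via the estimate $\E_{\mu_n}[\|M^n_\cdot - W_\cdot(\cY^n_0,\cY^n)\|_{\bbL^p([0,T])}]\to 0$, i.e.\ the bound $\E[|\cR^n_t-\cR^n_s|]=O(n^{-1/4}\log n)$ from~\eqref{eq_expectation_cR_tightness} interpolated against the deterministic bound $|\cR^n_t|\le c\, n^{3/4}$ to promote $\bbL^1$ to $\bbL^p$ convergence) and Proposition~\ref{prop_cv_martingale}. Your first sentence has a sign typo ($W_t-M^n_t=\cR^n_t$, not $-\cR^n_t$) but your own computation and the subsequent decomposition $W_\cdot=M^n_\cdot+\cR^n_\cdot$ are correct, so the argument goes through as written.
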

\begin{proof}[Proof of Proposition~\ref{prop_cv_martingale}]
By
~\cite[Theorem VIII.3.11]{js}, since the jumps of $\mc Y^n_t$ (and
therefore those of $M^n_t$) are bounded by $n^{-3/4}$ and since $(\<M^n\>_t)_{t\in[0,T]}$ is uniformly bounded, 
it is enough to
show that the quadratic variation $\<M^n\>_t$ converges in probability
to $a\, t$ for each $t\geq 0$. 
We prove the following slightly stronger claim:
\begin{equation}
\label{nn-02}
\forall t\in[0,T],\qquad 
\lim_{n\to\infty} \bb E_{\mu^n} \big[ \, \big|\, \<M^n\>_t \,-\,  a
\,t\, \big|\,\Big ] \,=\, 0\;.
\end{equation}
Recall from \eqref{nn-01} the formula for the quadratic variation $\<M^n\>_t$.
Splitting $\bar\eta=\bar\eta^m+\ms M^n$ into fast and slow modes as usual (recall $\ms M^n=n^{-1/4}\cY^n$), 
the jump rates satisfy:
\begin{align}
\frac{1}{n}\sum_{i\in\T_n}\big[c((\tau_i\eta))
-1\big]
&=
-\frac{8\, \gamma}{n}\, \sum_{i\in\T_n}\bar\eta_i\bar\eta_{i+1}
+\frac{4\, \gamma^2}{n}\, \sum_{i\in\T_n}\bar\eta_{i-1}\bar\eta_{i+1}
\nnb
&= 
(4\gamma^2-8\gamma)\, (\ms M^n)^2 
-\frac{8\, \gamma}{n}\, \sum_{i\in\T_n}\bar\eta^m_i\bar\eta^m_{i+1}
+\frac{4\, \gamma^2}{n}\, \sum_{i\in\T_n}\bar\eta^m_{i-1}\bar\eta^m_{i+1}
.
\end{align}
To prove \eqref{nn-02}, it is enough to prove that the first moment of the time integral of each of the above terms vanishes. 
The energy estimates of 
Corollary~\ref{n-s09} (if $a\leq \mf a_0$) and
Theorem~\ref{theo_free_energy} (if $a>\mf a_0$) show that $(\ms
M^n_t)^2 = n^{-1/2}(\cY^n_t)^2$ has expectation bounded by
$O(n^{-1/2})$ at each time.

It remains to show that
\begin{equation}
\lim_{n\to\infty} \int_{0}^t
\E_{\mu_n} \big[\,|L^m_s |\, \big]\, ds
=
0
\label{02}
\end{equation}
for all $t\in[0,T]$, where
\begin{equation}
L^{m}_t
:=
-\frac{8\,a\, \gamma}{n}\, \sum_{i\in\T_n}\bar\eta^m_i(t)\bar\eta^m_{i+1}(t)
+\frac{4\,a\,  \gamma^2}{n}\, \sum_{i\in\T_n}\bar\eta^m_{i-1}(t)\bar\eta^m_{i+1}(t)
.
\end{equation}
As $L^m$ is bounded, to prove \eqref{02} it is enough to show that
\begin{equation}
\lim_{n\to\infty} \int_{\tau\, n^{-1/2}}^t
\E_{\mu_n} \big[\,|L^m_s |\, \big]\, ds
=
0
\end{equation}
for some $\tau>0$. 
More generally this is true for any observable bounded by $o(\sqrt{n}\, )$, 
an observation used in
\eqref{eq_proof_other_modes_negligible_n34} to estimate 
terms of the form $n^{-3/4} \sum_{i\in\T_n}H_i \, \bar\eta^m_i (s)$, which are only
$O(n^{1/4})$.

Let $A_n = \Omega_n$ if $a\le \mf a_0$,
$A_n = \{|\sum_i\bar\eta_i|\leq n/4\}$, otherwise.  By Proposition
\ref{prop_LD_bound}, there is $\tau=\tau_{1/4}$ and a finite constant
$c(T)$ such that $\P^n_{\mu_n}(\eta_t\in A_n)\le c(T)^{-1} \exp\{-nc(T)\}$
for all $t\in[\tau \,n^{-1/2},T]$. Therefore, to prove \eqref{02} it
is enough to show that, for all $t\in[0,T]$:
\begin{equation}
\lim_{n\to\infty} \int_{\tau\, n^{-1/2}}^t
\E_{\mu_n} \big[\, {\mb 1}_{A_n}(\eta_s)\, |L^m_s |\, \big]\, ds
=
0
.
\label{eq_reduction_Lm_to_good_event}
\end{equation}

Write as usual
$f_t\nu^n_{\rm ref}$ for the law of the dynamics at time $t\geq 0$
and $f_t^m\nu^{n,m}_{\rm ref}$ for the conditional law on the
subset of configurations with $\sum_i\eta_i=m$.  For each $\lambda>0$,
the entropy- and log-Sobolev inequalities give:
\begin{align}
\E_{\mu_n}\big[\,|L^m_t|{\mb 1}_{A_n}(\eta_t)\, \big]
&=
\sum_{m=0}^n \nu^n_{\rm ref}(f_t{\bf 1}_{\sum_i\eta_i=m}) \nu^{n,m}_{\rm ref}(f^m_t {\mb 1}_{A_n}|L^m|)
\nnb
&\leq 
\frac{1}{\lambda}\sum_{m=0}^n\nu^n_{\rm ref}(f_t{\bf 1}_{\sum_i\eta_i=m}) \Big[\, H_n(f^m_t|\nu^{n,m}_{\rm ref}) + \log \nu^{n,m}_{\rm ref}\big(e^{\lambda {\mb 1}_{A_n} |L^m|}\big)\, \Big]
\nnb
&\leq 
\frac{1}{\lambda}\sum_{m=0}^n \nu^n_{\rm ref}(f_t{\bf 1}_{\sum_i\eta_i=m}) \Big[{\mf c}_{\rm LS} \, n^2\, \Gamma_n^{\rm ex}(f^m_t;\nu^{n,m}_{\rm ref}) + \log \nu^{n,m}_{\rm ref}\big(e^{\lambda {\mb 1}_{A_n} |L^m|}\big)\, \Big]
\nnb
&\leq 
\frac{1}{\lambda}\, {\mf c}_{\rm LS}\,  n^2\, \Gamma_n^{\rm ex}(f_t;\nu^{n}_{\rm ref}) + \frac{1}{\lambda}\max_{0\leq m\leq n}\log \nu^{n,m}_{\rm ref}\big(e^{\lambda {\mb 1}_{A_n} |L^m|}\big)\, 
,
\end{align}
where ${\mf c}_{\rm LS}$ denotes the log-Sobolev constant for $\nu^{n,m}_{\rm ref}$. 
It is known to be independent of $n,m$ if $a\leq \mf a_0$ as in this case $\nu^{n,m}_{\rm ref}$ is the uniform measure~\cite[Theorem 1.4]{LeeYau_LSI_RW1998},  
and is bounded uniformly in $n,m$ for $a>\mf a_0$ under Assumption~\ref{ass_LSI}. 

Consider first the case $a\leq \mf a_0$. 
Proposition~\ref{n-l03} (see also the remark following the proposition)
shows that the last exponential moment is bounded by
$C\, \exp\{C\lambda^2/n\}$ for some $C>0$.  By Theorem~\ref{n-s05},
the time integral of the carr\'e du champ is bounded by a constant.
Therefore, taking $\lambda=\sqrt{n}$ gives the claim.

Consider next the case $a> \mf a_0$.  By
Theorem~\ref{theo_free_energy}, the time integral of the carr\'e du
champ is again bounded by a constant.  By
Proposition~\ref{prop_concentration}, the exponential moments are also
bounded by $C\, \exp\{C\lambda^2/n\}$, uniformly in $n,m$ with the
restriction $|m-n/2|\leq n/4$ on the magnetisation.  This completes
the proof.
\end{proof}
\begin{proof}[Proof of~\eqref{eq_other_modes_negligible_n34}]
Let $H\in C^\infty(\T)$ and note that $\frac{1}{n}\sum_i H_i=\<H\> +O(n^{-1})$. 
As a result, uniformly on the configuration:
\begin{equation}
{\tt Y}^n(H)-\cY^n\<H\>
=
\frac{1}{n^{3/4}} \sum_{i\in\T_n}\bar\eta^m_i H_i
+
o_n(1)
.
\label{eq_proof_other_modes_negligible_n34}
\end{equation}
The proof of Proposition~\ref{prop_cv_martingale} then also
yields~\eqref{eq_other_modes_negligible_n34}.  Indeed, since
$|{\tt Y}^n(H)|\leq n^{1/4}\|H\|_\infty$, it is enough to
prove~\eqref{eq_reduction_Lm_to_good_event} with $L^m_s$ replaced by
$n^{-3/4} \sum_{i\in\T_n}H_i \, \bar\eta^m_i (s)$.  The same concentration
estimates apply.
\end{proof}

\subsection{Conclusion of the proof}\label{sec_continuity}

Fix $p\in(1,4/3)$ and a limit point
$\bb Q\in\ms M_1(\R\times\bb L^{p}([0,T],\R))$ of the laws
$(\bb P^n_{\mu_n}(\cY^n_0\in\cdot,\cY^n\in \cdot))_n$  throughout this section.  We will
abuse notations and write
$\lim_n\bb P^n_{\mu_n}(\cY^n_0\in\cdot,\cY^n_\cdot\in \cdot)=\bb Q$ even though
convergence only holds up to a subsequence. 
We will shorten $\bb L^{p}([0,T],\R),C([0,T],\R)$ to $\bb L^{p}([0,T]),C([0,T])$ respectively.  \\

By Corollary~\ref{coro_martingale_problem_well_posed}, 
to conclude the proof of
Theorem~\ref{theo_convergence_magnetisation} we need to show that
$\bb Q$ is supported on couples $(\cY_0,\cY)\in\R\times \bb L^p([0,T],\R)$ such that $\cY$ has a continuous representative starting at $\cY_0$, 
and such that
\begin{equation}
W_t(\cY_0, \cY)
=
\cY_t - \cY_0 + 2\, a\, \int_0^t\big[\, \theta\, \cY_s+\cY_s^3\, \big]\, ds
,\qquad 
t\in[0,T]
,
\end{equation}
has the law of $\sqrt{a}$ times a standard Brownian motion.  

Corollary~\ref{coro_cv_W_t(Y)} already established the weak
convergence of $W_\cdot(\cY^n_0,\cY^n)$.  This however does not say anything
about $W_\cdot(\cY_0,\cY)$ under $\bb Q$ since the map
$y \mapsto W_\cdot (\mc Y_0,y)$ is not continuous on
$\bb L^{p}([0,T])$, nor have we proven that trajectories under $\bb Q$
have finite third moment or are continuous.  We obtain the claim as a
consequence of the following lemmas.

For $A>0$, 
let $\chi_A:\R\to[0,1]$ denote a Lipschitz approximation of ${\bf 1}_{[-A,A]}$ equal to $1$ on $[-A,A]$, supported on $[-2A,2A]$ and such that $\chi_A$ increases to $1$ as $A\to\infty$. 
Write also $W^A_\cdot$ for the functional acting on $y:[0,T]\to\R$ as:
\begin{align}
W^A_t(y_0,y)
&=
W_t(y_0,y)
-
2\, a\, \int_0^t y^3(s) \, [1- \chi_A(y(s))]\, ds
\nnb
&=
y_t - y_0 + 2\, a\, \int_0^t\big[\, \theta\, y_s+y_s^3\, \chi_A(y(s))\, \big]\, ds
,\qquad 
t\in[0,T]
.
\label{eq_def_WA}
\end{align}
\begin{lemma}\label{lemm_continuity_WA}
For each $A>0$, 
$(y_0,y)\mapsto (W^A_t(y_0,y))_{t\in[0,T]}$ is a continuous mapping from $\R\times\bb
L^{p}([0,T])$ to $\bbL^{p}([0,T])$. 
In addition, 
if $y\in\bb L^4([0,T])$, 
then:
\begin{equation}
\sup_{t\in[0,T]}\big|\, W^A_t(y_0,y)-
W_t(y_0,y)\, \big| 
\leq 
\frac{2\, a\, }{A}\, \|y\|^4_{\bb L^{4}([0,T])}
.
\label{eq_diff_WA_W}
\end{equation}
\end{lemma}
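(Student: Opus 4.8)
The statement to prove, Lemma~\ref{lemm_continuity_WA}, has two parts. The plan is to handle the continuity assertion first and the quantitative bound~\eqref{eq_diff_WA_W} second, the latter being essentially immediate.

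For the bound~\eqref{eq_diff_WA_W}: by the definition~\eqref{eq_def_WA} of $W^A$, the difference $W^A_t(y_0,y) - W_t(y_0,y)$ equals $2a\int_0^t y^3(s)[1-\chi_A(y(s))]\, ds$. Since $1-\chi_A$ is supported on $\{|x|>A\}$ and bounded by $1$, on this set $|y^3(s)| = |y(s)|^3 \le |y(s)|^4/A$, so the integrand is bounded in absolute value by $|y(s)|^4/A$. Integrating over $[0,t]\subset[0,T]$ and taking the supremum over $t$ gives the claimed bound $\frac{2a}{A}\|y\|^4_{\bb L^4([0,T])}$; this requires nothing beyond $y\in\bb L^4([0,T])$.

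For the continuity assertion: I would show that $(y_0,y)\mapsto (W^A_t(y_0,y))_{t\in[0,T]}$, viewed as a map $\R\times\bb L^p([0,T])\to\bb L^p([0,T])$, is continuous. Decompose $W^A_t(y_0,y) = y_t - y_0 + 2a\theta\int_0^t y_s\, ds + 2a\int_0^t F_A(y_s)\, ds$ where $F_A(x) := x^3\chi_A(x)$. The first term $y\mapsto y$ is trivially continuous on $\bb L^p$; the constant term $-y_0$ (as a constant function of $t$) depends continuously on $y_0$; and the map $y\mapsto \big(\int_0^t y_s\, ds\big)_{t\in[0,T]}$ is continuous from $\bb L^p([0,T])$ to $C([0,T])\hookrightarrow \bb L^p([0,T])$ since $\big|\int_0^t (y_s-z_s)\, ds\big| \le T^{1-1/p}\|y-z\|_{\bb L^p}$ by H\"older. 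The crucial point is the nonlinear term: because $\chi_A$ is supported in $[-2A,2A]$, the function $F_A$ is bounded ($\|F_A\|_\infty \le 8A^3$) and globally Lipschitz (its derivative $3x^2\chi_A(x)+x^3\chi_A'(x)$ is bounded by a constant $L_A$ depending on $A$ and the Lipschitz constant of $\chi_A$). Hence for $y,z\in\bb L^p([0,T])$,
\begin{equation*}
\Big|\int_0^t \big(F_A(y_s)-F_A(z_s)\big)\, ds\Big| \le L_A\int_0^T |y_s - z_s|\, ds \le L_A\, T^{1-1/p}\, \|y-z\|_{\bb L^p([0,T])},
\end{equation*}
so $y\mapsto \big(\int_0^t F_A(y_s)\, ds\big)_{t\in[0,T]}$ is (globally Lipschitz, hence) continuous from $\bb L^p([0,T])$ into $C([0,T])$, and a fortiori into $\bb L^p([0,T])$. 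Summing the four continuous contributions gives continuity of $(y_0,y)\mapsto(W^A_t(y_0,y))_{t\in[0,T]}$.

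The only mild subtlety—and the place to be slightly careful—is that the truncation $\chi_A$ must be chosen Lipschitz (not merely bounded measurable) precisely so that $F_A$ is Lipschitz rather than only bounded; with a bounded-but-discontinuous truncation the composition $y\mapsto F_A(y_s)$ would fail to be $\bb L^p$-continuous, which is exactly why the lemma is stated for the smoothed cutoff $\chi_A$ rather than a sharp indicator. Everything else is a routine application of H\"older's inequality and the boundedness of $[0,T]$; I do not anticipate a genuine obstacle here, the difficulty of the overall argument lying rather in the later step (Section~\ref{sec_continuity}) where one must rule out that limit-point trajectories escape to where $W^A$ and $W$ differ.
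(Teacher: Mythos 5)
Your proof is correct and follows essentially the same approach as the paper: both the quantitative bound (a pointwise estimate $|y(s)|^3\mathbf{1}_{|y(s)|\ge A}\le |y(s)|^4/A$ followed by integration) and the continuity argument (resting on the Lipschitz property of $x\mapsto x^q\chi_A(x)$, which is the one genuinely non-trivial point) match the paper's reasoning. Your write-up is somewhat more explicit — spelling out the four-term decomposition of $W^A$ and noting that the cutoff must be Lipschitz, not merely bounded — but this is elaboration rather than a different argument.
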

\begin{lemma}\label{lemm_higher_moments}
There is $\mf c_4>0$ such that:
\begin{equation}
\sup_{n\geq 1} \E_{\mu_n}\bigg[\, \int_0^T (\cY^n_t)^4\, dt\, \bigg]
\leq 
\mf c_4
,\qquad 
\E_{\bb Q}\bigg[\, \int_0^T \cY^4_t\, dt\, \bigg]
\leq 
\mf c_4
.
\end{equation}
\end{lemma}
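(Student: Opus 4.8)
The statement splits into two essentially independent claims. The uniform-in-$n$ bound is already available: Corollary~\ref{coro_4th_moment} asserts $\sup_{n\ge 1}\E_{\mu_n}[\int_0^T(\cY^n_t)^4\, dt]<\infty$, so the plan is simply to take $\mf c_4$ to be this finite supremum (enlarged by $1$ if one wishes a strict bound). Nothing further is needed for the first inequality.

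For the bound under the limit law $\bb Q$, the plan is to transfer the uniform estimate to the limit. The only delicate point is that $\bb Q$ is a limit point of $\bb P^n_{\mu_n}(\cY^n_0\in\cdot,\cY^n\in\cdot)$ in $\R\times\bb L^p([0,T],\R)$ with $p\in(1,4/3)$, and the functional $y\mapsto\int_0^T y^4\, dt$ is neither continuous nor even finite on $\bb L^p([0,T],\R)$ for such $p$. I would circumvent this by truncation: for $K>0$ set $F_K(y):=\int_0^T\min\{y(t)^4,K\}\, dt$. The first step is to check that $F_K$ is a bounded continuous functional on $\bb L^p([0,T],\R)$. Boundedness by $KT$ is immediate. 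For continuity, if $y_n\to y$ in $\bb L^p([0,T])$ then any subsequence admits a further subsequence converging Lebesgue-a.e.; along it $\min\{y_n(t)^4,K\}\to\min\{y(t)^4,K\}$ pointwise, uniformly dominated by $K$, so dominated convergence yields $F_K(y_n)\to F_K(y)$ along that subsubsequence. Since every subsequence has a subsubsequence with this property, $F_K(y_n)\to F_K(y)$, i.e. $F_K$ is sequentially continuous, hence continuous on the metric space $\bb L^p([0,T],\R)$.

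With this in hand, weak convergence of the $\cY$-marginals gives $\E_{\bb Q}[F_K(\cY)]=\lim_n\E_{\mu_n}[F_K(\cY^n)]$, and since $F_K(y)\le\int_0^T y^4\, dt$ pointwise, the right-hand side is at most $\limsup_n\E_{\mu_n}[\int_0^T(\cY^n_t)^4\, dt]\le\mf c_4$. Letting $K\uparrow\infty$, one has $F_K(\cY)\uparrow\int_0^T\cY^4_t\, dt$ pointwise, so monotone convergence gives $\E_{\bb Q}[\int_0^T\cY^4_t\, dt]=\lim_{K\to\infty}\E_{\bb Q}[F_K(\cY)]\le\mf c_4$, which is the second inequality.

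I do not expect any genuine obstacle in this argument; the single point requiring attention is the one flagged above, namely that the fourth-moment functional fails to be continuous in the $\bb L^p$-topology used to define $\bb Q$, which is exactly what the truncation $F_K$ resolves. An alternative to the truncation step would be to note that $y\mapsto\int_0^T y^4\, dt=\sup_{K>0}F_K(y)$ is lower semicontinuous on $\bb L^p([0,T],\R)$ and to combine the Skorokhod representation theorem with Fatou's lemma; I would nonetheless prefer the truncation route, as it is self-contained and avoids appealing to separability and representation results.
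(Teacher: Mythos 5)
Your argument is correct and essentially the paper's: both proofs quote Corollaries~\ref{n-s09} and~\ref{coro_4th_moment} for the uniform-in-$n$ bound, then pass to the limit measure $\bb Q$ via a bounded continuous truncation of $y\mapsto y^4$ followed by monotone convergence. The only cosmetic difference is the choice of truncation — you use $\min\{y^4,K\}$ while the paper reuses the already-defined $T^A_4(y)=\int_0^T y^4\chi_A(y)\,dt$ (with $\chi_A$ the Lipschitz bump from Lemma~\ref{lemm_continuity_WA}, whose continuity was already recorded in~\eqref{eq_TA_continuous}) — but these play exactly the same role.
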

Let us prove Theorem~\ref{theo_convergence_magnetisation} assuming Lemmas~\ref{lemm_continuity_WA}--\ref{lemm_higher_moments}. 
\begin{proof}[Proof of Theorem~\ref{theo_convergence_magnetisation}]
Let $A>0$ and $F:\bb L^{p}([0,T]) \to \bb R$ be
Lipschitz and bounded. 
The continuity of $W^A$ in Lemma~\ref{lemm_continuity_WA} and weak
convergence imply:
\begin{equation}
\lim_{n\to\infty}\E_{\mu_n}\Big[\, F\big((W^A_t(\cY^n_0,\cY^n))_{t\in[0,T]}\big)\, \Big]
=
\E_{\bb Q}\Big[\, F\big((W^A_t(\cY_0,\cY))_{t\in[0,T]}\big)\, \Big]
.
\end{equation}
Together with the moment bounds of Lemma~\ref{lemm_higher_moments} and~\eqref{eq_diff_WA_W}, 
we obtain:
\begin{align}
\E_{\bb Q}\Big[\, F\big((W_t(\cY_0,\cY))_{t\in[0,T]}\big)\, \Big]
&=
\E_{\bb Q}\Big[\, F\big((W^A_t(\cY_0,\cY))_{t\in[0,T]}\big)\, \Big]
+O(1/A)
\nnb
&=
\lim_{n\to\infty}\E_{\mu_n}\Big[\, F\big((W^A_t(\cY_0^n,\cY^n))_{t\in[0,T]}\big)\, \Big]
+O(1/A)
.
\end{align}
Another use of~\eqref{eq_diff_WA_W} gives:
\begin{align}
\sup_{n\geq 1}\E_{\mu_n}\Big[\, \Big| &F\big(W^A_\cdot(\cY^n_0,\cY^n)\big) -  F\big(W_\cdot(\cY^n_0,\cY^n)\big)\,\Big| \Big]
\nnb
&\quad\leq 
\|F'\|_\infty \, \sup_{n\geq 1}\E_{\mu_n}\Big[ \, \big\|W^A_\cdot(\cY^n_0,\cY^n)-W_\cdot(\cY^n_0,\cY^n)\big\|_{\bbL^{p}([0,T])}\, \Big]
\nnb
&\quad\leq 
\frac{T^{\frac{1}{p}}{\mf c}_4 \|F'\|_\infty}{A}
,
\end{align}
whence:
\begin{equation}
\E_{\bb Q}\Big[\, F\big((W_t(\cY_0,\cY))_{t\in[0,T]}\big)\, \Big]
=
\lim_{n\to\infty}\E_{\mu_n}\Big[\, F\big((W_t(\cY_0^n,\cY^n))_{t\in[0,T]}\big)\, \Big]
.
\end{equation}
 Since 
$W_\cdot(\cY^n_0,\cY^n))$ converges weakly to $\sqrt{a}\, B_\cdot$ in $\bb L^{p}([0,T]
,\bb R)$ by Corollary~\ref{coro_cv_W_t(Y)}, 
with $B_\cdot$ a standard Brownian motion, 
we have shown:
\begin{equation}
{\rm Law}_{\bb Q}\big(\big(W_t(\cY_0,\cY)\big)_{t\in[0,T]}\big)
=
{\rm Law}((\sqrt{a}\, B_t)_{t\in[0,T]})
\quad 
\text{on }\bb L^p([0,T])
.
\label{eq_WT_equal_BM_in_law}
\end{equation}
To conclude the proof of Theorem~\ref{theo_convergence_magnetisation}
using the well-posedness result of
Corollary~\ref{coro_martingale_problem_well_posed} (proven at the end of this section), 
it remains to prove
that $\bb Q$ is supported on elements $(y_0,y)\in \R\times \bb L^p([0,T])$ with $y$ having a continuous representative starting at $y_0$. 
Equation~\eqref{eq_WT_equal_BM_in_law} implies that there is a set 
$\Omega_W\subset \bb L^p([0,T])$ such that $\bb Q(W(\cY^0,\cY)\in \Omega_W)=1$ and elements of $\Omega_W$ have a continuous representative. 
Equivalently $\bb Q(\Omega)=1$ if $\Omega= W(\cdot,\cdot)^{-1}(\Omega_W)$. 
For $y\in\Omega$, let $(\tilde W^y_t)_{t\in[0,T]}$ denote the continuous representative of $(W_t(y_0,y))_{t\in[0,T]}$ and define $\tilde y:[0,T]\to\R$ through:
\begin{equation}
\tilde y_t 
= 
y_0 
+ 
2\, a\, \int_0^t\big[\, \theta\, y_s+y_s^3\, \big]\, ds
+
\tilde W^y_t
\qquad 
t\in[0,T]
.
\end{equation}
Then $\tilde y=y$ almost everywhere, 
$\tilde y_0 = y_0$ and $\tilde y$ is a continuous function as, by H\"older inequality, there is $c>0$ such that, for each $\delta\in(0,1)$:
\begin{equation}
\sup_{s,t:|t-s|\leq \delta}|\tilde y_t-\tilde y_s|
\leq 
c \delta^{1/4}\, \Big(1+\int_0^T y_u^4\, du\Big)^{3/4}
+
\sup_{s,t:|t-s|\leq \delta}|W_t(y_0,y)-W_s(y_0,y)|
=
o_\delta(1)
.
\end{equation}
This concludes the proof.
\end{proof}
\begin{proof}[Proof of Lemma~\ref{lemm_continuity_WA}]
Let $A>0$. 
Recall the definition of $\chi_A$, $W^A_\cdot$ from~\eqref{eq_def_WA}. 
Let us first show that $y\mapsto (W^A_t(y))_{t\in[0,T]}$ is continuous on $\bb L^{p}([0,T])$.  
Define, for $p\in \N$:
\begin{equation}
T^A_p: y\in \bb L^{p}([0,T],\R) 
\mapsto 
\int_0^T y^p(t)\chi_A(y(t))\, dt 
.
\label{eq_def_TA}
\end{equation}
It is enough to prove that $T^A_3$ is continuous on $\bb L^{p}([0,T])$. 
In fact, if $f_q(x) = x^q\chi_A(x)$ for $q\geq 0$, 
then $f_q$ is a Lipschitz function due to $\chi_A$ being compactly supported. 
As a result, if $y^n\in \bb L^{p}([0,T])$ ($n\geq 1$) converges to $y$, 
then:
\begin{equation}
|T^A_p(y^n)-T^A_p(y)|
\leq 
\|f'_p\|_{\infty}\|y^n-y\|_{\bb L^1([0,T])}
.
\label{eq_TA_continuous}
\end{equation}
This proves the continuity of $y\in \bb L^{p}([0,T])\mapsto W^A_\cdot(y)$. 
The moment bound~\eqref{eq_diff_WA_W} on the other hand is a direct consequence of H\"older inequality: 
if $(y_0,y)\in \R\times\bb L^4([0,T])$ and $t\in[0,T]$,
\begin{align}
\big|\, W^A_t(y_0,y)-
W_t(y_0,y)\, \big| 
&\leq 
2\, a\, \int_0^t |y^3(s)| [1-\chi_A(y(s))]\, ds
\leq 
2\, a\, \int_0^t |y^3(s)|{\bf 1}_{|y(s)|\geq A}\, ds
\nnb
&\leq 
\frac{2\, a}{A}\, \|y\|_{\bb L^4([0,T])}^{4}
.
\end{align}
\end{proof}
\begin{proof}[Proof of Lemma~\ref{lemm_higher_moments}]
Corollaries~\ref{n-s09}--\ref{coro_4th_moment} respectively show for $a\leq \mf a_0$ and $a>\mf a_0$ 
that:
\begin{equation}
\sup_{n\geq 1}\E_{\mu_n}\Big[ \int_0^T(\cY^n_t)^4\, dt\Big]
<
\infty
.
\end{equation}
Let $A>0$. 
The continuity of the bounded function $T^A_4$~\eqref{eq_def_TA}, 
proven in~\eqref{eq_TA_continuous}, gives:
\begin{align}
\E_{\bb Q}\Big[\, \int_0^T \cY^4_t\, \chi_A(\cY_t)\, dt \, \Big]
&=
\lim_{n\to\infty}\E_{\mu_n}\Big[\, \int_0^T (\cY^n_t)^4 \, \chi_A(\cY^n_t)\, dt\, \Big]
\nnb
&\leq 
\limsup_{n\to\infty}\E_{\mu_n}\Big[\, \int_0^T (\cY^n_t)^4 \, dt\, \Big]
<
\infty
,
\end{align}
where we used $\chi_A\leq 1$ in the last line. 
As $\chi_A$ increases with $A$, 
the monotone convergence theorem gives the claim:
\begin{equation}
\E_{\bb Q}\Big[\,\int_0^T \cY^4_t \, dt \, \Big]
<
\infty
.
\end{equation}
\end{proof}
\begin{proof}[Proof of Corollary~\ref{coro_martingale_problem_well_posed}]
Let $\bb Q$ be a probability measure on $\R\times\bb L^p([0,T])$ satisfying items $(i)$ and $(ii)$ of Corollary~\ref{coro_martingale_problem_well_posed}.  
Let us prove that there is a one-to-one mapping between $\bb Q$ and a probability measure $\tilde{\bb Q}^{c}$ on $C([0,T],\R)$, 
equipped with its usual $\sigma$-algebra, 
which satisfies the assumptions of Lemma~\ref{lemma_martingale_problem_well_posed}. 
Since Lemma~\ref{lemma_martingale_problem_well_posed} guarantees uniqueness of such a measure, 
this will imply the claim of Corollary~\ref{coro_martingale_problem_well_posed}. \\

Denote by $j:C([0,T])\to \bb L^p([0,T])$ the canonical inclusion and 
let $\cB(\bb L^p)$ denote the Borel $\sigma$-algebra in $\bb L^p([0,T])$. 
Then there is a one-to-one mapping between $\bb Q$ and a probability measure $\tilde {\bb Q}$ 
on the measurable space $(C([0,T]),j^{-1}(\cB(\bb L^p)))$ given by:
\begin{equation}
\tilde{\bb Q}(A) 
=
\bb Q( j(A)),\qquad
A\in j^{-1}(\cB(\bb L^p))
.
\label{eq_link_bbQ_tildebbQ}
\end{equation}
Since $W_\cdot(\cY_0,\cY)= W_\cdot(\cY_0,\tilde\cY)$ in $\bb L^p([0,T])$ for any $\cY\in\bb L^p([0,T])$ having a continuous modification $\tilde \cY$ starting at $\cY_0$,
\begin{align}
\tilde{\bb Q}\Big( \tilde \cY_t 
&=
 \tilde \cY_0 - 2\, a\, \int_0^t [\theta\, \tilde\cY_s + \tilde\cY_s^3]\, ds + W_t(\tilde \cY_0,\tilde\cY_t)\text{ for all } t\in[0,T]\Big)
=
1
\nnb
&\quad
{\rm Law}_{\tilde {\bb Q}}(W_\cdot(\cY_0,\tilde\cY))
=
{\rm Law}(\sqrt{a}\, B_\cdot)\quad 
\text{on }j^{-1}(\cB(\bb L^p))
.
\label{eq_carac_law_tildeQ}
\end{align}
We next prove that $\tilde{\bb Q}$ can be uniquely extended to a probability measure $\tilde {\bb Q}^c$ on $C([0,T],\R)$ equipped with its usual $\sigma$-algebra $\cB(C)$. 
We first define an extension on cylinder functions.

For $\epsilon\in(0,T)$ and $t\in[0,T]$, 
write $I_\epsilon(t):= [(t-\epsilon)\vee 0, (t+\epsilon)\wedge T]$ and $\ell_\epsilon(t)$ for its length, and define:
\begin{equation}
K_{\epsilon,t} : y\in C([0,T])\mapsto
\frac{1}{\ell_\epsilon(t)}\int_{I_\epsilon(t)} y_s\, ds
.
\end{equation}
Let $\cC\cF_T$ denote the set of all cylindrical functions, 
i.e. functions of the form $G(K_{\epsilon,t_1},...,K_{\epsilon,t_N})$ for $\epsilon\in(0,T)$, $N\geq 1$, a bounded continuous $G:\R^N\to\R$ and $t_1,...,t_N\in[0,T]$. 
Define then:
\begin{equation}
\E_{\tilde{\bb Q}^c}[G(\tilde\cY_{t_1},...,\tilde\cY_{t_N})]
:=
\lim_{\epsilon\to 0}\E_{\tilde {\bb Q}^c}[G(K_{\epsilon,t_1}\tilde \cY,...,K_{\epsilon,t_N}\tilde \cY)]
,
\label{eq_def_tildebbQc}
\end{equation}
noting that the limit exists by dominated convergence since $\lim_{\epsilon\to 0}K_{\epsilon,t_1}y = y(t_1)$ for $y\in C([0,T])$. 
Set also $\tilde {\bb Q}^c(A) = \tilde{\bb Q}(A)$ for $A\in j^{-1}(\cB(\bb L^p))$.  

As $[0,T]$ is separable, the $\sigma$-algebra generated by cylinder functions is equal to $\cB(C)$ (a fortiori this is the case for the $\sigma$-algebra generated by cylinder functions and $j^{-1}(\cB(\bb L^p))$). 
The Carath\'eodory extension theorem thus ensures the uniqueness of an extension on $\cB(C)$ that we still denote by $\tilde {\bb Q}^c$. 
Under $\tilde {\bb Q}^c$, the fact that $K_{t_i,\epsilon}$ is bounded and continuous on $\bb L^p([0,T])$ for each $t_i\in(0,T]$, 
the identities~\eqref{eq_carac_law_tildeQ} relating $\cY_t$ and $W_\cdot(cY_0,\cY)$ and characterising the law of $W_\cdot(cY_0,\cY)$, and \eqref{eq_def_tildebbQc} imply: 
\begin{equation}
\E_{\tilde {\bb Q}^c}\big[G\big(W_{t_1}(\cY_0,\cY\big),...,G\big(W_{t_N}(\cY_0,\cY)\big)\big]
=
\E\big[ G\big(\sqrt{a} \, B_{t_1}),...,G(\sqrt{a} \, B_{t_N})\big]
.
\end{equation}
It follows that ${\rm Law}_{\tilde {\bb Q}^c}(W_\cdot(\cY_0,\cY))={\rm Law}(\sqrt{a}\, B_\cdot)$ on $\cB(C)$.  
By Lemma~\ref{lemma_martingale_problem_well_posed} there is a unique such probability measure. 
As there is a one-to-one mapping between $\bb Q$ and $\tilde {\bb Q}^c$, 
Corollary~\ref{coro_martingale_problem_well_posed} is proven. 
\end{proof}

\section{The fast modes}
\label{sec8}
In this section we prove Theorem~\ref{theo_fastmodes},
i.e. convergence in finite-dimensional distributions of
$(y^n_t)_{t\in(0,T]}$ to a Gaussian process that is white in time and
with covariance $\frac{\id}{4} + \frac{(-\Delta^{-1})}{2}$ in space.
We only treat the case of general $a$ under Assumption~\ref{ass_LSI}. 
The same computations apply also to the case $a<\mf a_0$, 
where they are in fact simpler.  
The proof below however makes use of the full force of Lemma~\ref{lemm_estimate_WphiJp}, 
which was only written for general $a$ as it was not needed in the $a<\mf a_0$ case, so we focus on the general $a$ case.

Let $T>0$ be fixed throughout the section. 
Notice that, by density, it is enough to prove
Theorem~\ref{theo_fastmodes} for $y^n$ acting on
${\color{blue} \cC_0} := C^\infty(\T)\cap\{H: \<H\>=\int_{\T}H(x)\, dx
= 0\}$, 
where we recall that $y^n$ acts on $H\in C^\infty(\T)$ according to:
\begin{equation}
y^n(H)
=
\frac{1}{\sqrt{n}\, }\sum_{i\in\T_n}\bar\eta_i H_i
,\qquad 
\eta\in\Omega_n
.
\label{eq_def_yn_sec8}
\end{equation}
In addition, linearity of $y^n$ and the Markov property imply
that it is enough to characterise the law of $y^n_t(H)$ and prove that
it is asymptotically independent from the $\sigma$-algebra $\cF^n_0$
generated by $y^n_0$, for each $H\in \cC_0$ and $t\in(0,T]$.  This is
the content of the next proposition.
\begin{proposition}\label{prop_fastmode_sec_fastmode}
Let $H\in\cC_0$ and $t\in(0,T]$.  
Let $(\mu_n)$ be a sequence of initial conditions as in Theorem~\ref{theo_fastmodes} and let $\cF^n_t$ denote the $\sigma$-algebra $\sigma(\eta_s:s\leq t)$.   
Then:
\begin{itemize}
	\item[(i)] (Independence).
	\begin{equation}
	\lim_{n\to\infty}\E_{\mu_n}\big[ e^{i y^n_t(H)} | \cF^n_0\big]
	=
	\lim_{n\to\infty}\E_{\mu_n}\big[ e^{i y^n_t(H)} \big]
	\end{equation}
	\item[(ii)] (Convergence to a Gaussian field).
	\begin{equation}
	\lim_{n\to\infty}\E_{\mu_n}\big[ e^{i y^n_t(H)}  \big]
	=
	\exp\Big[\, -\frac{1}{8}\|H\|^2_{\bb L^2(\T)}+\frac{a}{4}(H,\Delta^{-1}H)\, \Big]
	.
	\end{equation}
\end{itemize}
\end{proposition}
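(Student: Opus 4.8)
The plan is to prove Proposition~\ref{prop_fastmode_sec_fastmode} via a Duhamel (variation of constants) representation of $y^n_t(H)$ combined with the relative entropy / free energy estimates of Theorem~\ref{theo_free_energy} and the concentration bounds of Section~\ref{n-sec3}, which together let one replace microscopic averages by their equilibrium values on the relevant time scales. First I would write the semi-martingale decomposition of $e^{i y^n_t(H)}$, or more conveniently work directly with the characteristic function $u^n_t := \E_{\mu_n}[e^{iy^n_t(H)}\mid\cF^n_0]$ and its time evolution $\partial_t u^n_t = \E_{\mu_n}[\sqrt{n}\, L_n e^{iy^n_t(H)}\mid\cF^n_0]$. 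Since $\langle H\rangle=0$, the Kawasaki part of the generator acts nontrivially on $y^n(H)$ (unlike on the magnetisation), and it produces the dominant, deterministic damping term: a discrete Laplacian $n^2 L^{\rm ex}_n y^n(H) \approx y^n(\Delta H)$ after the diffusive rescaling, contributing $\sqrt{n}\cdot n^2$-speed relaxation of the fast modes. The plan is to use Duhamel against the semigroup generated by $\Delta$ on mean-zero functions, so that $y^n_t(H) = y^n_0(e^{t\,\ms L}H) + (\text{martingale and reaction terms})$, where the first term vanishes as $n\to\infty$ for $t>0$ because $e^{t\Delta}$ is smoothing and $y^n_0(e^{t\Delta}H)$ has variance $O(1)$ but gets multiplied by nothing that saves it — here one must be careful: actually the point is that on the $\sqrt n$-accelerated diffusive scale the initial data is forgotten instantaneously, giving item (i), the asymptotic independence from $\cF^n_0$.

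Next I would identify the limiting covariance. The martingale part of $y^n_t(H)$ has quadratic variation $\int_0^t \frac{1}{n}\sum_{i}[\text{exclusion rates}](\nabla^n_i H)^2\, ds + a\int_0^t\frac{1}{n}\sum_i c(\tau_i\eta)(\dots)\,ds$; the exclusion contribution, after rescaling, converges to $\frac{1}{4}\|H'\|^2_{\bb L^2}\cdot(\text{something})$ — more precisely, combining the Duhamel kernel $e^{t\Delta}$ with the noise one gets the stationary-in-time (white noise) structure, and a standard computation of $\int_0^\infty$ of the heat kernel squared against $|H'|^2$ reproduces $\frac14(H,H) + \frac a2(H,(-\Delta)^{-1}H)$; the factor $\frac14$ is the variance of $\bar\eta_i$ under $\nu^n_{1/2}$ and the $\frac a2(-\Delta)^{-1}$ piece comes from the reaction noise filtered through the diffusive Green's function, consistent with the last Remark of Section~\ref{sec_sketch_proof_sec_results} identifying $\frac{\id}4 + \frac a2(-\Delta)^{-1} = (4\id - g_0)^{-1}$. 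Concretely I would show $\E_{\mu_n}[(y^n_t(H))^2]\to \frac14(H,H)+\frac a2(H,(-\Delta)^{-1}H)$ and that higher cumulants vanish, using that $y^n(H)$ is, conditionally on the magnetisation, an approximately linear functional of i.i.d.-like variables so a CLT applies; Gaussianity then follows, giving item (ii).

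The replacement estimates are the technical core: wherever the generator produces a local function like $\sum_i c(\tau_i\eta)\bar\eta^m_i\bar\eta^m_j$ or the reaction drift $-4a\, \gamma\, y^n(\dots)$-type terms acting on mean-zero test functions, I would replace $c(\tau_i\eta)$ by its $\nu^n_{1/2}$-average (equal to $1$ plus corrections) up to an error controlled exactly by Lemma~\ref{lemm_estimate_WphiJp} (the bound on $W^{J,\phi}_p$ in terms of the carré du champ) together with the time-integrated carré du champ bound from Theorem~\ref{theo_free_energy}; this is why only the general-$a$ version is written, as remarked. The subtlety is that the fast modes live on times of order $1$ in the $\sqrt n L_n$ scaling, which is \emph{longer} than the hydrodynamic time for the exclusion part, so replacement must hold uniformly on $[\epsilon, T]$ — handled by the fact that the relative entropy / carré du champ bounds in Theorem~\ref{theo_free_energy} are uniform on $[0,T]$, plus the large deviation bounds of Appendix~\ref{app_LD} to restrict to the good set $|\sum_i\bar\eta_i|\le n/4$ after a time $\tau n^{-1/2}$, exactly as in the proof of Proposition~\ref{prop_cv_martingale}.

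The main obstacle I expect is controlling the cross term between the slow mode $\cY^n$ and the fast modes in the Duhamel expansion: the reaction drift of $y^n_t(H)$ contains terms of the form $a\, \ms M^n \cdot y^n(\widetilde H)$ and $a\, (\ms M^n)^2\cdot(\dots)$ coming from the expansion of the jump rates around magnetisation zero, and although each is formally small ($\ms M^n\approx n^{-1/4}$), making this rigorous requires combining the moment bound $\E[(\cY^n_t)^2]=O(n^{-1/2})$ from Corollary~\ref{n-s11} / Theorem~\ref{theo_free_energy} with the concentration of $y^n(\widetilde H)$; once these cross terms are shown negligible, what remains is a closed linear equation for $y^n_t(H)$ driven by a martingale whose bracket converges, and the Gaussian limit with the stated covariance follows from the martingale CLT (e.g.~\cite[Theorem VIII.3.11]{js}, as already used in Section~\ref{sec_martingale_pb}) applied to the Duhamel-transformed process, together with the vanishing of the initial contribution for $t>0$ which yields the claimed independence and the fact that the limit does not depend on $\mu_n$.
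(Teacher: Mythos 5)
Your plan follows essentially the same route as the paper: Duhamel against the heat semigroup with time-dependent test function $H_{\sqrt{n}(t-s)}=e^{\sqrt{n}(t-s)\Delta}H$ (so the $\sqrt{n}\,n^2 L_n^{\rm ex}$ drift is absorbed into $\partial_s H_{\sqrt n(t-s)}$), control of the initial term through the exponential decay of the heat kernel, replacement of the reaction terms via the free-energy/carr\'e du champ estimate of Theorem~\ref{theo_free_energy} and the concentration and large-deviation bounds, and a martingale limit theorem for the remaining driving noise. The covariance you predict, $\frac14(H,H)+\frac{a}{2}(H,(-\Delta)^{-1}H)$, is correct and your explanation of its origin is sound.

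There is, however, one concrete gap in how you propose to kill the drift. You correctly note that the nonlinear reaction terms (those carrying factors of $\ms M^n$, i.e. the lines displayed in \eqref{eq_Ito_fast_modes}) are small because $\ms M^n\approx n^{-1/4}$ and because the moment bound $\E[(\cY^n_t)^2]=O(1)$ plus Lemma~\ref{lemm_estimate_WphiJp} give quantitative control. But you do not address the \emph{linear} reaction drift $-2a\theta\int_0^t y^n_u(H_{\sqrt n(t-u)})\,du$, which is the subtle one: the naive bound $|y^n_u(H_{\sqrt n(t-u)})|\le\sqrt n\|H_{\sqrt n(t-u)}\|_\infty\le C\sqrt n\, e^{-c\sqrt n(t-u)}$ integrated over $u\in[0,t]$ only gives $O(1)$, not $o(1)$. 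This term is not helped by $\ms M^n$ being small (there is no such factor) nor by the carr\'e du champ estimate (it is a linear observable, so the renormalisation machinery you invoke does not apply as written). The paper handles it by a second Duhamel argument: introducing a companion test function $G^s$ solving the inhomogeneous heat equation $\partial_s G^s=\Delta G^s-H_s$, applying the semi-martingale decomposition to $y^n_s(G^{\sqrt n(t-s)})$, and reading off $\sqrt n\int_0^t y^n_s(H_{\sqrt n(t-s)})\,ds$ as one of the resulting terms, each of which can now be bounded by $O(\sqrt n)$ or smaller, yielding the $o(1)$ estimate after dividing by $\sqrt n$. Without this extra step your argument does not close.

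A smaller remark: in your first paragraph you write $y^n_0(e^{t\Delta}H)$ has ``variance $O(1)$ but gets multiplied by nothing that saves it'' — this would indeed be a problem, since at fixed $t$ the function $e^{t\Delta}H$ is still a nontrivial mean-zero test function. The correct object is $y^n_0(e^{\sqrt n t\Delta}H)$, whose $\bbL^2$-norm is $O(e^{-c\sqrt n t})$; you do catch this later (``on the $\sqrt n$-accelerated diffusive scale the initial data is forgotten instantaneously'') but the first formulation is wrong. Likewise the paper's conclusion for item~(ii) is obtained not by checking cumulants but by the exponential martingale $\cE^n_s(H)$, whose remainder in the Taylor expansion is $O(n^{-1/2})$ because the jumps of $y^n_s(H_{\sqrt n(t-s)})$ are $O(n^{-1/2})$; this is a cleaner version of the martingale CLT you allude to, and you should commit to one of the two routes rather than gesture at both.
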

The proof of Proposition~\ref{prop_fastmode_sec_fastmode} spans the
rest of the section.  Throughout, a sequence $(\mu_n)$ of initial
conditions as in Theorem~\ref{theo_fastmodes}, a test function
$H\in\cC_0$ and $t\in(0,T]$ are fixed.

The starting point is the following semi-martingale decomposition,
using Duhamel formula. For $u\geq 0$, write $H_u:= e^{u\Delta}H$. For
each $0\le s\leq t$,
\begin{align}
y^n_s(H_{\sqrt{n}(t-s)})
&=
y^n_0(H_{\sqrt{n} \, t}) 
-\int_0^s 2\, a\, \theta \, y^n_u(H_{\sqrt{n} \, (t-u)})\, du
+ M^{n,t}_{s}(H)
+O(1/n)
\nonumber\\
&\quad
- 8\, a\, \gamma^2\, \int_0^s \sum_{i\in\T_n}(H_{\sqrt{n}\, (t-u)})_i\Big[(\ms M^n)^3 + \bar\eta^m_{i-1}\bar\eta^m_{i}\bar\eta^m_{i+1} 
+ \bar\eta^m_{i-1}\bar\eta^m_{i+1}\ms M^n 
\label{eq_Ito_fast_modes}\\
&\hspace{3cm}
+ \bar\eta^m_{i}(\bar\eta^m_{i+1}+\bar\eta^m_{i-1}) \, \ms M^n + (\bar\eta^m_{i-1}+\bar\eta^m_{i}+\bar\eta^m_{i+1})(\ms M^n)^2 \Big](u)\, du
,
\nonumber
\end{align}
with $(M^{n,t}_{s}(H))_{s\in[0,t]}$ a martingale independent from
$\cF^n_0$ and $O(1/n)$ an error term uniform in the configuration
that comes from replacing discrete derivatives of $H$ by continuous
ones. Note that the time derivative of $H_{\sqrt{n}\, (t-u)}$ is used
to cancel the discrete Laplacian coming from the exclusion part of the
dynamics.

An important point used several times throughout the proof is that the heat kernel
$e^{\sqrt{n}\, s\Delta}$ reduces the test function $H$ to nearly $0$
at times $s>0$.  This for instance means the time integrals above may
only contribute to order $1$ in $n$ in the range $[t-O(n^{-1/2}),t]$, 
but also that the initial condition will not play a role. 
The former claim is hard to make use of when renormalisation is needed since we only control time integrals of the carré du champ on the full time interval $[0,t]$, see Proposition~\ref{prop_nonlinear_drift_fastmodes}. 
This observation can however be used for the
initial condition: 
since $-\Delta\geq 4\pi^2 \id$ on
$\{h\in \bbL^2(\T):\<h\>=0\}$, 
there is a constant $C>0$ such that:
\begin{equation}
|y^n_0(H_{\sqrt{n}\, t})| 
\leq 
C\sqrt{n}\, e^{-4 \pi^2 \sqrt{n}\,  t}\|H\|_{\bbL^2(\T)}
\underset{n\to\infty}{\longrightarrow}
0
.
\label{eq_y0fast_vanishes}
\end{equation}

We show in Section~\ref{sec_bound_drift_martingale_fastmodes} that the
drift term in~\eqref{eq_Ito_fast_modes} also vanishes in probability
and deduce item $(i)$ of Proposition~\ref{prop_fastmode_sec_fastmode}.
Section~\ref{sec_convergence_fastmodes} studies the martingale term,
proving the convergence of $(y^n_t(H))$ stated in item $(ii)$.
\subsection{Control of the drift: proof of $(i)$}\label{sec_bound_drift_martingale_fastmodes}
The main result of this section is the following claim.
\begin{proposition}\label{prop_yfast_closeto_mart}
Let $\delta>0$.  Then:
\begin{equation}
\lim_{n\to\infty}\bb P_{\mu_n}\Big( \big|y^n_t(H)- M^{n,t}_{t}(H)\big|>\delta\Big)
=
0
.
\label{eq_yfast_closeto_mart}
\end{equation}
for all $H\in\cC_0$ and $t\in(0,T]$. 
In particular item $(i)$ of Proposition~\ref{prop_fastmode_sec_fastmode} holds. 
\end{proposition}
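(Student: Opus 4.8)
The plan is to estimate, term by term, the drift appearing in the Duhamel decomposition~\eqref{eq_Ito_fast_modes}, showing each contribution vanishes in $\bb L^1(\bb P_{\mu_n})$, and then to invoke the martingale convergence of Section~\ref{sec_convergence_fastmodes} to conclude $(i)$. By~\eqref{eq_y0fast_vanishes} the initial term $y^n_0(H_{\sqrt n\,t})$ is deterministically $o_n(1)$, and the linear drift $-2a\theta\int_0^s y^n_u(H_{\sqrt n(t-u)})\,du$ is handled using that $\E_{\mu_n}[(y^n_u(G))^2]$ is bounded (for smooth $G$ with $\langle G\rangle=0$ this follows from the free energy estimate Theorem~\ref{theo_free_energy} combined with the concentration bounds, exactly as in the proof of Proposition~\ref{prop_cv_martingale}) together with the contraction $\|H_{\sqrt n(t-u)}\|_{\bb L^2}\le e^{-4\pi^2\sqrt n(t-u)}\|H\|_{\bb L^2}$, so that the $u$-integral over $[0,t]$ is $O(n^{-1/2})$. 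Thus everything reduces to controlling the nonlinear drift on the last line of~\eqref{eq_Ito_fast_modes}.

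For the nonlinear term I would split the seven monomials into three types according to how many powers of $\ms M^n=n^{-1/4}\mc Y^n$ they carry. The pure magnetisation term $(\ms M^n)^3=n^{-3/4}(\mc Y^n)^3$ summed against $(H_{\sqrt n(t-u)})_i$ gives $n^{1/4}\langle H_{\sqrt n(t-u)}\rangle (\mc Y^n_u)^3+O(n^{-1/2})$; since $H\in\cC_0$ we have $\langle H\rangle=0$ and $|\langle H_{\sqrt n(t-u)}\rangle|=|\langle H\rangle|=0$, so in fact this term contributes only through the discretisation error $O(n^{-1/2})$ once we note $\frac1n\sum_i (H_{\sqrt n(t-u)})_i = \langle H_{\sqrt n(t-u)}\rangle + O(n^{-1}) = O(n^{-1}e^{-4\pi^2\sqrt n(t-u)})$, which after integration in $u$ and multiplication by the prefactors ($a\gamma^2 n^{3/4}$ from the explicit scaling) is still $o_n(1)$. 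The terms $(\ms M^n)^2(\bar\eta^m_{i-1}+\bar\eta^m_i+\bar\eta^m_{i+1})$ and $\ms M^n\bar\eta^m_{i-1}\bar\eta^m_{i+1}$, $\ms M^n\bar\eta^m_i(\bar\eta^m_{i+1}+\bar\eta^m_{i-1})$ carry at least one factor of $\ms M^n$ and at least one of a product of $\bar\eta^m$'s; after summing against $(H_{\sqrt n(t-u)})_i$ these are precisely of the form $\sqrt n\,(\ms M^n)^q W^{J,\phi}_p$ with $\phi_i = (H_{\sqrt n(t-u)})_i$ (up to the scaling bookkeeping) and $(q,J,p)\in\cI$, so Lemma~\ref{lemm_estimate_WphiJp} applies: after a possible renormalisation step paying a fraction of $n^{5/2}\Gamma^{\rm ex}_n(f_u;\nu^n_g)$, which is integrable in time by Theorem~\ref{theo_free_energy}, the remainder is $O(\sqrt n)$, hence $O(n^{-3/4}\cdot\sqrt n)=o_n(1)$ once the correct powers of $n$ from the definition of $y^n$ and the prefactor $a\gamma^2$ are accounted for; moreover one must use, as in Proposition~\ref{prop_cv_martingale}, the large-deviation restriction to $|\sum_i\bar\eta_i|\le n/4$ and the fact that on the short time interval $[0,\tau n^{-1/2}]$ the $\sqrt n\,H_n(f_u|\nu^n_g)$ correction, integrated in $u$, is still $o_n(1)$.

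The remaining monomial $\bar\eta^m_{i-1}\bar\eta^m_i\bar\eta^m_{i+1}$ summed against $(H_{\sqrt n(t-u)})_i$ is a $q=0$, three-point correlation term $W^{\{-1,0,1\},\phi}_?$; it is exactly the type covered by the $q=0$, $p\ge 3$ (after the renormalisation reducing the $p=2$ local piece) analysis in Lemma~\ref{lemm_estimate_WphiJp}, where the gain comes not from a magnetisation prefactor but from the large deviation bound forcing $|W^{J,\phi}_p|\le\epsilon' n$. Collecting all estimates, $\E_{\mu_n}\big[|y^n_t(H)-M^{n,t}_t(H)|\big]\to 0$, which gives~\eqref{eq_yfast_closeto_mart} by Markov's inequality. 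Since $M^{n,t}_{\cdot}(H)$ is independent of $\cF^n_0$ by construction, $(i)$ of Proposition~\ref{prop_fastmode_sec_fastmode} follows: $\E_{\mu_n}[e^{iy^n_t(H)}|\cF^n_0]=\E_{\mu_n}[e^{iM^{n,t}_t(H)}|\cF^n_0]+o_n(1)=\E[e^{iM^{n,t}_t(H)}]+o_n(1)$, and the same identity without the conditioning gives equality of the two limits.

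The main obstacle is the bookkeeping in the nonlinear-drift estimate: one must carefully track how the $n$-dependent test function $\phi_i=(H_{\sqrt n(t-u)})_i$ interacts with Lemma~\ref{lemm_estimate_WphiJp}, since that lemma's constants depend on $\|\phi\|_\infty$ (uniformly bounded here, $\le\|H\|_\infty$, by the maximum principle for the heat semigroup) but the large-deviation truncation requires $\phi$ continuous on the closure of $\{x_1,\dots,x_p\text{ all different}\}$ — true here since $H$ is smooth — and one must confirm the error terms, which carry a prefactor blowing up like $n^{3/4}$ from the $n^{-3/4}$-scaled field versus the $n^{-1/4}$ scaling hidden in $\ms M^n$, are genuinely beaten by the $O(\sqrt n)$ bounds from the lemma and by the exponential decay $e^{-4\pi^2\sqrt n(t-u)}$ of $H_{\sqrt n(t-u)}$ wherever that decay is available. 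In short, the delicate part is not any new probabilistic input but assembling the already-established replacement estimates with the correct powers of $n$ and confirming the short-time relative-entropy correction integrates to something negligible.
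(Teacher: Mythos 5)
Your proposal correctly identifies the broad structure (initial condition, linear drift, nonlinear drift, independence of the martingale), but two of the three drift estimates contain genuine gaps.

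For the linear drift you assert that $\E_{\mu_n}[(y^n_u(G))^2]$ is uniformly bounded, citing the free energy estimate "exactly as in the proof of Proposition~\ref{prop_cv_martingale}." That proposition only establishes a \emph{time-integrated} first-moment bound for a different observable; it does not give a pointwise-in-time second-moment bound for $y^n_u(G)$. Starting from the entropy inequality with $H_n(f_u|\nu^n_g)=O(\sqrt n)$ and sub-Gaussian exponential moments, the best a priori bound is $\E_{\mu_n}[(y^n_u(G))^2]\lesssim\sqrt n\,\|G\|_{\bbL^2}^2$, not $O(\|G\|_{\bbL^2}^2)$; a uniform $O(1)$ bound is essentially what the whole of Theorem~\ref{theo_fastmodes} establishes, so invoking it here would be circular. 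This is precisely why the paper avoids direct estimation of the linear drift and instead introduces the family $G^s$ solving the inhomogeneous heat equation $\partial_s G^s=\Delta G^s - H_s$, so that $\sqrt n\int_0^t y^n_s(H_{\sqrt n(t-s)})\,ds$ is expressed through a second Duhamel decomposition whose remaining terms are controlled by exponential decay, a deterministic bound, large deviations, and the nonlinear-drift estimate.

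For the nonlinear drift the invocation of Lemma~\ref{lemm_estimate_WphiJp} is misplaced on two counts. First, every monomial in the last line of~\eqref{eq_Ito_fast_modes} is \emph{cubic} in $\bar\eta$ (after the $\bar\eta=\bar\eta^m+\ms M^n$ splitting the triplets are $(q,J,p)$ with $q+p+|J|-1=3$), whereas the set $\cI$ in~\eqref{eq_def_cI} requires $q+p+|J|-1\geq 4$ (and ${\bf 1}_{q=0}(p+|J|-1)\geq 4$), so none of these terms lie in $\cI$ and Lemma~\ref{lemm_estimate_WphiJp} does not apply as stated. Second, even for triplets in $\cI$ that lemma only produces a constant $C_T$ on the right-hand side, hence a time-integrated bound that is $O(1)$; but here one needs the time integral $\int_0^t \E[|W^{J,\phi_u}_1|]\,du$ (with $\phi_u = H_{\sqrt n(t-u)}$) to be $o(1)$, not merely bounded. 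The paper proves a new, strengthened Lemma~\ref{lemm_FK_nonlinear_drift}: for $q\geq 1$ it reuses the magnetisation small parameter, but for the $q=0$, $|J|=3$ term it performs \emph{two} successive block-averaging renormalisation steps (over blocks of length $\zeta n$), each paying a small fraction of the carr\'e du champ and picking up a small factor $\zeta$, together with a large-deviation truncation on the renormalised $W^{\{0\},\psi}_3$. The factors of $\zeta$ give the $C_\delta\delta'$ on the right side of~\eqref{eq_bound_drift_fastmodes}, which is what makes the time integral vanish. Your proposal mentions one renormalisation and the large-deviation truncation, but neither the mismatch with $\cI$ nor the $O(1)$-vs-$o(1)$ distinction is addressed, and the claimed "$O(n^{-3/4}\cdot\sqrt n)$" bookkeeping does not correspond to the prefactors actually appearing in~\eqref{eq_Ito_fast_modes}, which are $8a\gamma^2$ with no extra power of $n$.
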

In view of the estimate~\eqref{eq_y0fast_vanishes} of the initial condition, 
only the drift in~\eqref{eq_Ito_fast_modes} needs to be  bounded.  
This is done in the next subsections. 
Assuming~\eqref{eq_yfast_closeto_mart}, let us already show that it implies item $(i)$ of Proposition~\ref{prop_fastmode_sec_fastmode}. 
Let $G\in\cC_0$. 
Since $e^{i \cdot}$ is bounded,
\begin{align}
\E_{\mu_n}\big[\, e^{iy^n_0(G)}\, e^{i y^n_t(H)}\, \big]
&=
\E_{\mu_n}\big[\, e^{iy^n_0(G)}\, e^{i M^{n,t}_{t}(H)}\, \big] +o_n(1)
\nnb
&=
\E_{\mu_n}\big[\, e^{iy^n_0(G)}\,\big] \, \E_{\mu_n}
\big[\, e^{i M^{n,t}_{t}(H)}\, \big]  +o_n(1)
\nnb
&=
\E_{\mu_n}\big[\, e^{iy^n_0(G)}\,\big] \, \E_{\mu_n}
\big[\, e^{i y^{n}_{t}(H)}\, \big]  +o_n(1)
,
\end{align}
where the middle line uses that $M^{n,t}_{t}(H)$ is asymptotically independent from $\cF^n_0$. 
This is the claim $(i)$. 
\subsubsection{Non-linear drift term}
Here we prove that the second line of~\eqref{eq_Ito_fast_modes} converges to $0$ in probability for any $s\leq t$. 
This estimate only makes use of the special form of the test function $H_{\sqrt{n}\, (t-s)}$ to avoid having to deal with an initial time interval of length $O(n^{-1/2})$ at the end of which large deviation bounds become available. 
In this sense the bound below is cruder than what one could hope to get, 
although sufficient for our needs.
\begin{proposition}\label{prop_nonlinear_drift_fastmodes}
Let $J\subset\Z$ be finite and $q\geq 0$.  
Then, if either $q\geq 1$ or $|J|\geq 3$:
\begin{equation}
\forall r>0,\qquad 
\lim_{n\to\infty}\P_{\mu_n}\bigg( \, \Big|\,\int_0^t (\ms M^n_s)^q\, \sum_{i\in\T_n}\bar\eta^m_{i+J} (H_{\sqrt{n}\, (t-s)})_i\, ds \,\Big|\, > r\bigg)
=
0
.
\end{equation}
\end{proposition}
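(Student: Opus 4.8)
\textbf{Proof strategy for Proposition~\ref{prop_nonlinear_drift_fastmodes}.}
The plan is to bound the first moment of the absolute value of the
time integral, exploiting two distinct mechanisms: the rapid decay of
the heat semigroup $e^{\sqrt{n}(t-s)\Delta}$ on mean-zero functions,
which confines the relevant time window to $[t-c n^{-1/2}\log n,t]$,
and the relative entropy / free energy bounds of
Theorems~\ref{n-s05}--\ref{theo_free_energy} together with the
replacement estimates of Theorem~\ref{n-s02} and
Lemma~\ref{lemm_estimate_WphiJp}. First I would write, for any
$s\in[0,t]$, $H_{\sqrt{n}(t-s)}=e^{\sqrt{n}(t-s)\Delta}H$ and use
$-\Delta\ge 4\pi^2\id$ on $\cC_0$ to get
$\|H_{\sqrt{n}(t-s)}\|_{\bb L^\infty(\T)}\le
C\|H\|e^{-4\pi^2\sqrt{n}(t-s)}$, so that the deterministic prefactor
$\sum_{i\in\T_n}|(H_{\sqrt{n}(t-s)})_i|\le Cn e^{-4\pi^2\sqrt{n}(t-s)}$.
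Splitting the time integral at $s_n:=t-n^{-1/2}\log n$, the
contribution of $[0,s_n]$ is deterministically $O(n e^{-4\pi^2\log
n})=O(n^{1-4\pi^2})=o(1)$ since the integrand is bounded by a constant
(recall $|\ms M^n|\le 1/2$ and $|\bar\eta^m_\cdot|\le 1/2$). So only
$s\in[s_n,t]$ remains, an interval of length $n^{-1/2}\log n$, and on
it we still have the trivial bound $\sum_i|(H_{\sqrt{n}(t-s)})_i|\le
Cn$.

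On the window $[s_n,t]$ the object to control is
$\int_{s_n}^t\E_{\mu_n}\big[\,|(\ms M^n_s)^q \sum_{i}\bar\eta^m_{i+J}
(H_{\sqrt{n}(t-s)})_i|\,\big]\,ds$. Writing
$Z^n_s:=(\ms M^n_s)^q\sum_i\bar\eta^m_{i+J}(H_{\sqrt{n}(t-s)})_i$, I
would observe that $Z^n_s$ is, up to the bounded invariant factor
$(\ms M^n)^q$, a linear (if $|J|=1$, but then $q\ge 1$) or higher
($|J|\ge 2,3$) polynomial in the $\bar\eta^m$ with coefficients of
uniform sup-norm $O(1)$ against which the replacement estimates apply.
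Concretely: if $q\ge 1$ and $|J|\ge 2$, or $q=0$ and $|J|\ge 3$, this
is exactly of the form $(\ms M^n)^q W^{J,\phi}_{p}$ (with $p=1$ after
absorbing the test function into $\phi$, or $p\ge 1$ after the
integration-by-parts renormalisation of Lemma~\ref{lemm_estimate_WphiJp})
covered by that lemma, which gives, for $s\ge\tau n^{-1/2}$ (hence for
all $s\in[s_n,t]$ once $n$ is large, since $s_n\to t$),
$n^{3/4}\nu^n_{\rm ref}(f_s|Z^n_s|)\le \frac{\delta}{2}n^{5/2}\Gamma^{\rm
ex}_n(f_s;\nu^n_{\rm ref})+C(T)\sqrt{n}$ for any $\delta>0$, after
noting that the extra $n^{3/4}$ is harmless because we will multiply by
$n^{-3/4}$. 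The remaining two cases are: $q\ge1$, $|J|=1$, handled by
the renormalisation step of Lemma~\ref{lemm_estimate_WphiJp} (case
$q\ge1,p=1$) which trades a bit of the carré du champ to pass to
$W^{J,\phi}_{p'}$ with $p'\ge2$; and for $a\le\mf a_0$ one may instead
invoke Theorem~\ref{n-s02} directly. In all cases we obtain, for every
$\delta>0$ and some $C(\delta,T)$,
\begin{equation}
\E_{\mu_n}[\,|Z^n_s|\,]\le C e^{-4\pi^2\sqrt{n}(t-s)}\Big(\delta\,
n^{2}\,\Gamma^{\rm ex}_n(f_s;\nu^n_{\rm ref})+C(\delta,T)\sqrt{n}\,
+{\bf 1}_{s\le\tau n^{-1/2}}\sqrt{n}\,H_n(f_s|\nu^n_{\rm ref})\Big),
\end{equation}
where the $e^{-4\pi^2\sqrt n(t-s)}$ comes from bounding
$\sum_i|(H_{\sqrt n(t-s)})_i|$ by $Cne^{-4\pi^2\sqrt n(t-s)}$ rather than
$Cn$ (so one really keeps the decay inside the estimate; on $[s_n,t]$
this is just an improvement over the crude $Cn$ bound, not essential,
but clean).

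Integrating over $s$: the term $\int_{s_n}^t C(\delta,T)\sqrt n\,
e^{-4\pi^2\sqrt n(t-s)}\,ds\le C(\delta,T)\sqrt n\cdot n^{-1/2}=
C(\delta,T)$, which is only $O(1)$, not $o(1)$ — so the bare
$\sqrt n$ error is not yet good enough and one must use the heat-kernel
decay to gain. Here is the fix: do \emph{not} enlarge the window to
$[s_n,t]$ but keep $\int_0^t e^{-4\pi^2\sqrt n(t-s)}\,ds=
\frac{1-e^{-4\pi^2\sqrt n t}}{4\pi^2\sqrt n}\le \frac{1}{4\pi^2\sqrt
n}$, so that $\int_0^t C(\delta,T)\sqrt n\,e^{-4\pi^2\sqrt
n(t-s)}\,ds\le C(\delta,T)/(4\pi^2)=O(1)$ — still $O(1)$. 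This is the
genuine obstacle: the pointwise replacement error $\sqrt n$ times the
integrated kernel $n^{-1/2}$ is exactly order one. I would resolve it
by not asking for convergence to $0$ of the \emph{expectation} but
combining with the carré du champ budget: the term
$\delta\int_0^t n^2\Gamma^{\rm ex}_n(f_s;\nu^n_{\rm ref})e^{-4\pi^2\sqrt
n(t-s)}\,ds\le \delta\int_0^t n^2\Gamma^{\rm ex}_n(f_s;\nu^n_{\rm
ref})\,ds\le \delta\, C(T)\sqrt n\cdot$?—no. The right move is to
observe that Theorems~\ref{n-s05}--\ref{theo_free_energy} give
$\int_0^t n^{5/2}\Gamma^{\rm ex}_n(f_s;\nu^n_{\rm ref})\,ds\le
C(T)\sqrt n$, i.e. $\int_0^t n^{2}\Gamma^{\rm ex}_n(f_s;\nu^n_{\rm
ref})\,ds\le C(T)n^{-1}$, which is $o(1)$; and for the error term one
should extract an extra $n^{-1/2}\log n$ (not $n^{-1/2}$) by using the
decay more carefully restricted to $[s_n,t]$ \emph{and} the fact that
on $[0,s_n]$ the contribution is the genuinely small $O(n^{1-4\pi^2})$
computed above. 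On $[s_n,t]$, $\int_{s_n}^t \sqrt n\,ds = \sqrt n\cdot
n^{-1/2}\log n=\log n$ — still divergent. The actual resolution, which
I would implement, is Markov's inequality combined with the observation
that we only need convergence in \emph{probability}, not in $L^1$:
bound $\P_{\mu_n}(|\int_0^t Z^n_s\,ds|>r)$ using the exponential
Chebyshev / Feynman–Kac bound (as in
Lemma~\ref{lemm_replacement_tightness}, via \cite[Lemma B.1]{jl}),
where the relevant variational expression is
$\sup_{f\ge0,\nu^n_{\rm ref}(f)=1}\{\,\theta_0\,\nu^n_{\rm
ref}(f\,n^{3/4}(Z^n_s-S^n_s))+\tfrac12(L^*_n{\bf 1}-Q^{4+,\delta}_n)(f)
-\tfrac12 n^{5/2}\Gamma^{\rm ex}_n(f;\nu^n_{\rm ref})\,\}\le 0$ by the
defining properties of $S^n,Q^{4+,\delta}_n$ from
Lemma~\ref{lemm_estimate_WphiJp}; the remaining "nice" term
$\int_0^t(S^n_s+n^{-3/4}Q^{4+,\delta}_n)(\eta_s)\,ds$ has expectation
$O(n^{-1/4})$ by the entropy/free-energy bounds exactly as at the end of
the proof of Lemma~\ref{lemm_replacement_tightness}. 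Feeding in the
heat-kernel prefactor $e^{-4\pi^2\sqrt n(t-s)}$ (whose time integral is
$O(n^{-1/2})$) everywhere turns the relevant bound into $o(1)$.
\textbf{The main obstacle} is precisely this bookkeeping: making sure
that the heat-kernel decay is used to kill the time-integral, that the
replacement cost is absorbed into the carré du champ budget (which
Theorems~\ref{n-s05}--\ref{theo_free_energy} control on all of $[0,t]$,
not just near $t$), and that the short initial layer $[0,\tau n^{-1/2}]$
— where only the coarse bound~\eqref{eq_small_time_bound_Prop61} is
available — contributes negligibly because there the heat kernel has
barely acted and $\sum_i|(H_{\sqrt n(t-s)})_i|$ is still $O(n)$ but the
layer has length $O(n^{-1/2})$, giving $O(n^{1/2})$ times the coarse
entropy bound $O(\sqrt n)$, i.e. $O(n)$ — which is \emph{not} small, so
in fact one must again restrict to $s\ge s_n\gg\tau n^{-1/2}$ and note
the $[0,s_n]$ part is deterministically $o(1)$ by the $e^{-4\pi^2\log
n}$ factor. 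Once this layering is set up correctly the proof is routine.

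\emph{Remark.} The hypothesis "$q\ge1$ or $|J|\ge3$" is exactly what
puts $(\ms M^n)^q W^{J,\phi}_p$ into the index set $\cI$ of
\eqref{eq_def_cI} (so that the number of $\bar\eta^m$ factors is $\ge
2$, hence the term is of leading order and Lemma~\ref{lemm_estimate_WphiJp}
applies after at most one renormalisation); the excluded case $q=0$,
$|J|\le2$ would correspond to an uncontrolled two-point correlation,
which is precisely why those terms were cancelled by the choice of
reference measure in Sections~\ref{n-sec2}--\ref{sec_free_energy_bounds}
rather than being estimated here.
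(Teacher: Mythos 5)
Your overall strategy — use the exponential decay of $e^{\sqrt n(t-s)\Delta}$ to restrict the time integral, then control the remaining piece via replacement estimates, entropy/LSI, and Feynman--Kac — is the right skeleton, and you correctly identify that the crude bound from Lemma~\ref{lemm_estimate_WphiJp} integrated against the heat kernel is exactly $O(1)$, not $o(1)$. However, there is a genuine gap that your proposed fix does not close. The terms here have $q+|J|=3$ (e.g.\ $q=0,|J|=3$; $q=1,|J|=2$; $q=2,|J|=1$), so they are \emph{not} in the index set $\cI$ of~\eqref{eq_def_cI}, which requires at least four $\bar\eta$-factors. Your final remark asserting that the hypothesis ``$q\ge1$ or $|J|\ge3$'' places these terms in $\cI$ is therefore factually wrong, and Lemma~\ref{lemm_estimate_WphiJp} — together with the renormalisation pair $(S^n,Q^{4+,\delta}_n)$ built from it — does not apply directly. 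The ingredient that actually closes the argument, and which you do not have, is Lemma~\ref{lemm_FK_nonlinear_drift}: a refinement in which the error after renormalisation is not $C_T$ but $C_\delta(\delta'+o_n(1))$ with a tunable small parameter $\delta'$. For $q\ge1$ this comes from tracking the $\epsilon$-factor in the proof of Lemma~\ref{lemm_estimate_WphiJp}; for $q=0$, $|J|\ge3$ it requires a genuinely new step, namely a \emph{double} integration-by-parts renormalisation with block length $\ell_n=\lfloor\zeta n\rfloor$ tuned small, picking up explicit factors of $\zeta$ from Remark~\ref{rmk_better_concentration_1pt} and Corollary~\ref{coro_better_concentration}.

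The paper also structures the time-splitting differently from what you propose: rather than shrinking the window to $[s_n,t]=[t-n^{-1/2}\log n,t]$, it fixes an arbitrary $s\in(0,t)$, kills $[0,s]$ deterministically by the factor $e^{-c\sqrt n(t-s)}$, and then applies Lemma~\ref{lemm_FK_nonlinear_drift} on the $O(1)$-length interval $[s,t]$. This is cleaner because the small parameter comes from the renormalisation (the tunable $\delta'$), not from the window length, and because a fixed $s>0$ guarantees the large deviation bounds of Proposition~\ref{prop_LD_bound} are uniformly available without having to worry about the initial layer $[0,\tau n^{-1/2}]$ intersecting the window. Your $[s_n,t]$ idea would give the required $o(1)$ \emph{if} one had a pointwise-in-time bound with error $O(1)$ for these three-factor terms, but establishing such a bound is exactly the content of Lemma~\ref{lemm_FK_nonlinear_drift}, which your proposal neither supplies nor correctly identifies as missing.
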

Let $s\in(0,t)$. Since $\|H_{\sqrt{n}\, (t-u)}\|_\infty\leq C(H)e^{-c\sqrt{n}(t-s)}$ for some $C(H),c>0$ and $u\in[0,s]$, 
it is enough to prove Proposition~\ref{prop_nonlinear_drift_fastmodes} on the time interval $[s,t]$. 
This is done in the following lemma, a
generalisation of Lemma~\ref{lemm_estimate_WphiJp}. Recall
from~\eqref{eq_def_WJphi_p_sec_relent} the notation $W^{J,\phi}_1$.
\begin{lemma}\label{lemm_FK_nonlinear_drift}
Let $s\in(0,t)$. 
Let $J\subset\Z$ be finite, $q\geq 0$, $\phi:[s,t]\times\T\to\R$ be a
bounded function, and $\delta,\delta'\in(0,1/2)$. Then,
\begin{itemize}
	\item[(i)] (Renormalisation step).   
	There is a non-negative function $F^{\delta,\delta'}_n=F_n^{J,\phi,q,\delta,\delta'}:\Omega_n\to\R_+$ such that, for any density $f$ for $\nu^n_g$:
\begin{equation}
\big|\, \nu^{n}_g\big(f\, (\ms M^n)^{q}\, W^{J,\phi}_1\big) \big|
\leq 
\frac{\delta n^2}{2} \, 
\Gamma^{\rm ex}_n(f;\nu^n_g)
+
\nu^n_g\big( f \, F^{\delta,\delta'}_n\big)
.
\label{eq_renorm_step_lemm_nonlin_drift}
\end{equation}
	\item[(ii)] (Expectation bound). 
	There is $C_\delta>0$ such that, for each 
$u\in[s,t]$:
\begin{align}
\nu^{n}_g\big(f_u\, F^{\delta,\delta'}_n\big)
&\leq 
\frac{\delta n^{2}}{2} \,
\Gamma^{\rm ex}_n(f_u;\nu^n_g)
+C_\delta (\delta'+o_n(1))
,
\label{eq_bound_drift_fastmodes}
\end{align}
where the $o_n(1)$ is uniform in time. 
\end{itemize}
\end{lemma}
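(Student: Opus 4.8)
The plan is to follow the structure of the proof of Lemma~\ref{lemm_estimate_WphiJp}, which handles essentially the same kind of objects (products of a power of the slow mode with a renormalised polynomial in the fast modes $\bar\eta^m$), adapting it to the time-dependent test function $\phi$ and to the fact that we only ask for control on a single time instant $u\in[s,t]$ rather than on all of $[0,T]$. First I would record that $W^{J,\phi}_1=\frac{1}{n}\sum_i \bar\eta^m_{i+J}\,\phi_i$ (here $\bar\eta^m_{i+J}:=\prod_{k\in J}\bar\eta^m_{i+k}$ is a product of $|J|$ fast variables) and that, since we are on the time interval $[s,t]$ with $s>0$, the bound $\|H_{\sqrt n(t-u)}\|_\infty\le C(H)e^{-c\sqrt n(t-s)}$ is available, so in particular the magnetisation large-deviation bounds of Appendix~\ref{app_LD} apply for every $u\in[s,t]$ (we are past the initial layer of length $O(n^{-1/2})$, with $n$ large). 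This is the key reason the $o_n(1)$ in \eqref{eq_bound_drift_fastmodes} is uniform in time on $[s,t]$.

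The case split would mirror that of Lemma~\ref{lemm_estimate_WphiJp}. If $q\ge1$ and $|J|\ge2$: set $F^{\delta,\delta'}_n:=|(\ms M^n)^q W^{J,\phi}_1|$ (no renormalisation), restrict to $|\,\ms M^n\,|\le\epsilon$ using the large-deviation bound \eqref{eq_LD_bound_magnetisation_lemm_WJphi}, and on that event apply the entropy inequality at fixed magnetisation, the log-Sobolev inequality (Assumption~\ref{ass_LSI}), and the concentration estimates of Proposition~\ref{prop_concentration}/Corollary~\ref{coro_concentration_nun_g} for $W^{J,\phi}_1$; the prefactor $\epsilon^q$ makes the resulting exponential moment finite and the error $\le C_\delta\epsilon^{q}$, which we absorb into $\delta'$ by taking $\epsilon$ small. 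If $q\ge1$ and $|J|=1$: as in the $q\ge1,p=1$ case, run the renormalisation of Lemma~\ref{n-l01}/\eqref{eq_renormalisation_adjoint_middle}, paying $\frac{\delta n^2}{2}\Gamma^{\rm ex}_n$ of carré du champ to replace the single local factor by a spatial average, which turns $W^{J,\phi}_1$ into a sum of $W^{J',\psi}_{p'}$ with $p'\ge2$, reducing to the previous case; $F^{\delta,\delta'}_n$ is then the sum of the resulting terms plus the crude remainder $C\|\phi\|_\infty|\ms M^n|$. If $q=0$, then by hypothesis $|J|\ge3$, i.e. $W^{J,\phi}_1$ is a renormalised product of at least three fast variables; here I would follow the $q=0,p\ge3$ argument: impose $|W^{J,\phi}_1|\le\epsilon' n$ (a large-deviation event, since $\frac1n\sum_k\bar\eta^m_k\,\phi_{\cdot,k}$-type sums are functions of the empirical measure and Proposition~\ref{prop_LD_bound} applies, uniformly over the index using compactness and continuity of $\phi$ on the relevant closed set), and on that event the truncated exponential moment of $W^{J,\phi}_1$ is controlled by Corollary~\ref{coro_better_concentration}, the decisive point being that a product of $\ge3$ fast variables carries a prefactor $n^{(2-p)/p}$ with $p\ge3$ that beats the Gaussian blow-up. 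In each branch $F^{\delta,\delta'}_n\ge0$, satisfies a crude deterministic bound $O(n)$, and the entropy-plus-concentration estimate yields the $u$-uniform bound \eqref{eq_bound_drift_fastmodes} with the error $C_\delta(\delta'+o_n(1))$; the $o_n(1)$ collects the large-deviation probabilities times the $O(n)$ deterministic bound, which is $O(n e^{-cn})\to0$ uniformly in $u\in[s,t]$.

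Given the lemma, Proposition~\ref{prop_nonlinear_drift_fastmodes} follows by the usual exponential-Chebyshev plus Feynman--Kac argument (as in the proof of Lemma~\ref{lemm_replacement_tightness}): for $u\in[s,t]$ the integrand is $(\ms M^n)^q W^{J,\phi^u}_1$ with $\phi^u:=H_{\sqrt n(t-u)}$, and $\sup_u\|\phi^u\|_\infty$ is bounded (indeed tiny), so applying \eqref{eq_renorm_step_lemm_nonlin_drift}--\eqref{eq_bound_drift_fastmodes} with $\delta=1$ makes the variational problem
$\nu^n_g\!\big(f[\tfrac12(R^m-\text{renorm.})-\tfrac12 n^2\Gamma^{\rm ex}_n]\big)\le C(\delta'+o_n(1))$, and sending first $n\to\infty$ then $\delta'\to0$ kills the time integral in probability; the small initial slab $[0,s]$ contributes $O(\sqrt n\, e^{-c\sqrt n(t-s)}\cdot\text{poly})=o_n(1)$ directly from $\|H_{\sqrt n(t-u)}\|_\infty\le C(H)e^{-c\sqrt n(t-s)}$ and the deterministic bound on the observable. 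The main obstacle, as in Section~\ref{sec_free_energy_bounds}, is the renormalisation/concentration bookkeeping in the $q=0,|J|\ge3$ and the $q\ge1,|J|=1$ cases: one must track the many $W^{J',\psi}_{p'}$ produced by the integration-by-parts steps, check that each has $\psi$ bounded and continuous on the appropriate closed set so that the large-deviation bounds of Appendix~\ref{app_LD} apply uniformly in the free indices, and verify that each inherits \eqref{eq_bound_drift_fastmodes} — all of which is routine given Lemma~\ref{lemm_estimate_WphiJp} but tedious. The only genuinely new ingredient compared to Lemma~\ref{lemm_estimate_WphiJp} is the time-dependence of $\phi$, which is harmless because everything is controlled through $\|\phi\|_\infty$, uniformly over $u\in[s,t]$.
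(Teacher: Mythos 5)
There is a genuine gap, rooted in a recurring confusion between $|J|$ (the number of $\bar\eta^m$ factors in the local product) and $p$ (the number of independent summation indices in $W^{J,\phi}_p$). The observable $W^{J,\phi}_1 = \sum_{i}\bar\eta^m_{i+J}\,\phi_i$ has $p=1$ regardless of how large $|J|$ is: there is a single free index $i$, and $\bar\eta^m_{i+J}$ is a local product. The concentration estimates of Proposition~\ref{prop_concentration} and Corollary~\ref{coro_better_concentration} decay in $n$ like $n^{-(p-2)/2}$, i.e.\ they are governed by $p$, not by $|J|$. For $p=1$ the bound is only sub-Gaussian at scale $\sqrt{n}$: $\log\nu^{n,m}_g[\exp(s|W^{J,\phi}_1|/(\sqrt{n}\,\|\phi\|_\infty))]\le Cs^2$, so at a fixed scale $\lambda$ of order one (forced by matching $\delta n^2\Gamma^{\rm ex}_n$ against $\lambda^{-1}\mf c_{\rm LS}n^2\Gamma^{\rm ex}_n$) the resulting error contribution is $O(\lambda n)$, not $O(1)$ and certainly not $O(\delta')$. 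Your $q=0$ branch therefore fails: you cannot ``follow the $q=0,p\ge3$ argument'' and invoke Corollary~\ref{coro_better_concentration} directly on $W^{J,\phi}_1$, because that corollary requires $p\ge3$. The paper's mechanism that you omit is precisely a \emph{double} renormalisation: two successive integration-by-parts steps (Lemma~\ref{lemm_IBP}), each paying a fraction of the carr\'e du champ and replacing one of the local factors $\bar\eta^m_{i\pm1}$ by a window average $\ell_n^{-1}\sum_k\bar\eta^m_k$ with $\ell_n=\lfloor\zeta n\rfloor$, which turns the local $p=1$ object into $W^{\{0\},\psi}_3$ with $p=3$ plus controllable byproducts, after which the $p\ge3$ truncation and Corollary~\ref{coro_better_concentration} do apply. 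The factors of $\zeta$ picked up along the way (via Remark~\ref{rmk_better_concentration_1pt} and the second truncation $\zeta'$) are what produce the $\delta'$-smallness in \eqref{eq_bound_drift_fastmodes}; your version has no mechanism at all for generating a small parameter, because the $\delta'$ cannot come from the (fixed) magnetisation cutoff $\epsilon$ when $q=0$.

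The same misreading also breaks your $q\ge1,|J|\ge2$ branch, where you declare ``no renormalisation''. Even with the $\epsilon^q$ prefactor from the cutoff $|\ms M^n|\le\epsilon$, the $p=1$ exponential moment of $W^{J,\phi}_1$ at fixed $\lambda$ is still $O(\lambda^2 n)$, so the error term in the entropy inequality diverges with $n$ for any fixed $\epsilon$ (one would need $\epsilon\lesssim n^{-1/(2q)}$, which is incompatible with the large-deviation bound). The paper's $q\ge1$ branch is a reference back to Lemma~\ref{lemm_estimate_WphiJp}, which for $p=1$ explicitly includes the renormalisation step \eqref{eq_renormalisation_adjoint_middle}; this raises $p$ from $1$ to at least $2$, after which the concentration scales like $n^{-(p-2)/2}$ and the $\epsilon^{q/2}$ factor from \eqref{eq_use_entropy_LSI_WJphip} gives the $\delta'$ smallness. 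In short: the renormalisation steps are not an optional technicality but the step that converts ``few free indices, poor concentration'' into ``many free indices, good concentration'', and both your $q=0$ and your $q\ge1,|J|\ge2$ branches skip them.
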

\begin{proof}[Proof of Proposition~\ref{prop_nonlinear_drift_fastmodes} assuming Lemma~\ref{lemm_FK_nonlinear_drift}] 
We proceed as in the proof of Lemma~\ref{lemm_replacement_tightness}, using Feynman-Kac formula to estimate the probability. 
Recall the definition of the quantity $Q_n^{4+,\delta}$ renormalising the adjoint in~\eqref{eq_def_ellm}--\eqref{eq_bound_ellm}. 
Write for short $\phi_u:= H_{\sqrt{n}\, (t-u)}$ so that  the quantity to estimate is simply the time integral of $(\ms M^n)^q W^{J,\phi_u}_1$. 
Then, for each $\epsilon,\delta,\delta'>0$:
\begin{align}
\P_{\mu_n}\bigg( \, &\Big|\,\int_s^t \, (\ms M^n_u)^q\, W^{J,\phi_u}_1 (\eta_u) \, du \,\Big|\, > r\bigg)
\leq 
2\max_{\mf s\in\{+,-\}}\P_{\mu_n}\bigg( \, \int_s^t \mf s\, (\ms M^n_u)^q\, W^{J,\phi_u}_1 (\eta_u) \, du  > r\bigg)
\nnb
&\quad \leq 
2\max_{\mf s\in\{+,-\}}\P_{\mu_n}\bigg( \,\int_s^t \big[\, \mf s\, (\ms M^n_u)^q\, W^{J,\phi_u}_1 (\eta_u)  - F_n^{\delta,\delta'}(\eta_u)- \frac{\epsilon}{\sqrt{n}}\, Q^{4+,\delta}_n(\eta_u)\,\big]\, du  >r/2\bigg) 
\nnb
&\qquad +\frac{4\, }{r }\, \E_{\mu_n}\Big[\, \int_s^t \big[\, F_n^{\delta,\delta'}(\eta_u) + \frac{\epsilon}{\sqrt{n}\,}\,\big|Q^{4+,\delta}_n(\eta_u)\big|\,\big]\, du  \, \Big]
.
\label{eq_splitting_prop83}
\end{align}
The last term is bounded by $c(\epsilon+\delta+C_\delta\delta')/r$ for some $c>0$ by property~\eqref{eq_bound_ellm} of $Q^{4+,\delta}_n$,
~\eqref{eq_bound_drift_fastmodes} of $F_n^{\delta,\delta'}$ and the fact that $\int_0^t n^2\,\Gamma^{\rm ex}_n(f_s;\nu^n_g)\, ds$ is bounded. 
Choosing $\delta'$ as a function of $\delta$ so that $\lim_{\delta\to 0}C_{\delta}\delta'=0$, 
this term therefore vanishes in the limit $n\to\infty$, 
then $\delta\to0$, then $\epsilon\to 0$. 
Consider now the first term in the right-hand side of~\eqref{eq_splitting_prop83}. 
Let $\mf s\in \{+,-\}$. 
The entropy inequality gives:
\begin{align}
&\P^n_{\mu_n}\bigg( \,\int_s^t \big[\, \mf s\, (\ms M^n_u)^q\, W^{J,\phi_u}_1 (\eta_u)  - F_n^{\delta,\delta'}(\eta_u)- \frac{\epsilon}{\sqrt{n}}\, Q^{4+,\delta}_n(\eta_u)\,\big]\, du  >r/2\bigg) 
\nnb
&\qquad
\leq 
\frac{H_n(\mu_n\, |\, \nu^n_g)+\log 2}{-\log \P^n_{\nu^n_g}\big( \,\int_s^t\,  [\, \mf s\, (\ms M^n_u)^q\, W^{J,\phi_u}_1 (\eta_u)  - F_n^{\delta,\delta'}(\eta_u)- \frac{\epsilon}{\sqrt{n}}\, Q^{4+,\delta}_n(\eta_u)\,] \, du  >r/2\big) }
.
\end{align}
Using Feynman-Kac formula to estimate the denominator as in Lemma~\ref{lemm_replacement_tightness} (see~\eqref{eq_bound_FK_tightness}), 
as long as $\epsilon^{-1}\delta<1/4$, 
the carré du champ coming out of the renormalisation step in~\eqref{eq_renorm_step_lemm_nonlin_drift} can be absorbed and we obtain that the probability in the denominator is bounded by $e^{-\sqrt{n}\, r/(2\epsilon)}$. 
Taking $n$ to infinity, then $\delta$ to $0$, then $\epsilon$ to $0$ therefore concludes the proof. 
\end{proof}
\begin{proof}[Proof of Lemma~\ref{lemm_FK_nonlinear_drift}]
The case $q\geq 1$ was already proven in Lemma~\ref{lemm_estimate_WphiJp}.   
We refer to~\eqref{eq_magnetisation_splitting_withepsilon}--\eqref{eq_use_entropy_LSI_WJphip}--\eqref{eq_renormalisation_adjoint_middle} and give no further detail as also similar computations will be performed below.   \\

Consider now the case where $q=0$. 
In this case $|J|\geq 3$. 
For definiteness we only work with
$J=\{-1,0,1\}$, other sets being similar. 
The proof is similar to that of Lemma~\ref{lemm_estimate_WphiJp} but in order to get the small term in the right-hand side of~\eqref{eq_bound_drift_fastmodes} we will need not just one, 
but two renormalisation steps.

Let $u\in[s,t]$. Fix a density $f$ for
$\nu^n_g$. 
Let $\zeta\in(0,1/2)$ that will later be taken to be small and $\ell_n = \lfloor\zeta n\rfloor$. 
Replace $\bar\eta^m_{i+1}$ by $\frac{1}{\ell_n}\sum_{j=i+1}^{i+\ell_n}$ to find, 
using integration by parts (Lemma~\ref{lemm_IBP}) as in e.g.~\eqref{eq_renormalisation_adjoint_middle}, 
that there is $C>0$ such that:
\begin{align}
\nu^{n}_g\big(f\, W^{J,\phi_u}_1\Big)
&\leq 
\nu^n_g\Big( f\, \frac{1}{\ell_n}\sum_{i,j}\bar\eta^m_{i-1}\bar\eta^m_i\bar\eta^m_j {\bf 1}_{[i+1,i+\ell_n]}(j)(\phi_u)_i\Big)
+
\frac{\delta\, n^{2}}{4}\, \Gamma^{\rm ex}_n(f;\nu^{n}_g)
\label{eq_term_to_renorm_again_fastmodes}\\
&\quad 
+\frac{C }{\delta}\frac{1}{n}\sum_j \nu^{n}_g\Big[f\Big(\frac{1}{\sqrt{n}}\sum_{i}\bar\eta^{m}_{i-1}\bar\eta^m_{i}(\phi_u)_i I_{\ell_n}(j-i-1)\Big)^2\, \Big]
\label{eq_squareterm_1st_renorm_fastmodes}
\\
&\quad
+\frac{C }{n}\sum_j \nu^{n}_g\Big[f \frac{1}{n}\sum_i\sum_{\ell\neq {j,j+1}}\partial^n_1 g_{j,\ell}\bar\eta^m_\ell\bar\eta^m_{i-1}\bar\eta^m_{i}(\phi_u)_i I_{\ell_n}(j-i-1) \Big]
\label{eq_gterm_1st_renorm_fastmodes}
\\
&\quad 
+ C\, \nu^n_g\big(\, f\, |\ms M^n|\, \big) \|\phi_u\|_{\infty} 
.
\label{eq_Mn_term_1st_renorm_fastmodes}
\end{align}
The above decomposition does not yet allow to define $F_n^{\delta,\delta'}$ in terms of the integrands in the right-hand side above as the first term in the right-hand side of line~\eqref{eq_term_to_renorm_again_fastmodes} will require another renormalisation step. 
Let us however show that other terms already satisfy the bound~\eqref{eq_bound_drift_fastmodes}, 
so that the absolute value of their sum will constitute part of $F^{\delta,\delta'}_n$. 
The moment bound of Theorem~\ref{theo_free_energy}  already gives for~\eqref{eq_Mn_term_1st_renorm_fastmodes} (recall $\ms M^n = n^{-1/4}\cY^n$):
\begin{equation}
C\, \nu^n_g\big(\, f\, |\ms M^n|\, \big) \|\phi_u\|_{\infty} 
=
O(n^{-1/4})
.
\end{equation}
For~\eqref{eq_squareterm_1st_renorm_fastmodes}--\eqref{eq_gterm_1st_renorm_fastmodes}, 
we start with the large deviation bound on the magnetisation of Proposition~\ref{prop_LD_bound} to write, for any $X:\Omega_n\to\R$ with $n^{-1}\, \|X\|_\infty = O_n(1)$:
\begin{equation}
\nu^n_g(f_u\, X) 
=
\sum_{m: |m-n/2|\leq n/4} \nu^n_g\big(f_u {\bf 1}_{\sum_i \eta_i=m}\big) 
\nu^{n,m}_g\big( f_u^m X\, \big)
+o_n(1)
,
\label{eq_splitting_along_m_sec8}
\end{equation}
with the $o_n(1)$ uniform in $u\in[s,t]$. 
It is therefore enough to focus on bounding the averages in~\eqref{eq_squareterm_1st_renorm_fastmodes}--\eqref{eq_gterm_1st_renorm_fastmodes} uniformly on $m$ with $|m-n/2|\leq n/4$. 
Fix such an $m$. 

For the square term~\eqref{eq_squareterm_1st_renorm_fastmodes}, 
we find for each $j$ and each $\lambda>0$, 
noting that the sum on $i$ only contains $\ell_n = \lfloor \zeta n\rfloor$ terms:
\begin{align}
&\nu^{n,m}_g\Big[f^m_u\Big(\frac{1}{\sqrt{n}}\sum_{i}\bar\eta^{m}_{i-1}\bar\eta^m_{i}(\phi_u)_i I_{\ell_n}(j-i-1)\Big)^2\Big]
\nnb
&\quad 
\leq 
\lambda{\mf c}_{\rm LS} \, n^2\, \Gamma^{\rm ex}_{n}(f^m_u;\nu^{n,m}_g) 
+\lambda \log \nu^{n,m}_g\bigg[ \exp\Big[\lambda^{-1} \Big(\frac{1}{\sqrt{n}}\sum_{i}\bar\eta^{m}_{i-1}\bar\eta^m_{i}(\phi_u)_i I_{\ell_n}(j-i-1)\Big)^2\, \Big]\, \bigg]
\nnb
&\quad\leq 
\lambda{\mf c}_{\rm LS} \, n^2\, \Gamma^{\rm ex}_{n}(f^m_u;\nu^{n}_g) + 
\lambda^{-1} \zeta \|\phi_u\|_\infty
,
\label{eq_lsi_plus_entropy_sec8}
\end{align}
where in the last line we used Remark~\ref{rmk_better_concentration_1pt} to pick up suitable factors of $\zeta$. 
Taking $\lambda\leq \delta/(2 \mf c_{\rm LS})$, 
$\delta':= \zeta^{1/2}=\lambda$, 
and recalling~\eqref{eq_splitting_along_m_sec8}, we obtain~\eqref{eq_bound_drift_fastmodes} for the square term.

Consider next the expectation containing $g$~\eqref{eq_gterm_1st_renorm_fastmodes} at fixed $m$, 
and rewrite it as:
\begin{align}
&\frac{C}{n}\sum_j \nu^{n,m}_g\Big[f_u^m \frac{1}{n}\sum_i\sum_{\ell\neq {j,j+1}}\partial^n_1 g_{j,\ell}\bar\eta^m_\ell\bar\eta^m_{i-1}\bar\eta^m_{i}(\phi_u)_i I_{\ell_n}(j-i-1) \Big]
\nnb
&\quad =
\zeta\, \nu^{n,m}_g\Big[f_u^m \frac{1}{n}\sum_{i\in\T_n}\sum_{\ell\in\T_n}\bar\eta^m_\ell\, \bar\eta^m_{i-1}\, \bar\eta^m_{i}\, \Big(\frac{C}{\zeta n}\, \sum_{j\notin\{\ell-1,\ell\}}\partial^n_1 g_{j,\ell}(\phi_u)_i I_{\ell_n}(j-i-1) \Big)\, \Big]
.
\end{align}
The parenthesis, call it $\psi_{i,j}$, 
defines $\psi:\T^2_n\to\R$ bounded uniformly in $n,\zeta$. 
The last expression is therefore of the form $\zeta \nu^{n,m}_{g}[f^m_u \, W^{\{-1,0\},\psi}_{2}]$.  
The concentration results of Proposition~\ref{prop_concentration} and the entropy- and log-Sobolev inequalities therefore imply as usual, 
for some $C'>0$:
\begin{align}
&\frac{C}{n}\sum_j \nu^{n,m}_g\Big[f_u^m \frac{1}{n}\sum_i\sum_{\ell\neq {j,j+1}}\partial^n_1 g_{j,\ell}\bar\eta^m_\ell\bar\eta^m_{i-1}\bar\eta^m_{i}(\phi_u)_i I_{\ell_n}(j-i-1) \Big]
\nnb
&\quad 
\leq 
\zeta\, {\mf c}_{\rm LS} \, n^2\, \Gamma^{\rm ex}_{n}(f^m_u;\nu^{n,m}_g) + 
C'\zeta \|\phi_u\|_\infty
.
\end{align}
Thus the term containing $g$~\eqref{eq_gterm_1st_renorm_fastmodes} also satisfies~\eqref{eq_bound_drift_fastmodes} upon taking $\delta'=\zeta\leq \delta/(2 {\mf c}_{\rm LS})$. 

We conclude the proof by performing a second renormalisation step on:
\begin{equation}
\nu^n_g\Big( f_u\, \frac{1}{\ell_n}\sum_{i,j}\bar\eta^m_{i-1}\bar\eta^m_i\bar\eta^m_j {\bf 1}_{[i+1,i+\ell_n]}(j)(\phi_u)_i\Big)
,
\end{equation}
turning it into:
\begin{equation}
\nu^{n}_g\Big(f_u\, \frac{1}{\ell_n^2 }\sum_{i,j,k}\bar\eta^m_{k}\bar\eta^m_{i}\bar\eta^m_{j}(\phi_u)_i{\bf 1}_{[i+1,i+\ell_n]}(j){\bf 1}_{[i-\ell_n,i-1]}(k)\Big)
.
\label{eq_twice_renormalised_term}
\end{equation}
One can similarly check that all terms coming out of the integration by parts satisfy the bound~\eqref{eq_bound_drift_fastmodes}. 
We claim that this is also the case for the integrand in the last display. 
Indeed, 
in the notations of~\eqref{eq_def_WJphi_p},  
this term is of the form $W^{\{0\},\psi}_3$ for $\psi:\T^3_n\to\R$ bounded uniformly in $n,\zeta$. 
As explained in Section~\ref{sec_sketch_free_energy}, 
large deviation bounds can be used to bound terms of the form $W^{\{0\},\phi}_p$ for any $p\geq 3$ and bounded tensor $\phi:\T^p_n\to\R$. 
Introduce the event $A:= \{\max_i| \frac{1}{\ell_n}\sum_k \bar\eta^m_k{\bf 1}_{[i-\ell_n,i-1]}(k)|\leq \zeta'\}$, 
for $\zeta'$ that we are free to take arbitrarily small and note that large deviation bounds of Proposition~\ref{prop_LD_bound} imply:
\begin{equation}
\nu^{n}_g\big(f_u\,W^{\{0\},\psi}_3\, \big)
=
\nu^{n}_g\big({\bf 1}_{A}\, f_u\, W^{\{0\},\psi}_3\, \big)
+
o_n(1)
.
\end{equation}
Since $W^{\{0\},\psi}_3$ is smaller than $\zeta'\|\psi\|_\infty n$ on this event, 
using log-Sobolev and entropy inequalities as in~\eqref{eq_lsi_plus_entropy_sec8}, 
then the concentration estimates of Corollary~\ref{coro_better_concentration} yields~\eqref{eq_bound_drift_fastmodes}, 
taking $\zeta'$ suitably small as a function of $\zeta$ and defining $\delta'$ depending on $\zeta$. 
This concludes the proof.
\end{proof}
\subsubsection{Linear drift term}
In this section we conclude the estimate of the drift in~\eqref{eq_Ito_fast_modes}. 
We prove that, for each $r>0$:
\begin{equation}
\lim_{n\to\infty}\P_{\mu_n}\Big(\, \Big|\, \int_0^t y^n_s(H_{\sqrt{n}\, (t-s)})\, ds\, \Big| >r\Big)
=
0
.
\end{equation}
At first glance the time integral seems to be of order $1$ in $n$. 
Due to the time averaging it is in fact very close to its mean $0$. 
To see it, 
introduce a family of functions $(G^s)_{s\in[0,t]}$ in $\cC_0$ solving the inhomogeneous heat equation:
\begin{equation}
\partial_s G^{s} = \Delta G^s 
-
H_{s}
,\qquad 
G^0 \in\cC_0
.
\end{equation}
Note that we put the $s$ as an exponent to differentiate from the previous convention $H_s:= e^{s\Delta}H$. 
The semimartingale decomposition~\eqref{eq_Ito_fast_modes} then gives, 
for each $s\leq t$:
\begin{align}
y^n_t(G^{0})
& =
y^n_0(G^{\sqrt{n}\, t}) 
-\int_0^t 2\, a\, \theta \, y^n_s(G^{\sqrt{n}\, (t-s)})\, ds
 +O(n^{-1}) + M^{n,t}_{t}(G)
\nonumber\\
& \quad
- 8\, a\, \gamma^2\, \int_0^t \sum_{i\in\T_n}(G^{\sqrt{n}\, (t-s)})_i
\Big\{ \bar\eta^m_{i-1}\bar\eta^m_{i}\bar\eta^m_{i+1}
+ \ms M^n \, \mf h_i(\eta) \,
+ (\ms M^n)^2 \mf h'_i(\eta) + (\ms M^n)^3\Big\} (s)\, ds
\nnb
&\quad
+\sqrt{n}\int_0^t y^n_s\big(-\partial_s G^{\sqrt{n}\, (t-s)}+\Delta G^{\sqrt{n}\, (t-s)}\big)\, ds
,
\end{align}
where $\mf h_i (\eta)  = \bar\eta^m_{i-1}\bar\eta^m_{i+1}
+ \bar\eta^m_{i}\bar\eta^m_{i+1}  + \bar\eta^m_{i-1}\bar\eta^m_{i}$ and $\mf h'_i(\eta)=\bar\eta^m_{i-1}+\bar\eta^m_{i}+\bar\eta^m_{i+1}$. 
The integral in the last line is precisely
$\sqrt{n}$ times $\int_0^t y^n_s(H_{\sqrt{n}\, (t-s)})\, ds$. 
This integral can
therefore be estimated by bounding $n^{-1/2}$ times the other terms.  We
already know (Proposition~\ref{prop_nonlinear_drift_fastmodes}) that
the non-linear drift, 
i.e. the middle line, 
vanishes even without multiplying by $n^{-1/2}$. 
The
explicit expression for the quadratic variation of the martingale
$(M^{n,t}_{s}(G))_{s\in[0,t]}$ (see
Equation~\eqref{eq_rmk_quadvar_bounded_fastmode} below) gives a
trivial bound $O(\sqrt{n}\, )$.  In particular,
$n^{-1/2}\, M^{n,t}_{s}(G)$ also converges to $0$ in probability.
It remains to prove the convergence of:
\begin{equation}
\frac{1}{\sqrt{n}}|y^n_t(G^{0})|
,\quad
\frac{1}{\sqrt{n}}|y^n_0(G^{\sqrt{n}\, t})|,
\quad 
\frac{1}{\sqrt{n}}\,\int_0^t |y^n_s(G^{\sqrt{n}\, (t-s)})|\, ds
.
\end{equation}
To do so, note that the Duhamel formula gives for $G$:
\begin{equation}
G^{\sqrt{n}\, s}
=
e^{\sqrt{n}\, s\Delta}G^0 + \int_0^{\sqrt{n}\, s} e^{\sqrt{n}\, (s-u)\Delta}H_u\, du
=
e^{\sqrt{n}\, s\Delta}G^0 + \sqrt{n}\, s\, e^{\sqrt{n}\, s \Delta}H
,\qquad 
s\leq t
.
\end{equation}
Thus $\|G^{\sqrt{n}\, s}\|_{\infty}\leq c(G^0,H)\, e^{-c\sqrt{n}\, s}$ for some $c>0$ and each $s\leq t$ which immediately gives $\sup_{\eta\in\Omega_n}n^{-1/2}\, |y^n(G^{\sqrt{n}\, t})|
=
o_n(1)$
, 
and: 
\begin{align}
\frac{1}{\sqrt{n}}\,\int_0^t |y^n_s(G^{\sqrt{n}\, (t-s)})|\, ds
& \,\leq \,
\int_{0}^t \|G^{\sqrt{n}\, s}\|_{\infty} \, ds
\,\leq \,
\frac{c(G^0,H)\,}{\sqrt{n}}
.
\end{align}
For the remaining term $n^{-1/2}\, y^n_t(G^0)$, 
observe for instance that its being larger than $r>0$ is a large deviation event due to the extra $n^{-1/2}$ factor. 
Proposition~\ref{prop_LD_bound} thus ensures that this term also converges to $0$ in probability. 
\subsection{Convergence of $(y^n_t)_{t\in[0,T]}$: proof of item $(ii)$}\label{sec_convergence_fastmodes}
In view of Proposition~\ref{prop_yfast_closeto_mart}, 
we just need to prove:
\begin{equation}
\lim_{n\to\infty}\E_{\mu_n}\big[\, e^{i M^{n,t}_{t}(H)}\, \big]
=
\exp\Big[\, -\frac{1}{8}\|H\|^2_{\bb
L^2(\T)}-\frac{a}{4}(H,(-\Delta)^{-1} H) \, \Big]
.
\label{eq_cv_characfunct_fastmode}
\end{equation}
We start by computing the quadratic variation of $M^{n,t}_{\cdot}(H)$ in Section~\ref{sec_quadvar_Mfast}, 
then conclude in Section~\ref{sec_ccl_fastmode}.
\subsubsection{Convergence of quadratic variation}\label{sec_quadvar_Mfast}
Here we prove:
\begin{equation}
\lim_{n\to\infty} \E_{\mu_n}\bigg[\, \Big|\big<M^{n,t}_{\cdot}(H)\big>_t
-\frac{1}{4}\| H\|^2_{\bbL^2(\T)}
-
\frac{a}{2}(H, (-\Delta)^{-1}H)\, \Big|\, \bigg]
=
0
.
\label{eq_limit_quadvar_fastmode}
\end{equation}
Writing $\partial^n f(i):= n[f_{i+1}-f_i]$ for $f:\T\to\R$, 
the quadratic variation reads:
\begin{align}
\big<M^{n,t}_{\cdot}(H)\big>_s
&=
\int_0^s \frac{a\, \sqrt{n}\,}{n} \sum_{i\in\T_n} c(\tau_i\eta(u))\big(H_{\sqrt{n}\, (t-u)}\big)_i^2 \,du
\nnb
&\quad
+\int_0^s \frac{\sqrt{n}\,}{n} \sum_{i\in\T_n} |\eta_{i+1}(u)-\eta_i(u)|^2\, \big(\partial^n  H_{\sqrt{n}\, (t-u)}\big)_i^2 \,du 
.
\end{align}
Note that the exponential decay in $H_{\sqrt{n}\, (t-u)}$ for $u<t$ implies:
\begin{equation}
\forall s<t,\qquad 
\lim_{n\to\infty}\big<M^{n,t}_{\cdot}(H)\big>_s
=
0
.
\end{equation}
Note in addition that, as the jump rates are bounded above by $(1+\gamma^2)\leq 4$ (recall their definition~\eqref{01}),
one has:
\begin{equation}
\big<M^{n,t}_{\cdot}(H)\big>_t
\leq 
4\int_0^t \frac{\sqrt{n}\, }{n}\sum_{i\in\T_n}\Big[ a\, \big(H_{\sqrt{n}\, (t-s)}\big)_i^2 +\frac{1}{4}\big(H'_{\sqrt{n}\, (t-s)}\big)_i^2\, \Big]\,ds
.
\label{eq_rmk_quadvar_bounded_fastmode}
\end{equation}
This expression is clearly bounded by $O(\sqrt{n})$, 
but in fact is also bounded uniformly in $n$ as follows from:
\begin{equation}
\|(\partial^n H_{\sqrt{n}\, (t-s)})^2\|_\infty, \|H_{\sqrt{n}\, (t-s)}\|\leq c(H)e^{-c\sqrt{n}(t-s)}
.
\label{eq_bound_Hu_derH_u}
\end{equation} 
Let us now focus on proving~\eqref{eq_limit_quadvar_fastmode}.

Recall the expression~\eqref{01} of the jump rate $c$ and the elementary
formula
$|\eta_{i+1}-\eta_i|^2 = (1/2) -
2\bar\eta_i\bar\eta_{i+1}$.  
Isolating the constant part in the jump
rates and replacing discrete derivatives by continuous ones, one
finds, with a $o_n(1)$ error term uniform in the configuration:
\begin{align}
&\big<M^{n,t}_{\cdot}(H)\big>_t
=
\int_0^t \frac{\sqrt{n}\, }{n}\sum_{i\in\T_n}\Big[ a\, \big(H_{\sqrt{n}\, (t-s)}\big)_i^2 +\frac{1}{2}\big(\nabla H_{\sqrt{n}\, (t-s)}\big)_i^2\, \Big]\,ds
+ o_n(1)+\epsilon_n(t)
,
\label{eq_expression_quadvar_sec8}
\end{align}
where:
\begin{align}
&\epsilon_n(t)
:= 
\int_0^t \frac{\sqrt{n}\, }{n} \sum_{i\in\T_n}\Big[
-2\bar\eta_i\bar\eta_{i+1} \big(\partial^n H_{\sqrt{n}\,
(t-s)}\big)_i^2  
\nnb
&\hspace{5cm}+ 4\, \gamma\, a\, \big[ \, - 2 \bar\eta_{i}\bar\eta_{i+1}
+\gamma \bar\eta_{i-1}\bar\eta_{i+1}\,\big]
(H_{\sqrt{n}\, (t-s)})_i^2\Big]\, ds
.
\end{align}
Writing $\bar\eta=\bar\eta^m+\ms M^n$, 
and recalling the bound~\eqref{eq_bound_Hu_derH_u}, 
there is $C>0$ such that $\epsilon_n(t)$ can be bounded by:
\begin{align}
|\epsilon_n(t)|
&\leq
C\int_0^t  \sqrt{n}\, \big[(\ms M^n_s)^2 + |\ms M^n_s|\, \big] e^{-c\sqrt{n}(t-s)}\,  ds
+|\epsilon^m_n(t)|
,
\label{eq_bound_epsilon_n_sec8}
\end{align}
with $\epsilon^m_n(t)$ defined as $\epsilon_n(t)$ but with $\bar\eta^m$'s instead of $\bar\eta$'s. 
The fact that $\sup_{t\in[0,T]}\E_{\mu_n}[(\cY^n_t)^2]=O_n(1)$ by Theorem~\ref{theo_free_energy} ensures that the expectation of the first term in the right-hand side of~\eqref{eq_bound_epsilon_n_sec8} is bounded by $O(n^{-1/4})$.  
On the other hand, 
computations identical to those of Lemma~\ref{lemm_replacement_tightness} show that
$|\epsilon^m_n(t)|$ converges to $0$ in expectation, 
since $\epsilon^m_n(t)$ is a factor
$n^{-1/4}$ smaller than the quantity estimated in
Lemma~\ref{lemm_replacement_tightness}, see equation
\eqref{eq_def_Rm}).  \\

To conclude the proof of~\eqref{eq_limit_quadvar_fastmode}, 
it only remains to prove convergence of the constant term in~\eqref{eq_expression_quadvar_sec8} to the right-hand side of~\eqref{eq_limit_quadvar_fastmode}. 
A change of time variable gives:
\begin{align}
&\int_0^t \frac{\sqrt{n}\, }{n}\sum_{i\in\T_n}\Big[ a\, \big(H_{\sqrt{n}\, (t-s)}\big)_i^2 +\frac{1}{2}\big(\partial^n H_{\sqrt{n}\, (t-s)}\big)_i^2\, \Big]\,ds
\nnb
&\qquad
=
\frac{1}{n}\sum_{i\in\T_n} \int_0^{\sqrt{n}\, t} \Big[a\, (H_u)^2_i + \frac{1}{2}\big(\partial^n H_{u}\big)_i^2\Big]\, du
.
\end{align}
Recalling the bound~\eqref{eq_bound_Hu_derH_u}, 
the first term is equal to
\begin{align}
a\, \int_0^{\sqrt{n}\, t} \Vert H_u\Vert_2^2\, du \,+\, o_n(1)
\,=\, a\, \int_0^{\sqrt{n}\, t} (e^{2u\Delta} H , H) \, du \,+\, o_n(1)\;.
\end{align}
As $n\to\infty$, this expression converges to
\begin{align}
a\,  \int_0^{\infty} (e^{2u\Delta} H , H) \, du \,=\,
\frac{a}{2}\, (H, (-\Delta)^{-1}H)\;.
\end{align}
Similarly,
\begin{align}
\frac{1}{2n}\sum_{i\in\T_n} \int_0^{\sqrt{n}\, t} 
\big(\partial^n H_{u}\big)_i^2\, du \,=\,
\frac{1}{2} \int_0^{\sqrt{n}\, t} 
\big\Vert \nabla  H_{u}\big \Vert_2^2\, du \,+\, o_n(1)\;, 
\end{align}
and this expression converges to
\begin{align}
\frac{1}{4}(H,(-\Delta)^{-1} (-\Delta) H) \;=\;
\frac{1}{4}(H, H)\;.
\end{align}
\subsubsection{Conclusion of the proof}\label{sec_ccl_fastmode}
We now prove convergence of the characteristic function~\eqref{eq_cv_characfunct_fastmode}. 
Recall first that the following is a mean-1 martingale (see \cite[p336 Appendix A.7]{kl}):
for $s\in[0,T]$,
\begin{equation}
\cE^n_s(H) 
:=
\exp\Big[ iy^n_t(H) - iy^n_0(H_{\sqrt{n}\, t}) - \int_0^t e^{-iy^n_s(H_{\sqrt{n}\, (t-s)})}(\partial_s + \sqrt{n}\, L_n)\, e^{iy^n_s(H_{\sqrt{n}\, (t-s)})}\, ds
\Big]
.
\end{equation}
A direct computation of the action of the generator gives, 
recalling the semi-martingale decomposition~\eqref{eq_Ito_fast_modes} of $y^n_s(H_{\sqrt{n}\, (t-s)})$:
\begin{align}
\cE^n_s(H)
=
\exp\Big[i M^{n,t}_t(H)+\frac{1}{2}\<M^{n,t}_\cdot(H) \>_t + \int_0^t \omega^n_s\, ds\Big], 
\end{align}
where the remainder $\omega^n$ in the Taylor expansion of the exponentials is bounded for some $C>0$ by 
(the last line uses the bound~\eqref{eq_bound_Hu_derH_u}):
\begin{align}
\int_0^t |\omega^n_s|\, ds
&\leq 
\frac{1}{6}\int_0^t \sum_{i\in\T_n} \Big[ n^{5/2}\, \big|\nabla_{i,i+1}y^n_{s}(H_{\sqrt{n}\, (t-s)})\big|^3 
+ a\sqrt{n}\, c(\tau_i\eta_s)\, \big|\nabla_{i}y^n_{s}(H_{\sqrt{n}\, (t-s)})\big|^3\Big] \, ds
\nnb
&\leq 
C\int_0^t \max\big\{\|H_{\sqrt{n}\, (t-s)}\|_\infty, n^{-1}\|\partial^n H_{\sqrt{n}\, (t-s)})\|_\infty\big\}\, ds
=
O(n^{-1/2})
.
\end{align}
Since the quadratic variation is bounded uniformly in $n$ and the configuration (recall~\eqref{eq_rmk_quadvar_bounded_fastmode}),  
we have shown:
\begin{equation}
1 
=
\E_{\mu_n}\big[\cE^n_t(H)\big]
=
\E_{\mu_n}\bigg[ \exp\Big[i M^{n,t}_t(H)+\frac{1}{2}\<M^{n,t}_\cdot(H) \>_t \Big]\bigg]
+o_n(1)
.
\end{equation}
The convergence~\eqref{eq_limit_quadvar_fastmode} of the quadratic variation concludes the proof: 
\begin{align}
1
&= \E_{\mu_n}\bigg[ \exp\Big[i M^{n,t}_t(H)+\frac{1}{2}\<M^{n,t}_\cdot(H) \>_t \Big]\bigg] +o_n(1)
\nnb
&=
\E_{\mu_n}\big[ e^{i M^{n,t}_t(H)}\big] \exp\Big[\frac{1}{8}\|H\|_{\bb L^2(\T)}^2+\frac{a}{4}(H,(-\Delta)^{-1}H)\Big]
+o_n(1)
.
\end{align}

\newpage 

\appendix 

\section{Large deviation bounds}\label{app_LD}
In this section, 
we adapt to our present needs dynamical large deviation bounds from~\cite{LandimTsunoda}. 

Recall that $\ms M(\T)$ denotes the set of positive measures on the torus with mass bounded by $1$, 
equipped with the following distance $d$ which is a metric for weak convergence:
\begin{align}
{\rm d}(\mu_1,\mu_2)
&=
\sum_{k\in\N} \frac{1}{2^{k+1}}\Big[\big|\mu_1(\cos(2\pi k\cdot))-\mu_2(\cos(2\pi k\cdot))\big|
\nnb
&\hspace{4cm}+\big|\mu_1(\sin(2\pi k\cdot))-\mu_2(\sin(2\pi k\cdot))\big|\Big]
.
\label{eq_def_dweak}
\end{align}
For future reference, note that if $\mu_i=\rho_i\, dx$ for $i\in\{1,2\}$, 
then:
\begin{align}
{\rm d}(\mu_1,\mu_2)^2
&\leq 
\sum_{k\in\N} \frac{1}{2^k}\bigg[\Big|\int_{\T}(\rho_1-\rho_2)(x)\cos(2\pi k x))\, dx\Big|^2
\nnb
&\hspace{3cm}
+\Big|\int_{\T}(\rho_1-\rho_2)(x)\sin(2\pi k x))\, dx\Big|^2\bigg]
\nnb
&\leq 
\|\rho_1-\rho_2\|_2^2
.
\label{eq_bound_dw_by_L2}
\end{align}
Write $\pi^n=\frac{1}{n}\sum_i\eta_i \delta_{i/n}$ for the empirical measure associated with a configuration $\eta\in\Omega_n$ and $\pi^n_t=\pi^n(\eta^n_t)$ for the random measure induced by the dynamics with generator $\sqrt{n}\, L_n$ at time $t\geq 0$ (recall~\eqref{eq_def_generator}).   

To recover the Glauber + Kawasaki dynamics in~\cite{LandimTsunoda} in which the Kawasaki part is
diffusively rescaled (i.e. with generator $L_n$ rather than $\sqrt{n}\, L_n$), we need to consider the process $\eta^n_\cdot$
on the time-scale $tn^{-1/2}$. Recall that $V'(\rho) = 2(\rho-1/2)^3$
for $\gamma=1/2+o_n(1)$.

\begin{proposition}[Theorem 2.5 in~\cite{LandimTsunoda}]\label{prop_large_devs}
Let $\tau>0$, let $\rho_0:\T\to[0,1]$ be measurable and let $\eta^n\in\Omega_n$ ($n\in\N$) be associated with $\rho_0$ in the sense:
\begin{equation}
\lim_{n\to\infty} {\rm d}(\pi^n,\rho_0)
=
0
.
\end{equation}
The process $(\pi^n_{tn^{-1/2}})_{t\in[0,\tau]}$ with initial condition $\pi^n_0 = \pi^n(\eta^n)$ satisfies a large deviation principle on $\cD([0,\tau],\ms M(\T))$ with good rate function $I_\tau(\cdot|\rho_0)$. 

The quantity $I(\rho|\rho_0)$ may be finite only if $\rho_0$ is absolutely continuous with respect to Lebesgue measure and if $\rho$ is continuous in time. 
Moreover, 
the rate function $I(\cdot|\rho_0)$ vanishes at a trajectory $(\rho_t)_{t\in[0,\tau]}$ if and only if $\rho_t(dx)$ has a density with respect to Lebesgue measure for each $t\in[0,\tau]$, 
still denoted $\rho_t$, 
that is a weak solution of the hydrodynamic equation:
\begin{equation}
\partial_t\rho_t 
=
\Delta \rho_t - V'(\rho_t)
=
\Delta\rho_t - 2\Big(\rho_t-\frac{1}{2}\Big)^3,\quad
t\in(0,\tau),\qquad 
\rho_{t=0}=\rho_0
.
\label{eq_LLN_appendix}
\end{equation}
\end{proposition}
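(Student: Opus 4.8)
The plan is to deduce Proposition~\ref{prop_large_devs} directly from the large deviation principle established in~\cite{LandimTsunoda} for the standard (diffusively rescaled) Glauber + Kawasaki dynamics, the only work being to match conventions. First I would observe that the process $(\eta^n_{tn^{-1/2}})_{t\ge 0}$ has generator $n^{-1/2}\,\ms L_n = n^{-1/2}\sqrt{n}\,L_n = L_n = n^2 L^{\rm ex}_n + a L^G_n$, which is precisely the Glauber + Kawasaki generator treated in~\cite{LandimTsunoda} with the exclusion part accelerated by $n^2$ and a fixed reaction strength $a>0$. Hence the empirical measure $(\pi^n_{tn^{-1/2}})_{t\in[0,\tau]}$ is exactly the object for which Theorem 2.5 of~\cite{LandimTsunoda} provides a large deviation principle on $\cD([0,\tau],\ms M(\T))$ with a good rate function $I_\tau(\cdot\,|\,\rho_0)$, given the hydrodynamic-type convergence of the initial empirical measure ${\rm d}(\pi^n,\rho_0)\to 0$.

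The second step is to identify the reaction term. For the fixed choice $\gamma=\gamma_n = \tfrac12(1-\theta n^{-1/2})$ the potential $V$ from~\eqref{eq_LLN} has derivative $V'(\rho) = -(2\gamma-1)(2\rho-1) + \gamma^2(2\rho-1)^3$; since $2\gamma_n - 1 = -\theta n^{-1/2}\to 0$ and $\gamma_n\to\tfrac12$, the drift converges uniformly on $[0,1]$ to $\rho\mapsto \tfrac14(2\rho-1)^3 = 2(\rho-\tfrac12)^3$. So in the $n\to\infty$ limit the hydrodynamic equation entering the rate function is exactly~\eqref{eq_LLN_appendix}. Strictly speaking one should note that in~\cite{LandimTsunoda} the rate function is stated for a fixed reaction term, whereas here the term depends (mildly) on $n$ through $\gamma_n$; I would argue that the $O(n^{-1/2})$ perturbation of the bounded, smooth drift does not affect either the large deviation upper/lower bounds or the characterisation of the zero set, because the perturbation is uniformly small and the relevant estimates in~\cite{LandimTsunoda} are continuous in the reaction term (alternatively, one can absorb the perturbation into an error that vanishes in the large-deviation rate). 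The structural statements — that $I_\tau(\rho|\rho_0)<\infty$ forces $\rho_0$ absolutely continuous and $t\mapsto\rho_t$ continuous, and that $I_\tau(\cdot|\rho_0)$ vanishes exactly on weak solutions of~\eqref{eq_LLN_appendix} with initial datum $\rho_0$ — are then transcribed verbatim from~\cite{LandimTsunoda}, using uniqueness of weak solutions of the reaction-diffusion equation to pin down the zero set.

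I would also record here, for later use in the body of the paper, the concrete consequences that are actually invoked: the large deviation upper bound applied to the closed set of trajectories staying at distance $\ge\epsilon$ from the constant profile $\tfrac12$ (whose unique hydrodynamic trajectory from that profile is the constant one, since $\rho\equiv\tfrac12$ solves~\eqref{eq_LLN_appendix}), together with the exponential tightness / goodness of the rate function, yields: for every $\epsilon,\tau>0$ there is $c(\epsilon,\tau)>0$ with $\P^n_{\eta^n}\big(\sup_{t\le \tau n^{-1/2}} {\rm d}(\pi^n_t, \tfrac12\,dx)\ge\epsilon\big)\le c(\epsilon,\tau)^{-1} e^{-n c(\epsilon,\tau)}$, and the bound being uniform over initial conditions once one waits a time $\tau n^{-1/2}$ (this uniformity is what forces the $\tau n^{-1/2}$ delay, since at time $0$ the initial condition need not be near $\tfrac12$). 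The main — and essentially only — obstacle is bookkeeping: checking that the hypotheses of Theorem 2.5 of~\cite{LandimTsunoda} (regularity of rates, the $n^2$ acceleration of the exclusion part, the range of the Glauber rates, and the topology on $\ms M(\T)$) match the present setup, and handling the mild $n$-dependence of $\gamma_n$ in the reaction term; there is no new probabilistic content to produce.
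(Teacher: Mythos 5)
Your proposal takes essentially the same approach as the paper: Proposition~\ref{prop_large_devs} is stated in the paper as a direct citation of Theorem 2.5 of~\cite{LandimTsunoda}, preceded by precisely the remarks you make, namely that rescaling time by $n^{-1/2}$ converts the generator $\sqrt{n}\,L_n$ into the diffusively-scaled $L_n = n^2 L^{\rm ex}_n + a L^G_n$ treated in the reference, and that $V'(\rho)=2(\rho-1/2)^3$ for $\gamma = 1/2 + o_n(1)$. There is no separate proof in the paper to compare against.

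One point worth flagging, which you raise and handle more explicitly than the paper does: the jump rates $c_{\gamma_n}(\cdot)$ here genuinely depend on $n$ through $\gamma_n = \tfrac{1}{2}(1-\theta n^{-1/2})$, whereas Theorem~2.5 of~\cite{LandimTsunoda} is stated for a fixed reaction term. Your argument that a uniformly $O(n^{-1/2})$ perturbation of the smooth, bounded rates cannot alter the large deviation rate or the zero set is plausible, but you assert continuity of the LDP in the rates rather than prove it; the paper leaves the same gap implicit behind the phrase ``for $\gamma = 1/2+o_n(1)$''. A cleaner way to close it is to observe that the superexponential replacement estimates and the variational rate-function representation in~\cite{LandimTsunoda} hold with constants depending on the rates only through $\sup_\eta c_\gamma(\eta)$ and $\inf_\eta c_\gamma(\eta)$, both of which are bounded uniformly in $n$ for $\gamma_n \to 1/2$, and that the drift $V'$ appearing in the rate function converges uniformly on $[0,1]$. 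Your third paragraph, about the concrete exponential bound uniform over initial conditions after a delay $\tau n^{-1/2}$, is really the content of the separate Proposition~\ref{prop_LD_bound} (via the iteration over blocks of length $\tau_\epsilon n^{-1/2}$) rather than of the statement under review; also the displayed bound starting from the supremum over $t\le\tau n^{-1/2}$ contradicts the arbitrariness of the initial profile, as you yourself note at the end of the sentence. That inconsistency does not affect the proof of Proposition~\ref{prop_large_devs} itself.
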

We make use of a more general statement with unconstrained initial condition, 
which directly follows from the proof of Proposition~\ref{prop_large_devs} in~\cite{LandimTsunoda} and properties of the rate function. 
\begin{corollary}\label{coro_LD}
Let $\tau>0$. 
For any closed sets $C\subset \ms M(\T)$ and $\ms C\subset \cD([0,\tau],\ms M(\T))$, 
\begin{equation}
\limsup_{n\to\infty}\frac{1}{n}\log 
\sup_{\substack{\eta\in \Omega_n \\ \pi^n \in C}}
\P^n_\eta\big(\, (\pi^n_{tn^{-1/2}})_{t\in[0,\tau]}\in \ms C\big)
\leq 
-\inf_{\rho_0\in C}\inf_{(\rho_s)_{s\leq \tau}\in \ms C} I_\tau(\rho|\rho_0)
.
\end{equation}
\end{corollary}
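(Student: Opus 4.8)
The plan is to derive Corollary~\ref{coro_LD} from Proposition~\ref{prop_large_devs} by combining the upper bound half of the large deviation principle stated there with a uniformity-in-the-initial-condition argument and elementary properties of good rate functions. First I would recall that Proposition~\ref{prop_large_devs} gives, for \emph{each} fixed measurable $\rho_0$ and each sequence $\eta^n$ with $\mathrm d(\pi^n,\rho_0)\to 0$, a full large deviation principle on $\cD([0,\tau],\ms M(\T))$ with good rate function $I_\tau(\cdot\,|\,\rho_0)$; in particular the upper bound $\limsup_n \frac1n\log \P^n_{\eta^n}((\pi^n_{tn^{-1/2}})_{t\le\tau}\in\ms C)\le -\inf_{\rho\in\ms C} I_\tau(\rho|\rho_0)$ holds for every closed $\ms C$. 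The task is to upgrade this to a bound that is uniform over all initial configurations whose empirical measure lies in a fixed closed set $C\subset\ms M(\T)$, with the right-hand side now carrying an extra infimum over $\rho_0\in C$.

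The key structural input is that the standard proof of the dynamical large deviation upper bound for Glauber+Kawasaki (as in~\cite{LandimTsunoda}) proceeds via exponential martingales / a Feynman--Kac tilting and produces an estimate of the form $\P^n_{\eta}((\pi^n_{\cdot})\in\ms C)\le \exp\{-n(\inf_{\rho\in\ms C} J_\tau(\rho) - o(1))\}$ where the $o(1)$ is uniform in the starting configuration $\eta$, and where $J_\tau$ is a variational functional depending on the starting measure only through the value $\rho(0)$ of the trajectory at time $0$. Concretely, the rate function has the structure $I_\tau(\rho|\rho_0) = \widehat I_\tau(\rho)$ if $\rho(0)=\rho_0$ and $+\infty$ otherwise, so that $\inf_{\rho_0\in C}\inf_{\rho\in\ms C} I_\tau(\rho|\rho_0) = \inf\{\widehat I_\tau(\rho) : \rho\in\ms C,\ \rho(0)\in C\}$. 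So the plan is: (i) invoke the uniform-in-$\eta$ version of the upper bound from the proof in~\cite{LandimTsunoda}, which yields $\limsup_n \frac1n\log\sup_{\eta:\pi^n\in C}\P^n_\eta(\cdot\in\ms C)\le -\inf\{\widehat I_\tau(\rho):\rho\in\ms C\}$ once one notes the tilting estimate does not see the initial condition beyond $\pi^n_0$; (ii) observe that on the event $\{(\pi^n_{tn^{-1/2}})_{t\le\tau}\in\ms C\}$ one automatically has $\pi^n_0\in\{\mu:\exists\rho\in\ms C,\ \rho(0)=\mu\}$, and intersecting with $\{\pi^n\in C\}$ replaces $\ms C$ by $\ms C\cap\{\rho:\rho(0)\in C\}$, which is still closed since evaluation at time $0$ is continuous on $\cD([0,\tau],\ms M(\T))$ at trajectories that are continuous at $0$ — and $\widehat I_\tau$ is finite only on such trajectories; (iii) conclude $\inf\{\widehat I_\tau(\rho):\rho\in\ms C,\ \rho(0)\in C\}=\inf_{\rho_0\in C}\inf_{\rho\in\ms C} I_\tau(\rho|\rho_0)$, the last equality being the defining relation between $I_\tau(\cdot|\rho_0)$ and $\widehat I_\tau$.

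An alternative, perhaps cleaner, route avoids re-opening the proof of~\cite{LandimTsunoda}: cover $C$ (which is compact, being a closed subset of the compact space $\ms M(\T)$) by finitely many small balls $B(\mu_k,\epsilon)$, $k=1,\dots,N$; for each $k$ one has the fixed-initial-condition upper bound along any sequence $\eta^n$ with $\pi^n\to\mu_k$, and a compactness/subsequence argument together with lower semicontinuity of $(\rho_0,\rho)\mapsto I_\tau(\rho|\rho_0)$ (which holds because $I_\tau$ is jointly a good rate function, or can be checked directly from the variational formula) lets one pass to a uniform bound over $\eta^n$ with $\pi^n\in B(\mu_k,\epsilon)$ at the cost of an $\epsilon$-dependent error tending to $0$; then take the max over the finitely many $k$ and let $\epsilon\to 0$, using $\min_k\to\inf_{\rho_0\in C}$. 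The main obstacle is exactly this point: upgrading the \emph{pointwise in $\rho_0$} upper bound of Proposition~\ref{prop_large_devs} to a bound \emph{uniform over a compact set of initial data} with the correct infimum on the right. This requires either citing the uniformity that is implicit in the martingale-tilting proof of~\cite{LandimTsunoda} (the honest and shortest option, which the statement ``directly follows from the proof'' is signalling), or carefully combining finite covering with joint lower semicontinuity of the rate function and the fact that $I_\tau(\rho|\rho_0)<\infty$ forces $\rho$ to be continuous in time with $\rho(0)=\rho_0$, so that the subsequential limit trajectory arising from a diagonal argument indeed starts in $C$. Everything else — the measurability of the sup, the reduction to closed sets, the absolute-continuity constraints — is routine and inherited verbatim from Proposition~\ref{prop_large_devs}.
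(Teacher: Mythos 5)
Your proposal is correct, and your first route is precisely what the paper's one-line proof is invoking: the Feynman--Kac/exponential-martingale estimate underlying the LDP upper bound in~\cite{LandimTsunoda} is uniform in the starting configuration (the martingale is normalised independently of $\eta$, and the superexponential replacement lemma is proven via the entropy inequality with respect to $\nu^n_{1/2}$, for which $H_n(\delta_\eta\,|\,\nu^n_{1/2})\le n\log 2$ uniformly in $\eta\in\Omega_n$), while intersecting with $\{\pi^n\in C\}$ constrains the trajectory to satisfy $\rho(0)\in C$, and the resulting infimum agrees with that over the closure because trajectories with finite rate are continuous in time, so Skorohod convergence controls the value at $0$. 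Your covering/lower-semicontinuity route is a standard and valid alternative, but is more work than citing the built-in uniformity of the tilting argument.
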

In the next proposition, we use the large deviation principle in diffusive time (i.e. times of order $n^{-1/2}$ for the dynamics with generator $\sqrt{n}\, L_n$) 
to deduce information on time-scales of order $1$. 
To do so, we first describe properties of the hydrodynamic equation~\eqref{eq_LLN_appendix}. 
\begin{lemma}\label{lemm_properties_LLN}
Let $\epsilon>0$ and $\tau_\epsilon:= 2\epsilon^{-2}$. 
Then, for any initial condition $\rho_0(x)\, dx \in \ms M(\T)$:
\begin{equation}
\sup_{t\geq \tau_\epsilon} \|\rho_t-1/2\|_2
\leq
\frac{\epsilon}{2}
.
\end{equation}
\end{lemma}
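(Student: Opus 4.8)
The statement is a quantitative relaxation estimate for the hydrodynamic PDE $\partial_t\rho = \Delta\rho - 2(\rho-1/2)^3$ on the torus. The natural approach is an energy estimate for the centred quantity $u_t := \rho_t - 1/2$. First I would record that $u$ solves $\partial_t u = \Delta u - 2u^3$ with $u(0,\cdot)\in[-1/2,1/2]$ (at least in the weak sense guaranteed by Proposition~\ref{prop_large_devs}, with the usual parabolic smoothing making $u_t$ a classical solution for $t>0$; the maximum principle keeps $u_t$ valued in $[-1/2,1/2]$). Then I would compute
\begin{equation}
\frac{d}{dt}\|u_t\|_2^2 = -2\|\nabla u_t\|_2^2 - 4\int_{\T} u_t^4\,dx \leq -4\int_{\T} u_t^4\,dx .
\end{equation}
The key step is to discard the (nonpositive) gradient term and apply Jensen's inequality on the torus of total mass $1$: $\int_{\T} u_t^4\,dx \geq \big(\int_{\T} u_t^2\,dx\big)^2 = \|u_t\|_2^4$. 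Writing $y(t) := \|u_t\|_2^2 \geq 0$, this gives the differential inequality $y'(t) \leq -4\, y(t)^2$.

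Next I would integrate this scalar inequality. Comparison with the ODE $z' = -4z^2$, whose solutions are $z(t) = z(0)/(1+4z(0)t)$, yields $y(t) \leq y(0)/(1+4y(0)t) \leq 1/(4t)$ for all $t>0$, uniformly in the initial condition (this is where the a priori bound $y(0)\le 1/4$, coming from $|u_0|\le 1/2$, is not even needed — the bound $1/(4t)$ is initial-condition-free, which is exactly what the statement claims). Hence, for $t \geq \tau_\epsilon = 2\epsilon^{-2}$,
\begin{equation}
\|\rho_t - 1/2\|_2^2 = y(t) \leq \frac{1}{4t} \leq \frac{1}{4\tau_\epsilon} = \frac{\epsilon^2}{8} \leq \frac{\epsilon^2}{4},
\end{equation}
so $\|\rho_t - 1/2\|_2 \leq \epsilon/2$, which is the desired conclusion. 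The monotonicity of $y$ (it is nonincreasing) also gives the bound on the whole half-line $[\tau_\epsilon,\infty)$ at once.

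The only genuine technical point — and the place I would be most careful — is justifying the energy identity $\frac{d}{dt}\|u_t\|_2^2 = -2\|\nabla u_t\|_2^2 - 4\int u_t^4$ for the weak solutions produced by the large deviation theory, and the fact that $\rho_t$ is absolutely continuous with an $L^2$ density for $t>0$. This is standard parabolic regularity: for a bounded weak solution of a semilinear heat equation with smooth (here cubic) nonlinearity, instantaneous smoothing makes $\rho_t$ smooth for $t>0$, so one may differentiate under the integral sign and integrate by parts freely; the $L^\infty$ bound $|u_t|\le 1/2$ is preserved by the maximum principle. I would cite this briefly rather than prove it, since the PDE is classical, and then the rest is the elementary Jensen-plus-Gronwall computation above. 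An alternative that avoids even invoking regularity would be to run the same estimate at the level of the Galerkin/Fourier truncation used to define weak solutions and pass to the limit, but the direct argument is cleaner given that Proposition~\ref{prop_large_devs} already asserts time-continuity and absolute continuity of any finite–rate trajectory.
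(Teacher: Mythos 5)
Your proof is correct and follows essentially the same route as the paper: differentiate $\|u_t\|_2^2$, drop the nonpositive gradient term, apply Jensen's inequality to get the Riccati inequality $y' \le -4y^2$, and integrate. The paper integrates the differential inequality from an arbitrary $s>0$ (to sidestep the regularity issue at $t=0$ that you flag), arriving at the slightly looser but equally sufficient bound $\|\bar\rho_t\|_2^2 \le 1/(2t)$ for $t \ge 2s$; your direct bound $1/(4t)$ is a harmless improvement, and your explicit remarks on parabolic smoothing match what the paper invokes by citation.
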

\begin{proof}
By general arguments (see e.g.~\cite[Proposition A.3]{LandimTsunoda}
and~\cite[Proposition 2.1]{Mottoni}), there is a unique weak solution
$\rho_t$ of the hydrodynamic equation~\eqref{eq_LLN_appendix}, and
this solution is infinitely differentiable on $(0,\infty)\times\T$.
In particular the hydrodynamic equation is satisfied pointwise.
Writing $\bar\rho_t:=\rho_t-1/2$ we get for each $t>0$:
\begin{align}
\partial_t \|\bar\rho_t\|_2^2
&=
-\int_{\T} |\nabla\bar\rho_t(x)|^2\, dx - 4 \int_{\T} \bar\rho_t(x)^4\, dx 
\leq
- 4\|\bar\rho_t\|_2^4
.
\end{align}
This concludes the proof: 
for each $t\ge 2 s$,
\begin{equation}
\|\bar\rho_t\|_2^2 
\leq 
\frac{1}{4(t-s)+\|\bar\rho_s\|_2^{-2}}
\leq 
\frac{1}{2t}
\,\cdot
\end{equation}
\end{proof}
\begin{proposition}\label{prop_LD_bound}
Let $T>0$. 
For each $\epsilon>0$, there is $c(\epsilon,T)>0$ such that:
\begin{equation}
\forall n\geq 1,\forall t\in[\tau_\epsilon \, n^{-1/2},T],
\qquad
\sup_{\eta\in\Omega_n}\P_{\eta}^n\bigg({\rm d}\Big(\pi^n_{t},\frac{dx}{2}\Big)\geq \epsilon\bigg)
\leq 
c(\epsilon,T)^{-1}\, e^{-n c(\epsilon,T)}
,
\end{equation}
with $\tau_\epsilon$ the time of Lemma~\ref{lemm_properties_LLN}. 
\end{proposition}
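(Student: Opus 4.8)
The plan is to combine the large deviation upper bound of Corollary~\ref{coro_LD} (on the diffusive time-scale $n^{-1/2}$) with the convergence to equilibrium of the hydrodynamic equation established in Lemma~\ref{lemm_properties_LLN}. Fix $\epsilon>0$, $T>0$ and $t\in[\tau_\epsilon\, n^{-1/2},T]$. The key observation is that $\pi^n_t$ is obtained by running the process for a time $t n^{1/2}$ on the diffusive scale, and $tn^{1/2}\geq \tau_\epsilon$ by our choice of $t$. So on the diffusive scale we are looking at the process at a time at least $\tau_\epsilon$, which is precisely the time-scale after which Lemma~\ref{lemm_properties_LLN} guarantees that \emph{any} solution of the hydrodynamic equation is within $\epsilon/2$ of the constant profile $1/2$ in $\bb L^2$, hence (using~\eqref{eq_bound_dw_by_L2}) within $\epsilon/2$ in the distance ${\rm d}$.

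First I would set up the event. Let $\ms C^n_\epsilon \subset \cD([0,T'],\ms M(\T))$, with $T'$ a fixed horizon (say $T' = T+1$, so that $tn^{1/2}\leq T'$ for $n$ large, and separately handle small $n$ by a crude bound since there are finitely many configurations), be the closed set
\begin{equation}
\ms C^n_\epsilon := \Big\{ (\mu_s)_{s\in[0,T']} : {\rm d}\big(\mu_{tn^{1/2}}, dx/2\big)\geq \epsilon\Big\}.
\end{equation}
Actually, since $tn^{1/2}$ depends on $n$, it is cleaner to argue at fixed final time: for each fixed $s_0 \geq \tau_\epsilon$, the set $\{(\mu_r)_r : {\rm d}(\mu_{s_0}, dx/2)\geq \epsilon\}$ is closed in the Skorohod topology (evaluation at a continuous-in-time limit point is continuous; and the rate function is infinite off continuous trajectories, so we may intersect with $C([0,T'],\ms M(\T))$ where evaluation is genuinely continuous). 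Applying Corollary~\ref{coro_LD} with $C=\ms M(\T)$ (unconstrained initial condition) gives
\begin{equation}
\limsup_{n\to\infty}\frac{1}{n}\log \sup_{\eta\in\Omega_n}\P^n_\eta\Big({\rm d}\big(\pi^n_{s_0 n^{-1/2}}, dx/2\big)\geq \epsilon\Big)\leq -I_\epsilon^\star(s_0),
\end{equation}
where $I_\epsilon^\star(s_0) := \inf\{I_{s_0}(\rho|\rho_0) : \rho_0\in\ms M(\T),\ {\rm d}(\rho_{s_0}, dx/2)\geq \epsilon\}$. The content of Lemma~\ref{lemm_properties_LLN} together with the characterisation of the zero set of the rate function in Proposition~\ref{prop_large_devs} is exactly that $I_\epsilon^\star(s_0)>0$ for every $s_0\geq \tau_\epsilon$: a trajectory with $I_{s_0}(\rho|\rho_0)=0$ is a weak solution of~\eqref{eq_LLN_appendix}, hence satisfies $\|\rho_{s_0}-1/2\|_2\leq \epsilon/2$, hence ${\rm d}(\rho_{s_0},dx/2)\leq \epsilon/2<\epsilon$ by~\eqref{eq_bound_dw_by_L2}, contradicting $\rho\in\ms C_\epsilon$.

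The main obstacle — and the point requiring care — is getting a bound that is \emph{uniform in $t\in[\tau_\epsilon n^{-1/2},T]$}, i.e. uniform in $s_0 = tn^{1/2}\in[\tau_\epsilon, Tn^{1/2}]$, a range that grows with $n$. Pointwise-in-$s_0$ positivity of $I^\star_\epsilon$ is not enough; I need $\inf_{s_0\geq \tau_\epsilon} I^\star_\epsilon(s_0)>0$. To get this I would argue by contradiction and compactness: if there were $s_0^{(k)}$ and trajectories $\rho^{(k)}$ with $I_{s_0^{(k)}}(\rho^{(k)}|\rho^{(k)}_0)\to 0$ and ${\rm d}(\rho^{(k)}_{s_0^{(k)}},dx/2)\geq\epsilon$, one uses goodness of the rate function and the fact that near $s_0^{(k)}$ one can restrict $\rho^{(k)}$ to the time window $[s_0^{(k)}-\tau_\epsilon, s_0^{(k)}]$ and time-shift it back to $[0,\tau_\epsilon]$; the restricted/shifted trajectory has rate function bounded by $I_{s_0^{(k)}}(\rho^{(k)}|\rho_0^{(k)})\to 0$ (additivity/monotonicity of $I$ over disjoint time intervals), so by the good rate function property it is precompact and any limit point is a hydrodynamic solution evaluated at time $\tau_\epsilon$ — which by Lemma~\ref{lemm_properties_LLN} is within $\epsilon/2$ of $1/2$, contradicting the closed constraint ${\rm d}(\cdot,dx/2)\geq\epsilon$ passing to the limit. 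This yields a uniform constant $\bar c = \bar c(\epsilon,T)>0$ with $\sup_{\eta}\P^n_\eta({\rm d}(\pi^n_t,dx/2)\geq\epsilon)\leq e^{-n\bar c}$ for all large $n$ and all $t$ in the stated range; absorbing the finitely many small $n$ and the $\limsup$-versus-all-$n$ discrepancy into the constant (using that probabilities are at most $1$) gives the claimed form $c(\epsilon,T)^{-1}e^{-nc(\epsilon,T)}$. $\hfill\square$
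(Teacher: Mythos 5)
There is a genuine gap. You correctly identify the key difficulty -- uniformity in $t$, i.e.\ in the diffusive time $s_0 = t\sqrt{n}$ which ranges over $[\tau_\epsilon, T\sqrt{n}\,]$, a window that grows with $n$ -- but your proposed resolution addresses the wrong quantity. The compactness argument you sketch can indeed show $\inf_{s_0\geq\tau_\epsilon} I^\star_\epsilon(s_0)>0$ (positivity of the rate function uniformly in the final diffusive time), and that much is plausible. However, Corollary~\ref{coro_LD} is a $\limsup_{n\to\infty}$ statement on a \emph{fixed} diffusive time horizon $\tau$. Applying it at $\tau=s_0$ gives, for each fixed $s_0$, an $n$-threshold $n_0(s_0)$ past which the exponential bound holds; there is no control on how $n_0(s_0)$ behaves as $s_0\to\infty$, and you need the bound for $s_0$ as large as $T\sqrt{n}$. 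In other words, uniform positivity of the rate-function infimum does not give a uniform-in-$n$ probability bound over time horizons that themselves grow with $n$; the LDP simply does not apply on a moving horizon. (Your parenthetical ``$T'=T+1$ so that $t\sqrt{n}\leq T'$ for $n$ large'' is also backwards: for fixed $t>0$, $t\sqrt{n}\to\infty$.)

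The missing idea is a Markov chaining step, which is exactly what the paper does: split $[0,t]$ into $O(t\sqrt{n}/\tau_\epsilon)$ microscopic intervals of length $\tau_\epsilon n^{-1/2}$ and use the Markov property iteratively, so that the LDP from Corollary~\ref{coro_LD} is only ever invoked on the \emph{fixed} diffusive window $[0,2\tau_\epsilon]$ (starting from an arbitrary configuration). There the closed bad set $S_\epsilon=\{\sup_{s\in[\tau_\epsilon,2\tau_\epsilon]}{\rm d}(\rho_s,dx/2)\geq\epsilon\}$ contains no hydrodynamic trajectory by Lemma~\ref{lemm_properties_LLN}, giving an exponential decay uniform in the initial configuration, and the polynomial chaining factor $t\sqrt{n}/\tau_\epsilon$ is harmless. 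Your rate-function-level observations are essentially the content of that last step, but without the chaining they cannot be converted into the desired probability estimate.
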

\begin{proof}
Let $\epsilon>0$ and $t\geq \tau_\epsilon n^{-1/2}$. 
Write $\nu^\eta_t$ for the law of the dynamics at time $t$ starting from $\eta\in\Omega_n$. 
Split the interval $[0,t]$ in $p_{n,\epsilon}:=\lfloor t\sqrt{n}/\tau_\epsilon\rfloor$ intervals of length $\tau_\epsilon n^{-1/2}$ of the form $[k\tau_\epsilon n^{-1/2},(k+1)\tau_{\epsilon}n^{-1/2}]$, 
except possibly the first one that may have length up to $2\tau_\epsilon$. 
The Markov property gives: 
\begin{align}
&\P_{\eta}^n\bigg({\rm d}\Big(\pi^n_{t},\frac{dx}{2}\Big)\geq \epsilon\bigg)
\nnb
&\quad \leq 
\P_{\eta}^n\bigg({\rm d}\Big(\pi^n_{t-\tau_\epsilon n^{-1/2}},\frac{dx}{2}\Big)\geq \epsilon\bigg)
+\P_{\nu^\eta_{t-\tau_\epsilon n^{-1/2}}}^n\bigg({\rm d}\Big(\pi^n_{\tau_\epsilon n^{-1/2}},\frac{dx}{2}\Big)\geq \epsilon,{\rm d}\Big(\pi^n_{0},\frac{dx}{2}\Big)\leq \epsilon\bigg) 
\nnb
&\quad\leq 
\P_{\eta}^n\bigg({\rm d}\Big(\pi^n_{t-(p_{n,\epsilon}-1)\tau_\epsilon\, n^{-1/2}},\frac{dx}{2}\Big)\geq \epsilon\bigg)
\nnb
&\qquad 
+\Big(\,\frac{ t\sqrt{n}\, }{\tau_\epsilon}-1\Big)\sup_{\substack{\eta'\in\Omega_n \\ {\rm d}(\pi^n,dx/2)\leq \epsilon}}\P_{\eta'}^n \bigg({\rm d}\Big(\pi^n_{\tau_\epsilon n^{-1/2}},\frac{dx}{2}\Big)\geq \epsilon\bigg) 
.
\end{align}
As $t-(p_{n,\epsilon}-1)\tau_\epsilon n^{-1/2}\in[\tau_\epsilon n^{-1/2},2\tau_\epsilon n^{-1/2}]$ and taking a supremum in $\eta$, 
we obtain the bound:
\begin{equation}
\sup_{\eta\in\Omega_n}\P_{\eta}^n\bigg({\rm d}\Big(\pi^n_{t},\frac{dx}{2}\Big)\geq \epsilon\bigg)
\leq 
\frac{ t\sqrt{n}\, }{\tau_\epsilon}\, 
\sup_{\eta\in\Omega_n }\P_{\eta}^n \bigg(\sup_{s\in[\tau_\epsilon,2\tau_\epsilon]}{\rm d}\Big(\pi^n_{s\, n^{-1/2}},\frac{dx}{2}\Big)\geq \epsilon\bigg) 
\end{equation}
The set $S_\epsilon := \{\sup_{s\in[\tau_\epsilon,2\tau_\epsilon]}{\rm d}(\rho_{s},dx/2)\geq \epsilon\}$ is closed in $\cD([0,2\tau_\epsilon],\ms M(\T))$. 
From Lemma~\ref{lemm_properties_LLN} and the explicit expression~\eqref{eq_def_dweak} of the distance ${\rm d}$ together with the bound ${\rm d}(\rho_{s},dx/2)\leq \|\rho_s-1/2\|_2$, see~\eqref{eq_bound_dw_by_L2}, 
$S_\epsilon$ does not contain any hydrodynamic trajectory. 
Since $I_{2\tau_\epsilon}(\cdot|\rho_0)$ is a good rate function for each measurable $\rho_0$, 
\begin{equation}
\inf_{\rho\in S_\epsilon} I_{2\tau_\epsilon}(\rho|\rho_0)
>
0
.
\end{equation}
Invoking Corollary~\ref{coro_LD} completes the proof. 
\end{proof}
\begin{remark}\label{rmk_no_global_continuity_LD}
In Section~\ref{subsec_renorm_sec6} we make use of large deviation bounds on $\pi^n(\phi^n)$ for test functions of the form $\phi^n(\cdot)=\psi(\cdot,i_2/n,...,i_p/n)$ for $p\geq 1$, $i_2,...,i_p\in\T_n$, $n\geq 1$ and a function $\psi$ continuous on the closure of $[0,1]^{p}\setminus\{x_1,...,x_{p} \text{ all different}\}$. 
One readily checks that Proposition~\ref{prop_LD_bound} can be used to quantify deviations of $\pi^n(\phi^n)$. 
Indeed, let $\epsilon,\delta>0$ and take a finite cover of $[0,1]^{p-1}$ by balls of radius $\delta$. 
As $\psi$ is uniformly continuous on the closure of $[0,1]^{p}\setminus\{x_1,...,x_{p} \text{ all different}\}$, 
taking $\delta$ small enough we can find some ${\bf x}=(x_2,...,x_p)\in[0,1]^{p-1}$ such that:
\begin{equation}
\bb P^n\Big(\, \Big|\pi^n(\phi^n)
-\frac{1}{2}\int_{\T}\phi^n(x)\, dx\, \Big| > \epsilon\,\Big)
\leq 
\bb P^n\Big(\, \Big|\pi^n(\psi(\cdot,{\bf x}))
-\frac{1}{2}\int_{\T}\psi(x,{\bf x})\, dx\, \Big| > \frac{\epsilon}{2} \,\Big)
.
\end{equation}
The function $\psi(\cdot,{\bf x})$ can then be approximated uniformly by a continuous function $\tilde\phi^{\bf x}$, 
equal to $\psi(\cdot,{\bf x})$ except at distance at most $\delta$ from $x_2,...,x_p$, 
and linearl on each $[x_i-\delta,x_i+\delta]$. 
Taking $\delta$ smaller than $\epsilon/(4(p-1)\|\psi\|_\infty)$ then gives:
\begin{equation}
\bb P^n\Big(\, \Big|\pi^n(\phi^n)
-\frac{1}{2}\int_{\T}\phi^n(x)\, dx\, \Big| > \epsilon\,\Big)
\leq 
\bb P^n\Big(\, \Big|\pi^n(\tilde\phi^{\bf x})
-\frac{1}{2}\int_{\T}\tilde\phi^{\bf x}\, dx\, \Big| > \frac{\epsilon}{4} \,\Big)
.
\end{equation}
The last event can be expressed in terms of the weak distance of $\pi$ to $dx/2$ (recall~\eqref{eq_def_dweak}) uniformly in ${\bf x}$ since, 
for each integer $k\in\Z$, 
the $k^{\text{th}}$ Fourier coefficient of $\tilde\phi^{\bf x}$ is bounded from below uniformly in ${\bf x}$ by construction. 
\end{remark}

\section{Properties of the reference measures}\label{app_concentration}
In this section, we present estimates involving the measures
$\nu^n_U$, $\nu^n_g$ needed in the proof of Theorem
\ref{theo_convergence_magnetisation}, as well as the initial
conditions $\mu^n_{b,c}$ of Remark~\ref{rm_IC}.

\subsection{The reference measure $\nu^n_U$ and initial conditions}\label{app_refmeas_nuU}

Recall from \eqref{eq_def_nu_V} the definition of the partition
function $Z^n_U$.

\begin{lemma}
\label{n-29}
For all $\theta\in\bb R$,
\begin{equation*}
\lim_{n\to\infty} \frac{1}{n^{1/4}}\, Z^n_U \,=\,
\sqrt{\frac{2}{\pi}} \,  \int_{\bb R} e^{-2\theta x^2 -x^4}\, dx \;.
\end{equation*}
\end{lemma}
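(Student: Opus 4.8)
The statement is a local-limit-type asymptotics for the partition function $Z^n_U = \sum_{\eta\in\Omega_n} e^{nU(\mathfrak m/n)} 2^{-n} = E_{\nu^n_{1/2}}[e^{nU(\mathfrak m/n)}]$, where $\mathfrak m = \sum_i \bar\eta_i$. The natural approach is to rewrite this expectation as a sum over the value of $\mathfrak m$, insert the local central limit theorem for the binomial weights $\nu^n_{1/2}(\mathfrak m = k - n/2) = \binom{n}{k}2^{-n}$, and recognise a Riemann sum converging to a Gaussian integral after the correct rescaling. The relevant scaling is dictated by the heuristics of the paper: fluctuations of $\mathfrak m$ under $\nu^n_U$ are of order $n^{3/4}$, so one should substitute $\mathfrak m = n^{3/4} x$, which is why the prefactor $n^{1/4}$ (coming from $n^{3/4}/\sqrt n$, the ratio of the natural scale to the Gaussian scale) appears.

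First I would write
\begin{equation}
Z^n_U \;=\; \sum_{k=0}^n \binom{n}{k} 2^{-n}\, e^{n U((k - n/2)/n)}
\;=\; E_{\nu^n_{1/2}}\big[\, e^{n U(\mathfrak m/n)}\,\big]\;.
\end{equation}
Then I would use the expansion of $U$ around $0$ recorded in the excerpt: $U(0)=U'(0)=U^{(3)}(0)=0$, $U''(0) = 4(1-\theta/\sqrt n)$, $U^{(4)}(0) = 8(1-\theta/\sqrt n)^3$, together with convexity and symmetry of $U$. Writing $\varrho = \mathfrak m/n$ and recalling $\gamma = \gamma_n = \tfrac12(1-\theta/\sqrt n)$, a Taylor expansion gives
\begin{equation}
n U(\varrho) \;=\; \tfrac{1}{2}\,n\,U''(0)\,\varrho^2 \;+\; \tfrac{1}{24}\,n\,U^{(4)}(0)\,\varrho^4 \;+\; n\,O(\varrho^6)\;.
\end{equation}
Now substitute $\mathfrak m = n^{3/4} x$, i.e. $\varrho = n^{-1/4} x$: the quadratic term becomes $\tfrac12 U''(0)\, n^{1/2} x^2 = 2(1-\theta/\sqrt n) \sqrt n\, x^2 = 2\sqrt n\, x^2 - 2\theta x^2 + o(1)$ on the relevant range of $x$; the quartic term becomes $\tfrac{1}{24}U^{(4)}(0)\, x^4 = \tfrac13 x^4 + o(1)$; and the remainder $n O(\varrho^6) = O(n^{-1/2} x^6) = o(1)$ uniformly for $x$ in compact sets. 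The troublesome-looking divergent term $2\sqrt n\, x^2$ is exactly cancelled: the binomial weight satisfies, by the local CLT \eqref{n-11} (or Stirling), $\binom{n}{k}2^{-n} \sim \sqrt{2/(\pi n)}\, e^{-2 (k-n/2)^2/n} = \sqrt{2/(\pi n)}\, e^{-2\sqrt n\, x^2}$ with $k - n/2 = n^{3/4} x$. Hence the $e^{2\sqrt n\, x^2}$ from $nU$ and the $e^{-2\sqrt n x^2}$ from the binomial coefficient combine to give the net weight $e^{-2\theta x^2 - \tfrac13 x^4 (1+o(1))}$... wait, here one must be careful: the exponent is $-\tfrac13 x^4$ from $nU^{(4)}$ but also there is a $-2 x^4$-type correction hidden in the binomial local CLT. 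Indeed the sharper Stirling expansion gives $\log\binom{n}{k}2^{-n} = -2\sqrt n x^2 - \tfrac{4}{3} x^4 + o(1)$ when $k-n/2 = n^{3/4}x$ (this is the content of $E^{(4)}(1/2)=32$ in \eqref{n-38}, since $E(\rho)$ is the entropy and $-nE(\mathfrak m/n) = \log\binom{n}{k}2^{-n} + O(\log n)$, with $nE(n^{-1/4}x\cdot\ldots)$ expanded: $\tfrac12 E''(1/2)(2n^{-1/4}x)^2 n \cdot \ldots$, giving $2\sqrt n x^2$ and $\tfrac{1}{24}E^{(4)}(1/2) (2x)^4 = \tfrac{32}{24}\cdot 16 x^4/\ldots$). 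The correct bookkeeping: $nU(\varrho) - nE(\varrho) = -nW(\varrho)$ where $W = E - U_0$ is nonnegative convex, quartic at $1/2$ with $W^{(4)}(1/2)$ computed from $E^{(4)}(1/2) - U^{(4)}(0) = 32 - 8 = 24$ at $\gamma=1/2$ (up to the $\theta/\sqrt n$ corrections). So $nU(\varrho) + \log(\binom nk 2^{-n}) = -n W(\varrho) + O(\log n)$, and $n W(n^{-1/4} x) = \tfrac{1}{24} W^{(4)}(1/2)(2x)^4 + 2\theta x^2 + o(1) = x^4 + 2\theta x^2 + o(1)$. Hmm, I need $\tfrac{1}{24}\cdot 24 \cdot (2x)^4 / 2^4$... let me not grind: the point is the constants are pinned down by \eqref{n-34} and \eqref{n-38} and land on $e^{-2\theta x^2 - x^4}$. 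The main step is therefore to carry out this Taylor bookkeeping carefully, tracking the $\theta/\sqrt n$ corrections to $U''(0)$ and $U^{(4)}(0)$, and checking that the quadratic divergence cancels and the net quartic coefficient is $1$ and the net quadratic coefficient is $2\theta$.

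Having pointwise convergence of the summand (times $n^{3/4}$, the spacing of the lattice $\{n^{-3/4}(k-n/2)\}$) to $\sqrt{2/\pi}\, e^{-2\theta x^2 - x^4}$, I would convert the sum $\tfrac1{n^{1/4}} Z^n_U = n^{3/4}\sum_k \tfrac1n \big(\ldots\big)$... more precisely $\tfrac1{n^{1/4}}Z^n_U = \sum_k n^{-3/4}\, g_n(n^{-3/4}(k-n/2))$ with $g_n(x) = n^{3/4}\cdot\tfrac1{n^{3/4}}\cdots$; this is a Riemann sum over the lattice of spacing $n^{-3/4}$ and converges to $\int_{\mathbb R} \sqrt{2/\pi}\, e^{-2\theta x^2 - x^4}\, dx$. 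The one genuine obstacle is \emph{uniform integrability / domination}: I must control the tail of the sum for $|x|$ not in a fixed compact set, to justify passing the limit inside the sum (dominated convergence for sums, i.e. a uniform-in-$n$ integrable bound on $g_n$). For this I would use the convexity and nonnegativity of $W = E - U_0$: since $W\geq 0$, one has $g_n(x) \leq \sqrt{2/\pi}$ trivially on a bounded range, and for large $|x|$ (including the ``macroscopic'' regime $\varrho = \mathfrak m/n$ bounded away from $0$) one uses that $W$ is strictly positive and bounded below by a positive constant times $\varrho^2$ on $[\epsilon, 1/2]$, giving exponential decay $e^{-cn}$ for the macroscopic tail and a Gaussian-type bound $e^{-c x^2}$ for the mesoscopic range $1 \ll |x| \ll n^{1/4}$. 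Assembling these tail bounds with the pointwise convergence and dominated convergence yields the claimed limit. The rest — checking $W^{(4)}(1/2) = 24$ at $\gamma = 1/2$ so that the quartic coefficient is exactly $1$, and that the $O(1/\sqrt n)$ corrections produce exactly the $2\theta x^2$ term — is a routine but careful computation using \eqref{n-34}, \eqref{n-38}, and the explicit formula \eqref{n-07} for $U_0''$.
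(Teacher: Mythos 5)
Your proposal is correct and follows essentially the same route as the paper's proof: write $Z^n_U$ as a sum over $k$, apply Stirling/local CLT to the binomial weights, Taylor expand $W = E - U_0$ in the mesoscopic window $|k - n/2| \lesssim n^{3/4}$ using \eqref{n-34} and \eqref{n-38} to extract the exact coefficients $2\theta$ and $1$, recognise a Riemann sum on the lattice of spacing $n^{-3/4}$, and control the tails via $W(x) \ge c[x - 1/2]^4$. The brief confusion in your intermediate bookkeeping (the spurious factors of $2$ in $(2n^{-1/4}x)^2$ and $(2x)^4$) self-corrects, and you land on the right coefficients $nE \approx 2\sqrt n x^2 + \tfrac43 x^4$, $nU \approx 2\sqrt n x^2 - 2\theta x^2 + \tfrac13 x^4$, hence $nW \approx 2\theta x^2 + x^4$.
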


\begin{proof}
Since $U_0(0) = U_0(1) < \log 2$, the terms $k=0$, $n$ in the sum
which defines $Z^n_U$ give a contribution of order $e^{-c_0n}$ for
some $c_0>0$. Fix $\kappa>0$ which will increase to $+\infty$ after
$n$, and divide the remaining sum in two pieces, the first one over
the set $\Lambda^1_n = \{k: |k - (n/2)|\le \kappa \, n^{3/4}\}$, and the
second one over
$\Lambda^2_n  = \{1, \dots, n-1\} \setminus \Lambda^1_n$.

Recall from \eqref{n-34}. \eqref{n-38} the values of the derivatives
of $U_0$ at $1/2$ (which coincide with those of $U$ at $0$), and the
ones of $E$. By a fifth order Taylor expansion,
$W(k/n) := E(k/n) - U_0(k/n) = 2(\theta/\sqrt{n}) (\mf m/n)^2 + (\mf
m/n)^4 (1 + o_n(1))$ for all $k\in \Lambda^1_n$. By this estimate
and Stirling's formula,
\begin{equation*}
\begin{aligned}
\sum_{k\in \Lambda^1_n} e^{n U_0(k/n)} { n \choose k} \,
\Big(\frac{1}{2}\Big)^n\,  & =\, (1 + o_n(1))\, \sqrt{\frac{2}{\pi n}}
\sum_{k\in \Lambda^1_n} e^{- 2 (\theta/\sqrt{n}) (\mf m/n)^2  -n (\mf m/n)^4} \\
\, & =\, (1 + o_n(1) + o_\kappa(1) )\, \sqrt{\frac{2}{\pi}} \, n^{1/4}
\int_{\bb R} e^{- 2\theta x^2 -x^4}\, dx  \;.
\end{aligned}
\end{equation*}
In this formula, $o_\kappa(1)$ represents a term that vanishes as
$n\to\infty$ and then $\kappa\to\infty$.  On the other hand, since
$W(x) \ge c\, [x-(1/2)]^4$ for some $c>0$, by the same type of
arguments,
\begin{equation*}
\sum_{k\in \Lambda^2_n} e^{n U_0(k/n)} { n \choose k} \,
\Big(\frac{1}{2}\Big)^n \,=\, o_\kappa(1) \, n^{1/4}\;.
\end{equation*}
Adding together the previous identities completes the proof of the lemma.
\end{proof}

Recall the definition of the measure $\mu^n_{b,c}$, $\mu_b$ introduced
in \eqref{n-30}, \eqref{n-32}, respectively. The same sort of argument
shows that
\begin{equation}
\label{n-31a}
\lim_{n\to\infty} \frac{1}{n^{1/4}}\, Z^n_{b,2} \,=\,
\sqrt{\frac{2}{\pi}} \,  \int_{\bb R} e^{b x^2 - (3/2) x^4}\, dx\;,
\quad 
\lim_{n\to\infty}  E_{\mu^n_{b,2}} \big[ \, F(\mc Y^n) \, \big]  \,=\,
\int_{\bb R} F(x) \, \mu^n_{b} (dx)  
\end{equation}
for all bounded continuous function $F\colon \bb R\to \bb R$.  A
similar analysis can be carried out for the partition function
$Z^n_{b,c}$, and the measure $\mu^n_{b,c}$ for $c<2$, provided the set
$\Lambda^1_n$ is defined as the set
$\{k: |k - (n/2)|\le \kappa\, \sqrt{n}\, \}$. It yields that
\begin{equation}
\label{n-31b}
\lim_{n\to\infty} Z^n_{b,c} \,=\,
\sqrt{\frac{2}{\pi}} \,  \int_{\bb R} e^{-(2-c) x^2}\, dx\;,
\quad 
\lim_{n\to\infty}  E_{\mu^n_{b,c}} \big[ \, F(\mc Y^n) \, \big]  \,=\, F(0)
\end{equation}
for all $c<2$, $b\in \bb R$, and bounded continuous function
$F\colon \bb R\to \bb R$.

The same arguments also permit to estimate the relative entropy of the
measures $\mu^n_{b,c}$, $\mu^n_{b,2}$ with respect to $\nu^n_{1/2}$.
In particular, these measures satisfy the requirements on initial
conditions in Theorem~\ref{theo_convergence_magnetisation}.
\begin{lemma}
\label{n-s18}
For all $b\in \bb R$, $c< 2$, there exists a finite constant $C_0=
C_0(b,c, \theta)$
such that
\begin{gather*}
H_n(\mu^n_{b,2} \,|\, \nu^n_{U}) \,\le\, C_0\;, \quad
H_n(\mu^n_{b,c} \,|\, \nu^n_{U}) \,\le\, C_0 \, \log n \;,
\\
H_n(\mu^n_{b,2} \,|\, \nu^n_{1/2}) \,\le\, C_0 \sqrt{n} \;, \quad
H_n(\mu^n_{b,c} \,|\, \nu^n_{1/2}) \,\le\, C_0 \;,
\end{gather*}
for all $n\ge 1$.
\end{lemma}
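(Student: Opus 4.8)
The statement collects four relative-entropy bounds for the ``mean-field'' initial measures $\mu^n_{b,c}$ against the two reference measures $\nu^n_{1/2}$ and $\nu^n_U$. The starting point is the elementary identity for relative entropy between two tilts of the same base measure. Writing $\mu^n_{b,c} = (Z^n_{b,c})^{-1} e^{F_{b,c}(\mathfrak m/n)}\nu^n_{1/2}$ with $F_{b,c}(\varrho) := cn\varrho^2 + b\sqrt n\,\varrho^2$ (and $\mathfrak m = \sum_i\bar\eta_i$), and similarly $\nu^n_U = (Z^n_U)^{-1} e^{nU_0(1/2+\mathfrak m/n)}\nu^n_{1/2}$, one has
\begin{equation*}
H_n(\mu^n_{b,c}\,|\,\nu^n_{1/2})
= \int \log\frac{d\mu^n_{b,c}}{d\nu^n_{1/2}}\, d\mu^n_{b,c}
= E_{\mu^n_{b,c}}\!\big[F_{b,c}(\mathfrak m/n)\big] - \log Z^n_{b,c},
\end{equation*}
and the analogous formula with $\nu^n_U$ in place of $\nu^n_{1/2}$ where $F_{b,c}$ is replaced by $F_{b,c} - nU_0(\cdot)$ and $Z^n_{b,c}$ by $Z^n_{b,c}/Z^n_U$. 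So all four bounds reduce to (i) estimating the expectation of a quartic/quadratic function of the magnetisation under $\mu^n_{b,c}$, and (ii) the partition function asymptotics, both of which are controlled by exactly the Laplace/Stirling computation already used in the proof of Lemma~\ref{n-29} and in deriving \eqref{n-31a}--\eqref{n-31b}.

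First I would treat the $c=2$ cases, which are the delicate ones because the Gaussian part of $F_{b,2}-nU_0$ degenerates. Using $nU_0(1/2+\varrho) = 4n\varrho^2 - \tfrac{8\theta}{\sqrt n}n\varrho^2 + \tfrac{8}{3}n\varrho^4(1+o(1))$ from \eqref{n-34}, the exponent $F_{b,2}(\varrho) - nU_0(\varrho)$ equals, on the scale $\varrho = x/n^{1/4}$, a bounded expression of the form $b x^2 + \tfrac{8\theta}{\sqrt n}\cdot(\text{bounded}) - \tfrac83 x^4 + o(1)$, uniformly for $|k-n/2|\le \kappa n^{3/4}$; outside this window $W = E - U_0 \ge c[\varrho]^4$ forces exponential smallness exactly as in Lemma~\ref{n-29}. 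Hence $E_{\mu^n_{b,2}}[F_{b,2}(\mathfrak m/n) - nU_0(\mathfrak m/n)]$ is $O(1)$ (it is an average of a bounded quantity under the measure converging to $\mu_b$ by \eqref{n-31a}), and $\log(Z^n_{b,2}/Z^n_U)$ converges to a finite constant by \eqref{n-31a} and Lemma~\ref{n-29}; this gives $H_n(\mu^n_{b,2}\,|\,\nu^n_U) \le C_0$. For $H_n(\mu^n_{b,2}\,|\,\nu^n_{1/2})$ I keep the full exponent $F_{b,2}(\varrho) = 2n\varrho^2 + b\sqrt n\,\varrho^2$: on the relevant scale $\varrho\approx n^{-1/4}$ this is $2\sqrt n\, x^2 + O(1)$, so its expectation under $\mu^n_{b,2}$ is $O(\sqrt n\,)$, and $\log Z^n_{b,2} = \tfrac14\log n + O(1)$ by \eqref{n-31a}, whence $H_n(\mu^n_{b,2}\,|\,\nu^n_{1/2}) \le C_0\sqrt n$.

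Next the $c<2$ cases, where one works on the Gaussian scale $|k-n/2|\le\kappa\sqrt n\,$, i.e. $\varrho = x/\sqrt n$. Here $F_{b,c}(\varrho) - nU_0(\varrho) = (c-4)x^2 + \tfrac{8\theta}{\sqrt n}x^2 + \tfrac{b}{\sqrt n}x^2 + O(n^{-1}x^4)$, which is $O(1)$ on this window but whose expectation under $\mu^n_{b,c}$ must be checked more carefully: since $c<2$ but possibly $c\in(2,4)$ is \emph{not} allowed (the hypothesis is $c<2$), the coefficient $c-4<-2<0$ gives genuine Gaussian decay, the window captures all but $e^{-c'n^{?}}$ of the mass, and by \eqref{n-31b} the conditional law of $\mathcal Y^n = \mathfrak m/n^{1/4} = x/n^{1/4}$ concentrates at $0$. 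The subtlety is that $x$ ranges up to $\kappa\sqrt n$, so $F_{b,c}(\varrho) - nU_0(\varrho)$, although $O(1)$ pointwise on most of the window, has a tail contribution of size up to $O(\log n)$ once one integrates the Gaussian tail against the quadratic; this is the origin of the $\log n$ in $H_n(\mu^n_{b,c}\,|\,\nu^n_U) \le C_0\log n$. For $H_n(\mu^n_{b,c}\,|\,\nu^n_{1/2})$ one uses $F_{b,c}(\varrho) = cx^2 + b n^{-1/2}x^2$ directly: its expectation is $O(1)$ by Gaussian concentration on scale $\sqrt n$, and $\log Z^n_{b,c} = O(1)$ by \eqref{n-31b}, giving the $O(1)$ bound.

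\textbf{Main obstacle.} The only real work is bookkeeping the Taylor remainders uniformly over the two different concentration windows ($n^{3/4}$ for $c=2$, $\sqrt n$ for $c<2$) and verifying that the $\theta/\sqrt n$ corrections in \eqref{n-34} and the $b\sqrt n\,\varrho^2$ (resp.\ $bn^{-1/2}x^2$) terms are genuinely lower order; I expect pinning down the exact $\log n$ versus $O(1)$ distinction in the $c<2$, reference $\nu^n_U$ case to require the most care, since it hinges on integrating a quadratic against a Gaussian tail of width $\sqrt n$ rather than on a pointwise bound. Everything else is a direct transcription of the Laplace-method estimates already carried out for Lemma~\ref{n-29} and equations \eqref{n-31a}--\eqref{n-31b}, combined with the entropy identity above.
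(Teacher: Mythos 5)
Your overall framework is the same as the paper's: rewrite each relative entropy as (expectation of the log-density ratio under $\mu^n_{b,c}$) minus (log of a partition-function ratio), then estimate both pieces by the Laplace/Stirling computations already done for Lemma~\ref{n-29} and \eqref{n-31a}--\eqref{n-31b}. Your treatment of the two $c=2$ cases and the $c<2$, $\nu^n_{1/2}$ case is correct.

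However, there is a genuine error in the case you yourself flag as the most delicate, namely $H_n(\mu^n_{b,c}\,|\,\nu^n_{U})$ for $c<2$. You attribute the $\log n$ to the expectation term, claiming that ``integrating the Gaussian tail against the quadratic'' on the window $|x|\lesssim\kappa\sqrt n$ contributes $O(\log n)$. That is not what happens: on the Gaussian scale $\varrho=x/\sqrt n$ the combined exponent is $F_{b,c}-nU = (c-2)x^2 + O(x^2/\sqrt n) + O(x^4/n)$ (note also your coefficient should be $c-2$, not $c-4$, since $\tfrac12 U''(0)=4\gamma=2+O(n^{-1/2})$), and $\mu^n_{b,c}$ concentrates as a genuine Gaussian of order-one width in $x$. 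Integrating $(c-2)x^2$ (or $x^4/n$) against such a Gaussian gives an $O(1)$ constant, not $O(\log n)$; this is exactly the paper's statement that the second term converges to $\tfrac1{\hat Z_c}\int(c-2)x^2e^{-(2-c)x^2}\,dx$. The $\log n$ in this case comes entirely from the partition-function ratio $\log(Z^n_U/Z^n_{b,c})$: Lemma~\ref{n-29} gives $Z^n_U\sim\text{const}\cdot n^{1/4}$, while \eqref{n-31b} gives $Z^n_{b,c}\to\text{const}$ for $c<2$, hence $\log(Z^n_U/Z^n_{b,c})=\tfrac14\log n + O(1)$. Your writeup never estimates this ratio in the $c<2$ case, which is what leads to the misdiagnosis. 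Fixing this makes the argument line up with the paper's.
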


\begin{proof}
By definition of the relative entropy and of the measures
$\mu^n_{b,c}$, $\nu^n_{U}$, 
\begin{equation*}
H_n(\mu^n_{b,c} \,|\, \nu^n_{U}) \,=\, \int \log
\frac{\mu^n_{b,c}}{d\nu^n_{U}}\; d\mu^n_{b,c}\, 
=\, \log\frac{Z^n_U}{Z^n_{b,c}} \,+\,
\int \big\{\, c n (\mf m/n)^2 + b n^{1/2} (\mf m/n)^2 - n U(\mf m/n)\,
\big\}\; d\mu^n_{b,c} \;.
\end{equation*}
For $c=2$, by Lemma \ref{n-29}, and \eqref{n-31a}, the first term on
the right-hand side converges to a constant $C(b,\theta)$. Since
$U''(0) = 4$, the arguments presented in the proof of Lemma \ref{n-29}
yield that the second term converges to
\begin{equation*}
\int \big\{\, b x^2 - (1/3) x^4\, \big\}\; d\mu_{b}\;.
\end{equation*}

For $c<2$. by Lemma \ref{n-29} and \eqref{n-31b} the first term on the
right-hand side is bounded by $C \log n$. Computation similar to the
ones used to derive \eqref{n-31b} show that the second term
converges to
\begin{equation*}
\frac{1}{\widehat Z_c}\,  \int (c-2)\, x^2 \, e^{-(2-c) x^2} \, dx \;,  
\end{equation*}
where $\widehat Z_c$ is a normalising constant.  This completes the
proof of the lemma.

We turn to the product measure $\nu^n_{1/2}$.  By definition of the
relative entropy and of the measures $\mu^n_{b,c}$, $\nu^n_{1/2}$,
\begin{equation*}
H_n(\mu^n_{b,c} \,|\, \nu^n_{1/2}) \,=\, \int \log
\frac{\mu^n_{b,c}}{d\nu^n_{1/2}}\; d\mu^n_{b,c}\, 
=\, \log\frac{1}{Z^n_{b,c}} \,+\,
\int \big\{\, c n (\mf m/n)^2 + b n^{1/2} (\mf m/n)^2 \,
\big\}\; d\mu^n_{b,c} \;.
\end{equation*}
For $c=2$, by \eqref{n-31a}, the first term on the right-hand side is
bounded by a constant $C(b)$. The arguments presented in the proof of
Lemma \ref{n-29} yield that the second term is bounded by
$C_0\sqrt{n}$ for some finite constant $C_0$ independent of $n$.

For $c<2$. by\eqref{n-31b} the first term on the right-hand side is
bounded by $C$. Computation similar to the ones used to derive
\eqref{n-31b} show that the second term converges to
\begin{equation*}
\frac{1}{\widehat Z_c}\,  \int c \, x^2 \, e^{-(2-c) x^2} \, dx \;.
\end{equation*}
This completes the proof of the lemma.
\end{proof}

\subsection{The reference measure $\nu^n_g$}
Recall that $\nu^n_g$ is defined by:
\begin{equation}
\nu^n_g(\eta)
=
\frac{1}{Z^n_g}\exp\Big[\frac{1}{2n}\sum_{i\neq j}g_{i,j}\bar\eta_i\bar\eta_j\Big] \nu^n_{1/2}(\eta),
\qquad
\eta\in\Omega_n
,
\end{equation}
where $g_{i,j}=g_{i-j}$ is defined in Proposition~\ref{prop_g} as a function of $a\geq 0$ and $\gamma\in(-1,1)$. 
Throughout the section $a,\gamma$ are fixed. 
\subsubsection{Bound on the partition function}
\begin{lemma}\label{lemm_partition_fct_nu_g}
The partition function $Z^n_g$ satisfies:
\begin{equation}
\sup_{n\geq 1}\frac{\log Z^n_g}{\sqrt{n}\, }
<
\infty
.
\end{equation}
\end{lemma}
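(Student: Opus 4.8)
The plan is to show $Z^n_g = \nu^n_{1/2}\big(\exp[\tfrac{1}{2n}\sum_{i\neq j}g_{i,j}\bar\eta_i\bar\eta_j]\big) = \exp(O(\sqrt n))$ by separating the constant (magnetisation) part of $g$ from its mean-zero part, handling the first by a local central limit theorem / Laplace estimate and the second by a Gaussian-type bound for the operator $\mathrm{id}/(4\sigma) - g^0$. Recall from Proposition~\ref{prop_g} that $g = \min\{4,8\gamma\}\,\mathbf 1 + g^0_{\delta,b}$ (with $\delta=\gamma$, $b=a$ in our case), and that $g^0$ has nonnegative Fourier coefficients $\lambda^-_\ell$ given by~\eqref{eq_fourier_g_sec2}. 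Writing $\mathfrak m = \sum_i \bar\eta_i$, one has $\tfrac{1}{2n}\sum_{i\neq j}g_{i,j}\bar\eta_i\bar\eta_j = \tfrac{\min\{4,8\gamma\}}{2n}(\mathfrak m^2 - \tfrac n4) + \tfrac{1}{2n}\sum_{i\neq j}g^0_{i,j}\bar\eta_i\bar\eta_j$, so up to a bounded multiplicative constant
\begin{equation*}
Z^n_g \;\le\; C\, \nu^n_{1/2}\Big(\exp\Big[\tfrac{\min\{4,8\gamma\}}{2n}\mathfrak m^2\Big]\,\exp\Big[\tfrac{1}{2n}\sum_{i\neq j}g^0_{i,j}\bar\eta_i\bar\eta_j\Big]\Big).
\end{equation*}

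First I would treat the magnetisation factor. Since $\mathfrak m/\sqrt n$ is, under $\nu^n_{1/2}$, asymptotically a standard Gaussian with variance $1/4$, the quantity $\tfrac{1}{n}\mathfrak m^2$ is of order $1$ but with exponentially small tails only down to scale $\lambda < 2$ in $\nu^n_{1/2}(e^{\lambda \mathfrak m^2/n})$; more precisely, by the local CLT estimate~\eqref{n-11} (exactly as used in Lemma~\ref{n-29} and Proposition~\ref{n-l03}), $\nu^n_{1/2}(\exp[\tfrac{c}{2n}\mathfrak m^2])$ is bounded uniformly in $n$ provided $c<2$, which here requires $\min\{4,8\gamma\}<4$, i.e. $\gamma<1/2$. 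For $\gamma$ near $1/2$ (the regime $\gamma = \tfrac12(1-\theta n^{-1/2})$ of~\eqref{eq_def_gamma}) one has $\min\{4,8\gamma\} = 8\gamma = 4(1-\theta n^{-1/2})$, so the prefactor is $2(1-\theta n^{-1/2})$, and the same Laplace/Stirling computation as in the proof of Lemma~\ref{n-29} shows that $\nu^n_{1/2}(\exp[\tfrac{4\gamma}{2n}\mathfrak m^2])$ is of order $n^{1/4}$ — in particular $O(\sqrt n)$ on the log scale, even $O(\log n)$. So this factor contributes at most $O(\log n)$ to $\log Z^n_g$. (The harmless bounded term $-\min\{4,8\gamma\}/8$ from $\mathfrak m^2 - n/4$ is absorbed into $C$.)

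Next I would treat the mean-zero factor $\exp[\tfrac{1}{2n}\sum_{i\neq j}g^0_{i,j}\bar\eta_i\bar\eta_j]$. The key point is that $g^0$, viewed as a convolution operator on $\mathbb L^2(\T)$, has operator norm strictly below $1/\sigma = 4$: indeed from~\eqref{eq_fourier_g_sec2}, $\lambda^-_\ell \in [0, \tfrac{4\pi^2\ell^2 + 2b(1+2\delta)}{b+2\pi^2\ell^2})$ which is bounded by $4$ for all $\ell$ and, crucially, the Fourier multipliers of $g^0$ are bounded away from $4$ (this is precisely the content cited in the remark after Theorem~\ref{theo_free_energy}, that $\nu^n_{\tilde g}$ has covariance $(\sigma^{-1}\mathrm{id}-\tilde g)^{-1}$ with $\tilde g$ of operator norm $<1$; here $\sigma=1/4$). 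Given this spectral gap, a discrete Gaussian (Hubbard–Stratonovich) representation or the Gaussian-domination / Ginibre-type inequality controlling $\nu^n_{1/2}(\exp[\tfrac{1}{2n}\sum_{i\neq j}g^0_{i,j}\bar\eta_i\bar\eta_j])$ yields a bound uniform in $n$: introducing an auxiliary centred Gaussian field $\varphi$ with covariance $g^0/n$ to linearise the quadratic form, one gets $\nu^n_{1/2}(e^{\frac{1}{2n}\sum g^0\bar\eta\bar\eta}) = \mathbb E_\varphi[\prod_i \cosh(\varphi_i)] \le \mathbb E_\varphi[e^{\frac12\sum_i\varphi_i^2}]$, and the last expectation is finite uniformly in $n$ exactly because the largest eigenvalue of $g^0/n \cdot n = g^0$ (as a circulant matrix, its eigenvalues are the $\lambda^-_\ell$) is $<4$... actually $<1$ after the $\tfrac12$ is accounted for, i.e. one needs $\sup_\ell \lambda^-_\ell < 1$; if not one instead works with the normalisation constant of $\nu^n_g$ directly as in~\cite[Proposition 1.2]{Courant_suppl}, which gives $Z^n_g = \det(\mathrm{id} - \tfrac14 g^0)^{-1/2}(1+o(1)) = O(1)$ on the log scale. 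Either way this factor contributes $O(1)$, not even $O(\sqrt n)$.

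The main obstacle is the magnetisation factor in the near-critical regime $\gamma\to 1/2$, where the naive exponential-moment bound fails because the prefactor hits the critical value $2$; one must instead run the precise Laplace asymptotics of Lemma~\ref{n-29}'s proof (splitting $\{1,\dots,n-1\}$ into $|k-n/2|\le \kappa n^{3/4}$ and its complement, Taylor-expanding $E(k/n)$, and using Stirling) to see that the sum is merely polynomially large in $n$ rather than exponentially. A secondary technical point is justifying the Hubbard–Stratonovich / Gaussian-domination step for the mean-zero part and checking the spectral condition $\sup_\ell\lambda^-_\ell<1$ (or handling the borderline $\ell$ directly); but since $g^0$ and its norm are computed explicitly in Proposition~\ref{prop_g} and the relevant estimate is already available from~\cite{Courant_suppl}, this is routine. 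Putting the two contributions together gives $\log Z^n_g = O(\sqrt n)$ (in fact $O(\log n)$), uniformly in $n$, which is the claim.
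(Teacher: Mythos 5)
Your overall strategy — split $g$ into its constant (magnetisation) part and its mean-zero part $g^0$, handle the first by a Laplace/local-CLT estimate as in Lemma~\ref{n-29} and the second by a Hubbard--Stratonovich / Gaussian-domination computation — is sensible and, if carried out, would give the sharper bound $\log Z^n_g=O(\log n)$ that the paper's remark after Lemma~\ref{lemm_partition_fct_nu_g} alludes to. The paper instead trades away a factor $e^{c\sqrt n}$ to replace $\min\{4,8\gamma\}\,\mathbf 1$ by the strictly sub-critical constant $4(1-n^{-1/2})\,\mathbf 1$, so that the \emph{whole} kernel $h$ has spectral radius $<4$ and a single Gaussian-domination argument applies. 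Your route avoids that loss, so the two approaches are genuinely different in how they treat the near-critical constant mode.

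However there is a real gap in your argument: you write
\[
Z^n_g\le C\,\nu^n_{1/2}\Big(\exp\big[\tfrac{\min\{4,8\gamma\}}{2n}\,\mf m^2\big]\,\exp\big[\tfrac1{2n}\textstyle\sum_{i\neq j}g^0_{i,j}\bar\eta_i\bar\eta_j\big]\Big)
\]
and then ``treat the magnetisation factor'' and ``treat the mean-zero factor'' separately, as if the expectation of the product were a product of expectations. The two random variables are not independent under $\nu^n_{1/2}$, so this step needs an argument. The obvious fixes fail or require work you have not done: Cauchy--Schwarz (or any H\"older) multiplies the constant in front of $\mf m^2/n$, which is already at the critical threshold $2$, so the resulting exponential moment is \emph{infinite}. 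The correct way to decouple is to condition on $\mf m$, write the expectation as $\sum_m\nu^n_{1/2}(\sum_i\eta_i=m)\,e^{c\,\mf m^2/n}\,\nu^{n,m}_{1/2}\big(\exp[\tfrac1{2n}\sum g^0\bar\eta\bar\eta]\big)$, and show that the inner canonical expectation is bounded \emph{uniformly in $m$}. That last step is nontrivial — once $\bar\eta_i$ is re-expanded as $\bar\eta^m_i+\mf m/n$ the quadratic form produces a term of order $(\mf m/n)^2\cdot n\cdot\|g^0\|$ feeding back into the magnetisation, plus the cross term, both of which need to be absorbed — and it is precisely the kind of ``extra effort'' that the authors decided to sidestep by using the crude $e^{c\sqrt n}$ bound. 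As stated your proof is therefore incomplete at its central step.

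Two smaller inaccuracies. First, the threshold for $\nu^n_{1/2}(\exp[\tfrac{c}{2n}\mf m^2])$ to be bounded is $c<4$, not $c<2$ (since $\mf m/\sqrt n\Rightarrow N(0,1/4)$, $\nu^n_{1/2}(e^{\lambda\mf m^2/n})$ is finite iff $\lambda<2$, i.e.\ $c=2\lambda<4$); your displayed condition $c<2$ contradicts the conclusion you then draw ($\min\{4,8\gamma\}<4\Leftrightarrow\gamma<1/2$), so it is presumably just a slip. Second, for the Hubbard--Stratonovich step the relevant spectral condition is $\sup_\ell\lambda^-_\ell<4$ (i.e.\ $\sigma^{-1}\id-g^0>0$), not $\sup_\ell\lambda^-_\ell<1$: the $\log\cosh\le x^2/2$ bound produces a quadratic form $H-\tfrac14 H^2$, which is positive precisely when $0<\lambda^H<4$, exactly what the paper's~\eqref{eq_inverse_H} checks for $h$. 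The fallback you propose via \cite[Proposition~1.2]{Courant_suppl} would indeed be a valid alternative for the mean-zero factor, but it does not repair the missing decoupling step.
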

\begin{remark}
The bound on $\log Z^n_g$ is much worse than in Lemma~\ref{n-29}. 
This is due to the crude bound~\eqref{eq_crude_bound_Zng} 
that simplifies the argument but is not essential. 
One could also prove $\sup_{n\geq 1} n^{-1/4}Z^n_g<\infty$ with more effort.  
\end{remark}
\begin{proof}
Recall that $g= \min\{8\gamma,4\} {\bf 1} + g^0$ with $g^0$ having mean $0$ and ${\bf 1}$ the function constant equal to $1$.  
It is convenient to compute $Z^n_g$ when $g$ has leading eigenvalue strictly below $4$, so we consider $h$ given by:
\begin{equation}
h = g-\min\{8\gamma,4\}{\bf 1} + 4(1-n^{-1/2}){\bf 1}
,
\end{equation}
and focus on bounding $Z^n_h$ since:
\begin{equation}
Z^n_g
\leq 
e^{c\sqrt{n}\, }Z^n_h
,\qquad 
c>0
.
\label{eq_crude_bound_Zng}
\end{equation}
By definition,
\begin{equation}
Z^n_h 
=
E_{\nu^n_{1/2}}\bigg[ \exp\Big[ \frac{1}{2n}\sum_{i\neq j}\bar\eta_i\bar\eta_j h_{i,j}\Big]\,\bigg]
=
e^{-h(0)/8}\, E_{\nu^n_{1/2}}\bigg[ \, \exp\Big[ \frac{1}{2n}(\bar\eta,h\bar\eta)\Big]\, \bigg]
,
\end{equation}
where we used the identity $\bar\eta^2_\cdot=1/4$ to add the diagonal term and write $(u,v)=\sum_i u_iv_i$ for the usual inner product in $\R^n$.

Introduce a matrix $H$ on $\T_n^2$ as follows:
\begin{equation}
H_{i,j}
:=
n\int_{I_{i,j}} h(x,y)\, dx\, dy
,\qquad
I_{i,j}
:=
\Big[\frac{i}{n},\frac{i+1}{n}\Big)\times\Big[\frac{j}{n},\frac{j+1}{n}\Big)
.
\label{eq_def_H_bound_Z}
\end{equation}
Recall from Proposition~\ref{prop_g} that $g$ is $C^1$ on $[0,1]$, 
therefore $h$ as well and the approximation of $h_{i,j}$ by $n^{2}\int_{I_{i,j}}h$ incurs an error $O(n^{-1})$ at most. 
Thus,
for some constant $C(g)>0$, 
\begin{equation}
Z^n_h
\leq 
C(g)\, E_{\nu^n_{1/2}}\Big[e^{(\bar\eta,H\bar\eta)/2}\Big]
.
\label{eq_Zng_interm}
\end{equation}
Note that for any $u\in\R^n$, 
\begin{equation}
(u,Hu) = \int f(x)h(x-y)f(y)\, dx\, dy,\qquad 
f(x)
=
\sum_{i\in\T_n}{\bf 1}_{[\frac{i}{n},\frac{i+1}{n})}(x)u_i
.
\end{equation}
The fact that $h$ is positive definite as an operator on $\bb L^2(\T)$ and has spectral radius $4-n^{-1/2}$ therefore immediately transfers over to $H$: 
for each $u\in\R^n\setminus\{0\}$ with $\|u\|_2=1$,
\begin{align}
0
<
(u,Hu)
&\leq 
\sup_{f:\|f\|_2=1} \int_{\T^2} f(x) f(y)\, h(x-y)\, dx\, dy
\nnb
&=
\int_{\T^2} h(x-y)\, dx\, dy
\,=\,
4(1-n^{-1/2})
,
\label{eq_inverse_H}
\end{align}
where the second line comes from the explicit knowledge of the Fourier
decomposition of $g$ (thus of $h$) from Proposition~\ref{prop_g}.
Similarly, if $u\in\R^n$ has norm $1$ and satisfies $(u,{\mb 1})=0$:
\begin{equation}
(u,Hu)
\leq 
\sup_{\substack{f:\|f\|_2=1\\ \int_{\T}f(x)\, dx =0}} \int_{\T^2} f(x) f(y)\, h(x-y)\, dx
\leq 
\lambda^-_1
,
\label{eq_gap_H}
\end{equation}
with $(\lambda^-_\ell)_{\ell\geq 0}$ defined in Proposition~\ref{prop_g}. 

To bound the expectation in~\eqref{eq_Zng_interm}, 
denote by $\gamma_H\propto e^{-(\psi,H\psi)/2}\, d\psi$ the centred Gaussian measure on $\R^n$ with covariance $H^{-1}$, 
which is non-degenerate by~\eqref{eq_inverse_H}. 
The formula for the moment generating function of $\gamma_H$ then yields:
\begin{equation}
E_{\nu^n_{1/2}}\Big[e^{(\bar\eta,H\bar\eta)/2}\Big]
=
E_{\gamma_H}\bigg[ E_{\nu^n_{1/2}}\Big[ e^{(\bar\eta,H\psi)}\Big]\bigg]
.
\end{equation}
The quantity to average against $\nu^n_{1/2}$ now factorises between different sites and a direct computation yields:
\begin{equation}
E_{\gamma_H}\bigg[ \, E_{\nu^n_{1/2}}\Big[ e^{(\bar\eta,H\psi)}\Big]\, \bigg]
=
E_{\gamma_H} \bigg[ \exp\Big[ \sum_{i\in\T_n}\log\cosh\big((H\psi)_i/2\big) \Big]\bigg]
.
\label{eq_Zng_interm1}
\end{equation}
The last formula involves the Gaussian expectation of an explicit random variable, call it $-V(\psi)$:
\begin{equation}
V(\psi)
:=
-\sum_{i\in\T_n}\log\cosh\big((H\psi)_i/2\big)
,\qquad 
\psi\in\R^n
.
\end{equation}
This $V$ satisfies $V(0)=0$, $\nabla V(0)=0$ and its Hessian reads: 
\begin{equation}
\He V(\psi)
=
-\frac{1}{4}H (\id - D(\psi))H,\qquad 
D(\psi)
:=
\diag \Big( \tanh((H\psi)_i)^2,\quad i\in\T_n\Big)
.
\end{equation}
Note in particular:
\begin{equation}
\He V(\psi)
\geq 
-\frac{1}{4}H^2
,\qquad 
\psi\in\R^n
,
\label{eq_bound_HessV}
\end{equation}
so that recalling~\eqref{eq_Zng_interm}--\eqref{eq_Zng_interm1} we have now established:
\begin{equation}
Z^n_h
\leq 
C(g)
\int_{\R^n}\frac{\sqrt{\det H}}{\pi^{n/2}} \exp\Big[ - \frac{(\psi,H\psi)}{2} + \frac{(\psi,H^2\psi)}{8}\Big]\, d\psi
.
\end{equation}
By~\eqref{eq_inverse_H} 
the quantity $H-\frac{1}{4}H^2$ is a positive definite matrix. 
The Gaussian integral can therefore be computed and we find:
\begin{equation}
Z^n_h
\leq 
C(g)
\sqrt{\frac{\det(H)}{\det\big(H-\frac{1}{4}H^2\big)}}
=
C(g)
\Big[\det\Big(\id-\frac{1}{4}H\Big)\Big]^{-1/2}
.
\label{eq_bound_Znh_nearend}
\end{equation}
It remains to bound the last determinant from below. 
Recalling Definition~\eqref{eq_def_H_bound_Z} of $H$, 
note first that $g\in C^1([0,1])$ implies, for some $c(g)>0$: 
\begin{equation}
\Big|\tr(H^p)- \int_{\T} h^p(x)\, dx\Big|
\leq 
\frac{c(g)}{n}
,\qquad 
p\in\{1,2\}
.
\label{eq_bound_trace_H}
\end{equation}
Write $\lambda^H_0 = 4[1-n^{-1/2}]\geq \lambda^H_1\geq ...\geq \lambda^H_{n-1}>0$ for the eigenvalues of $H$.  
The elementary bound $\log(1-x)\geq -x-x^2/2$ ($x\in(0,1)$) implies:
\begin{equation}
\log(1-\lambda^H_\ell/4)
\geq 
-\frac{\lambda^H_\ell}{4} 
-\frac{(\lambda^H_\ell)^2}{32},\qquad 
\ell\in\{1,...,n-1\}
.
\end{equation}
We now estimate the determinant using this bound and~\eqref{eq_bound_trace_H}. 
For some $c'(g)>0$,
\begin{align}
\det\Big(\id-\frac{1}{4}H\Big)
&=
\exp\Big[\sum_{\ell=0}^{n-1}\log(1-\lambda^H_\ell/4)
\Big]
=
\sqrt{n}\, 
\exp\Big[\sum_{\ell=1}^{n-1}\log(1-\lambda^H_\ell/4)\Big]
\nnb
&\geq 
\sqrt{n}\, 
\exp\Big[\sum_{\ell=1}^{n-1}\Big(-\frac{\lambda^H_\ell}{4} - \frac{(\lambda^H_{\ell})^2}{32}\Big)\, \Big]
\nnb
&\geq 
c'(g)\, \sqrt{n}\, 
.
\end{align}
Injecting this bound in~\eqref{eq_bound_Znh_nearend} concludes the proof.
\end{proof}

\subsubsection{Concentration under $\nu^{n,m}_g$}\label{sec_concentration_nu_nmg}

Let $\Omega_{n,m}$ be the slice of the hypercube with magnetisation $m$:
\begin{equation}
\Omega_{n,m}
:=
\Big\{\eta\in\Omega_n=\{0,1\}^{\T_n}: \sum_i\eta_i = m\Big\}
.
\end{equation}
Recall that $g$ is defined in Proposition~\ref{prop_g} and $\nu^{n,m}_g=\nu^n_g\big(\cdot|\sum_{i}\eta_i=m\big)$ is the canonical measure associated with the probability measure $\nu^n_g\propto \exp\big[\frac{1}{2n}\sum_{i\neq j}\bar\eta_i\bar\eta_j g_{i,j}\big]\nu^n_{1/2}$ on $\Omega_n$. 
\begin{assumption}[log-Sobolev inequality]\label{ass_LSI}
For any $a>0$, 
there is $\mf c_{\rm LS}=\mf c_{\rm
LS}(a)>0$ such that the measure
$\nu^{n,m}_g$ satisfies a log-Sobolev inequality with respect to the
nearest-neighbour Dirichlet form with constant $\mf c_{\rm LS}\,
n^2$, and with respect to the Bernoulli Laplace Dirichlet form with
constant $\mf c_{\rm LS}\,\log(n^2/m(n-m))$:
\begin{equation}
\ent_{\nu^{n,m}_g}(F)
\leq 
\mf c_{\rm LS}\, n^2\, \Gamma^{\rm ex}_{n}(F;\nu^{n,m}_g)
,
\label{eq_LSI_nn}
\end{equation}
and:
\begin{equation}
\ent_{\nu^{n,m}_g}(F)
\leq 
\mf c_{\rm LS}\, \log\Big(\frac{n^2}{m(n-m)}\Big) \nu^{n,m}_g\Big(\frac{1}{n}\sum_{i,j\in\T_n}\big[\nabla_{i,j}\sqrt{F}\big]^2\Big)
,
\label{eq_LSI_longrange}
\end{equation}
for all $n\ge 1$, $0\le m\le n$, and functions
$F:\Omega_{n,m}\to\R_+$. Here, $\nabla_{i,j} F =
F(\eta^{i,j})-F(\eta)$ and:
\begin{equation}
\ent_{\nu^{n,m}_g}(F)
:=
\nu^{n,m}_g(F\log F) - \nu^{n,m}_g(F)\log \nu^{n,m}_g(F)
.
\end{equation}
\end{assumption}
\begin{remark}
The log-Sobolev inequality~\eqref{eq_LSI_nn} with respect to the nearest-neighbour Dirichlet form is used to bound fast modes in the relative entropy estimate of Section~\ref{sec_free_energy_bounds}. 
The log-Sobolev inequality~\eqref{eq_LSI_longrange} with respect to a long-range Dirichlet form is on the other hand used to obtain concentration results under $\nu^{n,m}_g$ in Proposition~\ref{prop_concentration} below.

In the
$g=0$ case~\eqref{eq_LSI_nn}--\eqref{eq_LSI_longrange} are the Lee-Yau
results~\cite[Theorems 4 and 5]{LeeYau_LSI_RW1998}. 
For $g\neq
0$~\eqref{eq_LSI_longrange} is known for a range of values of
$a$ only if the right-hand side is replaced with the modified Dirichlet
form, 
i.e. with
$[\nabla_{i,j}\sqrt{F}]^2$ replaced by $\nabla_{i,j}F\nabla_{i,j}\log
F$.  This implies~\eqref{eq_LSI_nn} in the same range, but with an additional
$\log n$ prefactor~\cite{RRG}. 
\end{remark}
Recall that $\bar\eta^m_i = \bar\eta_i-\frac{1}{n}\sum_j\bar\eta_j$, with $\bar\eta_i = \eta_i-1/2$ ($i\in\T_n$). 
For a set $J\subset\N_{\geq 1}$, 
write:
\begin{equation}
\bar\eta^m_{i+J}
:=
\prod_{j\in J}\bar\eta^m_{i+j}
,
\end{equation}
For $p\geq 1$, and a bounded function $\phi:\T^p\to\R$, 
let $W^{J,\phi}_p$ be the following observable:
\begin{equation}
W^{J,\phi}_p(\eta)
:=
\frac{1}{n^{p-1}}\sum_{i_1,...,i_p} \bar\eta^m_{i_1+J}\bar\eta^m_{i_2}...\bar\eta^m_{i_p}\phi_{i_1,...,i_p}
.
\label{eq_def_WJphi_p}
\end{equation}
We may assume, without loss of generality, that
$\phi$ is symmetric in its last
$p-1$ indices.  In addition, we may also assume
(see~\cite[Theorem A.3]{Correlations2022}):
\begin{equation}
\phi_{i_1,...,i_p}=0\quad \text{whenever there exists}
\, j\in J \;\;\text{such that}\;\;
|\{i_1+j, i_2 ,\dots ,i_p\}|<p
.
\label{eq_ass_tensor_diagonal}
\end{equation}
\begin{proposition}\label{prop_concentration}
Suppose Assumption~\ref{ass_LSI} holds. 
Let $\epsilon\in(0,1/2)$ and $m\in[\epsilon n,(1-\epsilon)n]$.  
If $p=1$, then:
\begin{equation}
\forall n\geq 1,\forall s>0,
\qquad
\log \nu^{n,m}_g\Big[\exp \Big(\frac{s|W^{J,\phi}_1|}{\sqrt{n}\, \|\phi\|_\infty} \Big)\Big]
\leq 
2s^2 {\mf c}_{\rm LS}\log(2/\epsilon)
.
\end{equation}
Assume now $p\geq 2$, $\epsilon\in(0,1/2)$ and $m\in[\epsilon
n,(1-\epsilon)n]$. Fix a a bounded function
$\phi:\T^p\to\R$ satifying the assumptions
\eqref{eq_ass_tensor_diagonal}.  Then, there are
$\lambda_p,C_p>0$ depending only on $J,p,\epsilon$ and
$g$ such that the following holds: for any
$\lambda\in[0,\lambda_p]$ and any $n\geq 1$,
\begin{equation}
\log \nu^{n,m}_g\Big[\exp \Big(\frac{\lambda |W^{J,\phi}_p|}{\|\phi\|_\infty}\Big)\Big]
\leq 
\frac{C_p \lambda}{n^{(p-2)/2}}\,\cdot
\label{eq_bound_mom_exp_appendix}
\end{equation}
\end{proposition}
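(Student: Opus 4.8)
The plan is to prove the concentration bound~\eqref{eq_bound_mom_exp_appendix} (and the $p=1$ case) via the standard Herbst argument applied to the log-Sobolev inequality~\eqref{eq_LSI_longrange} for the long-range (Bernoulli--Laplace) Dirichlet form under $\nu^{n,m}_g$. The key point is that $W^{J,\phi}_p$ is a degree-$p$ polynomial in the centred, constrained variables $\bar\eta^m_i$, and exchanging two occupation variables changes $W^{J,\phi}_p$ by a quantity controlled by lower-degree analogues of $W$ localized near the two exchanged sites.

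First I would treat the $p=1$ case as a warm-up: here $W^{J,\phi}_1/\sqrt{n}$ is a linear statistic $n^{-1}\sum_i \bar\eta^m_i \phi'_i$ (with $\|\phi'\|_\infty\le\|\phi\|_\infty$ after symmetrising), and a single swap $\eta\mapsto\eta^{i,j}$ changes it by at most $O(n^{-1}\|\phi\|_\infty)$. Plugging this into the Bernoulli--Laplace Dirichlet form on the right-hand side of~\eqref{eq_LSI_longrange} gives, for $F = e^{sW^{J,\phi}_1/(\sqrt{n}\|\phi\|_\infty)}$, a bound of the form $\ent(F)\le \mf c_{\rm LS}\log(n^2/m(n-m))\cdot O(s^2 n^{-1})\,\nu^{n,m}_g(F)$; since $m\in[\epsilon n,(1-\epsilon)n]$ one has $\log(n^2/m(n-m))\le 2\log(2/\epsilon)$ uniformly in $n$, and the $n^{-1}$ cancels against the $n^2$ from differentiating $W/\sqrt n$ over $\sim n$ sites, yielding the Herbst differential inequality $(\psi(s)/s)' \le 2\mf c_{\rm LS}\log(2/\epsilon)$ for $\psi(s) = \log\nu^{n,m}_g[F]$, hence $\psi(s)\le 2s^2\mf c_{\rm LS}\log(2/\epsilon)$.

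For $p\ge 2$ the structure is the same but the increment estimate is more delicate. Writing $W = W^{J,\phi}_p$, a swap at sites $i<j$ affects only terms in the sum where one of the free indices (or $i_1+j$ for $j\in J$) equals $i$ or $j$; using the diagonal-vanishing assumption~\eqref{eq_ass_tensor_diagonal} and $|\bar\eta^m_\cdot|\le 1$, I would show $|\nabla_{i,j}W|\le C\,n^{-1}\|\phi\|_\infty\,(1 + |Z_i| + |Z_j|)$ where $Z_i$ is a degree-$(p-2)$ polynomial of the type $n^{-(p-2)}\sum\bar\eta^m\cdots\bar\eta^m$ localized through $\phi$; critically these $Z_i$ are themselves bounded by $O(1)$ deterministically (not just in $L^2$) because they are normalized sums of bounded variables. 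Summing $\sum_{i,j}[\nabla_{i,j}\sqrt F]^2 = \sum_{i,j}\frac14 F\,(\nabla_{i,j}\log F)^2\,(1+o(1))$ over the $n^2$ pairs, using $\lambda|\nabla_{i,j}W|\le O(\lambda n^{-1}\|\phi\|_\infty)$ uniformly, gives $\ent(F)\le \mf c_{\rm LS}\log(2/\epsilon)\cdot C\lambda^2\|\phi\|_\infty^2 n^{-(p-2)}\,\nu^{n,m}_g(F)$ for $F=e^{\lambda W/\|\phi\|_\infty}$ — note the extra factor $n^{-(p-2)}$ comes from the normalization $n^{-(p-1)}$ in $W$ against only $n$ "active" indices after two are fixed. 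Herbst then gives $\psi(\lambda)\le C_p\lambda^2 n^{-(p-2)}$, which is stronger than (and implies) the claimed~\eqref{eq_bound_mom_exp_appendix} for $\lambda$ in a fixed interval $[0,\lambda_p]$.

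The main obstacle I anticipate is making the increment bound $|\nabla_{i,j}W^{J,\phi}_p|$ rigorous and uniform: one must carefully enumerate which monomials in the expansion of $W$ actually depend on $\eta_i,\eta_j$, control the recentering $\bar\eta^m_\cdot = \bar\eta_\cdot - n^{-1}\sum\bar\eta$ (a swap does not change $\sum_i\bar\eta_i$, so the recentering term is inert, which helps), and verify that the leftover degree-$(p-2)$ factors are deterministically bounded rather than merely $L^2$-bounded — this is where assumption~\eqref{eq_ass_tensor_diagonal} and the symmetry reduction are used, and where one might instead need to iterate (bounding exponential moments of $W_p$ in terms of those of $W_{p-2}$). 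A secondary technical point is handling the $o(1)$ discrepancy between $[\nabla_{i,j}\sqrt F]^2$ and $\tfrac14 F(\nabla_{i,j}\log F)^2$ when the increment is not infinitesimal; this is standard (use $(\sqrt{e^x}-\sqrt{e^y})^2 \le \tfrac14 e^{x\vee y}(x-y)^2$) but must be tracked to keep constants uniform in $n$.
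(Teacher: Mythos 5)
Your $p=1$ Herbst argument is sound, but the increment estimate you invoke for $p\ge 2$ is off by a factor of $n$, and the direct Herbst route cannot work at degree $\ge 2$. You claim $|\nabla_{k,\ell}W^{J,\phi}_p|\le Cn^{-1}\|\phi\|_\infty(1+|Z_k|+|Z_\ell|)$ with $Z$ a degree-$(p-2)$ polynomial that is deterministically $O(1)$. However, the dominant contribution to the swap gradient fixes exactly \emph{one} of the $p$ summation indices at $k$ or $\ell$, leaving $p-1$ free indices summed against the $n^{-(p-1)}$ normalization of $W$; it is therefore a degree-$(p-1)$ object with $p-1$ free sums, not degree $p-2$ with $p-2$ sums. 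For $p=2$, $J=\{0\}$ one gets $\nabla_{k,\ell}W \approx \tfrac{2}{n}(\eta_\ell-\eta_k)\sum_i\bar\eta^m_i[\phi_{k,i}-\phi_{\ell,i}]$, which is deterministically $O(\|\phi\|_\infty)$ (and only typically $O(n^{-1/2}\|\phi\|_\infty)$), not $O(n^{-1}\|\phi\|_\infty)$. With the correct deterministic bound, $\sum_{k,\ell}(\nabla_{k,\ell}W)^2 = O(n^2\|\phi\|_\infty^2)$ and Herbst yields only $\ent(e^{\lambda W/\|\phi\|_\infty})\lesssim\lambda^2 n\cdot\nu[e^{\lambda W/\|\phi\|_\infty}]$, which is useless; the bound $n^{-(p-2)}$ in your writeup also does not match your own stated gradient estimate (it would give $n^{-1}$ independently of $p$).

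This is not repairable by a sharper deterministic bound: $\nabla_{k,\ell}W_p$ is itself a random degree-$(p-1)$ observable whose smallness is stochastic, not pointwise. Your closing remark anticipates the fix, but the iteration must be on \emph{moments}, not exponential moments, and the degree drops by $1$, not $2$. The paper derives from the LSI~\eqref{eq_LSI_longrange} the moment comparison
\begin{equation*}
M_q\big(X^{J,\phi}_p\big)^2 \;\le\; \mf c_{\rm LS}\,\log(2/\epsilon)\, q\, \frac{1}{n}\sum_{k,\ell\in\T_n} M_q\big(\nabla_{k,\ell}X^{J,\phi}_p\big)^2,
\end{equation*}
where $X_p=n^{p-1}W_p$ and $M_q$ is the $\bb L^q(\nu^{n,m}_g)$-norm, then expresses $\nabla_{k,\ell}X^{J,\phi}_p$ explicitly as a bounded combination of degree-$(p-1)$ statistics $X^{J',\phi'}_{p-1}$ with $\|\phi'\|_\infty\le C\|\phi\|_\infty$ and $\phi'$ still satisfying~\eqref{eq_ass_tensor_diagonal}, and inducts on $p$ with base case $p=1$. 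This gives $M_q(X^{J,\phi}_p)\le c_p(\log(2/\epsilon))^{p/2}q^{p/2}n^{p/2}\|\phi\|_\infty$ uniformly in $q$, hence a stretched-exponential moment $\nu^{n,m}_g[\exp(\zeta_p|X_p|^{2/p}/(n\|\phi\|^{2/p}_\infty))]\le 2$; the claimed bound~\eqref{eq_bound_mom_exp_appendix} then follows from a tail integral together with the crude pointwise bound $|W^{J,\phi}_p|\le n\|\phi\|_\infty$. This is the G\"otze--Sambale--Sinulis higher-order concentration route, as in~\cite{Correlations2022}.
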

\begin{remark}
The claim is valid under Assumption~\ref{ass_LSI}, 
thus in particular holds for the uniform measure $\nu^{n,m}=\nu^{n,m}_{g=0}$. \demo
\end{remark}
\begin{remark}\label{rmk_better_concentration_1pt}
In the $p=1$ case one has in fact the following stronger result. 
Let $\phi:\T\to\R$ and define $\|\phi\|^2_{\rm HS}:= \frac{1}{n}\sum_{i=1}^n \phi_i^2$. 
Then:
\begin{equation}
\forall n\geq 1,\forall s>0,
\qquad
\log \nu^{n,m}_g\Big[\exp \Big(\frac{s|W^{J,\phi}_1|}{\sqrt{n}\,\|\phi\|^2_{\rm HS}} \Big)\Big]
\leq 
2s^2 {\mf c}_{\rm LS}\log(2/\epsilon)
.
\end{equation}
\end{remark}
\begin{proof}[Proof of Proposition~\ref{prop_concentration}]
Let $p\geq 1$, $\epsilon\in(0,1/2)$ and $m\in[\epsilon n,(1-\epsilon)n]$. 
The $p=1$ case is an instance of the well-known Herbst argument (see~\cite[Section 5.2]{BLM13}), 
so we give no more detail and focus on the proof of~\eqref{eq_bound_mom_exp_appendix}. 

Fix $p\geq 2$ henceforth.   
Define the unnormalised version of $W^{\phi,J}_p$:
\begin{equation}
X^{\phi,J}_p
:= 
n^{p-1}W^{\phi,J}_p
.
\end{equation}
We claim that, to prove~\eqref{eq_bound_mom_exp_appendix}, 
it is sufficient to prove the existence of $\zeta_p>0$, 
depending only on $J,p,g$ but not $\phi$, 
such that: 
\begin{equation}
\nu^{n,m}_g\bigg( \exp\Big[\frac{\zeta_p |X^{J,\phi}_p|^{2/p}}{n\|\phi\|^{2/p}_\infty}\Big]\bigg)
\leq 
2
.
\label{eq_Gaussian_exp_moments}
\end{equation}
Indeed, define $\lambda_p:=\zeta_p/2$ and recall the elementary formula
$\E[e^{|X|}]=1+\int_0^\infty e^t\Prob(|X|>t)\, dt$.  Using that
$|W^{J,\phi}_p|\leq n\,\|\phi\|_\infty$, the
bound~\eqref{eq_Gaussian_exp_moments} implies, for each
$\lambda\in[0,\lambda_p]$:  
\begin{align}
\nu^{n,m}_g\Big[\exp \Big(\frac{\lambda |W^{J,\phi}_p|}{\|\phi\|_\infty}\Big)\Big]
&=
1+ \int_0^{\lambda n} e^{t}\nu^{n,m}_g\Big(\frac{\lambda |X^{J,\phi}_p|}{\|\phi\|_\infty}> tn^{p-1}\Big)\, dt
\nnb
&=
1+ \int_0^{\lambda n} e^{t}\nu^{n,m}_g\bigg(\frac{\zeta_p|X^{J,\phi}_p|^{2/p}}{n\|\phi\|^{2/p}_\infty}>\zeta_p(1/\lambda)^{2/p}n^{1-2/p}t^{2/p}\bigg)\, dt
\nnb
&\leq 
1+ 2\int_0^{\lambda n} \exp\Big[t-\zeta_p(1/\lambda)^{2/p}n^{1-2/p}t^{2/p}\Big]\, dt
.
\label{eq_moment_by_tail_bound}
\end{align}
Elementary computations show that the argument of the exponential satisfies:
\begin{equation}
t-\zeta_p(1/\lambda)^{2/p}n^{1-2/p}t^{2/p}
\leq 
-\frac{1}{2}\zeta_p(1/\lambda)^{2/p}n^{1-2/p}t^{2/p}
,\qquad
t\leq \lambda n
.
\end{equation}
In particular the integral is bounded uniformly in $n$, 
and a change of variable gives the decay in~\eqref{eq_bound_mom_exp_appendix} and the following expression for the constant $C_p$:
\begin{equation}
C_p
\leq 
2\zeta_p^{-p/2}\int_0^\infty e^{-u^{2/p}/2}\, du
.
\label{eq_bound_constant_exp_moment}
\end{equation}

We now prove~\eqref{eq_Gaussian_exp_moments}. 
Let $c_p>0$. 
Using Jensen inequality to write:
\begin{equation}
\nu^{n,m}_g\bigg( \exp\Big[\frac{\zeta_p |X^{J,\phi}_p|^{2/p}}{n\|\phi\|^{2/p}_\infty}\Big]\bigg)
\leq 
1+\sum_{q=1}^\infty \frac{1}{q!}\nu^{n,m}_g\Big[ \frac{\zeta^{pq/2}_{p} |X^{J,\phi}_p|^{q}}{n^{pq/2}\|\phi\|^{q}_\infty}\Big]^{2/p}
\end{equation}
and using $q^q\leq q!\, e^q$ for $q\geq 1$, 
the bound~\eqref{eq_Gaussian_exp_moments} holds with constant $\zeta_p=(2e\log(2/\epsilon))^{-1} c_p^{-2/p}$ (and therefore~\eqref{eq_bound_mom_exp_appendix} with constant $\lambda_p=\zeta_p/2$) 
as soon as the following bound on moments is valid:
\begin{equation}
\forall q\geq 1,\qquad
M_q(X^{J,\phi}_p)
\leq 
c_p \log(2/\epsilon)^{p/2}q^{p/2}\, n^{p/2}\, \|\phi\|_\infty
,\qquad
M_q(X^{\phi,J}_p) 
:=
\nu^{n,m}_g \Big[\big|X^{\phi,J}_p\big|^{q}\Big]^{1/q}
.
\label{eq_moment_bound_uniform}
\end{equation}
We therefore focus on proving~\eqref{eq_moment_bound_uniform} for some
$c_p>0$. By Chauchy-Schwarz inequality we can restrict to $q\geq 2$. 

The proof of~\eqref{eq_Gaussian_exp_moments} then relies on the
log-Sobolev inequality of Assumption~\ref{ass_LSI} and is similar
to~\cite[Theorem A.3]{Correlations2022}, itself adapted
from~\cite{Goetze}.  The only difference with~\cite[Theorem
A.3]{Correlations2022} is the type of gradient arising in the
Dirichlet form: there, the gradients where of Glauber type (one spin
flipped) whereas here gradients involve spin exchanges.  The statement
in~\cite{Goetze} already involved general gradients so the following
proof is just an adaptation of their argument with explicit bounds.

Using the log-Sobolev inequality~\eqref{eq_LSI_longrange} as in the proof of~\cite[Theorem A.3]{Correlations2022}, 
one arrives at the following bound on moments (recall that $m\in[\epsilon n,(1-\epsilon)n]$ implies $\log(n^2/(m(n-m))\leq \log(2/\epsilon)$):
\begin{align}
\forall q\geq 2,\qquad
M_q(X^{J,\phi}_p)^2
&\leq 
\mf c_{\rm LS}\, \log(2/\epsilon)\, q\,  M_{q/2}\Big(\frac{1}{n}\sum_{k,\ell\in\T_n}\big[\nabla_{k,\ell}X^{J,\phi}_p\big]^2\Big)
\nonumber\\
&\leq 
\mf c_{\rm LS}\, \log(2/\epsilon)\, q\, \frac{1}{n}\sum_{k,\ell\in\T_n} M_{q}\big(\nabla_{k,\ell}X^{J,\phi}_p\big)^2
\label{eq_moment_bound_LSI_interm}
,
\end{align}
with $\nabla_{k,\ell}F(\eta) := F(\eta^{k,\ell})-F(\eta)$ for any
sites $k,\ell\in\T_n$ and configuration $\eta$.  Since, by hypothesis,
$\phi$ is symmetric on the last $p-1$ variables, the gradient in the
right-hand side reads, for $k,\ell\in \T_n$:
\begin{align}
\nabla_{k,\ell}X^{J,\phi}_p(\eta) \, & =\,
(\eta_{\ell}-\eta_k)\, \sum_{j\in J}
\, \bar\eta^m_{(k-j) + J\setminus\{j\}}
\sum_{\substack{i_2,...,i_p\\ i_a
\neq k,\ell}} \phi(k-j,i_2,...,i_p)
\, \bar\eta^m_{i_2}...\bar\eta^m_{i_p}\,
\nonumber
\\
& -\, (\eta_{\ell}-\eta_k)\, \sum_{j\in J}
\bar\eta^m_{(\ell-j) + J\setminus\{j\}}
\sum_{\substack{i_2,...,i_p\\ i_a \neq k,\ell}} 
\phi(\ell-j,i_2,...,i_p)\, 
\bar\eta^m_{i_2}...\bar\eta^m_{i_p}\,
\label{eq_gradient_X_m}
\\
&
+(p-1)(\eta_{\ell}-\eta_k) \, \sum_{\substack{i_1,...,i_{p-1}\\ i_a
\neq k,\ell}} \,
\big[\, \phi(i_1,i_2,...,k) - \phi(i_1,i_2,...,\ell)\,\big]
\, \bar\eta^m_{i_1+J}\bar\eta^m_{i_2}...\bar\eta^m_{i_{p-1}}
.
\nonumber
\end{align}
Mind that each term on the right-hand side can be expressed as some
$X^{J',\phi'}_{p-1}$. Indeed, let $\phi^{(1)}_{j,k,\ell}$,
$\phi^{(2)}_{j,k,\ell}$ $j\in J$, be given by
$\phi^{(1)}_{j,k,\ell} (i_1, \dots, i_{p-1}) = \phi (k-j, i_1, \dots,
i_{p-1}) \mb 1\{ i_a \neq k, \ell\}$,
$\phi^{(2)}_{j,k,\ell} (i_1, \dots, i_{p-1}) = \phi (\ell-j, i_1,
\dots, i_{p-1}) \mb 1\{ i_a \neq k, \ell\}$. With this notation, the
first two terms on the right-hand side cab be written as
\begin{equation}
\label{05}
(\eta_{\ell}-\eta_k)\, \sum_{j\in J}
\big\{\, \bar\eta^m_{(k-j) + J\setminus\{j\}} \,
X^{\{0\},\phi^{(1)}_{j,k,\ell}}_{p-1} 
\,-\, \bar\eta^m_{(\ell-j) + J\setminus\{j\}}
\, X^{\{0\},\phi^{(2)}_{j,k,\ell}}_{p-1} \,\big\}\;.
\end{equation}
Similarly, define $\phi^{(3)}_{k,\ell} (i_1, \dots, i_{p-1})$ as 
$[\phi (i_1, \dots, i_{p-1}, k) -  \phi (i_1, \dots, i_{p-1}, \ell)]\,
\mb 1\{ i_a \neq k, \ell\}$, to write the last term as
$(p-1)\, (\eta_{\ell}-\eta_k) X^{J,\phi^{(3)}_{k,\ell}}_{p-1}$. Note
that the functions $\phi^{(1)}_{j,k,\ell}$, $\phi^{(2)}_{j,k,\ell}$,
$\phi^{(3)}_{k,\ell}$ satisfy the hypotheses
\eqref{eq_ass_tensor_diagonal} and are bounded by $C_0
\Vert\phi\Vert_\infty$ for some constant $C_0$ independent of
$n$.
This leads to a proof by recursion on $p$.

In the $p=1$ case,~\eqref{eq_moment_bound_uniform} and ~\eqref{eq_gradient_X_m} give:
\begin{align}
M_q(X^{J,\phi}_1)^2
&\leq 
\mf c_{\rm LS}\, \log(2/\epsilon)\,
\frac{q}{n}\sum_{k,\ell}M_{q}
\Big(\sum_{j\in J}\big[\phi(k-j)\bar\eta^m_{(k-j) + J \setminus\{j\}}
- \phi(\ell-j)\bar\eta^m_{(\ell-j)+J\setminus\{j\}}\big]\Big)^2
\nonumber\\
&\leq 
4\, \mf c_{\rm LS}\, \log(2/\epsilon) q\,  n |J|^2\|\phi\|^2_{\infty}
.
\end{align}
This proves~\eqref{eq_moment_bound_uniform} for $p=1$ with constant $c_1 = \sqrt{\mf c_{\rm LS}}\, |J|$.

Assume now that~\eqref{eq_moment_bound_uniform} hold up to rank
$p-1\geq 1$.  By \eqref{05}, for each $k,\ell\in\T_n$:
\begin{align}
\nabla_{k,\ell}X^{J,\phi}_p
=
(\eta_{\ell}-\eta_k)\, \Big\{ \sum_{j\in J}
\big\{\, \bar\eta^m_{(k-j) + J\setminus\{j\}} \,
X^{\{0\},\phi^{(1)}_{j,k,\ell}}_{p-1} 
\,-\, \bar\eta^m_{(\ell-j) + J\setminus\{j\}}
\, X^{\{0\},\phi^{(2)}_{j,k,\ell}}_{p-1} \,\big\}
+
(p-1)\, X^{J,\phi^{(3)}_{k,\ell}}_{p-1}\Big\}\;.
\end{align}
This implies, bounding the $\bar\eta^m$'s by $1$:
\begin{align}
&\big|\nabla_{k,\ell}X^{J,\phi}_p\big|
\leq
(p-1) \big|X^{J,\phi^{(3)}_{k,\ell}}_{p-1}\big| 
+
\sum_{j\in J}\Big\{\, \big|X^{\{0\},\phi^{(1)}_{j,k,\ell}}_{p-1} \big|
+ \big|X^{\{0\},\phi^{(2)}_{j,k,\ell}}_{p-1}\big|\, \Big\}
.
\end{align}
as observed above, the tensors appearing in each of these three
objects still satisfy Assumption~\eqref{eq_ass_tensor_diagonal} by
construction.  The moment bound~\eqref{eq_moment_bound_LSI_interm} and
the recursion hypothesis at rank $p-1$ thus yield:
\begin{align}
M_q(X_p^{J,\phi})^2
&\leq 
\mf c_{\rm LS}\, \log(2/\epsilon)\,  \frac{q}{n}\,
\sum_{k,\ell}\bigg[2(p-1)^2 M_q\big(X^{J,\phi^{(3)}_{k,\ell}}_{p-1}\big)^2 
\nnb
&\hspace{3.5cm}
+ 2\Big(\sum_{j\in J}\big[M_q
\big(X^{\{0\},\phi^{(1)}_{j,k,\ell}}_{p-1}\big)
+ M_q\big(X^{\{0\},\phi^{(2)}_{j,k,\ell}}_{p-1}\big)\big] \Big)^2\bigg]
\nnb
&\leq 2\, \mf c_{\rm LS}\, c_{p-1}\, \log(2/\epsilon)^{p}\,  q^{p}\, n\,  \big(1+2|J|^2\big)\|\phi\|^{p}_{\infty}
.
\end{align}
This proves~\eqref{eq_moment_bound_uniform} at rank $p$ 
with $c_p:= \sqrt{c_{p-1}\mf c_{\rm LS}(1+2|J|^2)}$ and concludes the proof.
\end{proof}
As soon as $p\geq 3$, i.e. with sums of correlations between at least three independent indices, 
it becomes possible to improve the sub-Gaussian concentration bounds implied by the log-Sobolev inequality using the fact that $W^{J,\phi}_p$ is bounded. 
This gives exponential integrability of $\lambda W^{J,\phi}_p$ even for large $\lambda$ provided $|W^{J,\phi}_p|/n$ is small enough as explained next.
\begin{corollary}\label{coro_better_concentration}
With the notations and assumptions of Proposition~\ref{prop_concentration}, 
let $p\geq 3$ and $\delta>0$. 
Then, for any $R\in(0,2\lambda_p(2\delta)^{2/p-1})$,
\begin{equation}
\log \nu^{n,m}_g\Big[ \exp \Big(\frac{R |W^{J,\phi}_p|}{\|\phi\|_\infty} {\bf 1}_{|W^{J,\phi}_p|\leq \delta n\|\phi\|_\infty}\Big)\Big]
\leq 
\frac{C_pR}{n^{(p-2)/2}}
.
\end{equation}
\end{corollary}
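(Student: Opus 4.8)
\textbf{Proof proposal for Corollary~\ref{coro_better_concentration}.}

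The plan is to exploit the deterministic bound $|W^{J,\phi}_p|\leq n\|\phi\|_\infty$ together with the sub-Gaussian estimate~\eqref{eq_bound_mom_exp_appendix} of Proposition~\ref{prop_concentration}, following the same tail-integration scheme already used in the proof of that proposition (see~\eqref{eq_moment_by_tail_bound}). Write $X := |W^{J,\phi}_p|/\|\phi\|_\infty$, so that $X\leq n$ almost surely under $\nu^{n,m}_g$, and we must control $\nu^{n,m}_g[\exp(R\, X\, {\bf 1}_{X\leq \delta n})]$. Expanding via $\E[e^{Y}] = 1 + \int_0^\infty e^t\,\Prob(Y>t)\,dt$ with $Y = R\,X\,{\bf 1}_{X\leq \delta n}$, the indicator restricts the integral to $t\in[0,R\delta n]$, and on that range $\{Y>t\}\subset\{X > t/R\}$. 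Hence
\begin{equation}
\nu^{n,m}_g\big[e^{RX{\bf 1}_{X\leq \delta n}}\big]
\leq
1 + \int_0^{R\delta n} e^{t}\,\nu^{n,m}_g\big(X > t/R\big)\,dt
.
\end{equation}

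The next step is to bound the tail $\nu^{n,m}_g(X>t/R)$ using Chebyshev together with the exponential moment bound~\eqref{eq_bound_mom_exp_appendix}. Since $X = n^{-(p-1)}|X^{J,\phi}_p|/\|\phi\|_\infty$, writing $X>t/R$ as a statement about $|X^{J,\phi}_p|^{2/p}/n$ exactly as in~\eqref{eq_moment_by_tail_bound}, the bound $\nu^{n,m}_g(\exp[\zeta_p|X^{J,\phi}_p|^{2/p}/(n\|\phi\|_\infty^{2/p})])\leq 2$ (with $\zeta_p = 2\lambda_p$) yields
\begin{equation}
\nu^{n,m}_g\big(X>t/R\big)
\leq
2\exp\Big[-\zeta_p\, R^{-2/p}\, n^{1-2/p}\, t^{2/p}\Big]
,\qquad t>0
.
\end{equation}
Plugging this into the integral, the exponent of the integrand is $t - \zeta_p R^{-2/p} n^{1-2/p} t^{2/p}$. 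The point is that on the truncated range $t\leq R\delta n$ the linear term is dominated by (half of) the $t^{2/p}$ term precisely when $R$ is small relative to $\delta^{1-2/p}$: indeed $t\leq R\delta n$ gives $t^{1-2/p}\leq (R\delta n)^{1-2/p}$, so $t = t^{2/p}\cdot t^{1-2/p}\leq (R\delta)^{1-2/p} n^{1-2/p} t^{2/p}$, and this is at most $\tfrac12\zeta_p R^{-2/p}n^{1-2/p}t^{2/p}$ as soon as $(R\delta)^{1-2/p}R^{2/p}\leq \tfrac12\zeta_p$, i.e. $R\,\delta^{1-2/p}\leq \tfrac12\zeta_p = \lambda_p$, equivalently $R< 2\lambda_p\delta^{2/p-1}$ up to the factor $2^{2/p}$ bookkeeping matching the statement's constraint $R\in(0,2\lambda_p(2\delta)^{2/p-1})$. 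On that range,
\begin{equation}
t - \zeta_p R^{-2/p} n^{1-2/p} t^{2/p}
\leq
-\tfrac12\zeta_p R^{-2/p} n^{1-2/p} t^{2/p}
,\qquad 0\leq t\leq R\delta n
.
\end{equation}

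Finally, extend the integral to $[0,\infty)$ using this negative bound and change variables $u = \tfrac12\zeta_p R^{-2/p}n^{1-2/p}t^{2/p}$ (so $t = (2u/\zeta_p)^{p/2}R n^{-(p-2)/2}$, $dt = c_p\, R\, n^{-(p-2)/2}\, u^{p/2-1}\,du$), giving
\begin{equation}
\int_0^{R\delta n} e^{t - \zeta_p R^{-2/p} n^{1-2/p} t^{2/p}}\,dt
\leq
\frac{C_p\, R}{n^{(p-2)/2}}
,
\end{equation}
where $C_p$ depends only on $\zeta_p = 2\lambda_p$ and $p$, and can be taken as the same constant appearing in~\eqref{eq_bound_constant_exp_moment} (by adjusting it if necessary). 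Since $\log(1+x)\leq x$, this is exactly the claimed bound $\log\nu^{n,m}_g[\exp(R|W^{J,\phi}_p|/\|\phi\|_\infty\,{\bf 1}_{|W^{J,\phi}_p|\leq \delta n\|\phi\|_\infty})]\leq C_pR\,n^{-(p-2)/2}$. The only point requiring care — and the mild technical obstacle — is tracking the precise threshold on $R$ so that the linear term is genuinely absorbed on the truncated range: this is a deterministic one-variable computation, and the restriction $p\geq 3$ is what makes the resulting power $n^{-(p-2)/2}$ a genuine gain rather than a harmless $O(1)$ factor. No new probabilistic input beyond Proposition~\ref{prop_concentration} is needed.
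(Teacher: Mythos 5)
Your proof is correct and follows exactly the paper's intended route: the paper's own proof is the single sentence ``The proof uses the same estimates~\eqref{eq_moment_by_tail_bound} to~\eqref{eq_bound_constant_exp_moment} with the additional input that the integral now stops at $\delta Rn$,'' and you have simply spelled out those steps, which is exactly what is needed. One small remark: if you absorb the linear term with the factor $\tfrac12$ exactly as in the proof of Proposition~\ref{prop_concentration}, the condition you obtain is $R\le\lambda_p\delta^{2/p-1}$, whereas the stated range is $R<2\lambda_p(2\delta)^{2/p-1}=2^{2/p}\lambda_p\delta^{2/p-1}$; to recover the full stated range one should absorb with a $p$-dependent factor $\alpha_p=1-2^{2/p-1}\in(0,\tfrac12)$ instead of $\tfrac12$ (at the price of an adjusted constant $C_p$), a bookkeeping point you correctly flag but slightly misstate when you write ``equivalently $R<2\lambda_p\delta^{2/p-1}$.''
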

\begin{proof}
The proof uses the same estimates~\eqref{eq_moment_by_tail_bound} to~\eqref{eq_bound_constant_exp_moment} with 
the additional input that the integral now stops at $\delta Rn$. 
\end{proof}
\subsubsection{Concentration under $\nu^n_g$}
Concentration results on the $W^{J,\phi}_p$ under $\nu^{n,m}_g$ established in the previous section are also valid under $\nu^n_g$ as we now show.
\begin{corollary}\label{coro_concentration_nun_g}
Let $p\geq 1$, $J\subset\Z$ be finite and $\phi:\T^p\to\R$. 
Then, for some $\lambda'_p,C'_p>0$: 
\begin{align}
\forall n\geq 1,
\qquad \qquad
\log \nu^{n}_g\Big[\exp \Big(\frac{\lambda |W^{J,\phi}_1|}{ \sqrt{n}\, \|\phi\|_\infty}\Big)\Big]
&\leq 
C_1 \lambda^2
,
\qquad 
\lambda\leq \lambda'_1\sqrt{n},
\nnb
\log \nu^{n}_g\Big[\exp \Big(\frac{\lambda |W^{J,\phi}_p|}{\|\phi\|_\infty}\Big)\Big]
&\leq 
\frac{C_p \lambda}{n^{(p-2)/2}}
,\qquad
p\geq 2, \lambda\leq \lambda'_p
.
\end{align}
The claim of Corollary~\ref{coro_better_concentration} similarly also holds under $\nu^n_g$. 
\end{corollary}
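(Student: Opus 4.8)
\textbf{Proof plan for Corollary~\ref{coro_concentration_nun_g}.}
The plan is to deduce concentration under the grand-canonical measure $\nu^n_g$ from the already-established concentration under the canonical measures $\nu^{n,m}_g$ (Proposition~\ref{prop_concentration} and Corollary~\ref{coro_better_concentration}) by conditioning on the magnetisation and handling separately the bulk of magnetisations, where the canonical bounds apply uniformly, and the large-deviation tail, where the magnetisation is atypically large. First I would write, for any $X:\Omega_n\to\R$,
\begin{equation}
\nu^n_g\big(e^{X}\big)
=
\sum_{m=0}^n \nu^n_g\big({\bf 1}_{\sum_i\eta_i=m}\big)\, \nu^{n,m}_g\big(e^{X}\big)
,
\end{equation}
and split the sum according to whether $|m/n-1/2|\le 1/4$ or not. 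On the bulk event, Proposition~\ref{prop_concentration} (with $\epsilon=1/4$, so that the constants $\lambda_p,C_p$ there are uniform in $m$) gives $\log \nu^{n,m}_g[\exp(\lambda|W^{J,\phi}_p|/\|\phi\|_\infty)]\le C_p\lambda/n^{(p-2)/2}$ for $p\ge 2$ and $\lambda\le\lambda_p$, and the analogous sub-Gaussian bound $2s^2\mf c_{\rm LS}\log 8$ for $p=1$; these are already of the claimed form.

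The tail contribution is where the magnetisation large-deviation input enters. On $\{|m/n-1/2|>1/4\}$ I would use the deterministic bound $|W^{J,\phi}_p|\le n\|\phi\|_\infty$ (so $e^{\lambda|W^{J,\phi}_p|/\|\phi\|_\infty}\le e^{\lambda n}$) together with a static large-deviation estimate for the magnetisation under $\nu^n_g$: there is $c>0$ with $\nu^n_g(|\sum_i\bar\eta_i|\ge n/4)\le c^{-1}e^{-cn}$. This last estimate follows from the partition-function bound $\log Z^n_g=O(\sqrt n\,)$ of Lemma~\ref{lemm_partition_fct_nu_g} applied to the tilted measure, combined with a Hoeffding/Gaussian-chaos bound for $\frac1{2n}\sum_{i\neq j}\bar\eta_i\bar\eta_j g_{i,j}$ under $\nu^n_{1/2}$ (using $\|g\|_\infty<\infty$ from Proposition~\ref{prop_g}); alternatively one quotes the concentration estimates of Appendix~\ref{app_concentration} for the quadratic form directly. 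Thus the tail term is bounded by $e^{\lambda n}\cdot c^{-1}e^{-cn}$, which is $O(1)$ as soon as $\lambda\le \lambda'_p:=\min\{\lambda_p, c/2\}$ (and $\lambda\le\lambda'_1\sqrt n$ with $\lambda'_1:=\sqrt{c/2}$ in the $p=1$ case, where the exponent is $\lambda^2/n$ rather than $\lambda$). Collecting the bulk and tail pieces and adjusting the constants gives the stated inequalities; the same splitting with the truncated variable $W^{J,\phi}_p{\bf 1}_{|W^{J,\phi}_p|\le\delta n\|\phi\|_\infty}$ in place of $W^{J,\phi}_p$, using Corollary~\ref{coro_better_concentration} on the bulk, yields the final assertion.

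The main obstacle is the static large-deviation bound $\nu^n_g(|\sum_i\bar\eta_i|\ge n/4)\le c^{-1}e^{-cn}$: one must check that tilting $\nu^n_{1/2}$ by $\exp[\frac1{2n}\sum_{i\neq j}\bar\eta_i\bar\eta_j g_{i,j}]$ does not destroy the exponential decay of magnetisation large deviations. Since $g$ is bounded and the quadratic form is $O(n)$ on worst-case configurations, the tilt can at most shift the rate function, not flatten it; the quantitative version of this is exactly the content (or an easy consequence) of the partition-function estimate of Lemma~\ref{lemm_partition_fct_nu_g} together with the quadratic-form concentration in Appendix~\ref{app_concentration}, so no genuinely new estimate is required — only a careful bookkeeping of which constants are uniform in $m$ and $n$.
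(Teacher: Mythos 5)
Your argument follows the paper's proof precisely: decompose $\nu^n_g$ by conditioning on the magnetisation, apply the canonical bounds of Proposition~\ref{prop_concentration} (with $\epsilon=1/4$) on $\{|m-n/2|\le n/4\}$, and on the complementary event use the deterministic bound $|W^{J,\phi}_p|\le n\|\phi\|_\infty$ together with a static large-deviation estimate under $\nu^n_g$, shrinking $\lambda$ so that the tail term $e^{O(\lambda n)}\cdot c^{-1}e^{-cn}$ is $O(1)$. This is exactly how the paper proceeds.

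One caveat concerns your sketch of the static bound $\nu^n_g\big(|\sum_i\bar\eta_i|\ge n/4\big)\le c^{-1}e^{-cn}$. Since $g$ has leading Fourier coefficient $\min\{8\gamma,4\}\approx 4$ by Proposition~\ref{prop_g}, the tilt carries a factor $\exp\big[\tfrac{\lambda^-_0 n}{2}(\ms M^n)^2\big]$, which is of order $e^{\Theta(n)}$ precisely on $\{|\ms M^n|\ge 1/4\}$. A crude estimate via $\|g\|_\infty$ or a naive Gaussian-chaos concentration bound under $\nu^n_{1/2}$ is therefore too lossy: one actually needs the near-degenerate inequality $E(\rho)-\tfrac{\lambda^-_0}{2}(\rho-1/2)^2>0$ for $|\rho-1/2|\ge 1/4$ (at $\rho=3/4$ this margin is only about $0.006$), together with a uniform-in-$m$ control of the conditional mean-zero tilt. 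The paper also leaves this step as a bare assertion ("a direct large deviation bound"), so your proof is not less complete; just note that "the tilt can at most shift the rate function, not flatten it" understates how close to flattening it the tilt actually is. A very minor point: the correct threshold in the $p=1$ case is $\lambda'_1=c/2$ (so that $\lambda\sqrt n\le cn/2$), not $\sqrt{c/2}$.
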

\begin{proof}
We just prove the counterpart of Proposition~\ref{prop_concentration}, 
the proof for the other claims being similar. 
Decompose $\nu^n_g$ according to the magnetisation:  
\begin{equation}
\nu^n_g =
\sum_{m=0}^n \nu^n_g\Big(\sum_i\eta_i=m\Big) \nu^{n,m}_g.
\end{equation}
A direct large deviation bound provides the following concentration result under $\nu^n_g$: 
there is $c>0$ such that, for all $n\geq 1$ (recall $\pi^n=\frac{1}{n}\sum_i\eta_i\delta_{i/n}$ is the empirical measure),
\begin{equation}
\nu^n_g\big( {\rm d}(\pi^n, dx/2)\geq \epsilon\big)
\leq 
c^{-1}e^{-nc}
.
\end{equation}
The $W^{J,\phi}_p$ are on the other hand bounded by $\|\phi\|_{\infty} n $ if $p\geq 2$, or by $\|\phi\|_\infty\, \sqrt{n}$ if $p=1$. 
Reducing $\lambda_p$ to ensure $\lambda_p \leq c/2$ for $p\geq 2$, 
we thus find, for each $\lambda\leq \lambda_p$:
\begin{equation}
\nu^n_g\Big[\exp\Big( \frac{\lambda\big|W^{J,\phi}_p\big|}{\|\phi\|_\infty}\Big)\Big]
\leq 
\max_{m:|m-n|\leq 1/4}\nu^{n,m}_g\Big[\exp\Big( \frac{\lambda\big|W^{J,\phi}_p\big|}{\|\phi\|_\infty}\Big)\Big]
+ c^{-1}e^{-cn/2}
,
\end{equation}
which upon taking $\frac{1}{\lambda}\log $ gives the claim when $p\geq 2$. 
The argument is similar when $p=1$, 
with the upper bound on $\lambda$ coming from the need to ensure the contribution of the $|m-n/2|\geq n/4$ is  compensated by $\nu^n_g(|\sum_i\bar\eta_i|\geq n/4)$.  A similar reasoning yields an analogue of Corollary~\ref{coro_better_concentration}. 
\end{proof}
\subsection{Relative entropy with respect to the product measure}\label{app_relative_entropy}
In this section we prove Proposition~\ref{prop_free_energy_time0}.
Let $\mu_n$ be a sequence of probability measures on $\Omega_n$
satisfying:
\begin{equation}
\sup_{n\geq 1}\frac{H_n(\mu_n\, |\, \nu^n_{1/2})}{\sqrt{n}}
<
\infty
.
\end{equation}
\begin{proof}[Proof of Proposition~\ref{prop_free_energy_time0}]
The entropy inequality gives, for each $\epsilon>0$:
\begin{equation}
E_{\mu_n}[(\cY^n)^2]
\leq 
\frac{H_n(\mu_n\, |\, \nu^{n}_{1/2})}{\epsilon \, \sqrt{n}} + \frac{1}{\epsilon \, \sqrt{n}}\log E_{\nu^n_{1/2}}[ e^{\epsilon \sqrt{n} \,(\cY^n)^2}]
.
\end{equation}
Elementary computations show that the exponential moment is bounded with $n$ for all $\epsilon$ smaller than some $\epsilon_0>0$. 
In particular this follows from the proof of Lemma~\ref{n-29} which deals with a more complicated setting.  

We now turn to the relative entropy bound. It reads:
\begin{align}
H_n(\mu_n\, |\, \nu^{n}_{\rm ref})
&=
\sum_{\eta\in\Omega_n} \mu_n(\eta)\log\Big(\frac{\mu_n(\eta)}{\nu^n_{\rm ref}(\eta)}\Big)
\nnb
&=
\sum_{\eta\in\Omega_n} \mu_n(\eta)\log\Big(\frac{\mu_n(\eta)}{\nu^n_{1/2}(\eta)}\Big) 
- \sum_{\eta\in\Omega_n} \mu_n(\eta)\log\Big(\frac{\nu^n_{\rm ref}(\eta)}{\nu^n_{1/2}(\eta)}\Big)
.
\end{align}
The first term is just $H(\mu_n|\nu^n_{1/2})$, 
which is $O(\sqrt{n}\, )$ by assumption. 
Let us prove that the second term is bounded by $O( \sqrt{n}\, )$. \\

\noindent {\bf Case 1: $\nu^n_{\rm ref}=\nu^n_U$.} 
Then:
\begin{equation}
-\sum_{\eta\in\Omega_n} \mu_n(\eta)\log\Big(\frac{\nu^n_{\rm ref}(\eta)}{\nu^n_{1/2}(\eta)}\Big)
=
-E_{\mu_n}\big[ n U(\ms M^n )\big] + \log Z^n_U
,\qquad 
\ms M^n 
=
\frac{1}{n}\sum_{i\in\T_n}\bar\eta_i
.
\end{equation}
The logarithm of the partition function is bounded by $O(\log n)$ by Lemma~\ref{n-29}, 
while $U\geq 0$ by definition (recall~\eqref{n-06}). 
This proves the bound in this case. \\

\noindent {\bf Case 2: $\nu^n_{\rm ref}=\nu^n_g$.} 
One has:
\begin{equation}
-\sum_{\eta\in\Omega_n} \mu_n(\eta)\log\Big(\frac{\nu^n_{\rm ref}(\eta)}{\nu^n_{1/2}(\eta)}\Big)
=
-E_{\mu_n}\Big[ \frac{1}{2n}\sum_{i\neq j}\bar\eta_i\bar\eta_j g_{i,j}\Big] + \log Z^n_g
.
\end{equation}
The second term is bounded by $O(\sqrt{n}\, )$ by Lemma~\ref{lemm_partition_fct_nu_g}. 
The explicit formula for $g$ shows that it is a positive kernel in $\bbL^2(\T)$. 
As $g\in C(\T)\cap C^1([0,1])$ by Proposition~\ref{prop_g}, 
the matrix $(g_{i,j})_{i,j\in\T_n}$ is also positive (this is proven e.g.~\cite[Theorem 2.3]{FerreiraMenegato} or can be seen from~\eqref{eq_def_H_bound_Z}--\eqref{eq_inverse_H} using an explicit approximation argument). 
Thus, adding and subtracting the indices $i=j$ and recalling $\bar\eta_i^2=1/4$:
\begin{equation}
E_{\mu_n}\Big[ \frac{1}{2n}\sum_{i\neq j}\bar\eta_i\bar\eta_j g_{i,j}\Big] 
\geq 
- \frac{g(0)}{8}
.
\end{equation}
This concludes the proof. 
\end{proof}

\section{Integration by parts formula}\label{sec_IBP}
Let $m\in\{0,...,n\}$ and recall that $\Omega_{n,m}\subset\Omega_n$ is the subset of configurations with magnetisation $m$. 
This section contains an integration by parts formula under the fixed-magnetisation measure $\nu^{n,m}_g$, with $g$ given by Proposition~\ref{prop_g}:
\begin{equation}
\nu^{n,m}_g
:=
\nu^n_g\Big(\cdot \Big| \sum_{i\in\T_n}\eta_i = m\Big)
.
\end{equation}
The formula is more generally valid under $\nu^n_{1/2}$, $\nu^{n,m}_{1/2}$ and $\nu^n_U$ (recall $\nu^{n,m}_{1/2}=\nu^{n,m}_{g=0}$ is the uniform measure on $\Omega_{n,m}$).  
For $i\in\T_n$ and $f:\Omega_{n,m}\to\R$, define:
\begin{equation}
\Gamma^{{\rm ex}, i,i+1}_{n}(f;\nu^{n,m}_g)
:=
\nu^{n,m}_{g}\big([\nabla_{i,i+1}f(\eta)]^2\big)
,
\qquad
\nabla_{i,i+1}f(\eta)
=
f(\eta^{i,i+1})-f(\eta)
,\quad
\eta\in\Omega_{n,m}
.
\end{equation}
\begin{lemma}\label{lemm_IBP}
Let $i\in\T_N$ and $f:\Omega_{n,m}\to\R_+$.  
Let also $h:\Omega_{n,m}\to\R$ be invariant under the transformation $\eta\mapsto\eta^{i,i+1}$, i.e. $h(\eta^{i,i+1})=h(\eta)$. 
Then,  
for any $\delta >0$:
\begin{align}
&\nu^{n,m}_{g}\big( fh (\bar\eta^m_{i+1}-\bar\eta^m_i)\big)
\leq
\delta \, \Gamma^{{\rm ex}, i,i+1}_{n}(f;\nu^{n,m}_g)  + \frac{e^{\|g'\|_\infty/n}}{2\delta} \nu^{n,m}_{g}\big(f|h|^2\big)
\nonumber\\
&\qquad 
- \frac{1}{2}\nu^{n,m}_{g}\bigg( f h (\bar\eta^m_{i+1}-\bar\eta^m_i) \Big(\exp \Big[-\frac{(\bar\eta^m_{i+1}-\bar\eta^m_i)}{n^2} \sum_{j\neq i,i+1}\bar\eta^m_j\partial^{n}_1g_{i,j}\Big]-1\Big)\bigg)
\end{align}
and:
\begin{align}
&\nu^{n,m}_{g}\big( fh (\bar\eta^m_{i+1}-\bar\eta^m_i)\big)
\leq
\delta\, \Gamma^{{\rm ex}, i,i+1}_{n}(f;\nu^{n,m}_g)  + \frac{e^{\|g'\|_\infty/n}}{\delta} \nu^{n,m}_{g}\big(f|h|^2\big)
\nonumber\\ 
&\hspace{2cm}
+ \frac{e^{2\|g'\|_\infty/n}}{2n^2}\nu^{n,m}_{g}\bigg( f \Big|h\sum_{j\neq i,i+1}\bar\eta^m_j\partial^{n}_1g_{i,j}\Big|\bigg) +\frac{\|g'\|_\infty e^{2\|g'\|_\infty/n}}{2n^2}\nu^{n,m}_g(f|h \ms M^n|)
.
\end{align}
\end{lemma}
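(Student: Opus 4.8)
The plan is to prove both inequalities by the same mechanism: write $\nu^{n,m}_g$ as a tilt of the uniform measure $\nu^{n,m}_{1/2}$, use that $\nu^{n,m}_{1/2}$ is invariant under the exchange $\eta\mapsto\eta^{i,i+1}$, and perform a standard change of variables that produces the discrete gradient $\nabla_{i,i+1}$ together with an exponential correction factor coming from the tilt. Concretely, since $\nu^n_g(\eta)\propto e^{2\Pi^n(g)(\eta)}\nu^n_{1/2}(\eta)$ and $\Pi^n(g)(\eta^{i,i+1})-\Pi^n(g)(\eta)$ equals $\frac{(\eta_i-\eta_{i+1})}{2n}\sum_{j\neq i,i+1}\partial^n_1 g_{i,j}\bar\eta_j$ (this identity is recorded in Section~\ref{subsec_comput_adjoint_exclusion}, and $\bar\eta_j$ may be replaced by $\bar\eta^m_j$ modulo the magnetisation correction, using $\sum_j\partial^n_1 g_{i,j}=O_n(1)$ and translation invariance of $g$), the Radon–Nikodym density $\nu^{n,m}_g(\eta^{i,i+1})/\nu^{n,m}_g(\eta)$ is an explicit exponential $e^{R_i(\eta)}$ with $R_i(\eta)=\frac{(\bar\eta^m_{i+1}-\bar\eta^m_i)}{n^2}\sum_{j\neq i,i+1}\bar\eta^m_j\partial^n_1 g_{i,j}+O(\|g'\|_\infty/n^2)$, so that $|R_i|\le \|g'\|_\infty/n$ uniformly.

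First I would symmetrise: writing $F(\eta):=f(\eta)h(\eta)(\bar\eta^m_{i+1}-\bar\eta^m_i)$ and using that $h$ and the jump rates are exchange-invariant while $(\bar\eta^m_{i+1}-\bar\eta^m_i)$ is antisymmetric under the exchange, a change of variables $\eta\to\eta^{i,i+1}$ inside the integral gives
\begin{equation}
\nu^{n,m}_g\big(fh(\bar\eta^m_{i+1}-\bar\eta^m_i)\big)
=\tfrac12\,\nu^{n,m}_g\Big(h(\bar\eta^m_{i+1}-\bar\eta^m_i)\big[f(\eta)-f(\eta^{i,i+1})e^{R_i(\eta)}\big]\Big).
\end{equation}
Then I would split $f(\eta)-f(\eta^{i,i+1})e^{R_i}$ as $\big(f(\eta)-f(\eta^{i,i+1})\big)-f(\eta^{i,i+1})\big(e^{R_i}-1\big)$. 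The first piece, combined with $\sqrt{f(\eta)}+\sqrt{f(\eta^{i,i+1})}$ after writing $f(\eta)-f(\eta^{i,i+1})=(\sqrt{f(\eta)}-\sqrt{f(\eta^{i,i+1})})(\sqrt{f(\eta)}+\sqrt{f(\eta^{i,i+1})})$, is handled by Young's inequality $ab\le \delta a^2+\frac{1}{4\delta}b^2$: the $a^2$ term yields $\Gamma^{{\rm ex},i,i+1}_n(f;\nu^{n,m}_g)$ (up to the symmetry factor), and the $b^2$ term yields $\nu^{n,m}_g(f|h|^2)$ up to the factor $e^{\|g'\|_\infty/n}$, which appears because one bounds $\nu^{n,m}_g\big((\sqrt{f(\eta)}+\sqrt{f(\eta^{i,i+1})})^2|h|^2\big)\le 2\nu^{n,m}_g\big((f(\eta)+f(\eta^{i,i+1}))|h|^2\big)$ and then moves the mass at $\eta^{i,i+1}$ back to $\eta$ at cost $e^{\|g'\|_\infty/n}$, using $|\bar\eta^m_{i+1}-\bar\eta^m_i|\le 1$. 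This produces exactly the first line of the first inequality, and the leftover $-\tfrac12\nu^{n,m}_g\big(fh(\bar\eta^m_{i+1}-\bar\eta^m_i)(e^{R_i}-1)\big)$ — after a further change of variables $\eta\to\eta^{i,i+1}$ replacing $R_i$ by its value of the stated sign — is the second line.

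For the second inequality I would not keep the $e^{R_i}-1$ term implicit but instead bound it: $|e^{R_i}-1|\le |R_i|e^{|R_i|}\le e^{\|g'\|_\infty/n}|R_i|$, and $|R_i|\le \frac{1}{n^2}\big|\sum_{j\neq i,i+1}\bar\eta^m_j\partial^n_1 g_{i,j}\big|+\frac{\|g'\|_\infty}{n^2}|\ms M^n|$ once the $\bar\eta$-to-$\bar\eta^m$ replacement is carried out explicitly (the $\ms M^n$ term collecting the magnetisation correction). Plugging this into $-\tfrac12\nu^{n,m}_g\big(fh(\bar\eta^m_{i+1}-\bar\eta^m_i)(e^{R_i}-1)\big)$, bounding $|\bar\eta^m_{i+1}-\bar\eta^m_i|\le1$ and pulling out $f$, gives the last two terms on the right of the second inequality (with the slightly worse constants $e^{\|g'\|_\infty/n}/\delta$ and $e^{2\|g'\|_\infty/n}$ reflecting two uses of the tilt bound). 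I expect the only genuine bookkeeping obstacle to be tracking the $\bar\eta\leftrightarrow\bar\eta^m$ substitution and the accompanying $\ms M^n$ and $O_n(1)$ corrections carefully enough that the exponential prefactors come out exactly as stated; the Young-inequality and symmetrisation steps are routine. I would also note that the identical argument applies verbatim to $\nu^n_{1/2}$, $\nu^{n,m}_{1/2}$ (where $R_i\equiv0$, recovering the classical Lemma~\ref{n-s03}) and to $\nu^n_U$ (where the tilt depends only on the magnetisation, hence is exchange-invariant and contributes nothing), which is why the lemma is stated for the general reference measure.
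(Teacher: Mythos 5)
Your proof is correct and follows essentially the same route as the paper's: symmetrise via the change of variables $\eta\mapsto\eta^{i,i+1}$ on the exchange-invariant slice $\Omega_{n,m}$, identify the Radon--Nikodym ratio $e^{R_i}$ of $\nu^{n,m}_g$ under the swap, split $f(\eta)-e^{R_i}f(\eta^{i,i+1})=\big(f(\eta)-f(\eta^{i,i+1})\big)-f(\eta^{i,i+1})(e^{R_i}-1)$, and apply Young's inequality together with one more change of variables (costing $e^{\|g'\|_\infty/n}$) to the first piece. The second inequality is obtained, exactly as you describe, by replacing the $e^{R_i}-1$ term with the bound $|e^{R_i}-1|\le e^{|R_i|}|R_i|$ and decomposing $\bar\eta_j=\bar\eta^m_j+\ms M^n$, using $\sum_{j\neq i,i+1}\partial^n_1 g_{i,j}=n(g_1-g_{-1})=O_n(1)$, which matches the paper's derivation.
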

\begin{remark}
Setting $g=0$, the same integration by parts formula holds under $\nu^{n,m}_{1/2}=\nu^{n,m}_{g=0}$, with the same proof. 
The result similarly holds under $\nu^n_{1/2},\nu^n_U,\nu^n_g$ provided $f,h$ take arguments in $\Omega_n$. 
It also holds under all these measures with each $\bar\eta^m$ replaced by $\bar\eta$. 
\demo
\end{remark}
\begin{proof}
A direct computation using the fact that $\eta\mapsto\eta^{i,i+1}$ is bijective gives:
\begin{equation}
\nu^{n,m}_{g}\big( f h (\bar\eta^m_{i+1}-\bar\eta^m_i)\big) 
=
\frac{1}{2}\sum_{\eta\in\Omega_{n,m}}\nu^{n,m}_{g}(\eta)h(\eta) (\bar\eta^m_{i+1}-\bar\eta^m_i)\Big[f(\eta) - \frac{\nu^{n,m}_{g}(\eta^{i,i+1})}{\nu^{n,m}_{g}(\eta)}f(\eta^{i,i+1})\Big]
.
\end{equation}
The ratio of measures reads, for each $\eta\in\Omega_{n,m}$:
\begin{equation}
\frac{\nu^{n,m}_{g}(\eta^{i,i+1})}{\nu^{n,m}_{g}(\eta)}
=
\exp \Big[-\frac{(\bar\eta^m_{i+1}-\bar\eta^m_i)}{n^2} \sum_{j\neq i,i+1}\bar\eta^m_j\partial^n_1 g_{i,j}\Big]
.
\end{equation}
Thus, writing $e^x = 1 + e^x-1$ with $x$ the argument of the above exponential and again changing variables between $\eta$ and $\eta^{i,i+1}$:
\begin{align}
\nu^{n,m}_{g}\big( fh (\bar\eta_{i+1}-\bar\eta_i)\big)
&\leq
\frac{1}{2}\nu^{n,m}_{g}\big( |\nabla_{i,i+1} f||h| \big)
\\
&\ - 
\frac{1}{2}\nu^{n,m}_{g}\bigg( fh(\bar\eta^m_{i+1}-\bar\eta^m_i) \Big(\exp\Big[-\frac{(\bar\eta_{i+1}-\bar\eta_i)}{n^2} \sum_{j\neq i,i+1}\bar\eta_j\partial^n_1 g_{i,j}\Big]-1\Big)\bigg)
\nonumber
.
\end{align}
Note that $\bar\eta_{i+1}-\bar\eta_i = \bar\eta^{m}_{i+1}-\bar\eta^m_i$. 
The definition of $g$ in Proposition~\ref{prop_g} moreover implies:
\begin{equation}
\Big|\sum_{j\neq i,i+1}\partial^n_1 g_{i,j}\Big|
=
n|g_1-g_{-1}|
\leq 
2\|g'\|_\infty
.
\end{equation}
Writing $\bar\eta_j=\bar\eta^m_j + \frac{1}{n}\sum_{\ell}\bar\eta_\ell$ and using the elementary inequality $|uv|\leq \delta u^2 + (4\delta)^{-1}|v|^2$ 
with $u=\nabla_{i,i+1}\sqrt{f}$ and $v=[\sqrt{f}(\eta)+\sqrt{f}(\eta^{i,i+1})]|h|$ therefore gives:
\begin{align}
&\nu^{n,m}_{g}\big( fh (\bar\eta_{i+1}-\bar\eta_i)\big)
\leq
\delta \, \Gamma^{{\rm ex}, i,i+1}_{n}(f;\nu^{n,m}_g) + \frac{1}{8\delta}\nu^{n,m}_g\big([\sqrt{f}(\eta)+\sqrt{f}(\eta^{i,i+1})]^2|h|^2\big)
\nnb
&\hspace{1cm}
-\frac{1}{2}\nu^{n,m}_{g}\bigg( fh(\bar\eta^m_{i+1}-\bar\eta^m_i) 
\\
&\hspace{2cm}\qquad 
\times\Big(\exp\Big[-\frac{(\bar\eta^m_{i+1}-\bar\eta^m_i)}{n^2} \Big(\sum_{j\neq i,i+1}\bar\eta^m_j\partial^n_1 g_{i,j} + \ms M^n [n(g_1-g_{-1})]\Big)\Big]-1\Big)\bigg)
\nonumber
.
\end{align}
Expanding the square, 
changing variables $\eta\mapsto\eta^{i,i+1}$ and noticing that $\nu^{n,m}_g(\eta)/\nu^{n,m}_g(\eta^{i,i+1})$ is bounded by $e^{\|g'\|_\infty/n}$ gives the first claim. 
The second claim follows from $|\bar\eta^m_{i+1}-\bar\eta^m_i|^2\leq 1$, 
the bound $|e^{x}-1|\leq e^{x_0}|x|$ for $|x|\leq x_0$ and any $x_0>0$, 
and $1/n^2\leq 1/n$. 
\end{proof}
\section{Properties of the kernel $g$}\label{app_prop_g}
In this section we prove Proposition~\ref{prop_g}, i.e.:
\begin{enumerate}
	\item The existence of a family $(\delta,b)\in(-1,1)\times \R_+\mapsto g_{\delta,b}$ of solutions of~\eqref{eq_ODE_g} in $C(\T)\cap C^{\infty}([0,1])$, 
	and uniqueness when enforcing that $b\mapsto\|g^0_{\delta,b}\|_2$ and $\delta\mapsto\|g_{\delta,b}\|_2$ be continuous and vanish at $0$ ($g^0_{\delta,b}:=g_{\delta,b}-\int_\T g_{\delta,b}$). 
	\item The explicit Fourier representation~\eqref{eq_Fourier_g}. 
\end{enumerate}
\begin{proof}
We look for a weak solution to~\eqref{eq_ODE_g} in the space $\mathbb L^2(\T)$, 
i.e. we look for $g=g_{\delta,b}\in\mathbb L^2(\T)$ satisfying, for any $f$ in the Sobolev space $\mathbb H^2(\T)$:
\begin{align}
&\int_{\T} g(x) \big[f''(x)-2b(1+2\delta) f(x)\big]\, dx 
+ 4\sigma^{-1}\delta b f(0)
\nnb
&\quad+\int_{\T^2} g(x-z)g(z)\big[- \sigma f''(x) + (b/2) f(x)\big]\, dz\, dx 
=
0
.
\label{eq_weak_ODE_g}
\end{align}
As $(x,y)\mapsto g(x-y)$ is translation invariant, 
a closed equation on its Fourier coefficients can be obtained. 
Define:
\begin{equation}
c_{\ell}(g)
:=
\int_{\T} g(x) e^{2\pi i \ell x}\, dx
,\qquad
\ell\in\Z
.
\end{equation}
Let $\ell\in\Z$ and take $f=e^{2\pi i \cdot}$ as test function to find:
\begin{equation}
c_\ell(g) \big[-4\pi^2\ell^2 - 2b(1+2\delta)\big] 
+4\sigma^{-1}\delta b + (4\sigma\pi^2\ell^2 +b/2) |c_\ell(g)|^2
=
0
. 
\end{equation}
This second-degree polynomial has roots:
\begin{equation}
\lambda^\pm_\ell 
=
\frac{4\pi^2\ell^2 +2b(1+2\delta)}{b+8\sigma \pi^2\ell^2} 
\pm 
\frac{\sqrt{\big(4\pi^2\ell^2 +2b(1+2\delta)\big)^2 - 16\delta b \sigma^{-1}\big(4\sigma\pi^2\ell^2+b/2\big)}}{b+8\sigma \pi^2\ell^2} 
.
\label{eq_def_lambda_pm_ell}
\end{equation}
The requirement $g_{0,b}=0$ forces the mean $c_0(g_{0,b})$ of $g_{0,b}$ to be given by $\lambda^-_0$. 
Since we want continuity of the mean as a function of $\delta$, we must have:
\begin{equation}
\int_{\T} g(x)\, dx
=
\lambda^-_0
.
\end{equation}
Similarly, 
the only way $g^0 = g-\int_{\T}g$ can vanish when $b=0$ is if the Fourier coefficients of $g^0$ are given by the $\lambda^{-}_\ell$, ($\ell\neq 0$). 
Thus, by continuity:
\begin{equation}
g
=
\lambda^-_0 
+ \sum_{\ell\geq 1} \lambda^-_\ell\,  2\cos(2\pi\ell \cdot)
\qquad 
\text{in }\bbL^2(\T)
\label{eq_def_g_appendix}
\end{equation}
Expanding the square and using $\sigma=1/4$, 
the $\lambda^-_\ell$ can be rewritten as:
\begin{align}
\lambda^-_\ell
=
\frac{4\pi^2\ell^2+2b(1+2\delta)-\big|4\pi^2\ell^2 - 2b(1-2\delta)\big|}{b+2\pi^2\ell^2}
\geq 
0
,\quad
\lambda^-_0 
&= 
2(1+2\delta) -2|1-2\delta| 
\\
&=\min\{8\delta,4\}
\nonumber
.
\label{eq_formula_Fourier_coeffsg_appendix}
\end{align}
As a result,~\eqref{eq_def_g_appendix} defines an element of $\mathbb H^1(\T)$, thus also an element of $C^0(\T)$ by Sobolev embedding~\cite[Theorem 4.12]{Adams2003}. 

It remains to prove regularity of $g$ on $[0,1]$. 
Let $f\in C^\infty((0,1))$ be compactly supported. 
Integrating $f''$ terms in~\eqref{eq_weak_ODE_g} by parts and using the translation invariance of $g$ yields:
\begin{align}
\int_{\T} g'(x) f'(x)\, dx 
&= 
-2b(1+2\delta)\int_{\T}f(x)\Big(1+(b/2)\int_{\T} g(x-z)g(z)\, dz\Big)\, dx 
\nnb
&\quad - \sigma\int_{\T^2}g'(x-z)g'(z)f(x)\, dz\, dx
.
\end{align}
The right-hand side is a bounded linear form in $\bbL^2((0,1))$, 
whence $g\in \mathbb H^2((0,1))$.  
Iterating the procedure gives $g\in\mathbb H^k((0,1))$ for every $k\geq 2$, thus $g\in C^\infty([0,1])$ by Sobolev embedding. 
\end{proof}
\section*{Acknowledgements}
We would like to thank Milton Jara for discussions at the beginning of this project, as well as Kenkichi Tsunoda for comments and spotting a
mistake in an earlier draft.
B.D. would like to thank Hendrik Weber for discussions on~\cite{HMW25}. \\
B.D. acknowledges funding from the European Union's Horizon 2020
research and innovation programme under the Marie Sk\l odowska-Curie
grant agreement No 101034255. C. L. has been partially supported by FAPERJ CNE
E-26/201.117/2021, by CNPq Bolsa de Produtividade em Pesquisa PQ
305779/2022-2.

%\bibliography{C:/Users/BDagallier/Documents/Documents/Research/Thesis/Notes/all_refsbibtex.bib}

\begin{thebibliography}{99}

\bibitem{Adams2003} R.A. Adams, J.J.F. Fournier. {\it Sobolev
spaces}. 2nd ed. New York, NY: Academic Press (2003)

\bibitem{BauBo_hierarchical}
R.~Bauerschmidt and T.~Bodineau. 
Spectral gap critical exponent for Glauber dynamics of
  hierarchical spin models. Commun. Math. Phys. {\bf 373}(3), 1167--1206 (2020).

\bibitem{RRG} R. Bauerschmidt, T. Bodineau, B. Dagallier. Kawasaki dynamics beyond the uniqueness threshold. Probability Theory and Related Fields (2024). \url{https://doi.org/10.1007/s00440-024-01326-9}

\bibitem{BauDag_Ising} R. Bauerschmidt, B. Dagallier. Log-Sobolev inequality for near critical Ising models. Communications on Pure and Applied Mathematics {\bf 77}, 2568--2576 (2024). 

\bibitem{Kac1d} L. Bertini, E. Presutti, B. R\"udiger, E. Saada. Dynamical Fluctuations at the Critical Point: Convergence to a Nonlinear Stochastic PDE. 
Teor. Veroyatnost. i Primenen. {\bf 38}, no. 4, 689--741 (1993); reprinted in Theory Probab. Appl. {\bf 38}, no. 4, 586--629 (1993). 

\bibitem{Correlations2022} T. Bodineau, B. Dagallier. Large deviations
for out of equilibrium correlations in the symmetric simple exclusion
process. Electron. J. Probab. {\bf 29}, 1--96 (2024).

\bibitem{bl10} T. Bodineau, M. Lagouge. Current Large Deviations in a
Driven Dissipative Model. J Stat Phys {\bf 139}, 201--218 (2010).

\bibitem{BLM13} S. Boucheron, G. Lugosi, P.  Massart. {\it
Concentration inequalities: A nonasymptotic theory of independence.}
Oxford university press, 2013.

\bibitem{BR84} T. Brox, H. Rost. Equilibrium fluctuations of
stochastic particle systems: the role of conserved quantities. 
Ann. Probab. {\bf 12}, 742--759 (1984).

\bibitem{Courant} B. Dagallier. Fluctuations and correlations in weakly asymmetric simple exclusion on a ring subject to an atypical current. Ann. Probab. {\bf 53} (3), 1079--1137  (2025). \url{https://doi.org/10.1214/24-AOP1729}

\bibitem{Courant_suppl} B. Dagallier. Supplement to: ``Fluctuations and correlations in weakly asymmetric simple exclusion on a ring subject to an atypical current". Ann. Probab {\bf 53} (3), 1079--1137  (2025). 

\bibitem{Dawson} D. A. Dawson. Critical dynamics and fluctuations for a mean-field model of cooperative behaviour. Journal
of Statistical physics, {\bf 31}, 29--85 (1983).

\bibitem{dfl} A. De Masi, P. A. Ferrari, J. L.  Lebowitz.
Reaction-diffusion equations for interacting particle systems. Journal
of Statistical physics, {\bf 44}, 589--644 (1986).

\bibitem{FractionalSobolev} E. Di Nezza, G. Palatucci and E. Valdinoci. 
Hitchhiker's guide to the fractional Sobolev
spaces. Bull. Sci. math. {\bf 136}, 521--573 (2012).

\bibitem{flt} J. Farfan, C. Landim, K. Tsunoda. Static large
deviations for a reaction–diffusion model. Probability Theory and
Related Fields, {\bf 174}, 49--101 (2019)

\bibitem{FerreiraMenegato} J.C. Ferreira, V.A. Menegatto. Eigenvalues
of integral operators defined by smooth positive definite
kernels. Integr. eq. oper. theory {\bf 64}, 61--81 (2009).

\bibitem{Funaki_nongradient} T. Funaki. Interface motion from
Glauber-Kawasaki dynamics of non-gradient type. Arxiv preprint
arXiv:2404.18364 (2024).

\bibitem{fls} T. Funaki, C. Landim, S. Sethuraman. Linear fluctuation
of interfaces in Glauber-Kawasaki dynamics. Arxiv preprint
arXiv:2412.04015 (2024).

\bibitem{ft} T. Funaki, P. van Meurs, S. Sethuraman, K. Tsunoda. Motion by Mean Curvature from Glauber-Kawasaki Dynamics with Speed Change. 
Journal of Statistical Physics {\bf 190}, 1--30 (2023). \url{https://doi.org/10.1007/s10955-022-03044-9}

\bibitem{ft_early} T. Funaki, K. Tsunoda. Motion by Mean Curvature from Glauber–Kawasaki Dynamics. Journal of Statistical Physics {\bf 177}, 183--208 (2019). \url{https://doi.org/10.1007/s10955-019-02364-7}


\bibitem{Goetze} F. G\"oetze, H. Sambale, A. Sinulis. 
Higher order concentration for functions of weakly
dependent random variables. 
Electron. J. Probab. {\bf 24}, 1--19 (2019).

\bibitem{GJ_fluct_WASEP} P. Gon\c{c}alves, M. Jara. Nonlinear Fluctuations of Weakly Asymmetric Interacting Particle Systems. Archives for Rational Mechanics and Analysis {\bf 212}, 597--644 (2014). \url{https://doi.org/10.1007/s00205-013-0693-x}

\bibitem{gjs} P. Gon\c{c}alves, M. Jara, M. Simon. Second Order Boltzmann–Gibbs Principle for Polynomial Functions and Applications. Journal of Statistical Physics {\bf 166}, 90--113 (2017). \url{https://doi.org/10.1007/s10955-016-1686-6}



\bibitem{Kac3d} P. Grazieschi, K. Matetski, H. Weber.  The dynamical Ising-Kac model in 3D converges to $\Phi ^4_3$. Probability Theory and Related Fields (2024).

\bibitem{gvp} 
M. Z. Guo, G. C. Papanicolaou, S. R. S. Varadhan. Nonlinear diffusion limit for a system with nearest neighbor interactions. Communications in Mathematical Physics {\bf 118}, 31--59 (1988).

\bibitem{Hairer} M. Hairer. A theory of regularity structures. Inventiones mathematicae {\bf 198}, 269--504 (2013).

\bibitem{HH}
P.C. Hohenberg and B.I. Halperin. Theory of dynamic critical phenomena. Rev. Mod. Phys. {\bf 49} 3, 435--479 (1977).

\bibitem{HMW25}
R. Huang, K. Matetski and Hendrik Weber. Scaling limit of a weakly asymmetric simple exclusion process in the framework of regularity structures. arXiv preprint arxiv:2505.00621 (2025).



\bibitem{Iberti_Kac} M. Iberti. Convergence of Glauber dynamic on Ising-like models with Kac interaction to $\Phi^{2n} _2$. arXiv preprint arXiv:1708.00948 (2017).


\bibitem{js} J. Jacod, A. N. Shiryaev. {\it Limit theorems for
stochastic processes}. Second edition. Grundlehren der Mathematischen
Wissenschaften [Fundamental Principles of Mathematical Sciences], {\bf
288}. Springer-Verlag, Berlin, 2003.


\bibitem{jl} M. Jara, C. Landim. The stochastic heat equation as the
limit of a stirring dynamics perturbed by a voter model.
Ann. Appl. Probab. {\bf 33}, 4163--4209 (2023).

\bibitem{jm_example} M. Jara, O. Menezes. Non-equilibrium fluctuations
for a reaction-diffusion model via relative entropy. Markov Processes
and Related Fields {\bf 26}, 95--124 (2020).

\bibitem{Jara2018} M. Jara and O. Menezes. Non-Equilibrium
Fluctuations of Interacting Particle Systems, 2018, arXiv preprint
arXiv:1810.09526.

% \bibitem{j91} G. Jona-Lasinio. Stochastic reaction diffusion equations
% and interacting particle systems. Ann. I.H.P., Physique Th\'eorique,
% {\bf 55}, 751--758 (1991).

\bibitem{jlv} G. Jona-Lasinio, C. Landim, M. E. Vares. Large
deviations for a reaction diffusion model. Probab. Theory Related
Fields {\bf 97}, 339--361 (1993).%

\bibitem{ks} I. Karatzas, S. Shreve. {\it Brownian motion and
stochastic calculus}. Vol. 113. Springer Science \& Business Media,
1991.

\bibitem{kl} C. Kipnis, C. Landim. {\it Scaling Limits of Interacting
  Particle Systems}, Grundlheren der mathematischen Wissenschaften
{\bf 320}, Springer-Verlag, Berlin, New York, 1999.

\bibitem{lpy} C. Landim, G. Panizo, H. T. Yau. Spectral gap and
logarithmic Sobolev inequality for unbounded conservative spin
systems. Annales de l'Institut Henri Poincar\'e, Prob. et Stat. {\bf
38}, 739--777 (2002).

\bibitem{LandimTsunoda} C. Landim, K. Tsunoda. Hydrostatics and
dynamical large deviations for a reaction-diffusion model. Annales de
l’Institut Henri Poincar\'e, Prob. et Stat. {\bf 54}, 51-74 (2018).

\bibitem{LeeYau_LSI_RW1998} T.Y. Lee, H.T. Yau. Logarithmic Sobolev
inequality for some models of random walk. Ann. Probab. {\bf 26},
1855--1873 (1998).

\bibitem{Lifshitz1962} I.M. Lifshitz.
Kinetics of ordering during second-order phase transitions. Sov. Phys. JETP {\bf 15}, 939--942 (1962).

\bibitem{LuSly_Ising} E. Lubetzky, A. Sly. Critical Ising on the Square Lattice Mixes in Polynomial Time. Communications in Mathematical Physics {\bf 313}, 815--836 (2012). 

\bibitem{Mottoni} P. Mottoni, M. Schatzman. Geometrical evolution of developed interfaces. Transactions of the American Mathematical Society {\bf 347}, 1533--1589 (1995). \url{https://doi.org/10.2307/2154960}

\bibitem{Mourrat_Weber_Kac} J.-C. Mourrat, H. Weber. Convergence of the two-dimensional dynamic Ising-Kac Model to $\Phi^4_2$. Communications on Pure and Applied Mathematics {\bf 70}, 717--812 (2017).

\bibitem{petrov} V. V. Petrov. {\sl Sums of Independent Random
Variables}, Springer Verlag, New York (1975).

\bibitem{rost} H. Rost. Hydrodynamik gekoppelter diffusionen:
Fluktuationen im gleichgewicht. In Dynamics and Processes:
Proceedings of the Third Encounter in Mathematics and Physics, held in
Bielefeld, Germany Nov. 30–Dec. 4, 1981, pp. 97--107. Berlin,
Heidelberg: Springer Berlin Heidelberg, 2006.

\bibitem{Shen_Weber_Kac} H. Shen, H. Weber.
Glauber dynamics of 2D Kac–Blume–Capel model and their stochastic PDE limits, 
Journal of Functional Analysis {\bf 275}, no. 6,  1321--1367 (2018).

\bibitem{SY_diffusivity} H. Spohn, H. T. Yau. Bulk diffusivity of lattice gases close to criticality. Journal of Statistical Physics {\bf 79}, 231--241 (1995).

\bibitem{Tsunoda_invmeas} K. Tsunoda, in preparation.

\bibitem{Yang_KPZ_longrange} K. Yang. Kardar--Parisi--Zhang Equation from Long-Range Exclusion Processes. Communications in Mathematical Physics {\bf 400}, 1535--1663 (2023). \url{https://doi.org/10.1007/s00220-022-04628-y}

\bibitem{Yang_BG} K. Yang. Fluctuations for Some Nonstationary Interacting Particle Systems via Boltzmann–Gibbs Principle. Forum of Mathematics, Sigma {\bf 11} (2023, e32). \url{https://doi.org/10.1017/fms.2023.27}

\bibitem{yau} H. T. Yau. Logarithmic Sobolev inequality for generalized
simple exclusion processes. Probab. Th. Related Fields {\bf 109},
507--538 (1997).

\bibitem{yau_relative_ent} H. T. Yau. Relative entropy and hydrodynamics of Ginzburg-Landau models. Letters in Mathematical Physics {\bf 22}, 63--80 (1991).
  

\end{thebibliography}
%\bibliographystyle{plain}
%
\end{document}